\definecolor{ultramarine}{RGB}{0,32,96}
\colorlet{mygreen}{green!20!gray}
\colorlet{myultramarine}{ultramarine!20!gray}
\newsavebox\myboxA
\newsavebox\myboxB
\newlength\mylenA
\DeclareMathOperator*{\dprime}{^{\hskip-0.5mm\prime \prime}}
\DeclareMathOperator*{\dprimeind}{^{\hskip-1.5mm\prime \prime}}
\DeclareMathOperator*{\dprimeindl}{^{\hskip-4.5mm\prime \prime}}
\DeclareMathOperator*{\dprimeindtl}{^{\hskip-8.5mm\prime \prime}}
\DeclareMathOperator*{\dprimeindttl}{^{\hskip-26mm\prime \prime}}
\DeclareMathOperator*{\tprime}{^{\hskip-1.5mm\prime \prime \prime}}
\newcommand{\upperset}[2]{%
\underset{%
        \text{\raisebox{0.5ex}{\smash{\fontsize{5}{5}$#1$}}}
              }{#2}%
                    }
\newcommand{\uppersetF}[2]{%
\underset{%
        \text{\raisebox{0.8ex}{\smash{\fontsize{5}{5}$#1$}}}
              }{#2}%
                    }
\newcommand{\oversetF}[2]{%
\overset{%
        \text{\raisebox{0.2ex}{\smash{\fontsize{5}{5}$#1$}}}
              }{#2}%
                    }
\numberwithin{equation}{section}
\numberwithin{figure}{section}
\DeclareFontFamily{U}{BOONDOX-calo}{\skewchar\font=45 }
\DeclareFontShape{U}{BOONDOX-calo}{m}{n}{
  <-> s*[1.05] BOONDOX-r-calo}{}
\DeclareFontShape{U}{BOONDOX-calo}{b}{n}{
  <-> s*[1.05] BOONDOX-b-calo}{}
\DeclareMathAlphabet{\mathcalboondox}{U}{BOONDOX-calo}{m}{n}
\SetMathAlphabet{\mathcalboondox}{bold}{U}{BOONDOX-calo}{b}{n}
\DeclareMathAlphabet{\mathbcalboondox}{U}{BOONDOX-calo}{b}{n}
\newcommand{\Res}{\operatorname{Res}}
\newcommand{\Br}{\operatorname{Bar}}
\newcommand{\tot}{\operatorname{Tot}}
\newcommand{\Der}{\mathbb{D}\operatorname{er}}
\newcommand{\MA}{\mathcal{A}}
\newcommand{\MB}{\mathcal{B}}
\newcommand{\MO}{\mathcal{O}}
\newcommand{\cone}{\operatorname{cone}}
\newcommand{\lr}[1]{\{\mkern-6mu\{#1\}\mkern-6mu\}}
\newcommand{\ZZ}{{\mathbb{Z}}}
\newcommand{\NN}{{\mathbb{N}}}
\newcommand{\kk}{\Bbbk}
\newcommand{\Hom}{\operatorname{Hom}}
\newcommand{\shom}{\operatorname{hom}}
\newcommand{\Homgr}{\mathcal{H}om}
\newcommand{\Homgrgr}{\mathcalboondox{Hom}}
\newcommand{\Coder}{\operatorname{Coder}}
\newcommand{\llg}{\operatorname{lg}}
\newcommand{\nec}{\operatorname{nec}}
\newcommand{\inn }{\operatorname{inn}}
\newcommand{\out}{\operatorname{out}}
\newcommand{\llt}{\operatorname{lt}}
\newcommand{\rrt}{\operatorname{rt}}
\newcommand{\Multi}{\operatorname{Multi}}
\newcommand{\MC}{\operatorname{MC}}
\newcommand{\MCbar}{\operatorname{\underbar{MC}}}
\newcommand{\id}{\operatorname{id}}
\newcommand{\mixed}{\operatorname{mixed}}
\newcommand{\multinec}{\operatorname{multinec}}
\newcommand{\pre}{\operatorname{pre}}
\newcommand{\pCY}{\operatorname{pre-CY}_d}
\newcommand{\SpCY}{\operatorname{Spre-CY}_d}
\newcommand{\NpCY}{\operatorname{Npre-CY}_d}
\newcommand{\dPois}{\operatorname{dPois}}
\newcommand{\Ahat}{\operatorname{\widehat{A_{\infty}}_d}}
\newcommand{\cyc}{\operatorname{\operatorname{cyc}\widehat{A_{\infty}}_d}}
\newcommand{\Scyc}{\operatorname{\operatorname{Scyc}\widehat{A_{\infty}}_d}}
\newcommand*{\relrelbarsep}{.386ex}
\newcommand*{\relrelbar}{%
  \mathrel{%
    \mathpalette\@relrelbar\relrelbarsep
  }%
}
\newcommand*{\@relrelbar}[2]{%
  \raise#2\hbox to 0pt{$\m@th#1\relbar$\hss}%
  \lower#2\hbox{$\m@th#1\relbar$}%
}
\providecommand*{\rightrightarrowsfill@}{%
  \arrowfill@\relrelbar\relrelbar\rightrightarrows
}
\providecommand*{\leftleftarrowsfill@}{%
  \arrowfill@\leftleftarrows\relrelbar\relrelbar
}
\providecommand*{\xrightrightarrows}[2][]{%
  \ext@arrow 0359\rightrightarrowsfill@{#1}{#2}%
}
\providecommand*{\xleftleftarrows}[2][]{%
  \ext@arrow 3095\leftleftarrowsfill@{#1}{#2}%
}
\newcommand*\xoverline[2][0.75]{%
    \sbox{\myboxA}{$\m@th#2$}%
    \setbox\myboxB\null
    \ht\myboxB=\ht\myboxA%
    \dp\myboxB=\dp\myboxA%
    \wd\myboxB=#1\wd\myboxA
    \sbox\myboxB{$\m@th\overline{\copy\myboxB}$}
    \setlength\mylenA{\the\wd\myboxA}
    \addtolength\mylenA{-\the\wd\myboxB}%
    \ifdim\wd\myboxB<\wd\myboxA%
       \rlap{\hskip 0.5\mylenA\usebox\myboxB}{\usebox\myboxA}%
    \else
        \hskip -0.5\mylenA\rlap{\usebox\myboxA}{\hskip 0.5\mylenA\usebox\myboxB}%
    \fi}
\newcommand*{\doubarl}[1]{\xoverline{\xoverline{#1}}} 
\newcommand*{\doubar}[1]{\bar{\bar{#1}}} 
\newcommand{\dPoisl}{\{\hskip-1mm\{}
\newcommand{\dPoisr}{\}\hskip-1mm\}}
\newcommand{\doublefleche}
{ (0.05,-0.04) -- (0.18,-0.04)
(0.05,0.04) -- (0.18,0.04)
(0,0) -- (0.07,-0.07) 
(0,0) -- (0.07,0.07)}
\newcommand{\doubleflechescindeeleft}
{(0.07,-0.07-0.1) -- (0.22,-0.07-0.1)
(0,-0.1) -- (0.22,-0.1)
(0,-0.1) -- (0.1,-0.1-0.1) }
\newcommand{\doubleflechescindeeright}
{(0,0.1) -- (0.22,0.1)
(0.07,0.07+0.1) -- (0.22,0.07+0.1)
(0,0.1) -- (0.1,0.1+0.1)
}
\newcommand{\fleche}{
[<-,> = stealth] (0,0)--(0.5,0)
}
\newcommand{\flechelong}{
[<-, > = stealth] (0,0)--(1,0)
}
\newtheorem{definition}{Definition}[section]
\newtheorem{definition-proposition}[definition]{Definition-Proposition}
\newtheorem{remark}[definition]{Remark}
\newtheorem{theorem}[definition]{Theorem}
\newtheorem{proposition}[definition]{Proposition}
\newtheorem{corollary}[definition]{Corollary}
\newtheorem{lemma}[definition]{Lemma}
\newtheorem{theoreme}[]{Theorem}
\title{Homotopy theory of pre-Calabi-Yau morphisms}
\author{Marion Boucrot}
\date{}
\begin{document}

\maketitle
\hrulefill
\begin{abstract} 
In this article we study the homotopy theory of pre-Calabi-Yau morphisms, viewing them as Maurer-Cartan elements of an $L_{\infty}$-algebra.
We give two different notions of homotopy: a notion of weak homotopy for morphisms between $d$-pre-Calabi-Yau categories whose underlying graded quivers on the domain (resp. codomain) are the same, and a notion of homotopy for morphisms between fixed pre-Calabi-Yau categories $(\MA,s_{d+1}M_{\MA})$ and $(\MB,s_{d+1}M_{\MB})$. 
Then, we show that the notion of homotopy is stable under composition and that homotopy equivalences are quasi-isomorphisms.
Finally, we prove that the functor constructed by the author in a previous article between the category of pre-Calabi-Yau categories and the partial category of $A_{\infty}$-categories of the form $\MA\oplus\MA^*[d-1]$, for $\MA$ a graded quiver, together with hat morphisms sends homotopic $d$-pre-Calabi-Yau morphisms to weak homotopic $A_{\infty}$-morphisms. 
\end{abstract}

\textbf{Mathematics subject classification 2020:} 16E45, 18G70, 14A22

\textbf{Keywords:} \begin{small} $A_{\infty}$-categories, pre-Calabi-Yau categories, $L_{\infty}$-algebras, homotopies
\end{small}

\hrulefill

\tableofcontents
\section{Introduction}

Pre-Calabi-Yau structures were introduced by M. Kontsevich and Y. Vlassopoulos in the last de\-cade.
These structures have also appeared
under different names, such as $V_{\infty}$-algebras in \cite{tz}, $A_{\infty}$-algebras with boundary in
\cite{seidel}, or weak Calabi-Yau structures in \cite{kontsevich} for example. These references show that pre-Calabi-Yau structures
play an important role in homological algebra, symplectic geometry, string topology, noncommutative geometry and even in Topological Quantum Field Theory. 

A pre-Calabi-Yau structure is determined by a Maurer-Cartan element of the necklace bracket introduced in \cite{ktv}. There is a strong relation between pre-Calabi-Yau structures and $A_{\infty}$-structures, given as Maurer-Cartan elements of the Gerstenhaber bracket introduced in \cite{gerstenhaber}. Actually, M. Kontsevich, A. Takeda and Y. Vlassopoulos showed in \cite{ktv} that for $d\in\ZZ$, a $d$-pre-Calabi-Yau structure on a finite dimensional vector space $A$ is equivalent to a cyclic $A_{\infty}$-structure on $A\oplus A^*[d-1]$ such that $A\hookrightarrow A\oplus A^*[d-1]$ is an $A_{\infty}$-subalgebra.

The definition of pre-Calabi-Yau morphisms first appeared in \cite{ktv} and then in \cite{lv}, in the properadic setting. Moreover, in \cite{moi}, we proved the following result, relating $d$-pre-Calabi-Yau morphisms and $A_{\infty}$-morphisms:

\begin{theoreme}
\label{thm:main-article-1}
     There is a functor $\mathcal{P} : \pCY\rightarrow \Ahat$ sending a $d$-pre-Calabi-Yau category $(\MA,s_{d+1}M_{\MA})$ to the cyclic $A_{\infty}$-category $(\MA\oplus\MA^*[d-1],sm_{\MA\oplus\MA^*})$ given in \cite{ktv} and a $d$-pre-Calabi-Yau morphism $(\Phi_0,s_{d+1}\Phi) : (\MA,s_{d+1}M_{\MA})\rightarrow (\MB,s_{d+1}M_{\MB})$ to an $A_{\infty}$-structure $sm_{\MA\oplus\MB^*}$ together with $A_{\infty}$-morphisms 
 \begin{equation}
\begin{tikzcd}
&(\MA \oplus \MB^*[d-1],sm_{\MA\oplus\MB^*}) \arrow[swap,"s\varphi_{\MA}"]{dl} \arrow[swap,"s\varphi_{\MB}"]{dr}\\
(\MA \oplus \MA^*[d-1],sm_{\MA\oplus\MA^*})&& (\MB \oplus \MB^*[d-1], sm_{\MB\oplus\MB^*}).
\end{tikzcd}
\end{equation}
\end{theoreme}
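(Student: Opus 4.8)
The plan is to unpack both sides through the ``doubling'' construction $\MA\rightsquigarrow\MA\oplus\MA^*[d-1]$, first on objects, then on morphisms, and finally to check functoriality; throughout one works with suitably completed $\Hom$-spaces, which is why the target is the hatted category $\Ahat$.

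\textbf{Objects.} Here I would essentially recollect \cite{ktv}. A $d$-pre-Calabi-Yau structure $s_{d+1}M_{\MA}$ on $\MA$ is a Maurer--Cartan element for the necklace bracket $\lr{-,-}$, i.e.\ $\lr{M_{\MA},M_{\MA}}=0$; setting $X_{\MA}:=\MA\oplus\MA^*[d-1]$ and using the canonical nondegenerate pairing $\langle-,-\rangle_{\MA}$ on $X_{\MA}$, one defines $A_\infty$-operations $sm_{\MA\oplus\MA^*}$ on $X_{\MA}$ by inserting each $\MA^*[d-1]$-input and contracting. One then checks that the $A_\infty$-relations for $sm_{\MA\oplus\MA^*}$ are precisely the expansion of $\lr{M_{\MA},M_{\MA}}=0$, that $sm_{\MA\oplus\MA^*}$ is cyclic for $\langle-,-\rangle_{\MA}$, and that $\MA\hookrightarrow X_{\MA}$ is an $A_\infty$-subalgebra. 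This defines $\mathcal P$ on objects.

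\textbf{Morphisms.} Given a $d$-pre-Calabi-Yau morphism $(\Phi_0,s_{d+1}\Phi):(\MA,s_{d+1}M_{\MA})\to(\MB,s_{d+1}M_{\MB})$, I would form the interpolating graded quiver $X:=\MA\oplus\MB^*[d-1]$ and define $sm_{\MA\oplus\MB^*}$ by the same recipe as above but ``twisted by $\Phi$'': an input in $\MA$ is either acted on by a component of $M_{\MA}$ or fed into the components of $\Phi$; the $\MB$-strands thereby produced are acted on by components of $M_{\MB}$; and the resulting output is contracted against a $\MB^*[d-1]$-input via the pairing $\langle-,-\rangle_{\MB}$. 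Dually, $s\varphi_{\MA}:X\to X_{\MA}$ is the identity on $\MA$ together with the map $\MB^*[d-1]\to\MA^*[d-1]$ assembled by dualizing the components of $\Phi$ and contracting, while $s\varphi_{\MB}:X\to X_{\MB}$ is the $A_\infty$-morphism $\Phi:\MA\to\MB$ underlying the pre-Calabi-Yau morphism together with essentially the identity on $\MB^*[d-1]$, each equipped with the suitable mixed higher components. The heart of the argument is to verify that the $A_\infty$-relations for $sm_{\MA\oplus\MB^*}$ and the $A_\infty$-morphism equations for $s\varphi_{\MA}$ and $s\varphi_{\MB}$ are all consequences of the single defining equation of $(\Phi_0,s_{d+1}\Phi)$: one expands that equation over necklace words and partitions its terms according to which strands remain $\MA$-colored, which are fed to $\Phi$, and which are contracted with a dual input, and one recognizes the three resulting families of terms as the three desired $A_\infty$-identities. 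I expect this partition, together with the attendant signs, to be the main obstacle; it is much cleaner to carry it out in the graphical/operadic calculus of \cite{lv} than by an index chase.

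\textbf{Functoriality.} It remains to see that $\mathcal P$ is a functor to the partial category $\Ahat$. For identities, $\mathcal P(\id_{(\MA,s_{d+1}M_{\MA})})$ is the trivial roof $X_{\MA}\xleftarrow{\id}X_{\MA}\xrightarrow{\id}X_{\MA}$. For composition, given a second pre-Calabi-Yau morphism $(\Psi_0,s_{d+1}\Psi):(\MB,s_{d+1}M_{\MB})\to(\mathcal C,s_{d+1}M_{\mathcal C})$, the right leg of $\mathcal P(\Phi_0,s_{d+1}\Phi)$ and the left leg of $\mathcal P(\Psi_0,s_{d+1}\Psi)$ both have codomain $X_{\MB}$, so their composite in $\Ahat$ is defined; one then checks that it equals the roof attached to the composite pre-Calabi-Yau morphism, with apex $\MA\oplus\mathcal C^*[d-1]$. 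Unpacking both sides through the twisted recipe above reduces this to the compatibility of the construction with composition of pre-Calabi-Yau morphisms, a further bookkeeping identity that is routine once the machinery of the previous step is in place. This produces the functor $\mathcal P:\pCY\to\Ahat$ with the stated properties.
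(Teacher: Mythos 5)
This theorem is not proved in the present paper: it is recalled verbatim from \cite{moi}, and the body of the paper only uses its statement. So the comparison here is against the construction in \cite{moi} (building on \cite{ktv}), not against an in-text argument. Your architecture does match that construction: the apex $\MA\oplus\MB^*[d-1]$ with an $A_\infty$-structure assembled from $M_{\MA}$, $M_{\MB}$ and the components of $\Phi$, the left leg built from the identity on $\MA$ and the dualized components of $\Phi$, the right leg built from the one-output components of $\Phi$ and the identity on $\MB^*[d-1]$, and functoriality checked against the partial composition of hat morphisms in $\Ahat$.

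That said, as a proof your proposal has a genuine gap: everything that makes the theorem nontrivial is asserted rather than carried out. You never actually write down the higher components of $sm_{\MA\oplus\MB^*}$, $s\varphi_{\MA}$ or $s\varphi_{\MB}$ (the ``suitable mixed higher components'' are exactly the delicate part --- they are indexed by tuples $\doubar{x}$ of arbitrary length and involve contractions against several dual inputs, with signs coming from the $C_n$-invariance of $s_{d+1}\Phi$), and without explicit formulas the claim that the single identity $(\operatorname{MI}^{\doubar{x}})$ partitions into the Stasheff identities for the apex plus the two morphism identities cannot be checked; the partition itself is not canonical and has to be exhibited diagram by diagram, which is where all the sign work lives. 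The functoriality step has the same problem: composition in $\Ahat$ is only partially defined, and showing that the composite of the two roofs is again a roof with apex $\MA\oplus\mathcal{C}^*[d-1]$ requires matching the diagrammatic composition of Definition \ref{def:cp-pCY} against the composite of the legs, which is another nontrivial identity, not ``routine bookkeeping.'' In short: correct blueprint, but the load-bearing computations are all deferred, so this is a plan for a proof rather than a proof.
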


Although the homotopy theory of $A_{\infty}$-morphisms is well-known, with an explicit formula for homotopies between them given in \cite{lefevre}, there is no formula for homotopies between pre-Calabi-Yau morphisms and it is not so easy to generalize the one of the case of $A_{\infty}$-morphisms. 
However, there are two other definitions of homotopic $A_{\infty}$-morphisms, given in \cite{borisov1,borisov2}, based on the homotopy theory of Maurer-Cartan elements of $L_{\infty}$-algebras. 

$L_{\infty}$-algebras (also called strong homotopy Lie algebras or SH Lie algebras) are a generalisation of Lie algebras and were introduced in \cite{lada-stasheff,schlessinger-stasheff,stasheff-linf}. They play an important role in deformation theory, since any deformation deformation problem is governed by a DGLA (Differential Graded Lie Algebra) or more generally by an $L_{\infty}$-algebra via solutions of their Maurer-Cartan equation modulo equivalences (see \textit{e.g.} \cite{kontsevich-soibelman,manetti}). Homotopy theory of Maurer-Cartan elements of either a DGLA or an $L_{\infty}$-algebra has been studied in various articles (see \textit{e.g.} \cite{getzler, manetti, yalin}) and the equivalence relation is, in the case of DGLA, the same as the gauge equivalence (see \cite{manetti,schlessinger-stasheff}).

It is shown in \cite{borisov1} that there is an $L_{\infty}$-algebra whose Maurer-Cartan elements are in bijection with triples $(sm_B,f,sm_A)$ where $sm_B$ and $sm_A$ are $A_{\infty}$-structures on graded vector spaces $B$ and $A$ respectively and where $f : (A,sm_A)\rightarrow (B,sm_B)$ is an $A_{\infty}$-morphism. Moreover, the same author prove in \cite{borisov2} that there is also a way of seeing $A_{\infty}$-structures together with an $A_{\infty}$-morphism between them as Maurer-Cartan elements of a graded Lie algebra described in \cite{borisov2}. Therefore, one can define a homotopy between $A_{\infty}$-morphisms as a homotopy between the corresponding Maurer-Cartan elements of either the $L_{\infty}$-algebra given in \cite{borisov1} or the Lie algebra of \cite{borisov2}.

We give in this paper a construction of an $L_{\infty}$-algebra analogous to the one of \cite{borisov1} whose Maurer-Cartan elements are in bijection with triples $(s_{d+1}M_{\MB},s_{d+1}\mathbf{F},s_{d+1}M_{\MA})$ where $s_{d+1}M_{\MB}$ and $s_{d+1}M_{\MA}$ are $d$-pre-Calabi-Yau structures on graded quivers $\MB$ and $\MA$ respectively and where $s_{d+1}\mathbf{F}: (\MA,s_{d+1}M_{\MA})\rightarrow (\MB,s_{d+1}M_{\MB})$ is an $d$-pre-Calabi-Yau morphism, leading to the notion of weak homotopy of pre-Calabi-Yau morphisms.

Moreover, it is known that given $L_{\infty}$-algebras $A$ and $B$, there exists an $L_{\infty}$-algebra whose Maurer-Cartan elements are in correspondence with $L_{\infty}$-morphisms $A\rightarrow B$ (see \cite{kraft-schnitzer}). We present in this paper, in the case of $A_{\infty}$-categories and then in the case of $d$-pre-Calabi-Yau categories, a similar construction leading to the notion of homotopy of pre-Calabi-Yau morphisms. 
We also study, as it is done in \cite{kraft-schnitzer} in the case of $L_{\infty}$-morphisms, the first properties of this notion, such as the stability under composition and the link between homotopy equivalences and quasi-isomorphisms.

The main result of this paper about homotopies of $d$-pre-Calabi-Yau morphisms is the following:

\begin{theoreme}
    \label{thm:main}
    The functor $\mathcal{P}$ given in Theorem \ref{thm:main-article-1} sends homotopic $d$-pre-Calabi-Yau morphisms $(\Phi_0,s_{d+1}\Phi)$ and $(\Psi_0,s_{d+1}\Psi)$ to triples $\mathcal{P}(\Phi_0,s_{d+1}\Phi)=(sm_{\Phi},s\varphi_{\MA},s\varphi_{\MB})$, $\mathcal{P}(\Psi_0,s_{d+1}\Psi)=(sm_{\Psi},s\psi_{\MA},s\psi_{\MB})$ respectively such that $(s\varphi_{\MA},s\psi_{\MA})$ and $(s\varphi_{\MB},s\psi_{\MB})$ are pairs of weak homotopic $A_{\infty}$-morphisms.
\end{theoreme}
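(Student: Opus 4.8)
The plan is to unwind what "homotopic $d$-pre-Calabi-Yau morphisms" means on both sides of the functor $\mathcal{P}$, and then transport the homotopy through the explicit formulas defining $\mathcal{P}$. By construction, a homotopy between $(\Phi_0,s_{d+1}\Phi)$ and $(\Psi_0,s_{d+1}\Psi)$ is a homotopy of Maurer-Cartan elements in the $L_{\infty}$-algebra (modeled on the construction analogous to \cite{kraft-schnitzer}) whose Maurer-Cartan elements are $d$-pre-Calabi-Yau morphisms between the fixed categories $(\MA,s_{d+1}M_{\MA})$ and $(\MB,s_{d+1}M_{\MB})$. Concretely such a homotopy is a path $s_{d+1}\mathbf{F}(t)$ together with a degree $-1$ element $s_{d+1}\lambda(t)$ satisfying the usual flow equation $\frac{d}{dt}s_{d+1}\mathbf{F}(t) = d\,s_{d+1}\lambda(t) + [s_{d+1}\mathbf{F}(t),s_{d+1}\lambda(t)]+\dots$ in the relevant (curved) $L_{\infty}$-algebra, interpolating between $t=0$ and $t=1$. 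The first step is therefore to write this path explicitly and to record that, along it, the domain and codomain structures $s_{d+1}M_{\MA}$, $s_{d+1}M_{\MB}$ stay fixed.

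Next I would apply $\mathcal{P}$ fibrewise: for each $t$, Theorem~\ref{thm:main-article-1} produces $\mathcal{P}(\Phi_0(t),s_{d+1}\mathbf{F}(t)) = (sm_{\Phi}(t), s\varphi_{\MA}(t), s\varphi_{\MB}(t))$, with $sm_{\MA\oplus\MA^*}$ and $sm_{\MB\oplus\MB^*}$ constant in $t$ since they only depend on the fixed endpoints' structures. The key point is that the assignment $(\Phi_0,s_{d+1}\Phi)\mapsto s\varphi_{\MA}$ and $(\Phi_0,s_{d+1}\Phi)\mapsto s\varphi_{\MB}$ are given by explicit polynomial (in fact, essentially linear or "triangular") formulas in the components of $s_{d+1}\Phi$ — this is exactly the content of how $\varphi_{\MA},\varphi_{\MB}$ are built in \cite{moi}. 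Differentiating these formulas in $t$ shows that $\frac{d}{dt}s\varphi_{\MA}(t)$ (resp. $\frac{d}{dt}s\varphi_{\MB}(t)$) is again expressed through $s_{d+1}\lambda(t)$ via the same formula composed with $\mathcal P$; I would package this as an explicit degree $-1$ element $\mu_{\MA}(t)$ (resp. $\mu_{\MB}(t)$) and check that it solves the flow equation defining a weak homotopy of $A_{\infty}$-morphisms — i.e. the Maurer-Cartan homotopy equation in Borisov's $L_{\infty}$-algebra \cite{borisov1} for the triple $(sm_{\MA\oplus\MA^*}, \varphi_{\MA}(t), sm_{\MA\oplus\MA^*})$ over the fixed source/target $A_{\infty}$-structures (and analogously for $\MB$). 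Since the source and target $A_{\infty}$-structures along this path are constant, this is precisely a \emph{weak} homotopy in the sense defined earlier in the paper, which is what the statement asks for.

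The main obstacle is compatibility of brackets: I must verify that $\mathcal{P}$, although only a functor between a partial category and not an $L_{\infty}$-morphism on the nose, is compatible enough with the $L_{\infty}$-structures to carry a Maurer-Cartan homotopy to a Maurer-Cartan homotopy. Concretely, the derivative of the flow equation on the pre-CY side involves the necklace-type bracket $[s_{d+1}\mathbf{F}(t),s_{d+1}\lambda(t)]$, and I need to show its image under the (derivative of the) formula for $\varphi_{\MA}$ matches the Gerstenhaber-type bracket $[s\varphi_{\MA}(t),\mu_{\MA}(t)]$ appearing in the $A_{\infty}$ flow equation. I expect to handle this by the same bookkeeping used in \cite{moi} to prove $\mathcal P$ respects composition and the $A_\infty$-relations — namely, tracking how cyclic words on $\MA\oplus\MB^*[d-1]$ decompose — now applied one order higher to the linearized (tangent) data $s_{d+1}\lambda(t)$ instead of to $s_{d+1}\mathbf F$ itself. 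A secondary technical point is convergence/well-definedness of the $t$-dependent formulas in the completed setting ($\widehat{A_\infty}_d$), which I would address exactly as the finiteness conditions are handled in Theorem~\ref{thm:main-article-1}. Once these bracket-compatibilities are in place, setting $t=0,1$ and reading off the endpoints gives that $(s\varphi_{\MA},s\psi_{\MA})$ and $(s\varphi_{\MB},s\psi_{\MB})$ are weak homotopic, completing the proof.
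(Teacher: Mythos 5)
Your overall strategy coincides with the paper's: take the homotopy $a(t)+b(t)\,dt$ of Maurer--Cartan elements in $(\Bar{\mathcal{L}}_d^{\Phi_0}(\MA,\MB),\Bar{\ell}_{M_{\MA},M_{\MB}})$, push the path $a(t)$ through $\mathcal{P}$ fibrewise, and build a tangent datum from $b(t)$ that witnesses the weak homotopy on the $A_{\infty}$ side. However, there is a genuine gap in how you set up the target flow equation. You assert that the triple to consider is $(sm_{\MA\oplus\MA^*},\varphi_{\MA}(t),sm_{\MA\oplus\MA^*})$ and that ``the source and target $A_{\infty}$-structures along this path are constant.'' This is wrong on two counts. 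First, the source of $s\varphi_{\MA}$ is $(\MA\oplus\MB^*[d-1],sm_{a(t)})$, not $(\MA\oplus\MA^*[d-1],sm_{\MA\oplus\MA^*})$; the correct triple is $(sm_{a(t)},\varphi_{a(t)},sm_{\MA\oplus\MA^*})$. Second, and more importantly, $sm_{a(t)}$ is \emph{not} constant in $t$: it interpolates between $sm_{\Phi}$ and $sm_{\Psi}$, which are in general different $A_{\infty}$-structures on $\MA\oplus\MB^*[d-1]$. This is precisely why the conclusion of the theorem is only a \emph{weak} homotopy (Definition \ref{def:weak-htpy-A-inf}, where the structures are allowed to move) and not a homotopy in the stronger sense of fixed endpoints; if both structures really were constant you would be proving too much.

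Concretely, your proposed degree $-1$ element $\mu_{\MA}(t)$ living only in the ``morphism'' component cannot satisfy the flow equation in the Borisov-type algebra $C(\MA\oplus\MA^*)[1]\oplus C_{\Phi_0}(\MA\oplus\MB^*,\MA\oplus\MA^*)\oplus C(\MA\oplus\MB^*)[1]$, because $\frac{\partial}{\partial t}A(t)$ has a nonzero component $\tilde m(t)=\frac{\partial}{\partial t}sm_{a(t)}$ in the direction of $C(\MA\oplus\MB^*)[1]$ that must be accounted for. The paper's proof supplies exactly the missing ingredient: a degree $-1$ element $sm(t)$ (defined diagrammatically from $b(t)$, $a(t)$, $M_{\MA}$ and $M_{\MB}$) placed in that component of $B(t)$, together with the verification that $\tilde m(t)=[sm_{a(t)},sm(t)]_G$. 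That identity, and the coupled equation for $\frac{\partial}{\partial t}\varphi_{a(t)}$ involving both $sm(t)$ and $\varphi(t)$, constitute the bulk of the argument and are not recoverable from your outline as written. The rest of your plan (finiteness, compatibility of the necklace-type and Gerstenhaber-type brackets under $\mathcal{P}$, reading off the endpoints at $t=0,1$) matches the paper once this component is added.
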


Let us briefly present the contents of the article.
In Section \ref{section:not-conv}, we present the notations and conventions we use throughout this paper and we recall the basics about $L_{\infty}$-algebras and their Maurer-Cartan elements in Section \ref{section:L-inf}.

In Section \ref{section:A-inf-htpy}, we present two different notions of homotopies for $A_{\infty}$-morphisms: the one obtained in \cite{borisov1} and the one which is analogous to the notion of homotopy for $L_{\infty}$-morphisms given in \cite{kraft-schnitzer}. 
We prove that the latter is stable under left and right compositions with  $A_{\infty}$-morphisms (see Propositions \ref{prop:htpic-A-inf-1} and \ref{prop:htpic-A-inf-2}), and that homotopy equivalences coincide with quasi-isomorphisms of $A_{\infty}$-categories (see Theorem \ref{thm:equiv-htpy-qiso-A-inf}).

In Section \ref{section:pCY}, we first recall the definitions of pre-Calabi-Yau categories and pre-Calabi-Yau morphisms.
We then present in subsection \ref{section:brackets} two different ways of seeing pre-Calabi-Yau morphisms as Maurer-Cartan elements of $L_{\infty}$-algebras. 
In subsubsection \ref{subsection:non-fixed-pCY}, we define an $L_{\infty}$-algebra whose Maurer-Cartan elements are triples $(s_{d+1}M_{\MB},s_{-1}(s_{d+1}\mathbf{F}),s_{d+1}M_{\MA})$, where $s_{d+1}M_{\MA}$ and $s_{d+1}M_{\MB}$ are $d$-pre-Calabi-Yau structures on graded quivers $\MA$ and $\MB$, respectively, and where $s_{d+1}\mathbf{F}$ is a $d$-pre-Calabi-Yau morphism $(\MA,s_{d+1}M_{\MA})\rightarrow (\MB,s_{d+1}M_{\MB})$ (see Definition \ref{def:ell}). 
In subsubsection \ref{subsection:fixed-pCY}, we fix $d$-pre-Calabi-Yau categories $(\MA,s_{d+1}M_{\MA})$ and $(\MB,s_{d+1}M_{\MB})$, and define an $L_{\infty}$-algebra whose Maurer-Cartan elements are $s_{-1}(s_{d+1}\mathbf{F})$, where $s_{d+1}\mathbf{F} : (\MA,s_{d+1}M_{\MA})\rightarrow (\MB,s_{d+1}M_{\MB})$ is a $d$-pre-Calabi-Yau morphism (see Definition \ref{def:bar-ell}). In subsection \ref{section:htpy}, we define the two notions of homotopy for $d$-pre-Calabi-Yau morphisms analogous two those of Section \ref{section:A-inf-htpy} using the results of subsection \ref{section:brackets}.
We then prove that the notion of homotopy for pre-Calabi-Yau morphisms is stable under left and right compositions with pre-Calabi-Yau morphisms (see Propositions \ref{prop:htpy-cp-left} and \ref{prop:htpy-cp-right}), and show that homotopy equivalences are quasi-isomor\-phisms (see Proposition \ref{prop:htpy-equiv-q-iso}).

In Section \ref{section:relation-htpy}, we relate the homotopy theory of pre-Calabi-Yau categories with the one of $A_{\infty}$-categories, using the functor $\mathcal{P}$ defined in Theorem \ref{thm:main-article-1}. 
Namely, we show that $\mathcal{P}$ sends homotopic $d$-pre-Calabi-Yau morphisms to pairs of weak homotopic $A_{\infty}$-morphisms.

\textbf{Acknowledgements.} This work is part of a PhD thesis supervised by Estanislao Herscovich and Hossein Abbaspour. The author thanks them for the useful advice and comments about this paper. She also thanks D. Calaque and B. Vallette for useful discussions about homotopy. 

This work was supported by the French National Research Agency in the framework of the ``France 2030" program (ANR-15-IDEX-0002) and the LabEx PERSYVAL-Lab (ANR-11-LABX-0025-01).

\section{Notations and conventions}\label{section:not-conv}

We fix a field $\kk$ a field of characteristic different from $2$ and $3$ and an integer $d\in\ZZ$. To simplify we will denote $\otimes$ for $\otimes_{\kk}$. We denote by $\NN = \{0,1,2,\dots \}$ the set of natural numbers and we define $\NN^*=\NN\setminus\{0\}$.
For $i,j\in\NN$, we define the interval of integers $\llbracket i,j\rrbracket=\{n\in\NN | i\leq n\leq j\}$. 

Recall that if we have a (cohomologically) graded vector space $V=\oplus_{i\in\ZZ}V^i$, we define for $n\in\ZZ$ the graded vector space $V[n]$ given by $V[n]^i=V^{n+i}$ for each $i\in\ZZ$ and the map $s_{V,n} : V\rightarrow V[n]$ whose underlying set theoretic map is the identity. 
Moreover, if $f:V\rightarrow W$ is a morphism of graded vector spaces, we define the map $f[n] : V[n]\rightarrow W[n]$ sending an element $s_{V,n}(v)$ to $s_{W,n}(f(v))$ for all $v\in V$.
We will denote $s_{V,n}$ simply by $s_n$ when there is no possible confusion, and $s_1$ just by $s$.

We now recall the Koszul sign rules, that are the ones we use to determine the signs appearing in this thesis. Given graded vector spaces $V_1,\dots,V_n$ and $\sigma\in\mathfrak{S}_n$, 
we define the map
\[
\tau^{\sigma}_{V_1,\dots,V_n} : V_1\otimes\dots\otimes V_n \rightarrow V_{\sigma^{-1}(1)}\otimes\dots\otimes V_{\sigma^{-1}(n)}
\]
by
\begin{equation}
    \tau^{\sigma}_{V_1,\dots,V_n}(v_1\otimes\dots\otimes v_n)=(-1)^{\epsilon}(v_{\sigma^{-1}(1)}\otimes\dots\otimes v_{\sigma^{-1}(n)})
\end{equation}
where
\begin{small}
    \begin{equation}
        \epsilon=\hskip-6mm\sum\limits_{\substack{i>j\\ \sigma^{-1}(i)<\sigma^{-1}(j)}}\hskip-6mm |v_{\sigma^{-1}(i)}||v_{\sigma^{-1}(j)}|
    \end{equation}
\end{small}
and where $v_i\in V_i$ is a homogeneous element of degree $|v_i|$ for every $i\in\llbracket 1,n \rrbracket$.

Throughout this paper, when we consider an element $v$ of degree $|v|$ in a graded vector space $V$, we mean a homogeneous element $v$ of $V$.
Moreover, we will denote by $\id$ the identity map of every space of morphisms, without specifying it, when there is no possible confusion.
All the products in the paper will be products in the category of graded vector spaces.

Consider a graded quiver $\MA$ with set of objects $\MO$.
We will denote \[
\bar{\MO}=\bigsqcup_{n\in\NN^*}\MO^n
\]and more generally, we will denote by $\doubar{\MO}$ the set formed by all finite tuples of elements of $\bar{\MO}$, \textit{i.e.} 
\begin{equation}
\begin{split}
\bar{\bar{\MO}}=\bigsqcup_{n\in\NN^*} \bar{\MO}^n=\bigsqcup_{n\in\NN^*}\bigsqcup_{(p_1,\dots,p_n)\in\mathcal{T}_n}\MO^{p_1}\times\dots\times \MO^{p_n}
\end{split}
\end{equation}  
where $\mathcal{T}_n=\NN^n$ for $n>1$ and $\mathcal{T}_1=\NN^*$.

Given $\bar{x}=(x_1,\dots,x_n)\in\bar{\MO}$ we define its \textbf{\textcolor{ultramarine}{length}} as $\llg(\bar{x})=n$, its \textbf{\textcolor{ultramarine}{left term}} as $\llt(\bar{x})=x_1$ and its \textbf{\textcolor{ultramarine}{right term}} as $\rrt(\bar{x})=x_n$. For $i\in\llbracket 1,n\rrbracket$, we define $\bar{x}_{\leq i}=(x_1,\dots,x_i)$, $\bar{x}_{\geq i}=(x_i,\dots,x_n)$ and for $j>i$, $\bar{x}_{\llbracket i,j\rrbracket}=(x_i,x_{i+1},\dots,x_j)$. One can similarly define $\bar{x}_{<i}$ and $\bar{x}_{>i}$. Moreover, given $\bar{x}=(x_1,\dots,x_n)\in\bar{\MO}$, we will denote 
$\MA^{\otimes \bar{x}}={}_{x_1}\MA_{x_2}\otimes {}_{x_2}\MA_{x_3}\otimes\dots\otimes {}_{x_{n-1}}\MA_{x_{n}}$
and we will often denote an element of $\MA^{\otimes\bar{x}}$ as $a_1,a_2,\dots,a_{\llg(\bar{x})-1}$ instead of $a_1\otimes a_2\otimes \dots \otimes a_{\llg(\bar{x})-1}$ for $a_i\in {}_{x_i}\MA_{x_{i+1}}$, $i\in\llbracket 1,\llg(\bar{x})-1\rrbracket$. Similarly, given $\doubar{x}=(\bar{x}^1,\dots,\bar{x}^n)\in\doubar{\MO}$ we define its \textbf{\textcolor{ultramarine}{length}} as $\llg(\doubar{x})=n$, its \textbf{\textcolor{ultramarine}{left term}} as $\llt(\doubar{x})=\bar{x}^1$, its \textbf{\textcolor{ultramarine}{right term}} as $\rrt(\doubar{x})=\bar{x}^n$, $\MA^{\otimes \doubar{x}}=\MA^{\otimes \bar{x}^1}\otimes \MA^{\otimes \bar{x}^2}\otimes\dots\otimes \MA^{\otimes \bar{x}^n}$ and we define $N(\doubar{x})=\llg(\Bar{x}^1)+\dots+\llg(\Bar{x}^n)$. We also define the inverse of a tuple $\bar{x}=(x_1,\dots,x_n)\in\bar{\MO}$ as $\bar{x}^{-1}=(x_n,x_{n-1},\dots,x_1)$.

If $\sigma\in\mathfrak{S}_n$ and $\bar{x}=(x_1,\dots,x_n)\in\MO^n$, we define $\bar{x}\cdot\sigma=(x_{\sigma^{-1}(1)},x_{\sigma^{-1}(2)},\dots,x_{\sigma^{-1}(n)})$ and similarly, given $\doubar{x}=(\bar{x}^1,\dots,\bar{x}^n)\in\bar{\MO}^n$, we define $\doubar{x}\cdot\sigma=(\bar{x}^{\sigma^{-1}(1)},\bar{x}^{\sigma^{-1}(2)},\dots,\bar{x}^{\sigma^{-1}(n)})$.
We denote by $C_n$ the subgroup of $\mathfrak{S}_n$ generated by the cycle $\sigma=(1 2\dots n)$ which sends $i\in \llbracket 1,n-1\rrbracket $ to $i+1$ and $n$ to $1$.

We will denote 
\begin{equation}
    \begin{split}
        \mathcal{P}_k^n&=\{\big(\bar{i}=(i_1,\dots,i_k),\bar{j}=(j_1,\dots,j_{n-k})\big) | i_1<\dots<i_k, j_1<\dots<j_{n-k} \\&\hspace{6cm} \text{ and }\{i_1,\dots,i_k\}\cup\{j_1,\dots,j_{n-k}\}=\llbracket 1,n\rrbracket\},
    \end{split}
\end{equation}
and, to simplify, for any $n\in\NN$ and $\sigma\in\mathfrak{S}_n$, we denote the image of $i\in\llbracket 1,n\rrbracket$ under $\sigma$ by $\sigma_i$ instead of $\sigma(i)$.
Moreover, given $\sigma\in\mathfrak{S}_n$ and $f_1,\dots,f_n$, we define \[\delta^{\sigma}(f_{n},\dots,f_{1})=\sum\limits_{\substack{i>j\\\sigma_i<\sigma_j}}|f_{\sigma_i}||f_{\sigma_j}|.\]

In this paper, we will often make the use of the diagrammatic calculus introduced in \cite{ktv}. 
We refer the reader to \cite{moi} for a detailed account of the involved definitions and terminology, which we shall follow in this article. 
\section{\texorpdfstring{A quick reminder on $L_{\infty}$-algebras and their Maurer-Cartan elements}{L-infinity algebras and their MC elements}} \label{section:L-inf}
We recall here the definition of $L_{\infty}$-algebras and their Maurer-Cartan elements as well as the basics on the homotopy theory of the latter. For a more detailed account on those notions we refer the reader to \cite{getzler,yalin}.

\begin{definition}
    Given a graded vector space $\mathcal{L}$, an \textbf{\textcolor{ultramarine}{$L_{\infty}$-bracket}} on $\mathcal{L}$ is a collection of maps $\ell=(\ell^n)_{n\in\NN^*}$ of degree $0$ that are anti-symmetric, \textit{i.e.}
    \begin{equation}
        \ell^n(l_n,\dots,l_{i},l_{i+1},\dots,l_1)=-(-1)^{|l_i||l_{i+1}|}\ell^n(l_n,\dots,l_1)
    \end{equation}
     and that satisfies the higher Jacobi identities, \textit{i.e.}
    \begin{equation}
\label{eq:Jac-l}
    \begin{split}
    \sum\limits_{k=1}^n(-1)^k\sum\limits_{(\bar{i},\bar{j})\in\mathcal{P}_k^n}(-1)^{\Delta} \ell^{n-k+1}\big(\ell^k(l_{i_k},\dots,l_{i_1}),l_{j_{n-k}},\dots,l_{j_{1}}\big)=0
    \end{split}
\end{equation}
for every $n\in\NN^*$, $i\in\llbracket 1,n\rrbracket$, $l_j\in\mathcal{L}$ for $j\in\llbracket 1,n\rrbracket$ with 
\begin{small}
\begin{equation}
    \begin{split}
        \Delta&=\sum\limits_{r=1}^k|l_{i_r}|\sum\limits_{s=i_r+1}^{n}|l_s|+\sum\limits_{1\leq r<s\leq k}|l_{i_r}||l_{i_s}|+\sum\limits_{r=1}^k\sum\limits_{s=i_r+1}^{n}1+\sum\limits_{1\leq r<s\leq k}1.
    \end{split}
\end{equation}
\end{small}
A graded vector space endowed with an $L_{\infty}$-bracket is called a \textbf{\textcolor{ultramarine}{graded $L_{\infty}$-algebra}}.
\end{definition}

\begin{remark}
\label{remark:lie-is-linfinity}
    Consider a graded Lie algebra $(L,[-,-])$. 
    Then, $(L,[-,-])$ is an $L_{\infty}$-algebra whose $L_{\infty}$-bracket $\ell=(\ell^n)_{n\in\NN^*}$ is given by $\ell^2=[-,-]$ and $\ell^n=0$ for every $n\neq 2$.
\end{remark}

\begin{definition}
    Given a product of graded vector spaces $\mathcal{L}=\prod_{i\in I}L_i$ endowed with a graded Lie bracket $\ell$ such that for every $f\in \mathcal{L}^1$ and $i\in I$ the projection $\pi_i\circ \sum_{n\geq 0}\ell^n(f,\dots,f)$ on $L_i$ is a finite sum, we define a \textbf{\textcolor{ultramarine}{Maurer-Cartan element of $(\mathcal{L},\ell)$}} as a degree $1$ element $f\in \mathcal{L}^1$ such that for every $i\in I$, $\pi_i\circ \sum_{n\geq 0}\ell^n(f,\dots,f)=0$.
\end{definition}
We now recall the definition of the equivalence relation called homotopy on the sets of Maurer-Cartan elements.
\begin{definition}
\label{def:htpic-MC-elts}
    Given a graded $L_{\infty}$-algebra $(\mathcal{L},\ell)$, two Maurer-Cartan elements $f,g\in\mathcal{L}$ are \textbf{\textcolor{ultramarine}{homotopic}} if 
    there exists a Maurer-Cartan element $a(t)+b(t)dt\in \mathcal{L}[t,dt]$ such that $a(0)=f$ and $a(1)=g$, where $\mathcal{L}[t,dt]=\mathcal{L}[t]\oplus \mathcal{L}[t]\partial t$ is an $L_{\infty}$-algebra with differential $\partial$ given by 
    \[
    \partial(f+g \partial t)=\frac{\partial f}{\partial t}\partial t
    \]
    and $L_{\infty}$-bracket given by 
    \[
    [f_1+g_1\partial t,\dots,f_n+g_n\partial t]=\ell^n(f_1,\dots,f_n)+\sum_{i=1}^n(-1)^{\sum\limits_{j=i+1}^n|f_j|}\ell^n(f_1,\dots,f_{i-1},g_i,f_{i+1},\dots,f_n)\partial t.
    \]
\end{definition}

\begin{proposition}
    \label{prop:htpy-equiv-rel}
    Being homotopic Maurer-Cartan elements is an equivalence relation.
\end{proposition}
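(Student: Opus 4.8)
The plan is to verify the three standard properties of an equivalence relation—reflexivity, symmetry, and transitivity—for the homotopy relation on Maurer-Cartan elements of a graded $L_{\infty}$-algebra $(\mathcal{L},\ell)$, using the $L_{\infty}$-algebra structure on $\mathcal{L}[t,dt]$ from Definition \ref{def:htpic-MC-elts}. Reflexivity is immediate: given a Maurer-Cartan element $f\in\mathcal{L}^1$, the constant path $a(t)+b(t)\partial t$ with $a(t)=f$ and $b(t)=0$ is a Maurer-Cartan element of $\mathcal{L}[t,dt]$ (the Maurer-Cartan equation for this element reduces to that of $f$, since the differential $\partial$ kills the $t$-independent part and all brackets applied to the constant tuple $(f,\dots,f)$ give $\ell^n(f,\dots,f)$), and it satisfies $a(0)=a(1)=f$.

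For symmetry, suppose $f$ and $g$ are homotopic via $a(t)+b(t)\partial t$ with $a(0)=f$, $a(1)=g$. I would apply the affine reparametrisation $t\mapsto 1-t$: define $\tilde a(t)=a(1-t)$ and $\tilde b(t)=-b(1-t)$. One must check that $\tilde a(t)+\tilde b(t)\partial t$ is again a Maurer-Cartan element of $\mathcal{L}[t,dt]$. This is a matter of checking that substitution $t\mapsto 1-t$ together with the compensating sign on the $\partial t$-component is an automorphism (or at least an endomorphism) of the $L_{\infty}$-algebra $\mathcal{L}[t,dt]$; the sign $-b(1-t)$ is exactly what makes the differential $\partial$ commute with the substitution, since $\frac{\partial}{\partial t}a(1-t)=-(\tfrac{\partial a}{\partial t})(1-t)$. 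Then $\tilde a(0)=a(1)=g$ and $\tilde a(1)=a(0)=f$, giving the homotopy from $g$ to $f$.

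Transitivity is the main obstacle and the only part requiring genuine work. Suppose $f\sim g$ via a path $p_1\in\mathcal{L}[t,dt]$ and $g\sim h$ via a path $p_2\in\mathcal{L}[t,dt]$. The naive idea is to concatenate the two paths, but concatenation does not stay within polynomial (or formal power series) functions of $t$ in an obviously well-behaved way, and more seriously, concatenated paths need not be smooth at the gluing point. The standard remedy, which I would follow, is one of: (i) use the fact that the evaluation maps $\mathrm{ev}_0,\mathrm{ev}_1:\mathcal{L}[t,dt]\to\mathcal{L}$ are surjective quasi-isomorphisms (or fibrations) of $L_{\infty}$-algebras and invoke a path-object/cylinder argument—one forms the fibre product of $\mathcal{L}[t,dt]$ with itself over $\mathcal{L}$ via $\mathrm{ev}_1$ and $\mathrm{ev}_0$, and shows it receives a map from $\mathcal{L}[t,dt]$ splitting the two projections; or (ii) give an explicit gauge/homotopy-theoretic argument showing the relation defined by $\mathcal{L}[t,dt]$ agrees with gauge equivalence (in the pro-nilpotent or filtered setting), which is manifestly transitive. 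Approach (i) is cleanest: one checks that for the two Maurer-Cartan elements $p_1, p_2$ with $\mathrm{ev}_1(p_1)=g=\mathrm{ev}_0(p_2)$, there is a Maurer-Cartan element of $\mathcal{L}[t,dt]$ whose evaluation at $0$ is $f$ and at $1$ is $h$; concretely one can use the rescaling lemma that any homotopy can be reparametrised to be constant near the endpoints (replacing $t$ by a smooth, here polynomial, function $\rho$ with $\rho(0)=0$, $\rho(1)=1$, $\rho$ flat at both ends), after which literal concatenation produces a well-defined element that is still polynomial in a new variable. I would state the reparametrisation lemma, prove that applying a polynomial change of variable $t\mapsto\rho(t)$ (with the compensating Jacobian factor on the $\partial t$-term) preserves Maurer-Cartan elements of $\mathcal{L}[t,dt]$ (same computation as in the symmetry step), and then concatenate. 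The bookkeeping of signs in the bracket formula of Definition \ref{def:htpic-MC-elts} under these substitutions is the most delicate routine part, but it is forced by the requirement that $\partial$ be a derivation compatible with $\frac{\partial}{\partial t}$.

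Throughout, the only convergence subtlety is the finiteness condition in the definition of Maurer-Cartan element; since all the operations used (affine and polynomial reparametrisation, concatenation) act coefficient-wise in $t$ and do not increase the "internal" complexity in $\mathcal{L}$, the finiteness of $\pi_i\circ\sum_{n\ge 0}\ell^n(f,\dots,f)$ is preserved, so the constructed elements are genuine Maurer-Cartan elements of $\mathcal{L}[t,dt]$. This completes reflexivity, symmetry and transitivity, hence the proof that homotopy is an equivalence relation.
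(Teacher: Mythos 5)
The paper does not prove this proposition at all: it simply refers to \cite{getzler}. Your reflexivity and symmetry arguments are correct and standard (the constant path, and the substitution $t\mapsto 1-t$ with the compensating sign $\tilde b(t)=-b(1-t)$, which is exactly what is needed for the characterisation $\frac{\partial a}{\partial t}=\sum_{n}\frac{1}{(n-1)!}\ell^n(a,\dots,a,b)$ of Remark \ref{remark:htpic-MC-elts-equiv} to be preserved).

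The gap is in transitivity, precisely where you admit the work lies. The mechanism you commit to --- reparametrise each homotopy by a polynomial $\rho$ that is ``flat at both ends'' so that the paths are constant near the gluing point, then concatenate --- cannot be carried out inside $\mathcal{L}[t,dt]=\mathcal{L}[t]\oplus\mathcal{L}[t]\partial t$. A nonconstant polynomial is never constant on an interval, so no polynomial reparametrisation makes $a(\rho(t))$ literally constant near $t=0$ or $t=1$; and even granting vanishing of finitely many derivatives at the endpoints, the concatenation of two polynomial paths is only piecewise polynomial and hence is not an element of $\mathcal{L}[t]$. This is exactly the reason the standard proofs do not concatenate: Getzler's argument (the one the paper cites) shows that the simplicial set $n\mapsto\operatorname{MC}(\mathcal{L}\otimes\Omega_n)$ of Maurer--Cartan elements in polynomial differential forms on simplices satisfies the Kan condition, and transitivity follows by filling the horn $\Lambda^2_1$ whose two given edges are the homotopies $f\sim g$ and $g\sim h$; the third face of the filler is the required homotopy $f\sim h$. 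The alternative is to identify the homotopy relation with gauge equivalence, which is manifestly transitive. You name both of these as possible routes, so the correct ideas are present in your sketch, but the argument you actually propose to execute is the one that fails; to close the gap you would need to carry out the horn-filling (or gauge-equivalence) argument, including verifying that the finiteness hypotheses of the paper's definition of Maurer--Cartan element suffice to run the obstruction-theoretic induction that produces the filler.
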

\begin{proof}
    See for example \cite{getzler}.
\end{proof}

\begin{remark}
\label{remark:htpic-MC-elts-equiv}
    An element $a(t)+b(t)dt\in \mathcal{L}[t,dt]$ is a Maurer-Cartan element of $\mathcal{L}[t,dt]$ if and only if $a(t)$ is a Maurer-Cartan element of $\mathcal{L}$ and \[\frac{\partial a}{\partial t}(t)^{\doubar{x}}=\sum\limits_{n=2}^{\infty}\frac{1}{(n-1)!}\ell^n\big(a(t),\dots,a(t),b(t)\big)^{\doubar{x}}\] for every $t\in\Bbbk$, $\doubar{x}\in\doubar{\MO}_{\MA}$. This is a generalisation of a characterisation of Maurer-Cartan elements of Lie algebras given in \cite{manetti, yalin}.
\end{remark}

\section{\texorpdfstring{Homotopy theory of $A_{\infty}$-morphisms}{Some results for the case of A-inf categories}}
\label{section:A-inf-htpy}

In this section, we recall the way of seeing $A_{\infty}$-structures together with an $A_{\infty}$-morphism between them as Maurer-Cartan elements of a certain graded $L_{\infty}$-algebra as it is done in \cite{borisov1}.
We then present a way of seeing $A_{\infty}$-morphisms between fixed $A_{\infty}$-categories as Maurer-Cartan elements of another $L_{\infty}$-algebra, leading to a different notion of homotopy between $A_{\infty}$-morphisms analogous to the one presented in \cite{kraft-schnitzer} in the case of $L_{\infty}$-algebras. We finally prove that this notion of homotopy is stable under left and right composition with an $A_{\infty}$-morphism and that homotopy equivalences coincide with quasi-isomorphisms.
We refer the reader to \cite{lefevre} for the definitions of $A_{\infty}$-categories and their morphisms. 
\begin{definition}
\label{def:A-inf-L-inf-borisov}
    Consider graded quivers $\MA$ and $\MB$ with respective sets of objects $\MO_{\MA}$ and $\MO_{\MB}$ and a map $\Phi_0 : \MO_{\MA}\rightarrow \MO_{\MB}$. We define an $\infty$-bracket $\ell$ on $C(\MB)[1]\oplus C_{\Phi_0}(\MA,\MB)\oplus C(\MA)[1]$ given by 
    \begin{itemize}
    \item $\ell^2_{|C(\MB)[1]^{\otimes 2}}(sm,sm')=[sm,sm']_G$,
    \item  $\ell^2_{|C(\MA)[1]^{\otimes 2}}(sm,sm')=[sm,sm']_G$,
    \item $\ell^{n}(f_{n-1},\dots,f_{i+1},sm,f_{i},\dots,f_{1})=(-1)^{\epsilon_i}\sum\limits_{\sigma\in\mathfrak{S}_{n-1}}(-1)^{\delta^{\sigma}(sf_{n-1},\dots,sf_{1})}\varphi_{sm}^{\sigma}(sf_{n-1},\dots,sf_{1})$ for every $n\geq 2$, $i\in\llbracket 0,n-1\rrbracket$ with $sm\in C(\MB)[1]$ and $f_j\in C_{\Phi_0}(\MA,\MB)$ for $j\in\llbracket 1,n-1\rrbracket$,
    \item $\ell^2(f,sm)=-(-1)^{|f||sm|}\ell^2(sm,f)=\psi_{sm}(sf)$ for $sm\in C(\MA)[1]$ and $f\in C_{\Phi_0}(\MA,\MB)$,
    \item $\ell^n(l_n,\dots,l_1)=0$ if 
    \begin{itemize}
        \item  there exists $i,j\in\llbracket 1,n \rrbracket$, $i\neq j$ such that $l_i\in C(\MA)[1]$ and $l_j\in C(\MB)[1]$,
        \item $n>2$ and there exists $i\in\llbracket 1,n \rrbracket$ such that $l_i\in C(\MA)[1]$, 
        \item $n>2$ and there exists $i,j\in\llbracket 1,n \rrbracket$, $i\neq j$ such that $l_i,l_j\in C(\MB)[1]$,
        \item $l_i\in C_{\Phi_0}(\MA,\MB)$ for every $i\in\llbracket 1,n\rrbracket$,
    \end{itemize} 
\end{itemize}
where 
\begin{small}
    \begin{equation}
        \epsilon_i=i+1+i|sm|+|sm|\sum\limits_{j=i+1}^{n-1}|sf_j|+\sum\limits_{j=1}^{n-1}(j-1)|sf_j|
    \end{equation}
\end{small}
and where $\varphi_{sm}^{\sigma}(sf_{n-1},\dots,sf_{1})^{\Bar{x}}=\sum\mathcal{E}(\mathcalboondox{D})$, $\psi_{sm}(sf)^{\Bar{x}}=\sum\mathcal{E}(\mathcalboondox{D'})$ where the sums are over all the filled diagrams $\mathcalboondox{D}$ and $\mathcalboondox{D'}$ of type $\Bar{x}$ and of the form 

\begin{minipage}{21cm}
    \begin{tikzpicture}[line cap=round,line join=round,x=1.0cm,y=1.0cm]
\clip(-4,-1.8) rectangle (4,1.8);
     \draw [line width=0.5pt] (0.,0.) circle (0.5cm);
    \draw [rotate=0] [->, >= stealth, >= stealth] (0.5,0) -- (0.9,0);
    \draw [rotate=-90] [<-, >= stealth, >= stealth](0.5,0) -- (0.9,0);
    \draw [rotate=180] [<-, >= stealth, >= stealth](0.5,0) -- (0.9,0);
    \draw [rotate=135] [<-, >= stealth, >= stealth](0.5,0) -- (0.9,0);
    \draw [line width=0.5pt] (0.,-1.2) circle (0.3cm);
    \draw[rotate around={-90:(0,-1.2)},shift={(0.3,-1.2)}]\doublefleche;
    \draw [line width=0.5pt] (-1.2,0) circle (0.3cm);
     \draw[rotate around={180:(-1.2,0)},shift={(-0.9,0)}]\doublefleche;
    \draw [line width=0.5pt] (-0.84,0.84) circle (0.3cm);
    \draw[rotate around={135:(-0.84,0.84)},shift={(-0.54,0.84)}]\doublefleche;
\begin{scriptsize}
\draw [fill=black] (-0.45,-0.4) circle (0.3pt);
\draw [fill=black] (-0.3,-0.5) circle (0.3pt);
\draw [fill=black] (-0.5,-0.25) circle (0.3pt);
\end{scriptsize}
\draw(3,0.25)node[anchor=north]{and};
\draw (0,0.2) node[anchor=north] {$m_{\MB}$};
\draw (-1.2,0.28) node[anchor=north] {$f_{\sigma_2}$};
\draw (0.18,-0.92) node[anchor=north] {$f_{\sigma_{n-1}}$};
\draw (-0.84,1.12) node[anchor=north] {$f_{\sigma_1}$};
\end{tikzpicture}
\begin{tikzpicture}[line cap=round,line join=round,x=1.0cm,y=1.0cm]
\clip(-2,-1.8) rectangle (4,1.8);
    \draw [line width=0.5pt] (0.,0.) circle (0.5cm);
    \shadedraw [rotate=180, shift={(0.5cm,0cm)}] \doublefleche;
    \draw [line width=0.5pt] (1.5,0.) circle (0.5cm);
    \shadedraw[shift={(1cm,0cm)},rotate=180] \doubleflechescindeeleft;
    \shadedraw[shift={(1cm,0cm)},rotate=180] \doubleflechescindeeright;
    \shadedraw[shift={(1cm,0cm)},rotate=180] \fleche;
    \draw [rotate around={0:(1.5,0)}] [->,, >= stealth, >= stealth](2,0) -- (2.4,0);
    \draw (0,0.2) node[anchor=north] {$m_{\MA}$};
\draw (1.5,0.25) node[anchor=north] {$f$};
\end{tikzpicture}
\end{minipage}

\noindent respectively.
\end{definition}

\begin{proposition}
\label{prop:A-inf-L-inf-borisov}
    The graded quiver $C(\MB)[1]\oplus C_{\Phi_0}(\MA,\MB)\oplus C(\MA)[1]$ endowed with the $\infty$-bracket $\ell$ is a graded $L_{\infty}$-algebra whose Maurer-Cartan elements are in bijection with triples $(sm_{\MB},\mathbf{f},sm_{\MA})$ where $sm_{\MB}$ and $sm_{\MA}$ are $A_{\infty}$-structures on $\MB$ and $\MA$ respectively and where $(\Phi_0,s\mathbf{f}) : (\MA,sm_{\MA})\rightarrow (\MB,sm_{\MB})$ is an $A_{\infty}$-morphism.
\end{proposition}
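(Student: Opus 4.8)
# Proof Proposal

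The plan is to verify the two assertions separately: first that $(\ell^n)_{n\in\NN^*}$ defines an $L_\infty$-bracket on $\mathcal{L} := C(\MB)[1]\oplus C_{\Phi_0}(\MA,\MB)\oplus C(\MA)[1]$, and second that its Maurer-Cartan elements correspond bijectively to triples $(sm_\MB,\mathbf{f},sm_\MA)$ as stated. Throughout I will lean on the diagrammatic calculus of \cite{ktv,moi}, since the maps $\varphi_{sm}^\sigma$ and $\psi_{sm}$ are defined as sums of filled diagrams.

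For the \textbf{$L_\infty$-axioms}, antisymmetry of each $\ell^n$ follows from the Koszul signs already built into the definition (the $\delta^\sigma$ terms and the $\epsilon_i$ term), together with the observation that permuting two inputs that both lie in $C_{\Phi_0}(\MA,\MB)$ amounts to relabelling the summation over $\mathfrak{S}_{n-1}$; the cases where one input is $sm\in C(\MB)[1]$ or $C(\MA)[1]$ are governed by the $\epsilon_i$ normalization. The higher Jacobi identities \eqref{eq:Jac-l} must be checked componentwise according to how many inputs come from $C(\MB)[1]$, from $C(\MA)[1]$, and from $C_{\Phi_0}(\MA,\MB)$. Because of the vanishing clauses in the last bullet of Definition \ref{def:A-inf-L-inf-borisov}, only a short list of cases is nontrivial: (i) all inputs in $C(\MB)[1]$ (resp. $C(\MA)[1]$), which reduces to the graded Jacobi identity for the Gerstenhaber bracket $[-,-]_G$; (ii) one input $sm\in C(\MB)[1]$ and the rest in $C_{\Phi_0}(\MA,\MB)$, which is the key case; (iii) one input $sm\in C(\MA)[1]$ and the rest in $C_{\Phi_0}(\MA,\MB)$; and (iv) two inputs — one from $C(\MB)[1]$, one from $C(\MA)[1]$ — plus inputs from $C_{\Phi_0}(\MA,\MB)$. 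I expect cases (ii)--(iv) to be the main obstacle: each is proved by a diagrammatic argument showing that the terms of \eqref{eq:Jac-l} organize into a telescoping/cancelling sum. Concretely, applying $\ell$ twice produces filled diagrams with two ``internal'' discs (two copies of $m_\MB$, or one $m_\MB$ and one $f$, etc.), and the Jacobi sum pairs up each such diagram either with another one (opposite sign, by reassociating which disc is ``inner'') or with a diagram that must vanish for degree/type reasons. This is the pre-Calabi-Yau / $A_\infty$ analogue of the standard fact that the Gerstenhaber bracket together with the ``bar-type'' action on morphism spaces assembles into an $L_\infty$-structure; I would follow the bookkeeping in \cite{borisov1}, transcribed into the diagrammatic language, checking signs via the Koszul rule recalled in Section \ref{section:not-conv}.

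For the \textbf{Maurer-Cartan correspondence}, a degree $1$ element of $\mathcal{L}$ is a triple $(sm_\MB, f, sm_\MA)$ with $sm_\MB\in (C(\MB)[1])^1$, $sm_\MA\in(C(\MA)[1])^1$, and $f\in C_{\Phi_0}(\MA,\MB)^1$. I would expand $\sum_{n\geq 1}\frac{1}{n!}\ell^n(x,\dots,x)=0$ for $x=(sm_\MB,f,sm_\MA)$ and project onto each of the three summands. By the vanishing clauses, the projection to $C(\MB)[1]$ collects only $\tfrac12\ell^2(sm_\MB,sm_\MB)=\tfrac12[sm_\MB,sm_\MB]_G$, so this component vanishes iff $sm_\MB$ is an $A_\infty$-structure on $\MB$; symmetrically for the $C(\MA)[1]$ component and $sm_\MA$. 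The projection to $C_{\Phi_0}(\MA,\MB)$ collects the terms $\ell^n(f,\dots,f,sm_\MB,f,\dots,f)$ (summed over the position of $sm_\MB$, which by antisymmetry and the $\epsilon_i$ normalization just multiplies by $n$, cancelling part of the $1/n!$) together with $\ell^2(f,sm_\MA)$; after the combinatorial simplification this reads exactly $\sum_{n\geq 1}\varphi_{sm_\MB}^{\mathrm{id}}$-type terms $= \psi_{sm_\MA}(sf)$, i.e. the diagrammatic $A_\infty$-morphism equation for $(\Phi_0,s f):(\MA,sm_\MA)\to(\MB,sm_\MB)$ as in \cite{moi}. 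Matching this equation term-by-term with the standard one is the second place where care is needed, but it is essentially the defining identity of an $A_\infty$-morphism rewritten via filled diagrams, so no genuinely new input is required. Finally, the assignment $(sm_\MB,f,sm_\MA)\mapsto$ (triple) is visibly a bijection onto its image once both directions of the above equivalences are established.
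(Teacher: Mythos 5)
The paper disposes of this proposition by citing \cite{borisov1}, so your proposal is necessarily a different route: a direct verification. Your treatment of the Maurer--Cartan correspondence is sound and matches what the paper does for the analogous statements (Proposition \ref{prop:A-inf-L-inf} and the pre-Calabi--Yau version in subsubsection \ref{subsection:non-fixed-pCY}): the projection to $C(\MB)[1]$ (resp.\ $C(\MA)[1]$) isolates $\tfrac12[sm_{\MB},sm_{\MB}]_G$ (resp.\ $\tfrac12[sm_{\MA},sm_{\MA}]_G$), and the projection to $C_{\Phi_0}(\MA,\MB)$, after the $n\cdot\frac{1}{n!}$ and $\mathfrak{S}_{n-1}$ simplifications, is exactly the $A_{\infty}$-morphism identity.

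However, your case analysis for the higher Jacobi identities \eqref{eq:Jac-l} is wrong in a way that matters. In your cases (ii) and (iii) --- a single input from $C(\MB)[1]$ or $C(\MA)[1]$ and all other inputs from $C_{\Phi_0}(\MA,\MB)$ --- every term of \eqref{eq:Jac-l} vanishes identically: the inner bracket either has all its inputs in $C_{\Phi_0}(\MA,\MB)$ (hence is zero by the last vanishing clause of Definition \ref{def:A-inf-L-inf-borisov}) or contains the distinguished element and lands in $C_{\Phi_0}(\MA,\MB)$, in which case the outer bracket has all its inputs in $C_{\Phi_0}(\MA,\MB)$ and is again zero. So what you call ``the key case'' is vacuous. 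The configurations that carry the actual content --- and which are missing from your list --- are those with exactly \emph{two} inputs from $C(\MB)[1]\cup C(\MA)[1]$ and the rest in $C_{\Phi_0}(\MA,\MB)$: two from $C(\MB)[1]$, which expresses that $sm\mapsto\varphi_{sm}$ intertwines $[-,-]_G$ with the commutator of the operations $\varphi$; two from $C(\MA)[1]$, which expresses that $\psi$ is a Lie-algebra action (only $n=3$ survives there, since $\ell^n$ with a $C(\MA)[1]$-input vanishes for $n>2$); and your case (iv). This is precisely the reduction carried out in the paper's detailed proof of the pre-Calabi--Yau analogue, where the trivially satisfied configurations (including the unique-distinguished-input one) are listed and only the two-distinguished-input configurations remain to be checked. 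Your cancellation mechanism --- pairing diagrams by reassociating which disc is inner --- is the right tool for those cases, but the plan as written would verify only vacuous identities and never touch the compatibilities that actually make $\ell$ an $L_{\infty}$-bracket.
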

\begin{proof}
    See \cite{borisov1}.
\end{proof}

\begin{definition}
\label{def:weak-htpy-A-inf}
    Consider graded quivers $\MA$ and $\MB$ with respective sets of objects $\MO_{\MA}$ and $\MO_{\MB}$ and a map $\Phi_0:\MO_{\MA}\rightarrow \MO_{\MB}$.
    Given $A_{\infty}$-structures $sm_{\MA}$ and $sm'_{\MA}$ on $\MA$ (resp. $sm_{\MB}$ and $sm'_{\MB}$ on $\MB$), we say that two $A_{\infty}$-morphisms 
    \begin{equation}
        (\Phi_0,s\mathbf{f}) : (\MA,sm_{\MA})\rightarrow (\MB,sm_{\MB}) \text{ and } (\Phi_0,s\mathbf{g}) : (\MA,sm'_{\MA})\rightarrow (\MB,sm'_{\MB})
    \end{equation} are \textbf{\textcolor{ultramarine}{weakly homotopic}} if the corresponding Maurer-Cartan elements $(sm_{\MB},\mathbf{f},sm_{\MA})$ and $(sm'_{\MB},\mathbf{g},sm'_{\MA})$ of $(C(\MB)[1]\oplus C_{\Phi_0}(\MA,\MB)\oplus C(\MA)[1],\ell)$ are homotopic in the sense of Definition \ref{def:htpic-MC-elts}.
\end{definition}

\begin{definition}
    Consider $A_{\infty}$-categories $(\MA,sm_{\MA})$, $(\MB,sm_{\MB})$ and a map $\Phi_0 : \MO_{\MA}\rightarrow \MO_{\MB}$ between the sets of objects of the underlying graded quivers. We endow the graded vector space $C_{\Phi_0}(\MA,\MB)$ with the $\infty$-bracket $\bar{\ell}$ given by 
    \[
    \Bar{\ell}^1(\mathbf{f})^{\Bar{x}}=m_{\MB}^{\llt(\Bar{x}),\rrt(\Bar{x})}\circ sf^{\Bar{x}}-(-1)^{|s\mathbf{f}|}(\mathbf{f}\upperset{G}{\circ}sm_{\MA})^{\Bar{x}} \in C^{\Bar{x}}(\MA,\MB)
    \]
    for $\mathbf{f}\in C_{\Phi_0}(\MA,\MB)$ and
    \begin{equation}
        \Bar{\ell}^n(f_n,\dots,f_1)=(-1)^{\epsilon}\sum\limits_{\sigma\in\mathfrak{S}_n}(-1)^{\delta^{\sigma}(sf_{n},\dots,sf_{1})}\psi^{\sigma}_{m_{\MB}}(sf_{n},\dots,sf_{1})
    \end{equation}
    for $n\geq 2$, with 
    \begin{small}
        \begin{equation}
            \epsilon=\sum\limits_{i=1}^n(i-1)|sf_i|
        \end{equation}
    \end{small}
    and where $\psi^{\sigma}_{m_{\MB}}(sf_{n},\dots,sf_{1})^{\Bar{x}}=\sum\mathcal{E}(\mathcalboondox{D})$ where the sum is over all the filled diagrams $\mathcalboondox{D}$ of type $\Bar{x}$ and of the form

\begin{equation}
\label{eq:diag-A-inf}
\begin{tikzpicture}[line cap=round,line join=round,x=1.0cm,y=1.0cm]
\clip(-7,-1.8) rectangle (4,1.8);
     \draw [line width=0.5pt] (0.,0.) circle (0.5cm);
    \draw [rotate=0] [->, >= stealth, >= stealth] (0.5,0) -- (0.9,0);
    \draw [rotate=-90] [<-, >= stealth, >= stealth](0.5,0) -- (0.9,0);
    \draw [rotate=180] [<-, >= stealth, >= stealth](0.5,0) -- (0.9,0);
    \draw [rotate=135] [<-, >= stealth, >= stealth](0.5,0) -- (0.9,0);
    \draw [line width=0.5pt] (0.,-1.2) circle (0.3cm);
    \draw[rotate around={-90:(0,-1.2)},shift={(0.3,-1.2)}]\doublefleche;
    \draw [line width=0.5pt] (-1.2,0) circle (0.3cm);
     \draw[rotate around={180:(-1.2,0)},shift={(-0.9,0)}]\doublefleche;
    \draw [line width=0.5pt] (-0.84,0.84) circle (0.3cm);
    \draw[rotate around={135:(-0.84,0.84)},shift={(-0.54,0.84)}]\doublefleche;
\begin{scriptsize}
\draw [fill=black] (-0.45,-0.4) circle (0.3pt);
\draw [fill=black] (-0.3,-0.5) circle (0.3pt);
\draw [fill=black] (-0.5,-0.25) circle (0.3pt);
\end{scriptsize}
\draw (0,0.2) node[anchor=north] {$m_{\MB}$};
\draw (-1.2,0.28) node[anchor=north] {$f_{\sigma_2}$};
\draw (0,-0.92) node[anchor=north] {$f_{\sigma_n}$};
\draw (-0.84,1.12) node[anchor=north] {$f_{\sigma_1}$};
\end{tikzpicture}
\end{equation}
\end{definition}

\begin{lemma}
    \label{lemma:MC-A-inf}
    For every $\bar{x}\in\bar{\MO}_{\MA}$, there is a finite number of diagrams of the form \eqref{eq:diag-A-inf} and of type $\bar{x}$. In particular, given $f\in C_{\Phi_0}(\MA,\MB)$ of degree $1$, the projection $\pi_{\bar{x}}\circ \sum_{n\geq 0}\ell^n(f,\dots,f)$ on $C_{\Phi_0}^{\bar{x}}(\MA,\MB)$ is a finite sum for every $\bar{x}\in\bar{\MO}_{\MA}$.
\end{lemma}
\begin{proof}
    Consider $\bar{x}\in\bar{\MO}_{\MA}$. A diagram of type $\bar{x}$ carries $\llg(\bar{x})$ incoming arrows. Since every disc filled with an element of $C_{\Phi_0}(\MA,\MB)$ carries at least one incoming arrow, there are at most $\llg(\bar{x})$ such discs in a diagram of the form \eqref{eq:diag-A-inf} and of type $\bar{x}$. Therefore, we have a finite number of such diagrams.
\end{proof}

\begin{proposition}
\label{prop:A-inf-L-inf}
   Consider $A_{\infty}$-categories $(\MA,sm_{\MA})$ and $(\MB,sm_{\MB})$ together with a map  between the sets of objects of the underlying graded quivers $\Phi_0 : \MO_{\MA}\rightarrow \MO_{\MB}$. 
   
   \noindent Then, $(C_{\Phi_0}(\MA,\MB),\Bar{\ell})$ is a graded $L_{\infty}$-algebra whose Maurer-Cartan elements are in bijection with $A_{\infty}$-morphisms $(\Phi_0,s\mathbf{f}) : (\MA,sm_{\MA})\rightarrow (\MB,sm_{\MB})$.
\end{proposition}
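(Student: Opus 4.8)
The plan is to split the statement into two parts: first, that $(C_{\Phi_0}(\MA,\MB),\bar\ell)$ is a genuine $L_\infty$-algebra, and second, that its Maurer–Cartan elements are precisely the $A_\infty$-morphisms $(\Phi_0,s\mathbf f):(\MA,sm_\MA)\to(\MB,sm_\MB)$. For the second part I would unwind the Maurer–Cartan equation $\sum_{n\geq 1}\tfrac1{n!}\bar\ell^n(f,\dots,f)=0$, which by Lemma \ref{lemma:MC-A-inf} is a well-defined finite sum in each $C_{\Phi_0}^{\bar x}(\MA,\MB)$. The degree-$1$ part $\bar\ell^1(\mathbf f)^{\bar x}=m_\MB^{\llt(\bar x),\rrt(\bar x)}\circ sf^{\bar x}-(-1)^{|s\mathbf f|}(\mathbf f\mathbin{\upperset{G}{\circ}}sm_\MA)^{\bar x}$ already contains the ``linear in $m$'' terms of the $A_\infty$-morphism equation (composition of $f$ with a single $m_\MB$, and $f\circ sm_\MA$), while the terms $\tfrac1{n!}\bar\ell^n(f,\dots,f)$ for $n\geq 2$ produce exactly the trees with one vertex $m_\MB$ of arity $n$ fed by $n$ copies of $f$, summed over how the inputs are distributed — which is the remaining part of the $A_\infty$-morphism equation. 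The combinatorial factor $\tfrac1{n!}$ cancels the sum over $\mathfrak S_n$ in the definition of $\bar\ell^n$ after accounting for the symmetry of the filled diagrams in the $f$-labels, and the sign $\epsilon=\sum_{i=1}^n(i-1)|sf_i|$ together with $\delta^\sigma$ is designed precisely so that the Koszul signs match those in the $A_\infty$-morphism relation; this is a bookkeeping check parallel to the one in \cite{borisov1}, and I would carry it out diagrammatically in the formalism of \cite{ktv,moi}.

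For the first part — that $\bar\ell$ satisfies the higher Jacobi identities \eqref{eq:Jac-l} — the cleanest route is not to verify the identities by hand but to \emph{derive} $(C_{\Phi_0}(\MA,\MB),\bar\ell)$ as a substructure, or a twist, of the Borisov $L_\infty$-algebra $(C(\MB)[1]\oplus C_{\Phi_0}(\MA,\MB)\oplus C(\MA)[1],\ell)$ of Proposition \ref{prop:A-inf-L-inf-borisov}. Concretely, fix the Maurer–Cartan elements $sm_\MA$ and $sm_\MB$ inside that big $L_\infty$-algebra and consider the twisted brackets on the remaining summand $C_{\Phi_0}(\MA,\MB)$: the twisted differential is $\bar\ell^1(\mathbf f)=\sum_{k\geq 0}\tfrac1{k!}\ell^{k+1}(sm_\MB,\mathbf f,sm_\MA,\dots,sm_\MA)$-type expressions, which by the list of vanishing components of $\ell$ (only one $C(\MB)[1]$-input allowed, no $C(\MA)[1]$-input for $n>2$, etc.) collapses to exactly $m_\MB\circ sf - (-1)^{|s\mathbf f|}\mathbf f\mathbin{\upperset{G}{\circ}}sm_\MA$, and the higher twisted brackets $\bar\ell^n(f_n,\dots,f_1)$ collapse to $\ell^{n+1}(sm_\MB,f_n,\dots,f_1)$, which is the diagram \eqref{eq:diag-A-inf}. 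Since twisting an $L_\infty$-algebra by a Maurer–Cartan element of a sub-$L_\infty$-algebra (here the abelian part carrying $sm_\MA,sm_\MB$, on which all brackets among mixed $\MA/\MB$ inputs vanish) again yields an $L_\infty$-algebra, and since the $\bar x$-wise finiteness needed for the twist to be defined is Lemma \ref{lemma:MC-A-inf}, the Jacobi identities for $\bar\ell$ follow formally from those for $\ell$. One must check that the sign conventions $\epsilon$, $\delta^\sigma$ chosen in the definition of $\bar\ell$ agree with the signs produced by this twisting procedure; this is the one place where a genuine, if routine, computation is unavoidable.

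The main obstacle I anticipate is precisely this sign reconciliation: the definition of $\bar\ell^n$ is stated with explicit signs $(-1)^\epsilon(-1)^{\delta^\sigma}$ rather than as ``the twist of $\ell$'', so one has to prove that the two prescriptions coincide, keeping careful track of the degree shifts $[1]$ on $C(\MB)$ and $C(\MA)$, the Koszul signs in $\tau^\sigma$, and the signs $\epsilon_i$ appearing in $\ell^n$ in Definition \ref{def:A-inf-L-inf-borisov}. A secondary subtlety is that one should make sure the relevant ``abelian'' piece really is a sub-$L_\infty$-algebra on which twisting is legitimate — i.e. that $\ell^n$ with all inputs among $\{sm_\MA,sm_\MB\}$ and $n\geq 3$ vanishes and $\ell^2(sm_\MA,sm_\MB)=0$ — which is immediate from the vanishing clauses in Definition \ref{def:A-inf-L-inf-borisov}. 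Once these are in place, both halves of the proposition are essentially formal consequences of Proposition \ref{prop:A-inf-L-inf-borisov} and Lemma \ref{lemma:MC-A-inf}, with the alternative being a direct but lengthy diagrammatic verification of \eqref{eq:Jac-l} for $\bar\ell$ following the same pattern as the proof that the necklace/Gerstenhaber brackets satisfy Jacobi in \cite{ktv,moi}.
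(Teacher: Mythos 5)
Your proposal is correct, but the route you take for the higher Jacobi identities is genuinely different from the paper's. The paper proves Proposition \ref{prop:A-inf-L-inf} by a direct diagrammatic verification: it checks antisymmetry by hand, then splits the Jacobi sum \eqref{eq:Jac-l} into the $k=1$ term, the $k=n$ term and the intermediate terms $1<k<n$, matches the resulting diagrams pairwise, and closes the argument by invoking the Stasheff identities for $sm_{\MB}$; the Maurer--Cartan part is then obtained by unwinding the equation exactly as you describe. Your alternative is to realize $\bar{\ell}$ as the twist of the Borisov bracket $\ell$ of Proposition \ref{prop:A-inf-L-inf-borisov} by the Maurer--Cartan element $\alpha=(sm_{\MB},0,sm_{\MA})$, restricted to the summand $C_{\Phi_0}(\MA,\MB)$. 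This is legitimate: $\alpha$ is Maurer--Cartan because the only surviving terms of its curvature are $\tfrac12[sm_{\MB},sm_{\MB}]_G$ and $\tfrac12[sm_{\MA},sm_{\MA}]_G$; the vanishing clauses of Definition \ref{def:A-inf-L-inf-borisov} force the twisted brackets to collapse to $\ell^{n+1}(sm_{\MB},f_n,\dots,f_1)$ for $n\geq 2$ and to $\ell^2(sm_{\MB},f)+\ell^2(f,sm_{\MA})$ for $n=1$, and they keep $C_{\Phi_0}(\MA,\MB)$ closed under the twisted operations; moreover only $k\leq 1$ copies of $\alpha$ ever contribute, so the twist is a finite operation and needs no completeness hypothesis (Lemma \ref{lemma:MC-A-inf} is only required later, to make sense of the Maurer--Cartan equation for $\bar{\ell}$, not of the twist itself). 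What your approach buys is that the Jacobi identities become formal consequences of Proposition \ref{prop:A-inf-L-inf-borisov}, and — a bonus you do not fully exploit — the Maurer--Cartan elements of the twisted algebra are by construction exactly the $\mathbf{f}$ with $\alpha+\mathbf{f}$ Maurer--Cartan for $\ell$, so the bijection with $A_{\infty}$-morphisms also falls out of Proposition \ref{prop:A-inf-L-inf-borisov} without re-unwinding the equation. What it costs is the sign reconciliation you correctly identify as the residual computation: the paper's explicit $(-1)^{\epsilon}(-1)^{\delta^{\sigma}}$ must be shown to agree with the Koszul signs produced by the twisting formula and by the $\epsilon_i$ of Definition \ref{def:A-inf-L-inf-borisov}, and since both conventions ultimately encode the same Stasheff identities, the total amount of bookkeeping is comparable to, though better structured than, the paper's direct verification.
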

\begin{proof}
    We first prove that the $\infty$-bracket $\Bar{\ell}$ is anti-symmetric. We thus consider $n>2$ and elements $f_1,\dots,f_n\in C_{\Phi_0}(\MA,\MB)$. For $i\in\llbracket 1,n\rrbracket$, we have that 
    \allowdisplaybreaks
    \begin{align*}
         &\Bar{\ell}^n(f_n,\dots,f_i,f_{i+1},\dots,f_1)
         \\&=(-1)^{\epsilon'}\sum\limits_{\sigma\in\mathfrak{S}_n}(-1)^{\delta^{\sigma}(sf_n,\dots,sf_i,sf_{i+1},\dots,sf_1)}\psi^{\sigma}_{sm_{\MB}}(sf_n,\dots,sf_i,sf_{i+1},\dots,sf_1)
         \\&=(-1)^{|sf_i||sf_{i+1}|+\epsilon'}\sum\limits_{\sigma\in\mathfrak{S}_n}(-1)^{\delta^{\sigma}(sf_n,\dots,sf_1)}\psi^{\sigma}_{sm_{\MB}}(sf_n,\dots,sf_1)
         \\&=-(-1)^{|f_i||f_{i+1}|}\Bar{\ell}^n(f_n,\dots,f_1)
    \end{align*}
where 
\begin{small}
    \begin{equation}
        \epsilon'=\sum\limits_{\substack{j=1\\j\neq i,i+1}}^n(j-1)|sf_j|+i|sf_i|+(i-1)|sf_{i+1}|.
    \end{equation}
\end{small}
We now prove that $\Bar{\ell}$ satisfies the higher Jacobi identities.
Consider $n\in\NN^*$ as well as elements $f_1,\dots,f_n\in C_{\Phi_0}(\MA,\MB)$.
We have to show that 
\begin{equation}
\label{eq:Jac-A-inf}
    \begin{split}
        \sum\limits_{k=1}^n(-1)^{k}\sum\limits_{(\Bar{i},\Bar{j})\in\mathcal{P}_k^n}(-1)^{\Delta}\Bar{\ell}^{n-k+1}(\Bar{\ell}^k(l_{i_k},\dots,l_{i_1}),l_{j_{n-k}},\dots,l_{j_1})=0
    \end{split}
\end{equation}
where
\begin{small}
\begin{equation}
    \begin{split}
        \Delta&=\sum\limits_{r=1}^k|l_{i_r}|\sum\limits_{s=i_r+1}^{n}|l_s|+\sum\limits_{1\leq r<s\leq k}|l_{i_r}||l_{i_s}|+\sum\limits_{r=1}^k\sum\limits_{s=i_r+1}^{n}1+\sum\limits_{1\leq r<s\leq k}1.
    \end{split}
\end{equation}
\end{small}
The term of \eqref{eq:Jac-A-inf} indexed by $k=1$ is 
\begin{equation}
\label{eq:Jac-A-inf-k=1}
    \begin{split}
        &-\sum\limits_{i=1}^n(-1)^{\delta}\Bar{\ell}^{n}(\Bar{\ell}^1(f_i),f_{n},\dots,\hat{f_i},\dots,f_1)
        \\&=-\sum\limits_{i=1}^n(-1)^{\delta+\delta'}\big(\sum\limits_{\sigma\in\mathfrak{S}_n}(-1)^{\delta^{\sigma}(sm_{\MB}\upperset{M}{\circ} sf_i,sf_n,\dots,\hat{f_i},\dots,sf_1)}\psi^{\sigma}_{sm_{\MB}}(sm_{\MB}\upperset{M}{\circ} sf_i,sf_n,\dots,\hat{f_i},\dots,sf_1)
        \\&\hskip3cm -\sum\limits_{\sigma\in\mathfrak{S}_n}(-1)^{|sf_i|+\delta^{\sigma}(sf_i\upperset{G}{\circ}sm_{\MA},sf_n,\dots,\hat{f_i},\dots,sf_1)}\psi^{\sigma}_{sm_{\MB}}(sf_i\upperset{G}{\circ}sm_{\MA},sf_n,\dots,\hat{f_i},\dots,sf_1)\big)
    \end{split}
\end{equation}
where 
\begin{small}
\begin{equation}
    \begin{split}
        \delta=|f_i|\sum\limits_{j=i+1}^{n}|f_j|+n-i
         \text{, }\delta'=|f_i|(n-1)+\sum\limits_{j=1}^{i-1}(j-1)|sf_j|+\sum\limits_{j=i+1}^{n}j|sf_j|
    \end{split}
\end{equation}  
\end{small}
and where 
\begin{small}
\begin{equation}
    \psi^{\sigma}_{sm_{\MB}}(sm_{\MB}\upperset{M}{\circ} sf_i,sf_n,\dots,\hat{f_i},\dots,sf_1)^{\Bar{x}}=\sum\mathcal{E}(\mathcalboondox{D_1})\text{, }\psi^{\sigma}_{sm_{\MB}}(sf_i\upperset{G}{\circ}sm_{\MA},sf_n,\dots,\hat{f_i},\dots,sf_1)^{\Bar{x}}=\sum\mathcal{E}(\mathcalboondox{D'_1})
\end{equation}
\end{small}
where the sums are over all the filled diagrams $\mathcalboondox{D_1}$ and $\mathcalboondox{D'_1}$ of type $\Bar{x}$ and of the form

\begin{minipage}{21cm}
    \begin{tikzpicture}[line cap=round,line join=round,x=1.0cm,y=1.0cm]
\clip(-4.5,-2) rectangle (4,2);
    \draw (0,0) circle (0.5cm);
    \draw[->,>=stealth](0.5,0)--(0.9,0);
    \draw[rotate=180][<-,>=stealth] (0.5,0)--(0.9,0);
    \draw (-1.4,0) circle (0.5cm);
    \draw[rotate around={180:(-1.4,0)}][<-,>=stealth](-0.9,0)--(-0.5,0);
    \draw (-2.6,0) circle (0.3cm);
    \draw[rotate around={180:(-2.6,0)},shift={(-2.3,0)}]\doublefleche;
    \draw[rotate=120][<-,>=stealth] (0.5,0)--(0.9,0);
    \draw (-0.6,1.03) circle (0.3cm);
    \draw[rotate around={120:(-0.6,1.03)},shift={(-0.3,1.03)}]\doublefleche;
    \draw[rotate=-120][<-,>=stealth] (0.5,0)--(0.9,0);
    \draw (-0.6,-1.03) circle (0.3cm);
    \draw[rotate around={-120:(-0.6,-1.03)},shift={(-0.3,-1.03)}]\doublefleche;
\draw[fill=black] (-0.55,0.15) circle (0.3pt);
\draw[fill=black] (-0.5,0.3) circle (0.3pt);
\draw[fill=black] (-0.4,0.4) circle (0.3pt);
\draw[rotate=60][fill=black] (-0.55,0.15) circle (0.3pt);
\draw[rotate=60][fill=black] (-0.5,0.3) circle (0.3pt);
\draw[rotate=60][fill=black] (-0.4,0.4) circle (0.3pt);
\draw (2.5,0.25) node[anchor=north]{and};
\draw(0,0.2) node[anchor=north]{$m_{\MB}$};
\draw(-1.4,0.2) node[anchor=north]{$m_{\MB}$};
\draw(-2.6,0.25) node[anchor=north]{$f_{i}$};
\draw(-0.6,1.28) node[anchor=north]{$f_{\sigma_1}$};
\draw(-0.6,-0.76) node[anchor=north]{$f_{\sigma_n}$};
\end{tikzpicture}
\begin{tikzpicture}[line cap=round,line join=round,x=1.0cm,y=1.0cm]
\clip(-3.5,-2) rectangle (4.5,2);
    \draw (0,0) circle (0.5cm);
    \draw[->,>=stealth](0.5,0)--(0.9,0);
    \draw[rotate=180][<-,>=stealth] (0.5,0)--(0.9,0);
    \draw (-1.2,0) circle (0.3cm);
    \draw[rotate around={180:(-1.2,0)}][<-,>=stealth](-0.9,0)--(-0.5,0);
    \draw[rotate around={180:(-1.2,0)},shift={(-0.9,0)}]\doubleflechescindeeleft;
     \draw[rotate around={180:(-1.2,0)},shift={(-0.9,0)}]\doubleflechescindeeright;
    \draw (-2.4,0) circle (0.5cm);
    \draw[rotate around={180:(-2.4,0)},shift={(-1.9,0)}]\doublefleche;
    \draw[rotate=120][<-,>=stealth] (0.5,0)--(0.9,0);
    \draw (-0.6,1.03) circle (0.3cm);
    \draw[rotate around={120:(-0.6,1.03)},shift={(-0.3,1.03)}]\doublefleche;
    \draw[rotate=-120][<-,>=stealth] (0.5,0)--(0.9,0);
    \draw (-0.6,-1.03) circle (0.3cm);
    \draw[rotate around={-120:(-0.6,-1.03)},shift={(-0.3,-1.03)}]\doublefleche;
\draw[fill=black] (-0.55,0.15) circle (0.3pt);
\draw[fill=black] (-0.5,0.3) circle (0.3pt);
\draw[fill=black] (-0.4,0.4) circle (0.3pt);
\draw[rotate=60][fill=black] (-0.55,0.15) circle (0.3pt);
\draw[rotate=60][fill=black] (-0.5,0.3) circle (0.3pt);
\draw[rotate=60][fill=black] (-0.4,0.4) circle (0.3pt);
\draw(0,0.2) node[anchor=north]{$m_{\MB}$};
\draw(-2.4,0.2) node[anchor=north]{$m_{\MA}$};
\draw(-1.2,0.25) node[anchor=north]{$f_{i}$};
\draw(-0.6,1.28) node[anchor=north]{$f_{\sigma_1}$};
\draw(-0.6,-0.76) node[anchor=north]{$f_{\sigma_n}$};
\end{tikzpicture}
\end{minipage}

\noindent respectively, where the outgoing arrow of the disc filled with $m_{\MB}$ (resp. $f_i$) in $\mathcalboondox{D_1}$ (resp. $\mathcalboondox{D_1'}$) immediately follows counterclockwise the one of $f_{\sigma_{u-1}}$ where $u=\sigma^{-1}(1)$, whereas the term of \eqref{eq:Jac-A-inf} indexed by $k=n$ is 
\begin{equation}
\label{eq:Jac-A-inf-k=n}
    \begin{split}
        &\Bar{\ell}^{1}(\Bar{\ell}^n(f_{n},\dots,f_1))
        \\&=(-1)^{n+\sum\limits_{i=1}^n(i-1)|sf_i|}
        \sum\limits_{\sigma\in\mathfrak{S}_n}(-1)^{\delta^{\sigma}(sf_n,\dots,sf_1)}\big(sm_{\MB}\upperset{M}{\circ} \psi_{sm_{\MB}}^{\sigma}(sf_n\dots,sf_1)\big)
        \\&-(-1)^{n+\sum\limits_{i=1}^n(i-1)|sf_i|+\sum\limits_{i=1}^n|sf_i|+1}\sum\limits_{\sigma\in\mathfrak{S}_n}\sum\limits_{j=1}^n(-1)^{\delta^{\sigma}(sf_n,\dots,sf_1)+\sum\limits_{i=1}^{j-1}|sf_{\sigma_i}|}\big(\psi_{sm_{\MB}}^{\sigma,j}(sf_n\dots,sf_1)\upperset{G}{\circ}sm_{\MA}\big)
    \end{split}
\end{equation}
where $\big(sm_{\MB}\upperset{M}{\circ} \psi_{sm_{\MB}}^{\sigma}(sf_n\dots,sf_1)\big)^{\Bar{x}}=\sum\mathcal{E}(\mathcalboondox{D_2})$ and $\big(\psi_{sm_{\MB}}^{\sigma,j}(sf_n\dots,sf_1)\upperset{G}{\circ}sm_{\MA}\big)^{\Bar{x}}=\sum\mathcal{E}(\mathcalboondox{D'_2})$
where the sums are over all the filled diagrams $\mathcalboondox{D_2}$ and $\mathcalboondox{D'_2}$ of type $\Bar{x}$ an of the form

\begin{minipage}{21cm}
    \begin{tikzpicture}[line cap=round,line join=round,x=1.0cm,y=1.0cm]
\clip(-3.5,-2) rectangle (4.5,2);
    \draw (0,0) circle (0.5cm);
    \draw[->,>=stealth](0.5,0)--(0.9,0);
    \draw[rotate=180][<-,>=stealth] (0.5,0)--(0.9,0);
    \draw (-1.4,0) circle (0.5cm);
    \draw[rotate around={180:(-1.4,0)}][<-,>=stealth](-0.9,0)--(-0.5,0);
    \draw (-2.6,0) circle (0.3cm);
    \draw[rotate around={120:(-1.4,0)}][<-,>=stealth](-0.9,0)--(-0.5,0);
    \draw (-2,1.03) circle (0.3cm);
    \draw[rotate around={120:(-2,1.03)},shift={(-1.7,1.03)}]\doublefleche;
    \draw[rotate around={-120:(-1.4,0)}][<-,>=stealth](-0.9,0)--(-0.5,0);
    \draw (-2,-1.03) circle (0.3cm);
     \draw[rotate around={-120:(-2,-1.03)},shift={(-1.7,-1.03)}]\doublefleche;
    \draw[rotate around={180:(-2.6,0)},shift={(-2.3,0)}]\doublefleche;
\draw[fill=black] (-1.95,0.15) circle (0.3pt);
\draw[fill=black] (-1.9,0.3) circle (0.3pt);
\draw[fill=black] (-1.8,0.4) circle (0.3pt);
\draw[rotate around={60:(-1.4,0)}][fill=black] (-1.95,0.15) circle (0.3pt);
\draw[rotate around={60:(-1.4,0)}][fill=black] (-1.9,0.3) circle (0.3pt);
\draw[rotate around={60:(-1.4,0)}][fill=black] (-1.8,0.4) circle (0.3pt);
\draw (2.5,0.25) node[anchor=north]{and};
\draw(0,0.2) node[anchor=north]{$m_{\MB}$};
\draw(-1.4,0.2) node[anchor=north]{$m_{\MB}$};
\draw(-2.6,0.27) node[anchor=north]{$f_{\sigma_i}$};
\draw(-2,1.3) node[anchor=north]{$f_{\sigma_1}$};
\draw(-2,-0.76) node[anchor=north]{$f_{\sigma_n}$};
\end{tikzpicture}
\begin{tikzpicture}[line cap=round,line join=round,x=1.0cm,y=1.0cm]
\clip(-3.5,-2) rectangle (4.5,2);
    \draw (0,0) circle (0.5cm);
    \draw[->,>=stealth](0.5,0)--(0.9,0);
    \draw[rotate=180][<-,>=stealth] (0.5,0)--(0.9,0);
    \draw (-1.2,0) circle (0.3cm);
    \draw[rotate around={180:(-1.2,0)}][<-,>=stealth](-0.9,0)--(-0.5,0);
    \draw[rotate around={180:(-1.2,0)},shift={(-0.9,0)}]\doubleflechescindeeleft;
     \draw[rotate around={180:(-1.2,0)},shift={(-0.9,0)}]\doubleflechescindeeright;
    \draw (-2.4,0) circle (0.5cm);
    \draw[rotate around={180:(-2.4,0)},shift={(-1.9,0)}]\doublefleche;
    \draw[rotate=120][<-,>=stealth] (0.5,0)--(0.9,0);
    \draw (-0.6,1.03) circle (0.3cm);
    \draw[rotate around={120:(-0.6,1.03)},shift={(-0.3,1.03)}]\doublefleche;
    \draw[rotate=-120][<-,>=stealth] (0.5,0)--(0.9,0);
    \draw (-0.6,-1.03) circle (0.3cm);
    \draw[rotate around={-120:(-0.6,-1.03)},shift={(-0.3,-1.03)}]\doublefleche;
\draw[fill=black] (-0.55,0.15) circle (0.3pt);
\draw[fill=black] (-0.5,0.3) circle (0.3pt);
\draw[fill=black] (-0.4,0.4) circle (0.3pt);
\draw[rotate=60][fill=black] (-0.55,0.15) circle (0.3pt);
\draw[rotate=60][fill=black] (-0.5,0.3) circle (0.3pt);
\draw[rotate=60][fill=black] (-0.4,0.4) circle (0.3pt);
\draw(0,0.2) node[anchor=north]{$m_{\MB}$};
\draw(-2.4,0.2) node[anchor=north]{$m_{\MA}$};
\draw(-1.2,0.27) node[anchor=north]{$f_{\sigma_j}$};
\draw(-0.6,1.3) node[anchor=north]{$f_{\sigma_1}$};
\draw(-0.6,-0.76) node[anchor=north]{$f_{\sigma_n}$};
\end{tikzpicture}
\end{minipage}

\noindent respectively. Therefore, the term indexed by $i\in \llbracket1,n\rrbracket$ and $\sigma\in\mathfrak{S}_{n}$ in \eqref{eq:Jac-A-inf-k=1} with $sm_{\MA}$ and where $u=\sigma^{-1}(1)$ cancels with the term indexed by $\sigma'\in \mathfrak{S}_n$ and $j=u\in\llbracket 1,n\rrbracket$ in \eqref{eq:Jac-A-inf-k=n} with $sm_{\MA}$ where $\sigma'(l)=l$ if $l\neq j$ and $\sigma'(j)=i$ and we have that 

\begin{equation}
    \begin{split}
        &\sum\limits_{i=1}^n(-1)^{\delta+\delta'}\sum\limits_{\sigma\in\mathfrak{S}_n}(-1)^{|f_i|+\delta^{\sigma}(sf_i\upperset{G}{\circ}sm_{\MA},sf_n,\dots,\hat{f_i},\dots,sf_1)}\psi^{\sigma}_{sm_{\MB}}(sf_i\upperset{G}{\circ}sm_{\MA},sf_n,\dots,\hat{f_i},\dots,sf_1)\big)
        \\&+(-1)^{n+\sum\limits_{i=1}^n(i-1)|sf_i|+\sum\limits_{i=1}^n|sf_i|+1}\sum\limits_{\sigma\in\mathfrak{S}_n}\sum\limits_{j=1}^n(-1)^{\delta^{\sigma}(sf_n,\dots,sf_1)+\sum\limits_{i=1}^{j}|sf_{\sigma_i}|}\big(\psi_{sm_{\MB}}^{\sigma,j}(sf_n\dots,sf_1)\upperset{G}{\circ}sm_{\MA}\big)
        \\&=0.
    \end{split}
\end{equation}

Moreover, for $1<k<n$, we have 
\begin{equation}
    \begin{split}
        &(-1)^{k}\hskip-3mm\sum\limits_{(\Bar{i},\Bar{j})\in\mathcal{P}_k^n}(-1)^{\Delta}\Bar{\ell}^{n-k+1}(\Bar{\ell}^k(f_{i_k},\dots,f_{i_1}),f_{j_{n-k}},\dots,f_{j_1})
        \\&=(-1)^{k}\hskip-3mm\sum\limits_{(\Bar{i},\Bar{j})\in\mathcal{P}_k^n}\hskip-2mm(-1)^{\Delta+\epsilon}\hskip-1mm\sum\limits_{\sigma\in\mathfrak{S}_k}(-1)^{\delta^{\sigma}(sf_{i_k},\dots,sf_{i_1})}\Bar{\ell}^{n-k+1}(\psi_{sm_{\MB}}^{\sigma}(sf_{i_k},\dots,sf_{i_1}),f_{j_{n-k}},\dots,f_{j_1})
        \\&=(-1)^{k}\hskip-3mm\sum\limits_{(\Bar{i},\Bar{j})\in\mathcal{P}_k^n}\hskip-2mm(-1)^{\Delta+\epsilon+\epsilon'}\hskip-1mm\sum\limits_{\sigma\in\mathfrak{S}_k}(-1)^{\delta^{\sigma}(sf_{i_k},\dots,sf_{i_1})}\hskip-4mm\sum\limits_{\tau\in\mathfrak{S}_{n-k+1}}\hskip-3mm(-1)^{\delta^{\tau}(s\psi_{sm_{\MB}}^{\sigma}(sf_{i_k},\dots,sf_{i_1}),sf_{j_{n-k}},\dots,sf_{j_1})}
        \\&\hskip7.9cm\psi_{sm_{\MB}}^{\tau}(s\psi_{sm_{\MB}}^{\sigma}(sf_{i_k},\dots,sf_{i_1}),sf_{j_{n-k}},\dots,sf_{j_1})
    \end{split}
\end{equation}
with 
\begin{small}
    \begin{equation}
        \epsilon=\sum\limits_{r=1}^k(r-1)|sf_{i_r}|\text{, }\epsilon'=\sum\limits_{r=1}^{n-k}(r-1)|sf_{j_r}|+(n-k)(1+\sum\limits_{r=1}^k|sf_{i_r}|)
    \end{equation}
\end{small}
where $\psi_{sm_{\MB}}^{\tau}(s\psi_{sm_{\MB}}^{\sigma}(sf_{i_k},\dots,sf_{i_1}),sf_{j_{n-k}},\dots,sf_{j_1})^{\Bar{x}}=\sum\mathcal{E}(\mathcalboondox{D_3})$ where the sum is over all the filled diagrams of type $\Bar{x}$ and of the form

\begin{tikzpicture}[line cap=round,line join=round,x=1.0cm,y=1.0cm]
\clip(-8,-2) rectangle (4.5,2);
    \draw (0,0) circle (0.5cm);
    \draw[->,>=stealth](0.5,0)--(0.9,0);
    \draw[rotate=180][<-,>=stealth] (0.5,0)--(0.9,0);
    \draw (-1.4,0) circle (0.5cm);
    \draw[rotate around={180:(-1.4,0)}][<-,>=stealth](-0.9,0)--(-0.5,0);
    \draw (-2.6,0) circle (0.3cm);
    \draw[rotate around={120:(-1.4,0)}][<-,>=stealth](-0.9,0)--(-0.5,0);
    \draw (-2,1.03) circle (0.3cm);
    \draw[rotate around={120:(-2,1.03)},shift={(-1.7,1.03)}]\doublefleche;
    \draw[rotate around={-120:(-1.4,0)}][<-,>=stealth](-0.9,0)--(-0.5,0);
    \draw (-2,-1.03) circle (0.3cm);
     \draw[rotate around={-120:(-2,-1.03)},shift={(-1.7,-1.03)}]\doublefleche;
    \draw[rotate around={180:(-2.6,0)},shift={(-2.3,0)}]\doublefleche;
    \draw[rotate=120][<-,>=stealth] (0.5,0)--(0.9,0);
    \draw (-0.6,1.03) circle (0.3cm);
    \draw[rotate around={120:(-0.6,1.03)},shift={(-0.3,1.03)}]\doublefleche;
    \draw[rotate=-120][<-,>=stealth] (0.5,0)--(0.9,0);
    \draw (-0.6,-1.03) circle (0.3cm);
    \draw[rotate around={-120:(-0.6,-1.03)},shift={(-0.3,-1.03)}]\doublefleche;
\draw[fill=black] (-0.55,0.15) circle (0.3pt);
\draw[fill=black] (-0.5,0.3) circle (0.3pt);
\draw[fill=black] (-0.4,0.4) circle (0.3pt);
\draw[rotate=60][fill=black] (-0.55,0.15) circle (0.3pt);
\draw[rotate=60][fill=black] (-0.5,0.3) circle (0.3pt);
\draw[rotate=60][fill=black] (-0.4,0.4) circle (0.3pt);
\draw[fill=black] (-1.95,0.15) circle (0.3pt);
\draw[fill=black] (-1.9,0.3) circle (0.3pt);
\draw[fill=black] (-1.8,0.4) circle (0.3pt);
\draw[rotate around={60:(-1.4,0)}][fill=black] (-1.95,0.15) circle (0.3pt);
\draw[rotate around={60:(-1.4,0)}][fill=black] (-1.9,0.3) circle (0.3pt);
\draw[rotate around={60:(-1.4,0)}][fill=black] (-1.8,0.4) circle (0.3pt);
\draw(0,0.2) node[anchor=north]{$m_{\MB}$};
\draw(-1.4,0.2) node[anchor=north]{$m_{\MB}$};
\begin{scriptsize}
\draw(-2.6,0.16) node[anchor=north]{${\sigma_i}$};
\draw(-2,1.2) node[anchor=north]{${\sigma_{i_1}}$};
\draw(-2,-0.9) node[anchor=north]{${\sigma_{i_k}}$};
\draw(-0.6,1.2) node[anchor=north]{${\sigma_{j_1}}$};
\draw(-0.4,-0.9) node[anchor=north]{${\sigma_{j_{n-k}}}$};
\end{scriptsize}
\end{tikzpicture}

\noindent where we have filled the discs with $\sigma_j$ instead of $f_{\sigma_j}$ to simplify. Thus, since $sm_{\MB}$ satisfies the Stasheff identities, we have that 
\begin{equation}
    \begin{split}
        &\sum\limits_{k=2}^{n-1}(-1)^k\sum\limits_{(\Bar{i},\Bar{j})\in\mathcal{P}_k^n}(-1)^{\Delta}\Bar{\ell}^{n-k+1}(\Bar{\ell}^k(f_{i_k},\dots,f_{i_1}),f_{j_{n-k}},\dots,f_{j_1})
        \\&-\sum\limits_{i=1}^n(-1)^{\delta+\delta'}\sum\limits_{\sigma\in\mathfrak{S}_n}(-1)^{\delta^{\sigma}(m_{\MB}\upperset{M}{\circ} sf_i,sf_n,\dots,\hat{f_i},\dots,sf_1)}\psi^{\sigma}_{sm_{\MB}}(m_{\MB}\upperset{M}{\circ} sf_i,sf_n,\dots,\hat{f_i},\dots,sf_1)
        \\&+(-1)^{n+\sum\limits_{i=1}^n(i-1)|sf_i|}
        \sum\limits_{\sigma\in\mathfrak{S}_n}(-1)^{\delta^{\sigma}(sf_n,\dots,sf_1)}\big(sm_{\MB}\upperset{M}{\circ} \psi_{sm_{\MB}}^{\sigma}(sf_n\dots,sf_1)\big)=0.
    \end{split}
\end{equation}
Therefore, $\Bar{\ell}$ satisfies the higher Jacobi identity and $(C_{\Phi_0}(\MA,\MB),\Bar{\ell})$ is a graded $L_{\infty}$-algebra. We now show that the Maurer-Cartan elements of this $L_{\infty}$-algebra are in bijection with the $A_{\infty}$-morphisms between $(\MA,sm_{\MA})$ and $(\MB,sm_{\MB})$. 

By Lemma \ref{lemma:MC-A-inf}, one can define a Maurer-Cartan element of $(C_{\Phi_0}(\MA,\MB),\Bar{\ell})$ as a degree $1$ element $\mathbf{f}\in C_{\Phi_0}(\MA,\MB)$, \textit{i.e.} a degree $0$ element $s\mathbf{f}\in C_{\Phi_0}(\MA,\MB)[1]$, such that 
\begin{equation}
     \pi_{\bar{x}}\circ \sum\limits_{n=1}^{\infty}\frac{1}{n!}\Bar{\ell}^n(\mathbf{f},\dots,\mathbf{f})=0
\end{equation}
for every $\bar{x}\in\bar{\MO}_{\MA}$ which precisely gives that 
\[
\big((m_{\MB}\upperset{M}{\circ} s\mathbf{f})-(\mathbf{f}\upperset{G}{\circ}sm_{\MA})\big)^{\bar{x}}=0
\]
for every $\bar{x}\in\bar{\MO}_{\MA}$, \textit{i.e.} $(\Phi_0,s\mathbf{f})$ is an $A_{\infty}$-morphism. Reciprocally, given an $A_{\infty}$-morphism $(\Phi_0,s\mathbf{f}):(\MA,sm_{\MA})\rightarrow (\MB,sm_{\MB}$), $\mathbf{f}$ is clearly a Maurer-Cartan element of $(C_{\Phi_0}(\MA,\MB),\Bar{\ell})$.
\end{proof}

\begin{definition}
    Consider $A_{\infty}$-categories $(\MA,sm_{\MA})$, $(\MB,sm_{\MB})$ and a map between the sets of objects of the underlying graded quivers $\Phi_0 : \MO_{\MA}\rightarrow \MO_{\MB}$. Two $A_{\infty}$-morphisms $(\Phi_0,s\mathbf{f}),(\Phi_0,s\mathbf{g}) : (\MA,sm_{\MA}) \rightarrow (\MB,sm_{\MB})$ are \textbf{\textcolor{ultramarine}{homotopic}} if the corresponding Maurer-Cartan elements of $(C_{\Phi_0}(\MA,\MB),\Bar{\ell})$ are homotopic in the sense of Definition \ref{def:htpic-MC-elts}.
    Moreover, an $A_{\infty}$-morphism $(\Phi_0,s\mathbf{f}) : (\MA,sm_{\MA}) \rightarrow (\MB,sm_{\MB})$ is a \textbf{\textcolor{ultramarine}{homotopy equivalence}} if there exists an $A_{\infty}$-morphism $(\Psi_0,s\mathbf{g}) :(\MB,sm_{\MB}) \rightarrow (\MA,sm_{\MA})$ such that $\Psi_0$ is the inverse of $\Phi_0$, $s\mathbf{g}\circ s\mathbf{f}$ is homotopic to $\id_{\MA}$ and $s\mathbf{f}\circ s\mathbf{g}$ is homotopic to $\id_{\MB}$. 
\end{definition}

\begin{proposition}
    Consider $A_{\infty}$-categories $(\MA,sm_{\MA})$, $(\MB,sm_{\MB})$ and a map $\Phi_0 : \MO_{\MA}\rightarrow \MO_{\MB}$ between the sets of objects of the underlying graded quivers. 
    
    \noindent Then, if two $A_{\infty}$-morphisms $(\Phi_0,s\mathbf{f}),(\Phi_0,s\mathbf{g}) : (\MA,sm_{\MA}) \rightarrow (\MB,sm_{\MB})$ are homotopic, they are weakly homotopic.
\end{proposition}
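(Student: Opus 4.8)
The plan is to reduce the statement about homotopic $A_{\infty}$-morphisms (in the sense of the $L_{\infty}$-algebra $(C_{\Phi_0}(\MA,\MB),\Bar{\ell})$) to the statement about weak homotopy (in the sense of the Borisov $L_{\infty}$-algebra $\ell$ on $C(\MB)[1]\oplus C_{\Phi_0}(\MA,\MB)\oplus C(\MA)[1]$ from Definition \ref{def:A-inf-L-inf-borisov}). The key observation is that $(C_{\Phi_0}(\MA,\MB),\Bar{\ell})$ is, up to a shift, the $L_{\infty}$-algebra obtained from the Borisov $L_{\infty}$-algebra by fixing the two $A_{\infty}$-structures $sm_{\MA}$ and $sm_{\MB}$, i.e. by a twisting/restriction procedure. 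Concretely, the Maurer-Cartan element $(sm_{\MB},\mathbf{f},sm_{\MA})$ of $\ell$ decomposes as a ``background'' $(sm_{\MB},0,sm_{\MA})$ (itself Maurer-Cartan, since $sm_{\MA}$ and $sm_{\MB}$ are $A_{\infty}$-structures) plus the piece $\mathbf{f}$, and twisting $\ell$ by the background element yields precisely $\Bar{\ell}$ on the summand $C_{\Phi_0}(\MA,\MB)$. So the first step is to make this comparison explicit: show that for $\mathbf{f},\mathbf{g}\in C_{\Phi_0}(\MA,\MB)$, the twisted bracket $\ell^{(sm_{\MB},0,sm_{\MA})}$ restricted to $C_{\Phi_0}(\MA,\MB)$ agrees with $\Bar{\ell}$ (matching the formulas $\Bar{\ell}^1(\mathbf{f})^{\Bar{x}}=m_{\MB}^{\llt,\rrt}\circ sf^{\Bar{x}}-(-1)^{|s\mathbf{f}|}(\mathbf{f}\upperset{G}{\circ}sm_{\MA})^{\Bar{x}}$ with $\ell^2(sm_{\MB},\mathbf{f})+\ell^2(\mathbf{f},sm_{\MA})$, and the higher brackets $\Bar{\ell}^n$ with $\ell^{n+1}(sm_{\MB},\mathbf{f},\dots,\mathbf{f})$ divided by the appropriate factorial coming from the twisting sum); this is essentially a sign-bookkeeping exercise.

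Next, I would invoke the general principle that an $L_{\infty}$-morphism (here, the inclusion $C_{\Phi_0}(\MA,\MB)\hookrightarrow C(\MB)[1]\oplus C_{\Phi_0}(\MA,\MB)\oplus C(\MA)[1]$ composed with the twisting by the background Maurer-Cartan element, or even more simply the affine inclusion $\mathbf{f}\mapsto (sm_{\MB},\mathbf{f},sm_{\MA})$) sends homotopic Maurer-Cartan elements to homotopic Maurer-Cartan elements. Given a homotopy $\mathbf{a}(t)+\mathbf{b}(t)dt\in C_{\Phi_0}(\MA,\MB)[t,dt]$ between $\mathbf{f}$ and $\mathbf{g}$ as in Definition \ref{def:htpic-MC-elts}, I would produce the element $\big(sm_{\MB},\mathbf{a}(t),sm_{\MA}\big)+\big(0,\mathbf{b}(t),0\big)dt$ in $\big(C(\MB)[1]\oplus C_{\Phi_0}(\MA,\MB)\oplus C(\MA)[1]\big)[t,dt]$, and check it is a Maurer-Cartan element there with the correct endpoints at $t=0$ and $t=1$. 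The endpoints are immediate. For the Maurer-Cartan condition, I would use the characterisation in Remark \ref{remark:htpic-MC-elts-equiv}: $\mathbf{a}(t)$ must be Maurer-Cartan for $\ell$ for every $t$ (which holds because $\mathbf{a}(t)$ is Maurer-Cartan for $\Bar{\ell}$ and $sm_{\MA},sm_{\MB}$ are fixed $A_{\infty}$-structures, so $(sm_{\MB},\mathbf{a}(t),sm_{\MA})$ is Maurer-Cartan for $\ell$ by Proposition \ref{prop:A-inf-L-inf-borisov}), and the derivative equation $\partial_t\mathbf{a}(t)=\sum_n \tfrac{1}{(n-1)!}\Bar{\ell}^n(\mathbf{a}(t),\dots,\mathbf{a}(t),\mathbf{b}(t))$ must be transported to the corresponding equation for $\ell$. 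Here one uses that the $\ell$-brackets with one $\mathbf{b}(t)$ slot and several $\mathbf{a}(t)$ slots, together with at most one $sm_{\MA}$ or $sm_{\MB}$ inserted (which is exactly what the vanishing conditions in Definition \ref{def:A-inf-L-inf-borisov} allow), reproduce the $\Bar{\ell}$-brackets after summing over where $sm_{\MB}$ (or $sm_{\MA}$) sits; the $C(\MA)[1]$ and $C(\MB)[1]$ components of the derivative equation hold trivially since $\partial_t sm_{\MA}=\partial_t sm_{\MB}=0$ and the relevant $\ell$-brackets vanish.

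The main obstacle I anticipate is the precise sign and combinatorial matching between $\Bar{\ell}^n$ and the twisted bracket $\ell^{n+1}(sm_{\MB},\mathbf{f},\dots,\mathbf{f})$ — in particular keeping track of the factorials $\tfrac{1}{(n-1)!}$ versus $\tfrac{1}{n!}$, the Koszul signs $\epsilon_i$ and $\delta^{\sigma}$ in Definition \ref{def:A-inf-L-inf-borisov}, and the sign $\epsilon=\sum (i-1)|sf_i|$ in the definition of $\Bar{\ell}$ — and then propagating these through the $dt$-component of the Maurer-Cartan equation in $\mathcal{L}[t,dt]$, whose bracket formula in Definition \ref{def:htpic-MC-elts} carries its own sign $(-1)^{\sum_{j>i}|f_j|}$. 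Rather than re-deriving everything, I would phrase the argument as: the affine map $\iota:\mathbf{f}\mapsto(sm_{\MB},\mathbf{f},sm_{\MA})$ identifies $(C_{\Phi_0}(\MA,\MB),\Bar{\ell})$ with the twist of the Borisov $L_{\infty}$-algebra at the Maurer-Cartan element $(sm_{\MB},0,sm_{\MA})$ restricted to the middle summand, hence is (the linear part of) an $L_{\infty}$-morphism, and $L_{\infty}$-morphisms preserve homotopy of Maurer-Cartan elements (applying $\iota[t,dt]$ to the homotopy, which is legitimate because $\iota$ commutes with base change to $\kk[t,dt]$ and with the differential $\partial$). This packages the sign computation into the single verification that $\iota$ is a twist-restriction, which is where the real work lies but which is routine given the explicit formulas already written out in the proof of Proposition \ref{prop:A-inf-L-inf}.
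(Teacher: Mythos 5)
Your proposal is correct and follows essentially the same route as the paper: the lifted homotopy $\big(sm_{\MB},\mathbf{a}(t),sm_{\MA}\big)+\big(0,\mathbf{b}(t),0\big)dt$ is exactly the element the paper constructs, and the verification you outline (endpoints, Maurer-Cartan property of $(sm_{\MB},\mathbf{a}(t),sm_{\MA})$, and matching the $dt$-component of the two Maurer-Cartan equations via the vanishing conditions in Definition \ref{def:A-inf-L-inf-borisov}) is precisely the paper's argument, carried out there by comparing the corresponding diagrams. Your framing of $\Bar{\ell}$ as the twist of the Borisov bracket at the background Maurer-Cartan element $(sm_{\MB},0,sm_{\MA})$ is a slightly more conceptual packaging of the same computation, not a different proof.
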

\begin{proof}
    Let $(\Phi_0,s\mathbf{f}),(\Phi_0,s\mathbf{g}) : (\MA,sm_{\MA}) \rightarrow (\MB,sm_{\MB})$ be two homotopic morphisms. Following Remark \ref{remark:htpic-MC-elts-equiv}, we thus consider $a(t)+b(t)dt$ such that $sa(t)$ is an $A_{\infty}$-morphism between $(\MA,sm_{\MA})$ and $(\MB,sm_{\MB})$ for all $t\in\Bbbk$, $a(0)=\mathbf{f}$, $a(1)=\mathbf{g}$ and 
    \begin{equation}
        \frac{\partial a}{\partial t}(t)=\sum\limits_{n=1}^{\infty}\frac{1}{(n-1)!}\Bar{\ell}^n\big(a(t),\dots,a(t),b(t)\big)
    \end{equation}
    which gives by definition of $\Bar{\ell}$ that $\frac{\partial a}{\partial t}(t)^{\Bar{x}}=\sum\mathcal{E}(\mathcalboondox{D})-\sum\mathcal{E}(\mathcalboondox{D'})$ where the sums are over all the filled diagrams $\mathcalboondox{D}$ and $\mathcalboondox{D'}$ of type $\Bar{x}$ and of the form 

    \begin{minipage}{21cm}
        \begin{tikzpicture}[line cap=round,line join=round,x=1.0cm,y=1.0cm]
\clip(-3,-1.8) rectangle (4.5,1.8);
     \draw [line width=0.5pt] (0.,0.) circle (0.5cm);
    \draw [rotate=0] [->, >= stealth, >= stealth] (0.5,0) -- (0.9,0);
    \draw [rotate=-90] [<-, >= stealth, >= stealth](0.5,0) -- (0.9,0);
    \draw [rotate=90] [<-, >= stealth, >= stealth](0.5,0) -- (0.9,0);
    \draw [rotate=-135] [<-, >= stealth, >= stealth](0.5,0) -- (0.9,0);
    \draw [rotate=180] [<-, >= stealth, >= stealth](0.5,0) -- (0.9,0);
    \draw [rotate=135] [<-, >= stealth, >= stealth](0.5,0) -- (0.9,0);
    \draw [line width=0.5pt] (0.,-1.2) circle (0.3cm);
    \draw[rotate around={-90:(0,-1.2)},shift={(0.3,-1.2)}]\doublefleche;
     \draw [line width=0.5pt] (0.,1.2) circle (0.3cm);
    \draw[rotate around={90:(0,1.2)},shift={(0.3,1.2)}]\doublefleche;
    \draw [line width=0.5pt] (-1.2,0) circle (0.3cm);
     \draw[rotate around={180:(-1.2,0)},shift={(-0.9,0)}]\doublefleche;
    \draw [line width=0.5pt] (-0.84,0.84) circle (0.3cm);
    \draw[rotate around={135:(-0.84,0.84)},shift={(-0.54,0.84)}]\doublefleche;
    \draw [line width=0.5pt] (-0.84,-0.84) circle (0.3cm);
    \draw[rotate around={-135:(-0.84,-0.84)},shift={(-0.54,-0.84)}]\doublefleche;
\begin{scriptsize}
\draw [fill=black] (-0.24,-0.57) circle (0.3pt);
\draw [fill=black] (-0.3,-0.5) circle (0.3pt);
\draw [fill=black] (-0.15,-0.6) circle (0.3pt);
\draw [fill=black] (-0.24,0.57) circle (0.3pt);
\draw [fill=black] (-0.3,0.5) circle (0.3pt);
\draw [fill=black] (-0.15,0.6) circle (0.3pt);
\end{scriptsize}
\draw(0,0.2)node[anchor=north]{$m_{\MB}$};
\draw(-0.84,1.09)node[anchor=north]{$\scriptstyle{a(t)}$};
\draw(-0.84,-0.59)node[anchor=north]{$\scriptstyle{a(t)}$};
\draw(0,-0.95)node[anchor=north]{$\scriptstyle{a(t)}$};
\draw(0,1.45)node[anchor=north]{$\scriptstyle{a(t)}$};
\draw(-1.2,0.25)node[anchor=north]{$\scriptstyle{b(t)}$};
\draw(4,0.25)node[anchor=north]{and};
\end{tikzpicture}
\begin{tikzpicture}[line cap=round,line join=round,x=1.0cm,y=1.0cm]
\clip(-3,-1.8) rectangle (4,1.8);
    \draw [line width=0.5pt] (0.,0.) circle (0.5cm);
    \shadedraw [rotate=180, shift={(0.5cm,0cm)}] \doublefleche;
    \draw [line width=0.5pt] (1.5,0.) circle (0.5cm);
    \shadedraw[shift={(1cm,0cm)},rotate=180] \doubleflechescindeeleft;
    \shadedraw[shift={(1cm,0cm)},rotate=180] \doubleflechescindeeright;
    \shadedraw[shift={(1cm,0cm)},rotate=180] \fleche;
    \draw [rotate around={0:(1.5,0)}] [->,, >= stealth, >= stealth](2,0) -- (2.4,0);
    \draw(1.5,0.25)node[anchor=north]{$b(t)$};
    \draw(0,0.2)node[anchor=north]{$m_{\MA}$};
\end{tikzpicture}
    \end{minipage}

    \noindent respectively.
    We now consider $A(t)=(sm_{\MB},a(t),sm_{\MA})$ and $B(t)=(0,b(t),0)$. Then, we have that $A(t)+B(t)dt$ is a weak homotopy between the Maurer-Cartan elements $(sm_{\MB},\mathbf{f},sm_{\MA})$ and $(sm_{\MB},\mathbf{g},sm_{\MA})$. Indeed, it is clear that $A(t)$ is a Maurer-Cartan element of the graded Lie algebra $C(\MB)[1]\oplus C_{\Phi_0}(\MA,\MB)\oplus C(\MA)[1]$ and that we have $A(0)=(sm_{\MB},\mathbf{f},sm_{\MA})$ and $A(1)=(sm_{\MB},\mathbf{g},sm_{\MA})$. Moreover, we have that 
    \begin{equation}
        \begin{split}
           &\sum\limits_{n=2}^{\infty}\frac{1}{(n-1)!}\ell^n\big(A(t),\dots,A(t),B(t)\big)
           \\&=\bigg(0,\sum\limits_{n=2}^{\infty}\frac{1}{(n-1)!}\sum\limits_{i=1}^{n-1}\ell^n\big(\underbrace{a(t),\dots,a(t)}_{i\text{ times }},sm_{\MB},a(t),\dots,a(t),b(t)\big)
           \\&\phantom{=}\hskip2cm+\sum\limits_{n=2}^{\infty}\frac{1}{(n-1)!}\sum\limits_{i=1}^{n-1}\ell^n\big(\underbrace{a(t),\dots,a(t)}_{i\text{ times }},sm_{\MA},a(t),\dots,a(t),b(t)\big),0\bigg)
        \end{split}
    \end{equation}
    which is by definition of $\ell$ equal to $\sum\mathcal{E}(\mathcalboondox{D})-\sum\mathcal{E}(\mathcalboondox{D'})$.
    On the other hand, $\frac{\partial A}{\partial t}(t)=(0,\frac{\partial a}{\partial t}(t),0)$ which concludes the proof. 
\end{proof}

\begin{proposition}
\label{prop:htpic-A-inf-1}
     Consider $A_{\infty}$-categories $(\MA,sm_{\MA})$, $(\MB,sm_{\MB})$ and $(\mathcal{C},sm_{\mathcal{C}})$ and maps $\Phi_0 : \MO_{\MA}\rightarrow \MO_{\MB}$ and $\Psi_0 : \MO_{\MB}\rightarrow \MO_{\mathcal{C}}$. Then, if $(\Phi_0,s\mathbf{f}),(\Phi_0,s\mathbf{g}) : (\MA,sm_{\MA}) \rightarrow (\MB,sm_{\MB})$ are homotopic $A_{\infty}$-morphisms and $(\Psi_0,s\mathbf{h}) : (\MB,sm_{\MB})\rightarrow (\mathcal{C},sm_{\mathcal{C}})$ is an $A_{\infty}$-morphism, $(\Psi_0\circ \Phi_0,s\mathbf{h}\circ s\mathbf{f})$ and $(\Psi_0\circ \Phi_0,s\mathbf{h}\circ s\mathbf{g})$ are homotopic.
\end{proposition}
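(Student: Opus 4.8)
The plan is to obtain the homotopy between $s\mathbf{h}\circ s\mathbf{f}$ and $s\mathbf{h}\circ s\mathbf{g}$ by post-composing the given homotopy with $\mathbf{h}$. By Remark \ref{remark:htpic-MC-elts-equiv}, the hypothesis that $(\Phi_0,s\mathbf{f})$ and $(\Phi_0,s\mathbf{g})$ are homotopic provides an element $a(t)+b(t)dt\in C_{\Phi_0}(\MA,\MB)[t,dt]$ with $a(0)=\mathbf{f}$, $a(1)=\mathbf{g}$, such that $(\Phi_0,sa(t)):(\MA,sm_{\MA})\to(\MB,sm_{\MB})$ is an $A_{\infty}$-morphism for every $t\in\kk$ and
\begin{equation}
\frac{\partial a}{\partial t}(t)=\sum_{n=1}^{\infty}\frac{1}{(n-1)!}\,\bar{\ell}^{n}\big(a(t),\dots,a(t),b(t)\big).
\end{equation}
First I would let $A(t)$ be the element of $C_{\Psi_0\circ\Phi_0}(\MA,\mathcal{C})$ whose suspension $sA(t)=s\mathbf{h}\circ sa(t)$ is the composite $A_{\infty}$-morphism; since composites of $A_{\infty}$-morphisms are $A_{\infty}$-morphisms, Proposition \ref{prop:A-inf-L-inf} shows that $A(t)$ is a Maurer--Cartan element of $(C_{\Psi_0\circ\Phi_0}(\MA,\mathcal{C}),\bar{\ell})$ for every $t$, with $sA(0)=s\mathbf{h}\circ s\mathbf{f}$ and $sA(1)=s\mathbf{h}\circ s\mathbf{g}$. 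For the $dt$-component I would take $B(t)$ to be the part of $s\mathbf{h}\circ\big(sa(t)+sb(t)dt\big)$ that is linear in $dt$; concretely, $B(t)^{\bar{x}}=\sum\mathcal{E}(\mathcalboondox{D})$, the sum over all filled diagrams $\mathcalboondox{D}$ of type $\bar{x}$ consisting of a single disc filled with $\mathbf{h}$ that is fed by discs filled with $a(t)$ and exactly one disc filled with $b(t)$. Then $A(t)+B(t)dt$ has the degree making it an element of $C_{\Psi_0\circ\Phi_0}(\MA,\mathcal{C})[t,dt]$ of degree $1$, with the prescribed endpoints, so by Remark \ref{remark:htpic-MC-elts-equiv} and the above it suffices to establish
\begin{equation}
\frac{\partial A}{\partial t}(t)=\sum_{n=1}^{\infty}\frac{1}{(n-1)!}\,\bar{\ell}^{n}\big(A(t),\dots,A(t),B(t)\big).
\end{equation}

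To prove this identity I would expand both sides diagrammatically. The left-hand side is computed by the Leibniz rule applied to the composition defining $s\mathbf{h}\circ sa(t)$: as $\mathbf{h}$ is independent of $t$, $\tfrac{\partial A}{\partial t}(t)$ is a signed sum over inserting $\tfrac{\partial a}{\partial t}(t)$ into a single input slot of the $\mathbf{h}$-disc, into which one substitutes the homotopy equation above, producing diagrams with an inner disc filled with $m_{\MB}$ (from the $m_{\MB}$-part of $\bar{\ell}^{m}$) and diagrams with discs carrying $sm_{\MA}$ (from the $\upperset{G}{\circ}sm_{\MA}$-part). On the right-hand side, $\bar{\ell}^{1}(B(t))$ unfolds, by definition of $\bar{\ell}^{1}$, into diagrams with $m_{\mathcal{C}}$ on top of the $\mathbf{h}$-disc of $B(t)$ together with diagrams with $sm_{\MA}$ feeding into $B(t)$, while for $n\geq 2$ the term $\bar{\ell}^{n}(A(t),\dots,A(t),B(t))$ unfolds into diagrams with an $m_{\mathcal{C}}$-disc fed by $n-1$ discs each of which is an $\mathbf{h}$-disc fed by discs filled with $a(t)$, and one further $\mathbf{h}$-disc fed by discs filled with $a(t)$ and exactly one disc filled with $b(t)$. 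The crucial step is to use the $A_{\infty}$-morphism equation for $\mathbf{h}$, namely $m_{\mathcal{C}}\upperset{M}{\circ} s\mathbf{h}=\mathbf{h}\upperset{G}{\circ}sm_{\MB}$, to slide the outermost $m_{\mathcal{C}}$-disc past the $\mathbf{h}$-discs, where it becomes an inner $m_{\MB}$-disc; after this substitution the right-hand side reorganises exactly into $\mathbf{h}$ post-composed with the diagrams occurring in $\bar{\ell}^{m}(a(t),\dots,a(t),b(t))$, that is, into the left-hand side, while the $sm_{\MA}$-contributions produced by $\bar{\ell}^{1}$ on the two sides match term by term.

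I expect the main obstacle to be the bookkeeping of signs and combinatorial coefficients in this last identity: one must check that the Koszul signs, the sign exponents $\delta^{\sigma}$ and $\epsilon$ entering the definition of $\bar{\ell}$, and the coefficients $\tfrac{1}{(n-1)!}$ and $\tfrac{1}{(m-1)!}$ all line up once the outer $m_{\mathcal{C}}$ has been absorbed through the $A_{\infty}$-relation for $\mathbf{h}$ and the sums over the symmetric groups have been regrouped (as usual, these sums absorb the factorials). Granting this, Definition \ref{def:htpic-MC-elts} shows that $A(t)+B(t)dt$ is a homotopy between the Maurer--Cartan elements corresponding to $(\Psi_0\circ\Phi_0,s\mathbf{h}\circ s\mathbf{f})$ and $(\Psi_0\circ\Phi_0,s\mathbf{h}\circ s\mathbf{g})$, hence these two $A_{\infty}$-morphisms are homotopic.
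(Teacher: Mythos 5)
Your proposal is correct and follows essentially the same route as the paper: you construct the same candidate homotopy, namely $sA(t)=s\mathbf{h}\circ sa(t)$ together with the $dt$-component given by diagrams consisting of one $\mathbf{h}$-disc fed by $a(t)$-discs and exactly one $b(t)$-disc, and you verify the homotopy equation diagrammatically using the $A_{\infty}$-morphism relation for $\mathbf{h}$. The only point your sketch glosses over is that the terms in which $m_{\MA}$ feeds an $a(t)$-disc do not cancel directly against anything on the other side; as in the paper, they must first be converted into $m_{\MB}$-terms via the Maurer--Cartan equation for $a(t)$ before the $A_{\infty}$-relation for $\mathbf{h}$ can absorb them together with the $m_{\mathcal{C}}$-terms.
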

\begin{proof}
    Consider a homotopy $a(t)+b(t)dt$ between the Maurer-Cartan elements associated to the $A_{\infty}$-morphisms $(\Phi_0,s\mathbf{f})$ and $(\Phi_0,s\mathbf{g})$. For any $t\in\Bbbk$ we define $sx(t)$ as the composition $s\mathbf{h} \circ sa(t)$. It is clear that this is an $A_{\infty}$-morphism and that $sx(0)=s\mathbf{h}\circ s\mathbf{f}$ and $sx(1)=s\mathbf{h}\circ s\mathbf{g}$. Moreover, we define $sy(t)^{\Bar{x}}=\sum\mathcal{E}(\mathcalboondox{D})$ where the sum is over all the filled diagrams of type $\Bar{x}$ and of the form


    \end{minipage}
    
\noindent respectively. 

Since $(\Psi_0,s\mathbf{h})$ is an $A_{\infty}$-morphism we have $\sum\mathcal{E}(\mathcalboondox{D'_1})+\sum\mathcal{E}(\mathcalboondox{D'_3})-\sum\mathcal{E}(\mathcalboondox{D'_4})=\sum\mathcal{E}(\mathcalboondox{D_1})$ which concludes the proof.
\end{proof}

\begin{proposition}
\label{prop:htpic-A-inf-2}
    Consider $A_{\infty}$-categories $(\MA,sm_{\MA})$, $(\MB,sm_{\MB})$ and $(\mathcal{C},sm_{\mathcal{C}})$ and maps $\Phi_0 : \MO_{\MA}\rightarrow \MO_{\MB}$ and $\Psi_0 : \MO_{\MB}\rightarrow \MO_{\mathcal{C}}$. Then, if $(\Phi_0,s\mathbf{f}),(\Phi_0,s\mathbf{g}) : (\MA,sm_{\MA}) \rightarrow (\MB,sm_{\MB})$ are homotopic $A_{\infty}$-morphisms and $(\Psi_0,s\mathbf{h}) : (\mathcal{C},sm_{\mathcal{C}})\rightarrow (\MA,sm_{\MA})$ is an $A_{\infty}$-morphism, $(\Phi_0\circ \Psi_0,s\mathbf{f}\circ s\mathbf{h})$ and $(\Phi_0\circ \Psi_0,s\mathbf{g}\circ s\mathbf{h})$ are homotopic.
\end{proposition}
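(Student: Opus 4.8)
The plan is to realise right composition with the $A_\infty$-morphism $(\Psi_0,s\mathbf{h})$ as a \emph{strict} morphism of the $L_\infty$-algebras of Proposition~\ref{prop:A-inf-L-inf}, and then to use the standard fact that a strict $L_\infty$-morphism preserves the homotopy relation on Maurer--Cartan elements. First I would introduce the map
\[
\Theta : C_{\Phi_0}(\MA,\MB)\longrightarrow C_{\Phi_0\circ\Psi_0}(\mathcal{C},\MB),\qquad \Theta(\mathbf{u})=\mathbf{u}\circ\mathbf{h},
\]
where $\mathbf{u}\circ\mathbf{h}$ denotes the precomposition of the collection $\mathbf{u}$ with $\mathbf{h}$, so that $s(\mathbf{u}\circ\mathbf{h})=s\mathbf{u}\circ s\mathbf{h}$ on the bar constructions and $(\mathbf{u}\circ\mathbf{h})_n=\sum_{i_1+\dots+i_k=n}\mathbf{u}_k\circ(\mathbf{h}_{i_1}\otimes\dots\otimes\mathbf{h}_{i_k})$. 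A degree count shows that $\Theta$ is well defined, linear and degree-preserving; note that, unlike left composition, $\Theta$ is genuinely linear in $\mathbf{u}$, which is the feature that makes the right-composition case lighter than Proposition~\ref{prop:htpic-A-inf-1}.

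The main step is to check that $\Theta$ intertwines the $\infty$-bracket, i.e. that $\bar{\ell}^n(\Theta(f_n),\dots,\Theta(f_1))=\Theta(\bar{\ell}^n(f_n,\dots,f_1))$ for all $n\geq 1$. For $n\geq 2$ this is visible from the diagrammatic description: $\bar{\ell}^n(f_n,\dots,f_1)$ is a signed sum of diagrams carrying a single disc filled with $m_{\MB}$ at the root, with the discs filled with $f_{\sigma_i}$ attached to its incoming arrows, and precomposing with $\mathbf{h}$ simply grafts discs filled with $\mathbf{h}$ onto all external incoming arrows; since none of these arrows feeds $m_{\MB}$ directly, the discs filled with $\mathbf{h}$ lying below the disc filled with $f_{\sigma_i}$ recombine into a disc filled with $f_{\sigma_i}\circ\mathbf{h}$, while the signs $\epsilon$, $\delta^\sigma$ depend only on the unchanged degrees $|sf_i|$. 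For $n=1$ one has $\bar{\ell}^1(f)=m_{\MB}\upperset{M}{\circ}sf-(-1)^{|sf|}(f\upperset{G}{\circ}sm_{\MA})$; the first summand commutes with precomposition by associativity of the $M$-composition, $(m_{\MB}\upperset{M}{\circ}sf)\circ\mathbf{h}=m_{\MB}\upperset{M}{\circ}s(f\circ\mathbf{h})$, whereas in the second summand the inserted $m_{\MA}$ now receives its inputs through discs filled with $\mathbf{h}$, producing the block $m_{\MA}\upperset{M}{\circ}s\mathbf{h}$. This is the only place the hypothesis on $\mathbf{h}$ enters: since $(\Psi_0,s\mathbf{h})$ is an $A_\infty$-morphism, $\bar{\ell}^1(\mathbf{h})=0$, equivalently $m_{\MA}\upperset{M}{\circ}s\mathbf{h}=(-1)^{|s\mathbf{h}|}(\mathbf{h}\upperset{G}{\circ}sm_{\mathcal{C}})$, and substituting turns ``$f$ with one slot fed by $m_{\MA}(\mathbf{h},\dots,\mathbf{h})$ and the remaining slots fed by copies of $\mathbf{h}$'' into ``$(f\circ\mathbf{h})$ with $m_{\mathcal{C}}$ inserted into one slot'', that is $(f\circ\mathbf{h})\upperset{G}{\circ}sm_{\mathcal{C}}$, up to the expected Koszul sign. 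This establishes that $\Theta$ is a strict morphism of $L_\infty$-algebras.

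To conclude, $\Theta$ extends coefficientwise to a strict $L_\infty$-morphism $C_{\Phi_0}(\MA,\MB)[t,dt]\to C_{\Phi_0\circ\Psi_0}(\mathcal{C},\MB)[t,dt]$ commuting with the differential $\partial$, hence it carries Maurer--Cartan elements to Maurer--Cartan elements; applying it to a homotopy $a(t)+b(t)dt$ between the Maurer--Cartan elements attached to $(\Phi_0,s\mathbf{f})$ and $(\Phi_0,s\mathbf{g})$ produces the Maurer--Cartan element $(a(t)\circ\mathbf{h})+(b(t)\circ\mathbf{h})dt$ of $C_{\Phi_0\circ\Psi_0}(\mathcal{C},\MB)[t,dt]$, whose values at $t=0$ and $t=1$ are the Maurer--Cartan elements attached to $(\Phi_0\circ\Psi_0,s\mathbf{f}\circ s\mathbf{h})$ and $(\Phi_0\circ\Psi_0,s\mathbf{g}\circ s\mathbf{h})$; by Definition~\ref{def:htpic-MC-elts} these two morphisms are homotopic. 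If one prefers to stay closer to the proof of Proposition~\ref{prop:htpic-A-inf-1}, one can avoid the language of strict morphisms altogether: set $sx(t)=sa(t)\circ s\mathbf{h}$ and $sy(t)=sb(t)\circ s\mathbf{h}$, observe $sx(0)=s\mathbf{f}\circ s\mathbf{h}$ and $sx(1)=s\mathbf{g}\circ s\mathbf{h}$, and verify directly that $\frac{\partial x}{\partial t}(t)=\sum_{n\geq 1}\frac{1}{(n-1)!}\bar{\ell}^n(x(t),\dots,x(t),y(t))$ using $\frac{\partial x}{\partial t}(t)=\big(\frac{\partial a}{\partial t}(t)\big)\circ\mathbf{h}$ together with the intertwining identities above. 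I expect the hard part to be the careful bookkeeping of Koszul signs in the $n=1$ identity, where $\bar{\ell}^1(\mathbf{h})=0$ is invoked; the combinatorics of diagrams is easy here, because grafting discs filled with $\mathbf{h}$ never affects the disc filled with $m_{\MB}$ sitting at the root.
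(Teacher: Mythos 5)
Your proof is correct and is essentially the paper's argument: the paper takes exactly your ``alternative'' route, setting $sx(t)=sa(t)\circ s\mathbf{h}$ and defining $sy(t)$ by grafting discs filled with $\mathbf{h}$ onto the inputs of $b(t)$, then verifying the homotopy equation directly, with the two diagram identities you describe --- linearity of precomposition to recombine the $a(t)$'s fed by $\mathbf{h}$'s into $x(t)$'s in the $m_{\MB}$-rooted terms, and the $A_{\infty}$-morphism equation for $\mathbf{h}$ to convert the inserted $m_{\MA}$ into an inserted $m_{\mathcal{C}}$ --- doing all the work. Packaging these identities as the statement that $\mathbf{u}\mapsto\mathbf{u}\circ\mathbf{h}$ is a strict $L_{\infty}$-morphism, which then preserves Maurer--Cartan homotopies coefficientwise over $\kk[t,dt]$, is a tidy reorganization of the same computation rather than a different proof.
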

\begin{proof}
    Consider a homotopy $a(t)+b(t)dt$ between the Maurer-Cartan elements associated to the $A_{\infty}$-morphisms $(\Phi_0,s\mathbf{f})$ and $(\Phi_0,s\mathbf{g})$. For any $t\in\Bbbk$ we define $sx(t)=sa(t)\circ s\mathbf{h}$. It is clear that this is an $A_{\infty}$-morphism and that $sx(0)=s\mathbf{f}\circ s\mathbf{h}$ and $sx(1)=s\mathbf{g}\circ s\mathbf{h}$.
    Moreover, we define $sy(t)^{\Bar{x}}=\sum\mathcal{E}(\mathcalboondox{D})$ where the sum is over all the filled diagrams of type $\Bar{x}$ and of the form

        \begin{tikzpicture}[line cap=round,line join=round,x=1.0cm,y=1.0cm]
\clip(-7,-1.5) rectangle (4,1.5);
     \draw [line width=0.5pt] (0.,0.) circle (0.5cm);
    \draw [rotate=0] [->, >= stealth, >= stealth] (0.5,0) -- (0.9,0);
    \draw [rotate=-135] [<-, >= stealth, >= stealth](0.5,0) -- (0.9,0);
    \draw [rotate=135] [<-, >= stealth, >= stealth](0.5,0) -- (0.9,0);
    \draw [line width=0.5pt] (-0.84,0.84) circle (0.3cm);
    \draw[rotate around={135:(-0.84,0.84)},shift={(-0.54,0.84)}]\doublefleche;
    \draw [line width=0.5pt] (-0.84,-0.84) circle (0.3cm);
    \draw[rotate around={-135:(-0.84,-0.84)},shift={(-0.54,-0.84)}]\doublefleche;
\begin{scriptsize}
\draw [fill=black] (-0.55,-0.2) circle (0.3pt);
\draw [fill=black] (-0.55,0.2) circle (0.3pt);
\draw [fill=black] (-0.6,0) circle (0.3pt);
\end{scriptsize}
\draw(0,0.25)node[anchor=north]{$b(t)$};
\draw(-0.84,1.09)node[anchor=north]{$\mathbf{h}$};
\draw(-0.84,-0.59)node[anchor=north]{$\mathbf{h}$};
\end{tikzpicture}
    
\noindent for every $\Bar{x}\in\Bar{\MO}_{\mathcal{C}}$. 
    Thus, we have that 
    \begin{equation}
        \sum\limits_{n=1}^{\infty}\frac{1}{(n-1)!}\Bar{\ell}^n\big(x(t),\dots,x(t),y(t)\big)^{\Bar{x}}=\sum\mathcal{E}(\mathcalboondox{D_1})-\sum\mathcal{E}(\mathcalboondox{D_2})
    \end{equation}
    where the sums are over all the filled diagrams $\mathcalboondox{D_1}$ and $\mathcalboondox{D_2}$ of type $\Bar{x}$ and of the form
    
   \begin{minipage}{21cm}
       \begin{tikzpicture}[line cap=round,line join=round,x=1.0cm,y=1.0cm]
\clip(-5,-1.8) rectangle (3,1.8);
           \draw (0,0) circle (0.5cm);
           \draw[rotate=0][->,>=stealth](0.5,0)--(0.9,0);
            \draw[rotate=180][<-,>=stealth](0.5,0)--(0.9,0);
            \draw[rotate=120][<-,>=stealth](0.5,0)--(0.9,0);
            \draw[rotate=-120][<-,>=stealth](0.5,0)--(0.9,0);
            \draw (-1.4,0) circle (0.5cm);
            \draw[rotate around={120:(-1.4,0)}][<-,>=stealth](-0.9,0)--(-0.5,0);
            \draw[rotate around={-120:(-1.4,0)}][<-,>=stealth](-0.9,0)--(-0.5,0);
        \draw (-2,1.03) circle (0.3cm);
        \draw (-2,-1.03) circle (0.3cm);
        \draw[rotate around={120:(-2,1.03)},shift={(-1.7,1.03)}]\doublefleche;
        \draw[rotate around={-120:(-2,-1.03)},shift={(-1.7,-1.03)}]\doublefleche;
        \draw (-0.6,1.03) circle (0.3cm);
        \draw[rotate around={120:(-0.6,1.03)},shift={(-0.3,1.03)}]\doublefleche;
        \draw (-0.6,-1.03) circle (0.3cm);
        \draw[rotate around={-120:(-0.6,-1.03)},shift={(-0.3,-1.03)}]\doublefleche;
        \draw (2,0.25)node[anchor=north]{and};
        \draw (0,0.2)node[anchor=north]{$m_{\mathcal{B}}$};
        \draw (-1.4,0.25)node[anchor=north]{$b(t)$};
        \draw (-2,1.28)node[anchor=north]{$\mathbf{h}$};
        \draw (-2,-0.78)node[anchor=north]{$\mathbf{h}$};
        \draw (-0.6,1.28)node[anchor=north]{$\scriptstyle{x(t)}$};
        \draw (-0.6,-0.78)node[anchor=north]{$\scriptstyle{x(t)}$};
\draw[fill=black] (-0.55,0.15) circle (0.3pt);
\draw[fill=black] (-0.5,0.3) circle (0.3pt);
\draw[fill=black] (-0.4,0.4) circle (0.3pt);
\draw[rotate=60][fill=black] (-0.55,0.15) circle (0.3pt);
\draw[rotate=60][fill=black] (-0.5,0.3) circle (0.3pt);
\draw[rotate=60][fill=black] (-0.4,0.4) circle (0.3pt);
\draw [fill=black] (-1.95,-0.2) circle (0.3pt);
\draw [fill=black] (-1.95,0.2) circle (0.3pt);
\draw [fill=black] (-2,0) circle (0.3pt);
       \end{tikzpicture}
              \begin{tikzpicture}[line cap=round,line join=round,x=1.0cm,y=1.0cm]
\clip(-3.2,-1.8) rectangle (1.3,1.8);
     \draw [line width=0.5pt] (0.,0.) circle (0.5cm);
    \draw [rotate=0] [->, >= stealth, >= stealth] (0.5,0) -- (0.9,0);
    \draw [rotate=-135] [<-, >= stealth, >= stealth](0.5,0) -- (0.9,0);
    \draw [rotate=180] [<-, >= stealth, >= stealth](0.5,0) -- (0.9,0);
    \draw [rotate=135] [<-, >= stealth, >= stealth](0.5,0) -- (0.9,0);
    \draw [line width=0.5pt] (-1.2,0) circle (0.3cm);
     \draw[rotate around={180:(-1.2,0)},shift={(-0.9,0)}]\doubleflechescindeeleft;
     \draw[rotate around={180:(-1.2,0)},shift={(-0.9,0)}]\doubleflechescindeeright;
     \draw[rotate around={180:(-1.2,0)},shift={(-0.9,0)}]\fleche;
     \draw (-2.5,0) circle (0.5cm);
     \draw[rotate around={180:(-2.5,0)},shift={(-2,0)}]\doublefleche;
    \draw [line width=0.5pt] (-0.84,0.84) circle (0.3cm);
    \draw[rotate around={135:(-0.84,0.84)},shift={(-0.54,0.84)}]\doublefleche;
    \draw [line width=0.5pt] (-0.84,-0.84) circle (0.3cm);
    \draw[rotate around={-135:(-0.84,-0.84)},shift={(-0.54,-0.84)}]\doublefleche;
\begin{scriptsize}
\draw [rotate=-45][fill=black] (-0.21,-0.55) circle (0.3pt);
\draw [rotate=-45][fill=black] (-0.3,-0.5) circle (0.3pt);
\draw [rotate=-45][fill=black] (-0.12,-0.57) circle (0.3pt);
\draw [rotate=45][fill=black] (-0.21,0.55) circle (0.3pt);
\draw [rotate=45][fill=black] (-0.3,0.5) circle (0.3pt);
\draw [rotate=45][fill=black] (-0.12,0.57) circle (0.3pt);
\end{scriptsize}
\draw(0,0.25)node[anchor=north]{$b(t)$};
\draw(-2.5,0.2)node[anchor=north]{$m_{\mathcal{C}}$};
\draw(-0.84,1.09)node[anchor=north]{$\mathbf{h}$};
\draw(-0.84,-0.59)node[anchor=north]{$\mathbf{h}$};
\draw(-1.2,0.25)node[anchor=north]{$\mathbf{h}$};
\end{tikzpicture}
   \end{minipage}

    \noindent respectively.
    
    On the other hand, $\frac{\partial x}{\partial t}(t)=\sum\mathcal{E}(\mathcalboondox{D'_1})-\sum\mathcal{E}(\mathcalboondox{D'_2})$
    where the sums are over all the filled diagrams $\mathcalboondox{D'_1}$ and $\mathcalboondox{D'_2}$ of type $\Bar{x}$ and of the form

    \begin{minipage}{21cm}
    \begin{tikzpicture}[line cap=round,line join=round,x=1.0cm,y=1.0cm]
\clip(-5,-2.8) rectangle (3,2.8);
           \draw (0,0) circle (0.5cm);
           \draw[rotate=0][->,>=stealth](0.5,0)--(0.9,0);
            \draw[rotate=180][<-,>=stealth](0.5,0)--(0.9,0);
            \draw[rotate=120][<-,>=stealth](0.5,0)--(0.9,0);
            \draw[rotate=-120][<-,>=stealth](0.5,0)--(0.9,0);
            \draw (-1.4,0) circle (0.5cm);
            \draw[rotate around={135:(-1.4,0)}][<-,>=stealth](-0.9,0)--(-0.5,0);
            \draw[rotate around={-135:(-1.4,0)}][<-,>=stealth](-0.9,0)--(-0.5,0);
        \draw (-2.25,0.85) circle (0.3cm);
        \draw (-2.25,-0.85) circle (0.3cm);
        \draw[rotate around={135:(-2.25,0.85)},shift={(-1.95,0.85)}]\doublefleche;
        \draw[rotate around={-135:(-2.25,-0.85)},shift={(-1.95,-0.85)}]\doublefleche;
        \draw (-0.6,1.03) circle (0.3cm);
        \draw[rotate around={90:(-0.6,1.03)}][<-,>=stealth](-0.3,1.03)--(0,1.03);
        \draw (-0.6,1.88) circle (0.25cm);
        \draw[rotate around={90:(-0.6,1.88)},shift={(-0.35,1.88)}] \doublefleche;
        \draw[rotate around={150:(-0.6,1.03)}][<-,>=stealth](-0.3,1.03)--(0,1.03);
        \draw (-1.34,1.455) circle (0.25cm);
        \draw[rotate around={150:(-1.34,1.455)},shift={(-1.09,1.455)}] \doublefleche;
        \draw (-0.6,-1.03) circle (0.3cm);
       \draw[rotate around={-90:(-0.6,-1.03)}][<-,>=stealth](-0.3,-1.03)--(0,-1.03);
        \draw (-0.6,-1.88) circle (0.25cm);
        \draw[rotate around={-90:(-0.6,-1.88)},shift={(-0.35,-1.88)}] \doublefleche;
        \draw[rotate around={-150:(-0.6,-1.03)}][<-,>=stealth](-0.3,-1.03)--(0,-1.03);
        \draw (-1.34,-1.455) circle (0.25cm);
        \draw[rotate around={-150:(-1.34,-1.455)},shift={(-1.09,-1.455)}] \doublefleche;
        \draw (2,0.25)node[anchor=north]{and};
        \draw (0,0.2)node[anchor=north]{$m_{\mathcal{B}}$};
        \draw (-1.4,0.25)node[anchor=north]{$b(t)$};
        \draw (-2.25,1.1)node[anchor=north]{$\mathbf{h}$};
        \draw (-2.25,-0.6)node[anchor=north]{$\mathbf{h}$};
        \draw (-0.6,1.28)node[anchor=north]{$\scriptstyle{a(t)}$};
        \draw (-0.6,-0.78)node[anchor=north]{$\scriptstyle{a(t)}$};
        \draw (-1.34,1.65)node[anchor=north]{$\scriptstyle{\mathbf{h}}$};
        \draw (-0.6,2.08)node[anchor=north]{$\scriptstyle{\mathbf{h}}$};
        \draw (-1.34,-1.25)node[anchor=north]{$\scriptstyle{\mathbf{h}}$};
        \draw (-0.6,-1.68)node[anchor=north]{$\scriptstyle{\mathbf{h}}$};
\draw[fill=black] (-0.55,0.15) circle (0.3pt);
\draw[fill=black] (-0.5,0.3) circle (0.3pt);
\draw[fill=black] (-0.4,0.4) circle (0.3pt);
\draw[rotate=60][fill=black] (-0.55,0.15) circle (0.3pt);
\draw[rotate=60][fill=black] (-0.5,0.3) circle (0.3pt);
\draw[rotate=60][fill=black] (-0.4,0.4) circle (0.3pt);
\draw [fill=black] (-1.95,-0.2) circle (0.3pt);
\draw [fill=black] (-1.95,0.2) circle (0.3pt);
\draw [fill=black] (-2,0) circle (0.3pt);
\draw [fill=black] (-0.72,1.43) circle (0.3pt);
\draw [fill=black] (-0.82,1.4) circle (0.3pt);
\draw [fill=black] (-0.88,1.32) circle (0.3pt);
\draw [fill=black] (-0.72,-1.43) circle (0.3pt);
\draw [fill=black] (-0.82,-1.4) circle (0.3pt);
\draw [fill=black] (-0.88,-1.32) circle (0.3pt);
        \end{tikzpicture}
\begin{tikzpicture}[line cap=round,line join=round,x=1.0cm,y=1.0cm]
\clip(-3,-2.8) rectangle (3,2.8);
           \draw (0,0) circle (0.5cm);
           \draw[rotate=0][->,>=stealth](0.5,0)--(0.9,0);
            \draw[rotate=180][<-,>=stealth](0.5,0)--(0.9,0);
            \draw[rotate=120][<-,>=stealth](0.5,0)--(0.9,0);
            \draw[rotate=-120][<-,>=stealth](0.5,0)--(0.9,0);
            \draw (-1.4,0) circle (0.5cm);
            \draw[rotate around={120:(-1.4,0)}][<-,>=stealth](-0.9,0)--(-0.5,0);
            \draw[rotate around={-120:(-1.4,0)}][<-,>=stealth](-0.9,0)--(-0.5,0);
        \draw (-2,1.03) circle (0.3cm);
        \draw (-2,-1.03) circle (0.3cm);
        \draw[rotate around={120:(-2,1.03)},shift={(-1.7,1.03)}]\doublefleche;
        \draw[rotate around={-120:(-2,-1.03)},shift={(-1.7,-1.03)}]\doublefleche;
        \draw (-0.6,1.03) circle (0.3cm);
        \draw[rotate around={120:(-0.6,1.03)},shift={(-0.3,1.03)}]\doublefleche;
        \draw (-0.6,-1.03) circle (0.3cm);
        \draw[rotate around={-120:(-0.6,-1.03)},shift={(-0.3,-1.03)}]\doublefleche;
        \draw (0,0.25)node[anchor=north]{$b(t)$};
        \draw (-1.4,0.2)node[anchor=north]{$m_{\mathcal{A}}$};
        \draw (-2,1.28)node[anchor=north]{$\mathbf{h}$};
        \draw (-2,-0.78)node[anchor=north]{$\mathbf{h}$};
        \draw (-0.6,1.28)node[anchor=north]{$\mathbf{h}$};
        \draw (-0.6,-0.78)node[anchor=north]{$\mathbf{h}$};
\draw[fill=black] (-0.55,0.15) circle (0.3pt);
\draw[fill=black] (-0.5,0.3) circle (0.3pt);
\draw[fill=black] (-0.4,0.4) circle (0.3pt);
\draw[rotate=60][fill=black] (-0.55,0.15) circle (0.3pt);
\draw[rotate=60][fill=black] (-0.5,0.3) circle (0.3pt);
\draw[rotate=60][fill=black] (-0.4,0.4) circle (0.3pt);
\draw [fill=black] (-1.95,-0.2) circle (0.3pt);
\draw [fill=black] (-1.95,0.2) circle (0.3pt);
\draw [fill=black] (-2,0) circle (0.3pt);
       \end{tikzpicture}
    \end{minipage}

    \noindent respectively. We have that $\sum\mathcal{E}(\mathcalboondox{D'_1})=\sum\mathcal{E}(\mathcalboondox{D_1})$.
    Moreover, since $(\Psi_0,s\mathbf{h})$ is an $A_{\infty}$-morphism we have that $\sum\mathcal{E}(\mathcalboondox{D'_2})=\sum\mathcal{E}(\mathcalboondox{D_2})$ which concludes the proof.
\end{proof}

We now recall that homotopy equivalences of $A_{\infty}$-morphisms coincide quasi-isomorphisms (see \cite{lefevre}), giving a proof similar as the one in \cite{kraft-schnitzer} for $L_{\infty}$-morphisms. 

\begin{theorem}\label{thm:equiv-htpy-qiso-A-inf}
     Consider $A_{\infty}$-categories $(\MA,sm_{\MA})$, $(\MB,sm_{\MB})$ and a map $\Phi_0 : \MO_{\MA}\rightarrow \MO_{\MB}$ between the sets of objects of the underlying graded quivers. 
     
     \noindent Then, an $A_{\infty}$-morphism $(\Phi_0,s\mathbf{f}) : (\MA,sm_{\MA}) \rightarrow (\MB,sm_{\MB})$ is a homotopy equivalence if and only if it is a quasi-isomorphism.
\end{theorem}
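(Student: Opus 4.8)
The plan is to prove the two implications separately, working throughout with the $L_{\infty}$-algebra $(C_{\Phi_0}(\MA,\MB),\bar{\ell})$ of Proposition \ref{prop:A-inf-L-inf} as the ambient object in which homotopies of $A_{\infty}$-morphisms live, along the lines of \cite{kraft-schnitzer}. For the implication \emph{homotopy equivalence $\Rightarrow$ quasi-isomorphism}, I would first record the elementary fact that homotopic $A_{\infty}$-morphisms have cochain-homotopic linear parts. Given a homotopy $a(t)+b(t)dt$ between $(\Phi_0,s\mathbf{f})$ and $(\Phi_0,s\mathbf{g})$, Remark \ref{remark:htpic-MC-elts-equiv} gives $\frac{\partial a}{\partial t}(t)=\sum_{n\geq 1}\frac{1}{(n-1)!}\bar{\ell}^n\big(a(t),\dots,a(t),b(t)\big)$; evaluating the $\bar{x}$-component for $\bar{x}\in\bar{\MO}_{\MA}$ with $\llg(\bar{x})=2$ — where, by the arrow-counting argument of Lemma \ref{lemma:MC-A-inf}, only the $n=1$ term survives — yields $\frac{\partial a^1}{\partial t}(t)=m_{\MB}^1\circ b^1(t)\pm b^1(t)\circ m_{\MA}^1$. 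Since each $a^1(t)$ is a cochain map (the arity-$2$ part of the Maurer-Cartan equation for $a(t)$), integrating over $t\in[0,1]$ exhibits $h=\pm\int_0^1 b^1(t)\,dt$ as a cochain homotopy between $f^1$ and $g^1$ on every hom-complex, so that $H(f^1)=H(g^1)$. If now $(\Phi_0,s\mathbf{f})$ is a homotopy equivalence with inverse $(\Psi_0,s\mathbf{g})$, then $\Psi_0=\Phi_0^{-1}$ is a bijection and, applying this to the homotopies $s\mathbf{g}\circ s\mathbf{f}\sim\id_{\MA}$ and $s\mathbf{f}\circ s\mathbf{g}\sim\id_{\MB}$ together with $(\mathbf{g}\circ\mathbf{f})^1=g^1\circ f^1$, we get $H(g^1)\circ H(f^1)=\id$ and $H(f^1)\circ H(g^1)=\id$ on every hom-complex; hence each $f^1$ is a quasi-isomorphism and $(\Phi_0,s\mathbf{f})$ is a quasi-isomorphism.

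For the converse, assume $(\Phi_0,s\mathbf{f})$ is a quasi-isomorphism, so that $\Phi_0$ is a bijection, and set $\Psi_0=\Phi_0^{-1}$. Over the field $\kk$, choose by homotopy transfer (see \cite{lefevre}) minimal models $\iota_{\MA}:(H\MA,sm_{H\MA})\to(\MA,sm_{\MA})$ and $\iota_{\MB}:(H\MB,sm_{H\MB})\to(\MB,sm_{\MB})$, identities on objects, together with $A_{\infty}$-morphisms $p_{\MA},p_{\MB}$ in the other direction satisfying $p_{\MA}\circ\iota_{\MA}=\id_{H\MA}$ and $\iota_{\MA}\circ p_{\MA}\sim\id_{\MA}$ (and likewise for $\MB$). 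The transferred morphism $\bar{\mathbf{f}}:=p_{\MB}\circ\mathbf{f}\circ\iota_{\MA}:(H\MA,sm_{H\MA})\to(H\MB,sm_{H\MB})$ has linear part $\bar{f}^1=H(f^1)$, which is an isomorphism by hypothesis. Since the source and target are minimal, the equations defining an inverse $A_{\infty}$-morphism can be solved recursively on arity, so $\bar{\mathbf{f}}$ has a two-sided inverse $A_{\infty}$-morphism $\bar{\mathbf{g}}:(H\MB,sm_{H\MB})\to(H\MA,sm_{H\MA})$ over $\Phi_0^{-1}$. Put $\mathbf{g}:=\iota_{\MA}\circ\bar{\mathbf{g}}\circ p_{\MB}:(\MB,sm_{\MB})\to(\MA,sm_{\MA})$, an $A_{\infty}$-morphism over $\Psi_0$. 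Using that homotopy of $A_{\infty}$-morphisms is a congruence for composition (Propositions \ref{prop:htpic-A-inf-1} and \ref{prop:htpic-A-inf-2}) together with $\iota_{\MA}\circ p_{\MA}\sim\id_{\MA}$, one gets
\[
\mathbf{g}\circ\mathbf{f}\sim\iota_{\MA}\circ\bar{\mathbf{g}}\circ(p_{\MB}\circ\mathbf{f}\circ\iota_{\MA})\circ p_{\MA}=\iota_{\MA}\circ\bar{\mathbf{g}}\circ\bar{\mathbf{f}}\circ p_{\MA}=\iota_{\MA}\circ p_{\MA}\sim\id_{\MA},
\]
and symmetrically $\mathbf{f}\circ\mathbf{g}\sim\id_{\MB}$; hence $(\Phi_0,s\mathbf{f})$ is a homotopy equivalence.

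The forward implication is essentially formal and uses only the arity-$2$ truncation of $\bar{\ell}$. The substance of the converse lies in the classical inputs drawn from \cite{lefevre} — existence of minimal models of $A_{\infty}$-categories with their quasi-inverses, and invertibility of $A_{\infty}$-morphisms with invertible linear part between minimal $A_{\infty}$-categories — and the main obstacle I expect is checking that the homotopies these constructions produce coincide with the present notion, namely homotopy of Maurer-Cartan elements of $C_{\Phi_0}(\MA,\MB)$ in the sense of Definition \ref{def:htpic-MC-elts}. Alternatively, one may bypass minimal models and, as in \cite{kraft-schnitzer}, construct $\mathbf{g}$ and the connecting Maurer-Cartan element of $C_{\Phi_0}(\MA,\MB)[t,dt]$ directly by an obstruction-theoretic induction along the length filtration, the successive obstruction classes vanishing because $f^1$ is a quasi-isomorphism; in that approach the delicate part becomes the bookkeeping of the diagrammatic signs in $\bar{\ell}$.
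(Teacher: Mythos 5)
Your proof follows essentially the same route as the paper's: the forward implication via chain-homotopy of the linear parts extracted from the Maurer-Cartan equation in $C_{\Phi_0}(\MA,\MB)[t,dt]$, and the converse via the decomposition of $\MA$ and $\MB$ into a minimal model plus a contractible summand, inversion of the induced morphism between minimal models, and the composition-stability Propositions \ref{prop:htpic-A-inf-1} and \ref{prop:htpic-A-inf-2}. The one step you explicitly defer --- verifying that $\iota_{\MA}\circ p_{\MA}$ is homotopic to $\id_{\MA}$ in the sense of Definition \ref{def:htpic-MC-elts} --- is precisely where the paper does the remaining work, exhibiting the explicit homotopy $a(t)(x,y)=(x,ty)$, $b(t)(x,y)=(0,h(y))$ built from a contracting homotopy $h$ of the contractible summand.
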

\begin{proof}
    Consider a homotopy equivalence $(\Phi_0,s\mathbf{f}) : (\MA,sm_{\MA}) \rightarrow (\MB,sm_{\MB})$. Then, there exists an $A_{\infty}$-morphism $(\Psi_0,s\mathbf{g}) :(\MB,sm_{\MB}) \rightarrow (\MA,sm_{\MA})$ such that $\Psi_0$ is the inverse of $\Phi_0$, $s\mathbf{g}\circ s\mathbf{f}$ is homotopic to $\id_{\MA}$ and $s\mathbf{f}\circ s\mathbf{g}$ is homotopic to $\id_{\MB}$. In particular, $sg^{x,y}\circ sf^{x,y}$ (resp. $sf^{x,y}\circ sg^{x,y}$) is chain homotopic to $\id_{{}_x\MA_y[1]}$ (resp. $\id_{{}_x\MB_y[1]}$) for any $x,y\in\MO_{\MA}$ (resp. $x,y\in\MO_{\MB}$) which means that both $(sf^{x,y})_{x,y\in\MO_{\MA}}$ and $(s
    g^{x,y})_{x,y\in\MO_{\MB}}$ are quasi-isomorphisms of graded quivers. By definition, $(\Phi_0,s\mathbf{f})$ is thus a quasi-isomorphism of $A_{\infty}$-categories.

    On the other hand, consider a quasi-isomorphism of $A_{\infty}$-categories $(\Phi_0,s\mathbf{f})$. There is an isomorphism $\MA\simeq H(\MA)\oplus L(\MA)$ (resp. $\MB\simeq H(\MB)\oplus L(\MB)$) where $L(\MA)$ (resp. $L(\MB)$) is a contractible $A_{\infty}$-algebra with differential denoted by $d_{\MA}$ (resp. $d_{\MB}$). Consider a contracting homotopy $h : L(\MA)\rightarrow L(\MA)$, \textit{i.e.} $d_{\MA}h+hd_{\MA}=\id_{L(\MA)}$. We denote by $i_{\MA}$ (resp. $p_{\MA}$) the canonical injection (resp. projection) $H(\MA)\rightarrow H(\MA)\oplus L(\MA)$ (resp. $H(\MA) \oplus L(\MA)\rightarrow H(\MA)$). We define $i_{\MB}$ and $p_{\MB}$ in a similar manner. Those are $A_{\infty}$-morphisms, due to the form of structure of the product of $A_{\infty}$-categories.
    
    Moreover, we have that $i_{\MA}\circ p_{\MA}$ is homotopic to $\id_{\MA}$. Indeed, we show that $a(t)+b(t)dt$ with
    \begin{equation}
        \begin{split}
            &a(t) : (x,y)\in H(\MA)\oplus L(\MA)\mapsto (x,ty)\in H(\MA)\oplus L(\MA),
            \\& b(t) : (x,y)\in H(\MA)\oplus L(\MA)\mapsto (0,h(y))\in H(\MA)\oplus L(\MA)
        \end{split}
    \end{equation}
    is a homotopy between $i_{\MA}\circ p_{\MA}$ and $\id_{\MA}$.
    For any $t\in\Bbbk$, $a(t)$ is an $A_{\infty}$-morphism. Moreover, $\frac{\partial a}{\partial t}(t)(x,y)=(0,y)$
    and 
    \begin{equation}
        \begin{split}
            \sum\limits_{n=2}^{\infty}\frac{1}{(n-1)!}\Bar{\ell}^n\big(a(t),\dots,a(t),b(t)\big)(x,y)
            =(0,d_{\MA}h(y))+(0,hd_{\MA}(y))
            =\frac{\partial a}{\partial t}(t)(x,y)
        \end{split}
    \end{equation}
    so $i_{\MA}\circ p_{\MA}$ is homotopic to $\id_{\MA}$. One can show in a similar manner that $i_{\MB}\circ p_{\MB}$ is homotopic to $\id_{\MB}$.

    Since $i_{\MA}$ and $p_{\MB}$ are quasi-isomorphisms, $(\Phi_0,sp_{\MB}\circ s\mathbf{f}\circ si_{\MA})$ is a quasi-isomorphism. It is even an isomorphism since $H(\MA)$ is minimal. Therefore, $(\Psi_0,s\mathbf{g})$ is a quasi-inverse of $(\Phi_0,s\mathbf{f})$ with $s\mathbf{g}=i_{\MA[1]}\circ (p_{\MB[1]}\circ s\mathbf{f}\circ i_{\MA[1]})^{-1}\circ p_{\MB[1]}$.

    Then, $s\mathbf{g}\circ s\mathbf{f}$ is homotopic to $i_{\MA[1]}\circ (p_{\MB[1]}\circ s\mathbf{f}\circ i_{\MA[1]})^{-1}\circ p_{\MB[1]}\circ s\mathbf{f}\circ i_{\MA[1]}\circ p_{\MA[1]}=i_{\MA[1]}\circ p_{\MA[1]}$ by Proposition \ref{prop:htpic-A-inf-1} which is homotopic to $\id_{\MA}$.
    Similarly, we have that $s\mathbf{f}\circ s\mathbf{g}$ is homotopic to $i_{\MB[1]}\circ p_{\MB[1]}\circ s\mathbf{f}\circ i_{\MA[1]}\circ (p_{\MB[1]}\circ s\mathbf{f}\circ i_{\MA[1]})^{-1}\circ p_{\MB[1]}=i_{\MB[1]}\circ p_{\MB[1]}$ by Proposition \ref{prop:htpic-A-inf-2} which is homotopic to $\id_{\MB}$. This shows that $(\Phi_0,s\mathbf{F})$ is a homotopy equivalence.
\end{proof}
\section{Homotopy theory of pre-Calabi-Yau morphisms}
\label{section:pCY}

\subsection{The category of pre-Calabi-Yau categories}

In this section, we recall the definition of the category of pre-Calabi-Yau categories. 
We first recall the definition of pre-Calabi-Yau categories given in \cite{ktv}.

\begin{definition}
Given graded quivers $\MA$ and $\MB$ with respective sets of objects $\MO_{\MA}$ and $\MO_{\MB}$ and a map $F_0 : \MO_{\MA}\rightarrow \MO_{\MB}$, we define the graded vector space 
\begin{small}
    \begin{equation}
    \begin{split}
&\Multi^{\bullet}_{d,F_0}(\MA,\MB)\\&\hspace{1cm}=\prod_{n\in\NN^*}\prod_{\doubar{x}\in\bar{\MO}_{\MA}^n}\Homgr_{\kk}\bigg(\bigotimes\limits_{i=1}^n\MA[1]^{\otimes \bar{x}^i},\bigotimes\limits_{i=1}^{n-1}{}_{F_0(\llt(\bar{x}^i))}\MB_{F_0(\rrt(\bar{x}^{i+1}))}[-d]\otimes{}_{F_0(\llt(\bar{x}^n))}\MB_{F_0(\rrt(\bar{x}^1))}[-d])\bigg).
    \end{split}
    \end{equation}
\end{small}
The action of $\sigma=(\sigma_n)_{n\in\NN^*}\in\prod_{n\in\NN^*} C_n$ on an element $\mathbf{F} = (F^{\doubar{x}})_{\doubar{x} \in \doubar{\MO}_{\MA}}\in\Multi^{\bullet}_{d,F_0}(\MA,\MB)$ is the element $\sigma\cdot\mathbf{F}\in\Multi^{\bullet}_{d,F_0}(\MA,\MB)$ given by 
\begin{equation}
    \begin{split}
(\sigma\cdot\mathbf{F})^{\doubar{x}}=\tau^{\sigma_n}_{{}_{F_0(\llt(\bar{x}^{1}))}\MB_{F_0(\rrt(\bar{x}^2))}[-d],\dots, {}_{F_0(\llt(\bar{x}^n))}\MB_{F_0(\rrt(\bar{x}^1))}[-d]}\circ F^{\doubar{x}\cdot\sigma_n} \circ \tau^{\sigma_n}_{\MA[1]^{\otimes \bar{x}^1},\MA[1]^{\otimes\bar{x}^2}, \dots ,\MA[1]^{\otimes \bar{x}^n}}
    \end{split}
\end{equation}
for $\doubar{x}=(\bar{x}^1,\dots,\bar{x}^n)\in\bar{\MO}_{\MA}^n$.
We denote by $\Multi^{\bullet}_{d,F_0}(\MA,\MB)^{C_{\llg(\bullet)}}$ the space of elements of $\Multi^{\bullet}_{d,F_0}(\MA,\MB)$ that are invariant under the action of $\prod_{n\in\NN^*} C_n$.

When $\MB=\MA$ and $F_0=\id$, we denote $\Multi^{\bullet}_{d,F_0}(\MA,\MB)^{C_{\llg(\bullet)}}$ simply by $\Multi^{\bullet}_{d}(\MA)^{C_{\llg(\bullet)}}$.
\end{definition}

\begin{definition}
A \textbf{\textcolor{ultramarine}{$d$-pre-Calabi-Yau structure}} on a graded quiver $\MA$ is an element
\[
s_{d+1}M_{\MA}\in\Multi^{\bullet}_d(\MA)^{C_{\llg(\bullet)}}[d+1]
\]
of degree $1$, solving the Maurer-Cartan equation 
\begin{equation}
    \label{eq:MC}
    [s_{d+1}M_{\MA},s_{d+1}M_{\MA}]_{\nec}=0.
\end{equation}
Since $s_{d+1}M_{\MA}$ has degree $1$, this is tantamount to requiring that 
\begin{equation}
\label{eq:stasheff-pCY}
    \tag{$\operatorname{SI}^{\doubar{x}}$}
    (s_{d+1}M_{\MA}\upperset{\nec}{\circ} s_{d+1}M_{\MA})^{\doubar{x}}=0
\end{equation}
for every $\doubar{x}\in\doubar{\MO}$ with $(s_{d+1}M_{\MA}\upperset{\nec}{\circ} s_{d+1}M_{\MA})^{\doubar{x}}=\sum\mathcal{E}(\mathcalboondox{D})$ where the sum is over all the filled diagrams of type $\doubar{x}$ and of the form 

\begin{tikzpicture}[line cap=round,line join=round,x=1.0cm,y=1.0cm]
\clip(-7.4,-1) rectangle (10.586743541133627,1);
    \draw (0,0) circle (0.5cm);
    \draw[->,>=stealth] (0.5,0)--(0.9,0);
    \draw[rotate=120][->,>=stealth] (0.5,0)--(0.9,0);
    \draw[rotate=-120][->,>=stealth] (0.5,0)--(0.9,0);
    \draw[rotate=180,shift={(0.5,0)}]\doubleflechescindeeleft;
    \draw[rotate=180,shift={(0.5,0)}]\doubleflechescindeeright;
    \draw[rotate=60,shift={(0.5,0)}]\doublefleche;
    \draw (-1.4,0) circle (0.5cm);
    \draw[->,>=stealth](-0.9,0)--(-0.5,0);
    \draw[rotate around={120:(-1.4,0)}][->,>=stealth](-0.9,0)--(-0.5,0);
    \draw[rotate around={-120:(-1.4,0)}][->,>=stealth](-0.9,0)--(-0.5,0);
    \draw[rotate around={60:(-1.4,0)},shift={(-0.9,0)}]\doublefleche;
    \draw[rotate around={-60:(-1.4,0)},shift={(-0.9,0)}]\doublefleche;
    \draw[fill=black] (-2,0) circle (0.3pt);
    \draw[fill=black] (-1.95,0.2) circle (0.3pt);
    \draw[fill=black] (-1.95,-0.2) circle (0.3pt);
    \draw[fill=black] (0.3,-0.5) circle (0.3pt);
    \draw[fill=black] (0.45,-0.4) circle (0.3pt);
    \draw[fill=black] (0.13,-0.58) circle (0.3pt);
    \draw (0,0.25) node[anchor=north]{$M_{\MA}$};
    \draw (-1.4,0.25) node[anchor=north]{$M_{\MA}$};
\end{tikzpicture}

A graded quiver endowed with a $d$-pre-Calabi-Yau structure is called a \textbf{\textcolor{ultramarine}{$d$-pre-Calabi-Yau category}}. 
\end{definition}

We now recall the definition of $d$-pre-Calabi-Yau morphisms given in \cite{ktv} (cf. \cite{lv}).

\begin{definition}
    Given $d$-pre-Calabi-Yau categories $(\MA,s_{d+1}M_{\MA})$ and $(\MB,s_{d+1}M_{\MB})$, $F_0:\MO_{\MA} \rightarrow\MO_{\MB}$ between their respective sets of objects and $s_{d+1}\mathbf{F}\in \Multi^{\bullet}_{d,F_0}(\MA,\MB)[d+1]$ of degree $0$, the \textbf{\textcolor{ultramarine}{multinecklace composition of $s_{d+1}M_{\MA}$ and $s_{d+1}\mathbf{
F}$}} is the element 
    \[s_{d+1}\mathbf{F}\upperset{\multinec}{\circ} s_{d+1}M_{\MA}\in \Multi^{\bullet}_{d,F_0}(\MA,\MB)[d+1]
    \]
    given by 
    \[(s_{d+1}\mathbf{F}\upperset{\multinec}{\circ}s_{d+1}M_{\MA})^{\doubar{x}}=\sum\mathcal{E}(\mathcalboondox{D})\] for $\doubar{x}\in\MO_{\MA}$, where the sum is over all the filled diagrams $\mathcalboondox{D}$ of type $\doubar{x}$ of the form

\begin{equation}
\begin{tikzpicture}[line cap=round,line join=round,x=1.0cm,y=1.0cm]
\clip(-7.4,-1.2) rectangle (10.586743541133627,1.7);
 \draw [line width=0.5pt] (0.,0.) circle (0.5cm);
     \shadedraw[rotate=30,shift={(0.5cm,0cm)}] \doublefleche;
     \shadedraw[rotate=150,shift={(0.5cm,0cm)}] \doublefleche;
     \draw [line width=0.5pt] (0,1.3) circle (0.3cm);
     \shadedraw [shift={(0cm,1cm)},rotate=-90] \doubleflechescindeeleft;
     \shadedraw [shift={(0cm,1cm)},rotate=-90] \doubleflechescindeeright;
     \shadedraw [shift={(0cm,1cm)},rotate=-90] \fleche;
     \draw [->,> = stealth] (0.3,1.3)--(0.6,1.3);
     \draw [->,> = stealth] (-0.3,1.3)--(-0.6,1.3);
     \draw [line width=0.5pt] (1.12,-0.65) circle (0.3cm);
     \shadedraw[shift={(0.86cm,-0.5cm)},rotate=150] \doubleflechescindeeleft;
     \shadedraw[shift={(0.86cm,-0.5cm)},rotate=150] \doubleflechescindeeright;
     \shadedraw[shift={(0.86cm,-0.5cm)},rotate=150] \fleche;
     \draw [rotate around ={60:(1.12,-0.65)}] [->,> = stealth] (1.43,-0.65)--(1.73,-0.65);
     \draw [rotate around ={-120:(1.12,-0.65)}] [->,> = stealth] (1.43,-0.65)--(1.73,-0.65);
     \draw [line width=0.5pt] (-1.12,-0.65) circle (0.3cm);
     \shadedraw[shift={(-0.86cm,-0.5cm)},rotate=30] \doubleflechescindeeleft;
      \shadedraw[shift={(-0.86cm,-0.5cm)},rotate=30] \doubleflechescindeeright;
       \shadedraw[shift={(-0.86cm,-0.5cm)},rotate=30] \fleche;
      \draw [rotate around ={-60:(-1.12,-0.65)}] [->,> = stealth] (-1.43,-0.65)--(-1.73,-0.65);
     \draw [rotate around ={120:(-1.12,-0.65)}] [->,> = stealth] (-1.43,-0.65)--(-1.73,-0.65);
\begin{scriptsize}
\draw [fill=black] (0,1.7) circle (0.3pt);
\draw [fill=black] (0.2,1.65) circle (0.3pt);
\draw [fill=black] (-0.2,1.65) circle (0.3pt);
\draw [fill=black] (0,-0.6) circle (0.3pt);
\draw [fill=black] (0.2,-0.55) circle (0.3pt);
\draw [fill=black] (-0.2,-0.55) circle (0.3pt);
\draw [fill=black] (1.45,-0.85) circle (0.3pt);
\draw [fill=black] (1.5,-0.67) circle (0.3pt);
\draw [fill=black] (1.33,-0.97) circle (0.3pt);
\draw [fill=black] (-1.45,-0.85) circle (0.3pt);
\draw [fill=black] (-1.5,-0.67) circle (0.3pt);
\draw [fill=black] (-1.33,-0.97) circle (0.3pt);
\end{scriptsize}
\draw (0,0.25)node[anchor=north]{$M_{\MA}$};
\draw (0,1.55)node[anchor=north]{$\mathbf{F}$};
\draw (-1.12,-0.4)node[anchor=north]{$\mathbf{F}$};
\draw (1.12,-0.4)node[anchor=north]{$\mathbf{F}$};
\end{tikzpicture}
\label{fig:morph-1}
\end{equation}
\noindent
\end{definition}

\begin{definition}
 Given $d$-pre-Calabi-Yau categories $(\MA,s_{d+1}M_{\MA})$ and $(\MB,s_{d+1}M_{\MB})$, $F_0:\MO_{\MA} \rightarrow\MO_{\MB}$ between their respective sets of objects and $s_{d+1}\mathbf{F}\in \Multi^{\bullet}_{d,F_0}(\MA,\MB)[d+1]$ of degree $0$, the \textbf{\textcolor{ultramarine}{pre composition of $s_{d+1}\mathbf{F}$ and $s_{d+1}M_{\MB}$}} is the element \[s_{d+1}M_{\MB}\upperset{\pre}{\circ}s_{d+1}\mathbf{F}\in \Multi^{\bullet}_{d,F_0}(\MA,\MB)[d+1]\] given by \[(s_{d+1}M_{\MB}\upperset{\pre}{\circ}s_{d+1}\mathbf{F})^{\doubar{x}}=\sum\mathcal{E}(\mathcal{D'})\] for $\doubar{x}\in\MO_{\MA}$, where the sum is over all the filled diagrams $\mathcalboondox{D'}$ of type $\doubar{x}$ of the form

\begin{equation}
\begin{tikzpicture}[line cap=round,line join=round,x=1.0cm,y=1.0cm]
\clip(-4.6,-1) rectangle (4.579407206898253,1.5);
     \draw [line width=0.5pt] (0.,0.) circle (0.5cm);
     \draw [rotate=90] [->,> = stealth] (0.5,0)--(0.9,0);
     \draw [rotate=-30] [->,> = stealth] (0.5,0)--(0.9,0);
     \draw [rotate=-150] [->,> = stealth] (0.5,0)--(0.9,0);
     \draw [rotate=0] [<-,> = stealth] (0.5,0)--(0.9,0);
     \draw [line width=0.5pt] (1.15,0.) circle (0.25cm);
     \draw[rotate around={60:(1.15,0)}] [->,> = stealth] (1.4,0)--(1.7,0);
      \draw[rotate around={-60:(1.15,0)}] [->,> = stealth] (1.4,0)--(1.7,0);
      \shadedraw[rotate around={120:(1.15,0)}, shift={(1.4cm,0cm)}] \doublefleche;
       \shadedraw[shift={(1.4cm,0cm)}] \doublefleche;
     \draw [rotate=60] [<-,> = stealth] (0.5,0)--(0.9,0);
       \draw [rotate=180] [<-,> = stealth] (0.5,0)--(0.9,0);
      \draw [line width=0.5pt] (-1.15,0.) circle (0.25cm);
     \draw[rotate around={-60:(-1.15,0)}] [->,> = stealth] (-1.4,0)--(-1.7,0);
      \draw[rotate around={60:(-1.15,0)}] [->,> = stealth] (-1.4,0)--(-1.7,0);
      \shadedraw[rotate around={-60:(-1.15,0)}, shift={(-0.9cm,0cm)}] \doublefleche;
       \shadedraw[rotate around={180:(-1.15,0)},shift={(-0.9cm,0cm)}] \doublefleche;
        \draw [rotate=120] [<-,> = stealth] (0.5,0)--(0.9,0);
        \draw [line width=0.5pt] (0.575,1) circle (0.25cm);
        \draw[rotate around={120:(0.575,1)}] [->,> = stealth] (0.825,1)--(1.125,1);
      \draw[rotate around={0:(0.575,1)}] [->,> = stealth] (0.825,1)--(1.125,1);
      \shadedraw[rotate around={60:(0.575,1)}, shift={(0.825cm,1cm)}] \doublefleche;
       \shadedraw[rotate around={180:(0.575,1)},shift={(0.825cm,1cm)}] \doublefleche;
        \draw [line width=0.5pt] (-0.575,1) circle (0.25cm);
      \draw[rotate around={-120:(-0.575,1)}] [->,> = stealth] (-0.825,1)--(-1.125,1);
      \draw[rotate around={0:(-0.575,1)}] [->,> = stealth] (-0.825,1)--(-1.125,1);
      \shadedraw[rotate around={-120:(-0.575,1)}, shift={(-0.325cm,1cm)}] \doublefleche;
       \shadedraw[rotate around={120:(-0.575,1)},shift={(-0.325cm,1cm)}] \doublefleche;
\begin{scriptsize}
\draw [fill=black] (0.65,0.7) circle (0.3pt);
\draw [fill=black] (0.79,0.75) circle (0.3pt);
\draw [fill=black] (0.88,0.85) circle (0.3pt);
\draw [fill=black] (-0.25,1) circle (0.3pt);
\draw [fill=black] (-0.29,0.85) circle (0.3pt);
\draw [fill=black] (-0.29,1.15) circle (0.3pt);
\draw [fill=black] (-1,0.3) circle (0.3pt);
\draw [fill=black] (-0.9,0.2) circle (0.3pt);
\draw [fill=black] (-1.15,0.3) circle (0.3pt);
\draw [fill=black] (1,-0.3) circle (0.3pt);
\draw [fill=black] (1.15,-0.3) circle (0.3pt);
\draw [fill=black] (0.9,-0.2) circle (0.3pt);
\draw [fill=black] (0,-0.6) circle (0.3pt);
\draw [fill=black] (0.2,-0.55) circle (0.3pt);
\draw [fill=black] (-0.2,-0.55) circle (0.3pt);
\draw [rotate=120][fill=black] (0,-0.6) circle (0.3pt);
\draw [rotate=120][fill=black] (0.15,-0.55) circle (0.3pt);
\draw [rotate=120][fill=black] (-0.15,-0.55) circle (0.3pt);
\draw [rotate=240][fill=black] (0,-0.6) circle (0.3pt);
\draw [rotate=240][fill=black] (0.15,-0.55) circle (0.3pt);
\draw [rotate=240][fill=black] (-0.15,-0.55) circle (0.3pt);
\end{scriptsize}
\draw (0,0.25)node[anchor=north]{$M_{\MB}$};
\draw (-1.15,0.25)node[anchor=north]{$\mathbf{F}$};
\draw (1.15,0.25)node[anchor=north]{$\mathbf{F}$};
\draw (0.575,1.25)node[anchor=north]{$\mathbf{F}$};
\draw (-0.575,1.25)node[anchor=north]{$\mathbf{F}$};
\end{tikzpicture}
\label{fig:morph-2}
\end{equation}

\noindent
\end{definition}

\begin{definition}
\label{def:pcY-morphism}
Given $d$-pre-Calabi-Yau categories $(\MA,s_{d+1}M_{\MA})$ and $(\MB,s_{d+1}M_{\MB})$ with respective sets of objects $\MO_{\MA}$ and $\MO_{\MB}$ a \textbf{\textcolor{ultramarine}{$d$-pre-Calabi-Yau morphism}} is a pair $(F_0,s_{d+1}\mathbf{F})$ where $F_0$ is a map $\MO_{\MA}\rightarrow \MO_{\MB}$ and $s_{d+1}\mathbf{F}\in \Multi^{\bullet}_{d,F_0}(\MA,\MB)^{C_{\llg(\bullet)}}[d+1]$
is of degree $0$ satisfying the following equation
\begin{equation}
\label{eq:morphism-pCY}
\tag{$\operatorname{MI}^{\doubar{x}}$}
(s_{d+1}\mathbf{F}\upperset{\multinec}{\circ} s_{d+1}M_{\MA})^{\doubar{x}}= (s_{d+1}M_{\MB}\upperset{\pre}{\circ} s_{d+1}\mathbf{F})^{\doubar{x}}
\end{equation}
for all $\doubar{x}\in\doubar{\MO}_{\MA}$.
Note that the left member and right member of the previous identity belong to
\[
\Homgr_{\kk}\bigg(\bigotimes\limits_{i=1}^{n}\MA[1]^{\otimes\bar{x}^{i}}, \bigotimes\limits_{i=1}^{n-1}{}_{F_0(\llt(\bar{x}^{i}))}\MB_{F_0(\rrt(\bar{x}^{i+1}))}[-d]\otimes{}_{F_0(\llt(\bar{x}^{n}))}\MB_{F_0(\rrt(\bar{x}^{1}))}[1]\bigg).
\]
\end{definition}
We now recall how to compose $d$-pre-Calabi-Yau morphisms.
\begin{definition}
\label{def:cp-pCY}
Let $(\MA,s_{d+1}M_{\MA})$, $(\MB,s_{d+1}M_{\MB})$ and $(\mathcal{C},s_{d+1}M_{\mathcal{C}})$ be $d$-pre-Calabi-Yau categories and consider $(F_0,s_{d+1}\mathbf{F}) :(\MA,s_{d+1}M_{\MA})\rightarrow (\MB,s_{d+1}M_{\MB})$ and $(G_0,s_{d+1}\mathbf{G}) :(\MB,s_{d+1}M_{\MB})\rightarrow (\mathcal{C},s_{d+1}M_{\mathcal{C}})$ two $d$-pre-Calabi-Yau morphisms.
The \textbf{\textcolor{ultramarine}{composition of $s_{d+1}\mathbf{F}$ and $s_{d+1}\mathbf{G}$}} is the pair \[(G_0\circ F_0,s_{d+1}\mathbf{G}\circ s_{d+1}\mathbf{F})\] where
\[s_{d+1}\mathbf{G}\circ s_{d+1}\mathbf{F}\in \Multi^{\bullet}_{d,G_0\circ F_0}(\MA,\mathcal{C})^{C_{\llg(\bullet)}}[d+1]\] is of degree $0$ and is given by $(s_{d+1}\mathbf{G}\circ s_{d+1}\mathbf{F})^{\doubar{x}}=\sum\mathcal{E}(\mathcalboondox{D})$ where the sum is over all filled diagrams $\mathcalboondox{D}$ of type $\doubar{x}\in\doubar{\MO}_{\MA}$ of the form

\begin{equation}
\begin{tikzpicture}[line cap=round,line join=round,x=1.0cm,y=1.0cm]
\clip(-7.5,-3) rectangle (11.248895821273946,1);
     \draw [line width=0.5pt] (0.,0.) circle (0.5cm);
     \draw [rotate=30] [->,> = stealth] (0.5,0)--(0.9,0);
     \draw [rotate=-90] [->,> = stealth] (0.5,0)--(0.9,0);
     \draw [rotate=150] [->,> = stealth] (0.5,0)--(0.9,0);
     \draw [rotate=0] [<-,> = stealth] (0.5,0)--(0.9,0);
     \draw [line width=0.5pt] (1.2,0.) circle (0.3cm);
     \draw [->,> = stealth] (1.5,0)--(1.9,0);
     \shadedraw[rotate around={90:(1.2,0)},shift={(1.5cm,0cm)}] \doublefleche;
     \shadedraw[rotate around={-90:(1.2,0)},shift={(1.5cm,0cm)}] \doublefleche;
      \draw [line width=0.5pt] (2.4,0.) circle (0.5cm);
      \draw [rotate around={-90:(2.4,0)}] [<-,> = stealth] (2.9,0)--(3.3,0);
      \draw [rotate around={-45:(2.4,0)}] [->,> = stealth] (2.9,0)--(3.3,0);
    \draw [rotate around={135:(2.4,0)}] [->,> = stealth] (2.9,0)--(3.3,0);
     \draw [line width=0.5pt] (2.4,-1.2) circle (0.3cm);
     \shadedraw[rotate around={-90:(2.4,-1.2)},shift={(2.7cm,-1.2cm)}] \doublefleche;
     \draw [rotate=-60] [<-,> = stealth] (0.5,0)--(0.9,0);
      \draw [line width=0.5pt] (0.6,-1.03) circle (0.3cm);
     \shadedraw[rotate around={-60:(0.6,-1.03)},shift={(0.9cm,-1.03cm)}] \doublefleche;
     \draw [line width=0.5pt] (-1.2,0.) circle (0.3cm);
     \draw [<-,> = stealth] (-0.5,0)--(-0.9,0);
     \shadedraw[rotate around={180:(-1.2,0)},shift={(-0.9cm,0cm)}] \doublefleche;
     \draw [rotate=-120] [<-,> = stealth] (0.5,0)--(0.9,0);
    \draw [line width=0.5pt] (-0.6,-1.03) circle (0.3cm);
     \shadedraw[rotate around={-120:(-0.6,-1.03)},shift={(-0.3cm,-1.03cm)}] \doublefleche;
     \shadedraw[rotate around={-240:(-0.6,-1.03)},shift={(-0.3cm,-1.03cm)}] \doublefleche;
     \draw[rotate around={180:(-0.6,-1.03)}][->,> = stealth] (-0.3,-1.03)--(0.1,-1.03);
     \draw[rotate around={-60:(-0.6,-1.03)}][->,> = stealth] (-0.3,-1.03)--(0.1,-1.03);
      \draw [line width=0.5pt] (-1.8,-1.03) circle (0.5cm);
      \draw [rotate around={120:(-1.8,-1.03)}] [<-,> = stealth] (-1.3,-1.03)--(-0.9,-1.03);
       \draw [rotate around={-60:(-1.8,-1.03)}] [->,> = stealth] (-1.3,-1.03)--(-0.9,-1.03);
        \draw [rotate around={180:(-1.8,-1.03)}] [->,> = stealth] (-1.3,-1.03)--(-0.9,-1.03);
     \draw [line width=0.5pt] (-2.4,0.) circle (0.3cm);
     \shadedraw[rotate around={120:(-2.4,0)},shift={(-2.1cm,0cm)}] \doublefleche;
     \draw [line width=0.5pt] (0,-2.06) circle (0.5cm);
      \draw [rotate around={-60:(0,-2.06)}] [->,> = stealth] (0.5,-2.06)--(0.9,-2.06);
\begin{scriptsize}
\draw [fill=black] (0,0.6) circle (0.3pt);
\draw [fill=black] (0.2,0.55) circle (0.3pt);
\draw [fill=black] (-0.2,0.55) circle (0.3pt);
\draw [fill=black] (0.5,-0.3) circle (0.3pt);
\draw [fill=black] (0.4,-0.4) circle (0.3pt);
\draw [fill=black] (0.55,-0.18) circle (0.3pt);
\draw [fill=black] (-0.5,-0.3) circle (0.3pt);
\draw [fill=black] (-0.4,-0.4) circle (0.3pt);
\draw [fill=black] (-0.55,-0.18) circle (0.3pt);
\draw [fill=black] (1.98,-0.4) circle (0.3pt);
\draw [fill=black] (1.88,-0.26) circle (0.3pt);
\draw [fill=black] (2.12,-0.5) circle (0.3pt);
\draw [fill=black] (2.8,0.4) circle (0.3pt);
\draw [fill=black] (2.95,0.2) circle (0.3pt);
\draw [fill=black] (2.6,0.55) circle (0.3pt);
\draw [fill=black] (-0.2,-1.03) circle (0.3pt);
\draw [fill=black] (-0.25,-0.9) circle (0.3pt);
\draw [fill=black] (-0.25,-1.16) circle (0.3pt);
\draw [fill=black] (0.3,-1.55) circle (0.3pt);
\draw [fill=black] (0.5,-1.7) circle (0.3pt);
\draw [fill=black] (0.6,-1.9) circle (0.3pt);
\draw [rotate around={180:(0,-2.06)}][fill=black] (0.6,-1.9) circle (0.3pt);
\draw [rotate around={180:(0,-2.06)}][fill=black] (0.5,-1.7) circle (0.3pt);
\draw [rotate around={180:(0,-2.06)}][fill=black] (0.3,-1.55) circle (0.3pt);
\draw [fill=black] (-1.3,-0.7) circle (0.3pt);
\draw [fill=black] (-1.45,-0.55) circle (0.3pt);
\draw [fill=black] (-1.67,-0.45) circle (0.3pt);
\draw [rotate around={180:(-1.8,-1.03)}][fill=black] (-1.3,-0.7) circle (0.3pt);
\draw [rotate around={180:(-1.8,-1.03)}][fill=black] (-1.45,-0.55) circle (0.3pt); circle (0.3pt);
\draw [rotate around={180:(-1.8,-1.03)}][fill=black] (-1.67,-0.45) circle (0.3pt);
\end{scriptsize}
\draw (0,0.25)node[anchor=north]{$\mathbf{G}$};
\draw (0,-1.8)node[anchor=north]{$\mathbf{G}$};
\draw (2.4,0.25)node[anchor=north]{$\mathbf{G}$};
\draw (-1.8,-0.78)node[anchor=north]{$\mathbf{G}$};
\draw (1.2,0.25)node[anchor=north]{$\mathbf{F}$};
\draw (-1.2,0.25)node[anchor=north]{$\mathbf{F}$};
\draw (2.4,-0.95)node[anchor=north]{$\mathbf{F}$};
\draw (-0.6,-0.78)node[anchor=north]{$\mathbf{F}$};
\draw (-2.4,0.25)node[anchor=north]{$\mathbf{F}$};
\draw (0.6,-0.78)node[anchor=north]{$\mathbf{F}$};
\end{tikzpicture}
    \label{fig:composition}
\end{equation}

More precisely, the sum is over all the diagrams $\mathcalboondox{D}$ of type $\doubar{x}$ where the discs are filled with either $\mathbf{F}$ or $\mathbf{G}$ such that each outgoing arrow of a disc filled with $\mathbf{F}$ is connected to an incoming arrow of a disc filled with $\mathbf{G}$ and each incoming arrow of a disc filled with $\mathbf{G}$ is connected to an outgoing arrow of a disc filled with $\mathbf{F}$. In particular, the incoming (resp. outgoing) arrows of the diagram $\mathcalboondox{D}$ are incoming (resp. outgoing) arrows of a disc filled with $\mathbf{F}$ (resp. $\mathbf{G}$).
\end{definition}

\begin{proposition}
\label{prop:pCY-category}
The data of $d$-pre-Calabi-Yau categories together with $d$-pre-Calabi-Yau morphisms and their composition define a category, denoted by $\pCY$. Given a graded quiver $\MA$ with set of objects $\MO$, the identity morphism $(\MA,s_{d+1}M_{\MA})\rightarrow (\MA,s_{d+1}M_{\MA})$ is given by $\operatorname{
Id}^{x,y}=\id_{{}_{x}\MA_{y}}$ for $x,y\in\MO$ and $\operatorname{Id}^
{\bar{x}^1,\dots,\bar{x}^n}=0$ for $(\bar{x}^1,\dots,\bar{x}^n)\in\bar{\MO}^n$ such that $n\neq 1$ or $n=1$ and $\llg(\bar{x}^1)>2$.
\end{proposition}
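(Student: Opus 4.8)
The plan is to verify the three axioms of a category: that the composite of two $d$-pre-Calabi-Yau morphisms is again a $d$-pre-Calabi-Yau morphism, that this composition is associative, and that the element $s_{d+1}\operatorname{Id}$ described in the statement is a two-sided unit. Throughout, ``diagram'' means a filled diagram in the calculus of \cite{ktv, moi}, and I use without further comment that for a fixed type $\doubar{x}$ there are only finitely many diagrams of each of the shapes occurring below (which is a finite set, cf. Lemma \ref{lemma:MC-A-inf}), so that every sum $\sum\mathcal{E}(\mathcalboondox{D})$ appearing in the argument is finite.

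First I would check that $(\operatorname{Id}_0,s_{d+1}\operatorname{Id})$, with $\operatorname{Id}_0=\id_{\MO}$ and $\operatorname{Id}$ as in the statement, is a $d$-pre-Calabi-Yau morphism $(\MA,s_{d+1}M_{\MA})\to(\MA,s_{d+1}M_{\MA})$ and a unit. Since every component of $\operatorname{Id}$ vanishes except the one indexed by a single length-two tuple, which is $\id_{{}_{x}\MA_{y}}$, the element $s_{d+1}\operatorname{Id}$ manifestly lies in $\Multi^{\bullet}_{d}(\MA)^{C_{\llg(\bullet)}}[d+1]$ and is of degree $0$. For the morphism identity $(\operatorname{MI}^{\doubar{x}})$, observe that in the multinecklace composition $s_{d+1}\operatorname{Id}\upperset{\multinec}{\circ}s_{d+1}M_{\MA}$ every disc filled with $\operatorname{Id}$ is forced to be bivalent — one incoming and one outgoing arrow — hence a ``pass-through''; contracting all such discs identifies the nonzero diagrams with the single diagram carrying one disc filled with $M_{\MA}$, whence $(s_{d+1}\operatorname{Id}\upperset{\multinec}{\circ}s_{d+1}M_{\MA})^{\doubar{x}}=(s_{d+1}M_{\MA})^{\doubar{x}}$ after the shift identification of Definition \ref{def:pcY-morphism}. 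The same argument gives $(s_{d+1}M_{\MA}\upperset{\pre}{\circ}s_{d+1}\operatorname{Id})^{\doubar{x}}=(s_{d+1}M_{\MA})^{\doubar{x}}$, so the two sides of $(\operatorname{MI}^{\doubar{x}})$ coincide; the only point requiring care is the bookkeeping of Koszul signs coming from the degree shifts, which I would record here. The very same ``contract the bivalent $\operatorname{Id}$-discs'' argument, now applied to Definition \ref{def:cp-pCY}, shows $s_{d+1}\operatorname{Id}\circ s_{d+1}\mathbf{F}=s_{d+1}\mathbf{F}=s_{d+1}\mathbf{F}\circ s_{d+1}\operatorname{Id}$ for any $d$-pre-Calabi-Yau morphism $\mathbf{F}$; on objects unitality is trivial.

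Next I would prove that the composite of $(F_0,s_{d+1}\mathbf{F})\colon(\MA,s_{d+1}M_{\MA})\to(\MB,s_{d+1}M_{\MB})$ and $(G_0,s_{d+1}\mathbf{G})\colon(\MB,s_{d+1}M_{\MB})\to(\mathcal{C},s_{d+1}M_{\mathcal{C}})$ is again a $d$-pre-Calabi-Yau morphism. That $s_{d+1}\mathbf{G}\circ s_{d+1}\mathbf{F}$ lies in $\Multi^{\bullet}_{d,G_0\circ F_0}(\MA,\mathcal{C})^{C_{\llg(\bullet)}}[d+1]$ and is of degree $0$ is immediate from \eqref{fig:composition}: every internal arrow contributes a $[-d]$-shift glued to a $[d]$-shift, so the total shift and the degree come out as prescribed, and rotating the external boundary of such a diagram yields another diagram of the same shape, giving invariance under $\prod_{n}C_{n}$. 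The content is $(\operatorname{MI}^{\doubar{x}})$ for $s_{d+1}\mathbf{G}\circ s_{d+1}\mathbf{F}$, which I would establish via the chain
\begin{equation}
\begin{split}
s_{d+1}(\mathbf{G}\circ\mathbf{F})\upperset{\multinec}{\circ}s_{d+1}M_{\MA}
&=s_{d+1}\mathbf{G}\circ\big(s_{d+1}\mathbf{F}\upperset{\multinec}{\circ}s_{d+1}M_{\MA}\big)\\
&=s_{d+1}\mathbf{G}\circ\big(s_{d+1}M_{\MB}\upperset{\pre}{\circ}s_{d+1}\mathbf{F}\big)\\
&=\big(s_{d+1}\mathbf{G}\upperset{\multinec}{\circ}s_{d+1}M_{\MB}\big)\circ s_{d+1}\mathbf{F}\\
&=\big(s_{d+1}M_{\mathcal{C}}\upperset{\pre}{\circ}s_{d+1}\mathbf{G}\big)\circ s_{d+1}\mathbf{F}\\
&=s_{d+1}M_{\mathcal{C}}\upperset{\pre}{\circ}s_{d+1}(\mathbf{G}\circ\mathbf{F}),
\end{split}
\end{equation}
in which the second and fourth equalities are the morphism identities $(\operatorname{MI})$ for $\mathbf{F}$ and for $\mathbf{G}$ respectively, while the first, third and fifth express the compatibility of the composition $\circ$ of morphisms with the pre- and multinecklace compositions, obtained by inserting an ``intermediate'' $M_{\MB}$-disc. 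Each of these structural equalities is proved purely diagrammatically, by checking that its two sides are the sum of $\mathcal{E}(-)$ over one and the same family of diagrams — a single disc filled with an $M$, together with $\mathbf{F}$- and $\mathbf{G}$-discs glued along arrows in the nested pattern dictated by \eqref{fig:morph-1}, \eqref{fig:morph-2} and \eqref{fig:composition}, ``read off the same picture in two ways''. The hard part of the whole proof is precisely the verification of these diagrammatic identities: the combinatorics of which arrows are glued to which is routine, but matching the Koszul signs produced on the two sides by the various $[-d]$- and $[1]$-shifts requires care, and this is where the bulk of the computation lies.

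Finally I would verify associativity. For composable $d$-pre-Calabi-Yau morphisms $(F_0,s_{d+1}\mathbf{F})$, $(G_0,s_{d+1}\mathbf{G})$ and $(H_0,s_{d+1}\mathbf{H})$, unwinding Definition \ref{def:cp-pCY} twice shows that both $(s_{d+1}\mathbf{H}\circ s_{d+1}\mathbf{G})\circ s_{d+1}\mathbf{F}$ and $s_{d+1}\mathbf{H}\circ(s_{d+1}\mathbf{G}\circ s_{d+1}\mathbf{F})$ equal $\sum\mathcal{E}(\mathcalboondox{D})$ over the same set of diagrams $\mathcalboondox{D}$: those in which the outgoing arrows of $\mathbf{F}$-discs feed incoming arrows of $\mathbf{G}$-discs, the outgoing arrows of $\mathbf{G}$-discs feed incoming arrows of $\mathbf{H}$-discs, the external incoming arrows belong to $\mathbf{F}$-discs and the external outgoing arrows to $\mathbf{H}$-discs; a comparison of the induced signs then gives the equality, while on objects $H_0\circ(G_0\circ F_0)=(H_0\circ G_0)\circ F_0$ is clear. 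Together with the previous two paragraphs this proves that $d$-pre-Calabi-Yau categories and $d$-pre-Calabi-Yau morphisms form a category $\pCY$ with the asserted identity morphisms.
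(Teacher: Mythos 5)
The paper itself does not prove this proposition; it defers entirely to \cite{moi}, so there is no internal argument to compare against. Your overall strategy --- checking that $\operatorname{Id}$ is a morphism and a two-sided unit by contracting the bivalent $\operatorname{Id}$-discs, that composites satisfy $(\operatorname{MI}^{\doubar{x}})$ by pushing the structure disc through the two layers, and that associativity holds because both iterated composites are sums over the same three-layer diagrams --- is the standard one and is essentially the argument of \cite{moi}.

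There is, however, one step that fails as literally written: the first, third and fifth equalities of your displayed chain. The operations $\upperset{\multinec}{\circ}$, $\upperset{\pre}{\circ}$ and $\circ$ are not linear substitutions into a single slot: by Definition \ref{def:cp-pCY}, $s_{d+1}\mathbf{G}\circ X$ fills \emph{every} inner disc of the composition diagram with $X$. Hence $s_{d+1}\mathbf{G}\circ\big(s_{d+1}\mathbf{F}\upperset{\multinec}{\circ}s_{d+1}M_{\MA}\big)$, read literally, is a sum over diagrams containing one $M_{\MA}$-disc \emph{per inner slot}, whereas $\big(s_{d+1}\mathbf{G}\circ s_{d+1}\mathbf{F}\big)\upperset{\multinec}{\circ}s_{d+1}M_{\MA}$ contains exactly one $M_{\MA}$-disc; the two sides are therefore not sums over the same family of diagrams, contrary to what you assert. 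The repair is standard but must be made explicit: define the intermediate terms of the chain directly as sums of $\mathcal{E}(\mathcalboondox{D})$ over the diagram classes you describe in words (a single structure disc together with clusters of $\mathbf{F}$- and $\mathbf{G}$-discs in the prescribed adjacency pattern), and justify the second and fourth equalities by a locality, or substitution, property of the evaluation $\mathcal{E}$ --- namely that replacing the sub-diagram consisting of the $M_{\MA}$-disc and the $\mathbf{F}$-discs adjacent to it by the corresponding $M_{\MB}$-centred sub-diagram, as licensed by $(\operatorname{MI}^{\doubar{x}})$ for $\mathbf{F}$, leaves the total evaluation unchanged. Without that substitution lemma (and the accompanying Koszul-sign bookkeeping, which you correctly identify as the bulk of the work) the chain is a mnemonic rather than a proof.
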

\begin{proof}
    See \cite{moi} (cf. \cite{ktv,lv}).
\end{proof}

\subsection{\texorpdfstring{Pre-Calabi-Yau morphisms as Maurer-Cartan elements of an $L_{\infty}$-algebra}{Pre-Calabi-Yau morphisms as Maurer-Cartan elements of an L-infinity-algebra}}\label{section:brackets}
We now present the way of seeing $d$-pre-Calabi-Yau morphisms as Maurer-Cartan elements of an $L_{\infty}$-algebra.
In subsection \ref{subsection:non-fixed-pCY}, we define an $L_{\infty}$-algebra whose Maurer-Cartan elements are in bijection with triples $(s_{d+1}M_{\MB},s_{d+1}\mathbf{F},s_{d+1}M_{\MA})$ where $s_{d+1}M_{\MB}$ and $s_{d+1}M_{\MA}$ are $d$-pre-Calabi-Yau structures on graded quivers $\MB$ and $\MA$ respectively and $s_{d+1}\mathbf{F}$ is a $d$-pre-Calabi-Yau morphism $(\MA,s_{d+1}M_{\MA})\rightarrow (\MB,s_{d+1}M_{\MB})$. This is analogous to the construction for the case of $A_{\infty}$-algebras presented in \cite{borisov1}. 
In subsection \ref{subsection:fixed-pCY}, given $d$-pre-Calabi-Yau categories $(\MA,s_{d+1}M_{\MA})$ and $(\MB,s_{d+1}M_{\MB})$ we construct an $L_{\infty}$-algebra whose Maurer-Cartan elements coincide with $d$-pre-Calabi-Yau morphisms between them analogously to the case of $L_{\infty}$-morphisms developed in \cite{kraft-schnitzer}.

\subsubsection{The case of non-fixed pre-Calabi-Yau structures}
\label{subsection:non-fixed-pCY}
Given two graded quivers $\MA$ and $\MB$ and a map $\Phi_0 : \MO_{\MA}\rightarrow \MO_{\MB}$, we consider the graded Lie algebras $(\Multi_d^{\bullet}(\MA)^{C_{\llg(\bullet)}}[d+1],[-,-]_{\nec})$ and $(\Multi_d^{\bullet}(\MB)^{C_{\llg(\bullet)}}[d+1],[-,-]_{\nec})$ and see them as $L_{\infty}$-algebras (see Remark \ref{remark:lie-is-linfinity}) that we denote by $(\mathfrak{g}_{\MA},\ell_{\MA})$ and $(\mathfrak{g}_{\MB},\ell_{\MB})$ respectively. We also denote $\mathfrak{h}=\Multi_{d,\Phi_0}^{\bullet}(\MA,\MB)^{C_{\llg(\bullet)}}[d+1]$.

\begin{definition}\label{def:ell}
    We define an $\infty$-bracket $\ell$ on $\mathcal{L}_d^{\Phi_0}(\MA,\MB)=\mathfrak{g}_{\MB}\oplus\mathfrak{h}[-1]\oplus \mathfrak{g}_{\MA}$ as follows. Given $n\in\NN^*$, define $\ell^n_{|\mathfrak{g}_{\MB}^{\otimes n}}=\ell^n_{\MB}$, $\ell^n_{|\mathfrak{g}_{\MA}^{\otimes n}}=\ell^n_{\MA}$, \begin{equation}
    \begin{split}
    &\ell^{n}(s_{-1}f_{n-1},\dots,s_{-1}f_{i+1},g_{\MB},s_{-1}f_{i},\dots,s_{-1}f_{1})
    \\&\hspace{5cm}=(-1)^{\epsilon_i}\sum\limits_{\sigma\in\mathfrak{S}_{n-1}}(-1)^{\delta^{\sigma}(f_{n-1},\dots,f_{1})}s_{-1}\Phi_{g_{\MB}}^{\sigma}(f_{n-1},\dots,f_{1})
    \end{split}
    \end{equation} 
    for $g_{\MB}\in \mathfrak{g}_{\MB}$ and $f_j\in \mathfrak{h}$ for $j\in\llbracket 1,n-1\rrbracket$,
    \begin{equation}
    \begin{split}
    &\ell^n(s_{-1}f_{n-1},\dots,s_{-1}f_{i+1},g_{\MA},s_{-1}f_{i},\dots,s_{-1}f_{1})\\&\hspace{5cm}= (-1)^{\epsilon'_i}\sum\limits_{\sigma\in\mathfrak{S}_{n-1}}(-1)^{\delta^{\sigma}(f_{n-1},\dots,f_{1})}s_{-1}\Psi^{\sigma}_{g_{\MA}}(f_{n-1},\dots,f_{1})
     \end{split}
    \end{equation}
    for $g_{\MA}\in \mathfrak{g}_{\MA}$ and $f_j\in \mathfrak{h}$ for $j\in\llbracket 1,n-1\rrbracket$, $\ell^n(l_n,\dots,l_1)=0$ if \begin{itemize}
        \item  there exists $i,j\in\llbracket 1,n \rrbracket$, $i\neq j$ such that $l_i\in \mathfrak{g}_{\MA}$ and $l_j\in \mathfrak{g}_{\MB}$,
        \item $n>2$ and there exists $i,j\in\llbracket 1,n \rrbracket$, $i\neq j$ such that $l_i,l_j\in \mathfrak{g}_{\MA}$ or $l_i,l_j\in \mathfrak{g}_{\MB}$,
        \item $l_i\in\mathfrak{h}$ for every $i\in\llbracket 1,n\rrbracket$,
    \end{itemize} 
where 
\begin{small}
\begin{equation}
    \begin{split}
    \epsilon_i=i+1+i|g_{\MB}|+|g_{\MB}|\sum\limits_{j=i+1}^{n-1}|f_j|+\sum\limits_{j=1}^{n-1}(j-1)|f_j|
    \text{, }\epsilon'_i=i+i|g_{\MA}|+|g_{\MA}|\sum\limits_{j=1}^{i}|f_j|+\sum\limits_{j=1}^{n-1}(j-1)|f_j|
    \end{split}
\end{equation}
\end{small}
and where $\Phi^{\sigma}_{g_{\MB}}(f_{n-1},\dots,f_{1}), \Psi^{\sigma}_{g_{\MA}}(f_{n-1},\dots,f_{1})\in\mathfrak{h}$ are given by $\Phi^{\sigma}_{g_{\MB}}(f_{n-1},\dots,f_{1})=\sum\mathcal{E}(\mathcalboondox{D})$ and $\Psi^{\sigma}_{g_{\MA}}(f_{n-1},\dots,f_{1})=\sum\mathcal{E}(\mathcalboondox{D'})$ where the sums are over all the filled diagrams of the form

\begin{small}
\begin{equation}
    \label{eq:form-diag}
\begin{tikzpicture}[line cap=round,line join=round,x=1.0cm,y=1.0cm]
\clip(-3.5,-1.5) rectangle (4,1.8);
  \draw [line width=0.5pt] (0.,0.) circle (0.5cm);
     \draw [rotate=45] [<-,>=stealth,] (0.5,0)--(0.9,0);
     \draw [rotate=0] [->,>=stealth] (0.5,0)--(0.9,0);
     \draw [rotate=180] [->,>=stealth] (0.5,0)--(0.9,0);
     \draw [rotate=135] [<-,>=stealth] (0.5,0)--(0.9,0);
     \draw [rotate=90] [<-,>=stealth] (0.5,0)--(0.9,0);
     \draw [line width=0.5pt] (0.,1.15) circle (0.25cm);
     \draw [rotate around={150:(0,1.15)}] [->,>=stealth,] (0.25,1.15)--(0.55,1.15);
     \draw [rotate around={30:(0,1.15)}] [->,>=stealth,] (0.25,1.15)--(0.55,1.15);
     \draw [rotate around={90:(0,1.15)},shift={((0.25,1.15))}] \doublefleche;
     \draw [rotate around={210:(0,1.15)},shift={((0.25,1.15))}] \doublefleche;
     \draw [line width=0.5pt] (0.81,0.81) circle (0.25cm);
     \draw [rotate around={-15:(0.81,0.81)}] [->,>=stealth,] (1.06,0.81)--(1.36,0.81);
     \draw [rotate around={105:(0.81,0.81)}] [->,>=stealth,] (1.06,0.81)--(1.36,0.81);
     \draw [rotate around={165:(0.81,0.81)},shift={(1.06,0.81)}] \doublefleche;
     \draw [rotate around={45:(0.81,0.81)},shift={(1.06,0.81)}] \doublefleche;
     \draw [line width=0.5pt] (-0.81,0.81) circle (0.25cm);
     \draw [rotate around={75:(-0.81,0.81)}] [->,>=stealth,] (-0.56,0.81)--(-0.26,0.81);
     \draw [rotate around={195:(-0.81,0.81)}] [->,>=stealth,] (-0.56,0.81)--(-0.26,0.81);
     \draw [rotate around={135:(-0.81,0.81)},shift={(-0.56,0.81)}]\doublefleche;
     \draw [rotate around={255:(-0.81,0.81)},shift={(-0.56,0.81)}]\doublefleche;
\begin{scriptsize}
\draw [fill=black] (0.7,0.5) circle (0.3pt);
\draw [fill=black] (0.88,0.5) circle (0.3pt);
\draw [fill=black] (1.05,0.57) circle (0.3pt);
\draw [fill=black] (0.27,1) circle (0.3pt);
\draw [fill=black] (0.16,0.87) circle (0.3pt);
\draw [fill=black] (0.3,1.15) circle (0.3pt);
\draw [fill=black] (-0.55,1) circle (0.3pt);
\draw [fill=black] (-0.5,0.85) circle (0.3pt);
\draw [fill=black] (-0.52,0.7) circle (0.3pt);
\draw [fill=black] (0,-0.6) circle (0.3pt);
\draw [fill=black] (0.2,-0.55) circle (0.3pt);
\draw [fill=black] (-0.2,-0.55) circle (0.3pt);
\draw [rotate=-45][fill=black] (-0.5,0.3) circle (0.3pt);
\draw [rotate=-45][fill=black] (-0.55,0.2) circle (0.3pt);
\draw [rotate=-45][fill=black] (-0.57,0.1) circle (0.3pt);
\draw [rotate=45][fill=black] (0.5,0.3) circle (0.3pt);
\draw [rotate=45][fill=black] (0.55,0.2) circle (0.3pt);
\draw [rotate=45][fill=black] (0.57,0.1) circle (0.3pt);
\end{scriptsize}
\draw (0,0.2)node[anchor=north]{$g_{\MB}$};
\draw (-0.81,1.06)node[anchor=north]{$\scriptstyle{f_{\sigma_v}}$};
\draw (0.81,1.06)node[anchor=north]{$\scriptstyle{f_{\sigma_u}}$};
\draw (0,1.4)node[anchor=north]{$\scriptstyle{f_{\sigma_1}}$};
\draw (3.5,0.25) node[anchor=north]{and};
\end{tikzpicture}
    \begin{tikzpicture}[line cap=round,line join=round,x=1.0cm,y=1.0cm]
\clip(-4,-1.5) rectangle (5.249408935174429,1.8);
     \draw [line width=0.5pt] (0.,0.) circle (0.5cm);
     \shadedraw[rotate=30,shift={(0.5cm,0cm)}] \doublefleche;
     \shadedraw[rotate=150,shift={(0.5cm,0cm)}] \doublefleche;
     \draw [line width=0.5pt] (0,1.3) circle (0.3cm);
     \shadedraw [shift={(0cm,1cm)},rotate=-90] \doubleflechescindeeleft;
     \shadedraw [shift={(0cm,1cm)},rotate=-90] \doubleflechescindeeright;
     \shadedraw [shift={(0cm,1cm)},rotate=-90] \fleche;
     \draw [->,>=stealth] (0.3,1.3)--(0.6,1.3);
     \draw [->,>=stealth] (-0.3,1.3)--(-0.6,1.3);
     \draw [line width=0.5pt] (1.12,-0.65) circle (0.3cm);
     \shadedraw[shift={(0.86cm,-0.5cm)},rotate=150] \doubleflechescindeeleft;
     \shadedraw[shift={(0.86cm,-0.5cm)},rotate=150] \doubleflechescindeeright;
     \shadedraw[shift={(0.86cm,-0.5cm)},rotate=150] \fleche;
     \draw [rotate around ={60:(1.12,-0.65)}] [->,>=stealth] (1.43,-0.65)--(1.73,-0.65);
     \draw [rotate around ={-120:(1.12,-0.65)}] [->,>=stealth] (1.43,-0.65)--(1.73,-0.65);
     \draw [line width=0.5pt] (-1.12,-0.65) circle (0.3cm);
     \shadedraw[shift={(-0.86cm,-0.5cm)},rotate=30] \doubleflechescindeeleft;
      \shadedraw[shift={(-0.86cm,-0.5cm)},rotate=30] \doubleflechescindeeright;
       \shadedraw[shift={(-0.86cm,-0.5cm)},rotate=30] \fleche;
      \draw [rotate around ={-60:(-1.12,-0.65)}] [->,>=stealth] (-1.43,-0.65)--(-1.73,-0.65);
     \draw [rotate around ={120:(-1.12,-0.65)}] [->,>=stealth] (-1.43,-0.65)--(-1.73,-0.65);
\begin{scriptsize}
\draw [fill=black] (0,1.7) circle (0.3pt);
\draw [fill=black] (0.2,1.65) circle (0.3pt);
\draw [fill=black] (-0.2,1.65) circle (0.3pt);
\draw [fill=black] (0,-0.6) circle (0.3pt);
\draw [fill=black] (0.2,-0.55) circle (0.3pt);
\draw [fill=black] (-0.2,-0.55) circle (0.3pt);
\draw [fill=black] (1.45,-0.85) circle (0.3pt);
\draw [fill=black] (1.5,-0.67) circle (0.3pt);
\draw [fill=black] (1.33,-0.97) circle (0.3pt);
\draw [fill=black] (-1.45,-0.85) circle (0.3pt);
\draw [fill=black] (-1.5,-0.67) circle (0.3pt);
\draw [fill=black] (-1.33,-0.97) circle (0.3pt);
\end{scriptsize}
\draw (0,0.2)node[anchor=north]{$g_{\MA}$};
\draw (0,1.55)node[anchor=north]{$\scriptstyle{f_{\sigma_1}}$};
\draw (-1.12,-0.4)node[anchor=north]{$\scriptstyle{f_{\sigma_n}}$};
\draw (1.12,-0.4)node[anchor=north]{$\scriptstyle{f_{\sigma_2}}$};
\end{tikzpicture}
\end{equation}
\end{small}

\noindent respectively, where the bold arrow is placed such that $f_{\sigma_1}$ fills the first (resp. the second) disc in the application order of diagrams of the form $\mathcalboondox{D}$ (resp. $\mathcalboondox{D'}$).
\end{definition}

\begin{lemma}
\label{lemma:fin-diag}
    Given $(i,j)\in \NN^2$, there exists a finite number of diagrams of the forms \eqref{eq:form-diag} and of type $\doubar{x}$ where $\llg(\doubar{x})=j$ and $N(\doubar{x})=i$. In particular, for $F\in\mathcal{L}_d^{\Phi_0}(\MA,\MB)^1$, the projection $\sum_{n\geq 0}\ell^n(F,\dots,F)^{\doubar{x}}$ on 
    \[
    \Multi_d^{\text{\scriptsize{$\doubarl{\Phi_0(\doubar{x})}$}}}(\MB)^{\llg(\doubar{x})}[d+1]\oplus \Multi_{d,\Phi_0}^{\doubar{x}}(\MA,\MB)^{\llg(\doubar{x})}[d+1]\oplus \Multi_d^{\doubar{x}}(\MA)^{\llg(\doubar{x})}[d+1]
    \] is a finite sum for every $\doubar{x}\in\doubar{\MO}_{\MA}$.
\end{lemma}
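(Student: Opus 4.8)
The plan is to adapt the proof of Lemma~\ref{lemma:MC-A-inf} to the pre-Calabi-Yau diagrams \eqref{eq:form-diag}. Fix $\doubar{x}\in\doubar{\MO}_{\MA}$ with $\llg(\doubar{x})=j$ and $N(\doubar{x})=i$. By construction of the notion of type, a filled diagram of type $\doubar{x}$ has exactly $N(\doubar{x})-\llg(\doubar{x})=i-j$ incoming arrows, namely the tensor factors of $\bigotimes_{l}\MA[1]^{\otimes\bar{x}^l}$, and exactly $\llg(\doubar{x})=j$ outgoing legs, both families being prescribed by $\doubar{x}$. So the crux is to bound, in terms of $i$ and $j$ alone, the number of discs and the number of legs of each disc occurring in a diagram of the form \eqref{eq:form-diag}.

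First I would record that a diagram of the form $\mathcalboondox{D}$ (resp.\ $\mathcalboondox{D'}$) consists of a unique ``central'' disc, filled with $g_{\MB}$ (resp.\ $g_{\MA}$), together with discs filled with elements of $\mathfrak{h}$, and that the colours of the arrows severely restrict the wiring: a disc of $\mathfrak{g}_{\MB}$ (resp.\ $\mathfrak{g}_{\MA}$) produces and consumes arrows of $\MB$ (resp.\ of $\MA$) whereas a disc of $\mathfrak{h}$ consumes arrows of $\MA$ and produces arrows of $\MB$, so that in $\mathcalboondox{D}$ every incoming arrow of an $\mathfrak{h}$-disc is an external incoming arrow of the diagram and every internal arrow runs from an $\mathfrak{h}$-disc to the central disc, while in $\mathcalboondox{D'}$ every outgoing leg of an $\mathfrak{h}$-disc is an external leg and every internal arrow runs from the central disc to an $\mathfrak{h}$-disc. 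As in Lemma~\ref{lemma:MC-A-inf}, I would use that each disc occurring in such a diagram carries at least one incoming arrow on every one of its sides; in particular the number $\llg(\doubar{y})$ of outgoing legs of a disc is at most its number of incoming arrows, so that, summing over all discs, the total number of outgoing arrows is at most the total number of incoming arrows. In $\mathcalboondox{D}$ this bounds the number of internal arrows, hence the number of incoming arrows of the central disc, by $i-j$; it follows that there are at most $i-j$ discs filled with elements of $\mathfrak{h}$ and that every disc has at most $i$ legs, and the count in $\mathcalboondox{D'}$ is analogous. Consequently every diagram of the form \eqref{eq:form-diag} of type $\doubar{x}$ has at most $N_0(i,j):=i-j+1$ discs, each having at most $i$ legs.

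I would then conclude the first assertion by noting that a diagram of the form \eqref{eq:form-diag} is a ribbon graph whose vertices are its discs, so once the number of discs, their numbers of legs and the type $\doubar{x}$ are bounded there are only finitely many such ribbon graphs and only finitely many ways of labelling the sides of the discs by objects of $\MO_{\MA}$ compatibly with $\doubar{x}$; hence the set of such diagrams is finite. For the ``in particular'' statement, I would take $F=g_{\MB}+s_{-1}f+g_{\MA}\in\mathcal{L}_d^{\Phi_0}(\MA,\MB)^1$ and expand $\sum_{n\geq 0}\ell^n(F,\dots,F)^{\doubar{x}}$ multilinearly; by the vanishing clauses of Definition~\ref{def:ell} the only contributions to the component indexed by $\doubar{x}$ are the necklace brackets $[g_{\MB},g_{\MB}]_{\nec}$ and $[g_{\MA},g_{\MA}]_{\nec}$ (which are finite sums on a fixed $\doubar{x}$ by the classical count for $[-,-]_{\nec}$) and, for each $n\geq 2$, scalar multiples of $\sum_{\sigma}\Phi^{\sigma}_{g_{\MB}}(f,\dots,f)^{\doubar{x}}$ and $\sum_{\sigma}\Psi^{\sigma}_{g_{\MA}}(f,\dots,f)^{\doubar{x}}$. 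A term of $\ell^n$ produces diagrams with $n-1$ discs filled with $f$ and one central disc, hence with $n$ discs in total, so by the bound above only the finitely many $n$ with $n\leq N_0(i,j)$ contribute; and by the first assertion each of $\sum_{\sigma}\Phi^{\sigma}_{g_{\MB}}(f,\dots,f)^{\doubar{x}}$ and $\sum_{\sigma}\Psi^{\sigma}_{g_{\MA}}(f,\dots,f)^{\doubar{x}}$ is a finite sum of terms $\mathcal{E}(\mathcalboondox{D})$. Therefore the projection is a finite sum.

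The step I expect to be the main obstacle is precisely the uniform bound on the number of discs, \textit{i.e.}\ on the index $n$: contrary to the $A_{\infty}$ situation of Lemma~\ref{lemma:MC-A-inf}, where ``one incoming arrow per disc'' immediately bounds the number of discs by $\llg(\bar{x})$, here a single $\mathfrak{h}$-disc or the central disc could a priori carry many more outgoing than incoming legs, so that an unbounded number of arrows might cascade into the central disc. Controlling this requires the stronger observation that each \emph{side} of every disc carries an incoming arrow --- equivalently, that components of $\Multi^{\bullet}$ attached to a tuple $\doubar{y}$ having a length-one side do not contribute to diagrams of the form \eqref{eq:form-diag} --- together with the colouring constraints on these diagrams recalled above.
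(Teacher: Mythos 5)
Your overall strategy --- bound the number of discs and the number of legs of each disc in terms of $(i,j)$, then conclude that only finitely many ribbon graphs, finitely many labellings, and finitely many values of $n$ contribute --- is the same as the paper's, and your handling of the ``in particular'' clause is fine. The gap is exactly where you flagged it, and it is a real one: your bound on the number of discs rests on the per-disc inequality $\llg(\doubar y)\le \#\{\text{inputs}\}$, which you justify by asserting that every side of every disc carries an incoming arrow, i.e.\ that no side $\bar y^l$ of length one occurs. Nothing in the definition of $\Multi^{\bullet}_{d,\Phi_0}(\MA,\MB)$ or of the diagrams \eqref{eq:form-diag} excludes such components, and the paper neither states nor uses this: a disc with sides of lengths $1,1,2$ has three outputs and one input, so the inequality fails for it. Moreover, even granting the inequality, the deduction ``summing over all discs, total outgoing $\le$ total incoming, hence the number of internal arrows is at most $i-j$'' does not work: the internal arrows occur on both sides of that inequality and cancel, leaving only $j\le i-j$; to get a bound on the internal arrows you would have to sum over the $\mathfrak h$-discs alone.

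The paper's count avoids the per-side hypothesis by a case split. For $\mathcalboondox{D}$, an $\mathfrak h$-disc with at least two outputs keeps at least one output external after feeding one to the central disc, so if $j'$ denotes the number of outputs of the central disc there are at most $j-j'$ such discs; an $\mathfrak h$-disc with exactly one output must instead carry an (external) input, so these are bounded by the number of inputs of the diagram. This only uses the analogue of the hypothesis already invoked in Lemma~\ref{lemma:MC-A-inf} (a one-output disc carries at least one incoming arrow), which is also the minimal assumption under which the statement can hold, since a disc with one output and no input could be inserted arbitrarily often without changing the type. For $\mathcalboondox{D'}$ no input condition is needed at all: every $\mathfrak h$-disc has all of its outputs external and has at least one output, so there are at most $j$ of them. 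In both cases each disc then has boundedly many legs, hence finitely many possible types, which gives the finiteness. I would replace your per-disc inequality by this case split (or, for $\mathcalboondox{D}$, by the direct observation that all inputs of an $\mathfrak h$-disc are external, once the one-output case is dealt with).
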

\begin{proof}
   Consider $(i,j)\in\NN^2$. We first show that there exists a finite number of diagrams of the form \eqref{eq:form-diag} with a disc filled with $g_{\MB}$ carrying $j$ outputs and $i$ inputs. If we denote by $j'$ the number of outputs of the disc filled with $g_{\MB}$, we have that $1\leq j' \leq j$. Moreover, there are at most $j-j'$ discs filled with $f_r$ with more than $1$ output and at most $i$ discs filled with $f_r$ carrying one output.
   Therefore, we have a finite number of discs filled with $f_r$ and for each of those, a finite number of possible types. We thus have a finite number of diagrams of the form \eqref{eq:form-diag} with a disc filled with $g_{\MB}$ carrying $j$ outputs and $i$ inputs.

   Now, we show that there exists a finite number of diagrams of the form \eqref{eq:form-diag} with a disc filled with $g_{\MA}$ carrying $j$ outputs and $i$ inputs. Such a diagram is composed of at most $j$ discs filled with $f_r$. Moreover, each of those discs carry at most $j$ outputs and $i+1$ inputs and the disc filled with $g_{\MA}$ carries at most $j$ outputs and $i$ inputs. Thus, the diagram consists of a finite number of discs and we have for each of those a finite number of possible types.   
   \end{proof}

\begin{proposition}
    $(\mathcal{L}_d^{\Phi_0}(\MA,\MB),\ell)$ is a graded $L_{\infty}$-algebra whose Maurer-Cartan elements are in bijection with triples $(s_{d+1}M_{\MB},s_{-1}(s_{d+1}\mathbf{F}),s_{d+1}M_{\MA})$ where $s_{d+1}M_{\MB}$ and $s_{d+1}M_{\MA}$ are $d$-pre-Calabi-Yau structures on $\MB$ and $\MA$ respectively and $(\Phi_0,s_{d+1}\mathbf{F}) : (\MA,s_{d+1}M_{\MA})\rightarrow (\MB,s_{d+1}M_{\MB})$ is a $d$-pre-Calabi-Yau morphism.
\end{proposition}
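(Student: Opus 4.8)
The plan is to follow closely the argument used for Proposition~\ref{prop:A-inf-L-inf} (itself modeled on \cite{borisov1}; see Proposition~\ref{prop:A-inf-L-inf-borisov}), replacing the Gerstenhaber bracket by the necklace bracket and keeping the $[d+1]$ and $[-1]$ shifts carried uniformly through every computation. First I would check that each $\ell^n$ of Definition~\ref{def:ell} is a well-defined map of degree $0$ into $\mathcal{L}_d^{\Phi_0}(\MA,\MB)=\mathfrak{g}_{\MB}\oplus\mathfrak{h}[-1]\oplus\mathfrak{g}_{\MA}$: this is an inspection of the $\doubar{x}$-components of the decorated diagrams in~\eqref{eq:form-diag}, the sign prefactors $\epsilon_i,\epsilon'_i$ being designed precisely so that the maps come out homogeneous of degree $0$. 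Next I would establish the antisymmetry of $\ell$. For $n>2$ and a transposition of two neighbouring arguments $f_i,f_{i+1}\in\mathfrak{h}$, the same computation as the one displayed in the proof of Proposition~\ref{prop:A-inf-L-inf} applies: exchanging $f_i$ and $f_{i+1}$ changes $\delta^{\sigma}$ by $|f_i||f_{i+1}|$ and alters $\epsilon_i$ (resp.\ $\epsilon'_i$) in a controlled way, yielding the required sign $-(-1)^{|f_i||f_{i+1}|}$; the cases in which one argument lies in $\mathfrak{g}_{\MB}$ or $\mathfrak{g}_{\MA}$ reduce to the antisymmetry of $[-,-]_{\nec}$ together with the vanishing rules of Definition~\ref{def:ell}.

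The heart of the proof is the higher Jacobi identity~\eqref{eq:Jac-l}, which I would organize by the multiset of summands to which $l_1,\dots,l_n$ belong, discarding all but a handful of configurations using the vanishing conditions of Definition~\ref{def:ell}. Case (i): all arguments in $\mathfrak{g}_{\MB}$ (resp.\ all in $\mathfrak{g}_{\MA}$) — this is exactly the graded Jacobi identity for $(\mathfrak{g}_{\MB},[-,-]_{\nec})$ (resp.\ $\mathfrak{g}_{\MA}$), which holds since $s_{d+1}M$ lives in a graded Lie algebra. Case (ii): exactly one argument $g_{\MB}\in\mathfrak{g}_{\MB}$, the rest in $\mathfrak{h}[-1]$ — here the identity amounts to the compatibility of $[-,-]_{\nec}$ acting on $g_{\MB}$ with the operations $\Phi^{\sigma}_{g_{\MB}}$, i.e.\ that plugging a necklace self-composition into the first type of diagram in~\eqref{eq:form-diag} equals the two ways of first inserting the $f_r$'s and then composing; this is precisely the kind of bookkeeping identity that the diagrammatic calculus of \cite{ktv,moi} makes transparent, once the underlying diagrams are matched. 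Case (iii): exactly one $g_{\MA}\in\mathfrak{g}_{\MA}$ — symmetric to (ii), now using the multinecklace operation and the second diagram in~\eqref{eq:form-diag}. Case (iv): one argument in $\mathfrak{g}_{\MB}$, one in $\mathfrak{g}_{\MA}$, the rest in $\mathfrak{h}[-1]$ — the two ``mixed'' contributions (from $\Phi^{\sigma}_{g_{\MB}}$ precomposed with the $f_r$'s and from $\Psi^{\sigma}_{g_{\MA}}$) cancel pairwise after matching diagrams, exactly as the $\mathcalboondox{D_1}$ and $\mathcalboondox{D_1'}$ terms cancel in the proof of Proposition~\ref{prop:A-inf-L-inf}; every other configuration is killed outright. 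In each case the combinatorial identity between sums of decorated diagrams is immediate from the diagram calculus, so that the only genuine labor is the Koszul-sign comparison, handled by the same computation as in the $A_{\infty}$ case with the extra shifts tracked throughout.

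Finally, with $(\mathcal{L}_d^{\Phi_0}(\MA,\MB),\ell)$ known to be an $L_{\infty}$-algebra, I would identify its Maurer-Cartan elements. By Lemma~\ref{lemma:fin-diag} the Maurer-Cartan equation $\sum_{n\geq 1}\frac{1}{n!}\ell^n(F,\dots,F)=0$ is well posed, and a degree $1$ element $F$ of $\mathfrak{g}_{\MB}\oplus\mathfrak{h}[-1]\oplus\mathfrak{g}_{\MA}$ is precisely a triple $(s_{d+1}M_{\MB},s_{-1}(s_{d+1}\mathbf{F}),s_{d+1}M_{\MA})$ with $s_{d+1}M_{\MB},s_{d+1}M_{\MA}$ of degree $1$ and $s_{d+1}\mathbf{F}$ of degree $0$. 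Projecting the Maurer-Cartan equation onto the three summands: the $\mathfrak{g}_{\MB}$-component yields $\frac12[s_{d+1}M_{\MB},s_{d+1}M_{\MB}]_{\nec}=0$, i.e.\ $s_{d+1}M_{\MB}$ is a $d$-pre-Calabi-Yau structure; the $\mathfrak{g}_{\MA}$-component yields the same for $s_{d+1}M_{\MA}$; and the $\mathfrak{h}[-1]$-component collects exactly the diagrams with a single disc filled with $s_{d+1}M_{\MB}$ and all others filled with $s_{d+1}\mathbf{F}$ — summing to $s_{d+1}M_{\MB}\upperset{\pre}{\circ}s_{d+1}\mathbf{F}$ — together with those having a single disc filled with $s_{d+1}M_{\MA}$ — summing to $s_{d+1}\mathbf{F}\upperset{\multinec}{\circ}s_{d+1}M_{\MA}$ — so that, component by component in $\doubar{x}$, it is exactly the morphism identity~\eqref{eq:morphism-pCY}; hence $(\Phi_0,s_{d+1}\mathbf{F})$ is a $d$-pre-Calabi-Yau morphism, and conversely any such triple manifestly solves the Maurer-Cartan equation. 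I expect the main obstacle to be cases (ii)--(iv) of the Jacobi identity: the diagram combinatorics is dictated by \cite{ktv,moi}, but getting the Koszul signs (with the interplay of the $[d+1]$ and $[-1]$ shifts and the prefactors $\epsilon_i,\epsilon'_i$) to line up is where essentially all the effort lies.
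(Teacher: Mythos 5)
Your overall architecture (antisymmetry, case analysis for the higher Jacobi identity, then identification of Maurer--Cartan elements) matches the paper, and your treatment of the Maurer--Cartan identification and of the mixed case (one argument in $\mathfrak{g}_{\MB}$, one in $\mathfrak{g}_{\MA}$) is essentially the paper's argument. But your case analysis for the Jacobi identity has a genuine gap: the configurations that actually carry all the work are the ones with exactly \emph{two} arguments in the same summand, i.e.\ $l_i,l_j\in\mathfrak{g}_{\MA}$ (or both in $\mathfrak{g}_{\MB}$) and all remaining arguments in $\mathfrak{h}$, and these do not appear anywhere in your plan. In those cases the term $\ell^{n-1}\big([l_i,l_j]_{\nec},\dots\big)$ is nonzero and must cancel against the sum over $k\geq 3$ of nested terms $\ell^{n-k+1}\big(\ell^k(\dots,l_i,\dots),\dots,l_j,\dots\big)$; this forces one to prove that $\Psi^{\sigma}_{l_i\underset{\nec}{\circ}l_j}$ (resp.\ $\Phi^{\tau}_{l_i\underset{\nec}{\circ}l_j}$) decomposes, with the correct Koszul signs, as a signed sum of iterated compositions $\Psi^{\sigma\dprime}_{l_{j_q}}\big(\Psi^{\sigma'}_{l_{i_p}}(\dots),\dots\big)$, and this diagram-matching with its sign verification is the bulk of the paper's proof. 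Omitting it leaves the proposition unproved.

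Conversely, the content you assign to your cases (ii) and (iii) --- exactly one argument in $\mathfrak{g}_{\MB}$ (resp.\ $\mathfrak{g}_{\MA}$) and the rest in $\mathfrak{h}$ --- is not where any compatibility of the necklace bracket with $\Phi^{\sigma}_{g_{\MB}}$ is tested: in that configuration every term of the Jacobi sum vanishes individually, because an inner bracket not containing the $\mathfrak{g}$-element has all arguments in $\mathfrak{h}$ and is zero, while an inner bracket containing it lands in $\mathfrak{h}[-1]$, so the outer bracket again has all arguments in $\mathfrak{h}$ and is zero. The paper accordingly lists this among the trivially satisfied cases. So your proposal both spends effort where none is needed and skips the subcases where the nontrivial cancellation (and the delicate sign bookkeeping you correctly anticipate as the main obstacle) actually occurs.
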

\begin{proof}
    We first show that the $\infty$-bracket $\ell$ is anti-symmetric. Consider $n\geq 2$ and $l_i\in\mathcal{L}_d^{\Phi_0}(\MA,\MB)$ for $i\in\llbracket 1,n\rrbracket$.
    We have to show that the identity
    \begin{equation}
    \label{eq:l-bracket-antisymm}
        \ell^n(l_1,\dots,l_n)=-(-1)^{|l_i||l_{i+1}|}\ell^n(l_1,\dots,l_{i-1},l_{i+1},l_i,l_{i+2},\dots,l_n)
    \end{equation}
    holds for every $i\in\llbracket 1,n-1\rrbracket$.
It is clear by the definition that \eqref{eq:l-bracket-antisymm} is satisfied in the following cases:
\begin{itemize}
    \item $l_i\in\mathfrak{g}_{\MA}$ or $l_i\in\mathfrak{g}_{\MB}$ or $l_i\in\mathfrak{h}$ for every $i\in\llbracket 1,n\rrbracket$,
    \item there exists $i,j\in\llbracket 1,n \rrbracket$, $i\neq j$ such that $l_i\in \mathfrak{g}_{\MA}$ and $l_j\in \mathfrak{g}_{\MB}$,
    \item $n>2$ and there exists $i,j\in\llbracket 1,n \rrbracket$, $i\neq j$ such that $l_i,l_j\in \mathfrak{g}_{\MA}$ or $l_i,l_j\in \mathfrak{g}_{\MB}$.
\end{itemize}
Then, since $[-,-]_{\nec}$ is a Lie bracket, we only have to show that \eqref{eq:l-bracket-antisymm} holds when $l_i\in\mathfrak{g}_{\MA}$ or $l_i\in\mathfrak{g}_{\MB}$ for some $i\in\llbracket 1,n\rrbracket$ and $l_j\in\mathfrak{h}$ for $j\in\llbracket 1,n\rrbracket$ with $j\neq i$.

Consider elements $f_i\in\mathfrak{h}$ for $i\in\llbracket 1,n-1\rrbracket$ and $g_{\MB}\in\mathfrak{g}_{\MB}$. Take $j\in\llbracket 1,i-1\rrbracket$.
Then, we have that 
\begin{equation}
    \begin{split}
        &\ell^{n}(s_{-1}f_{n-1},\dots,s_{-1}f_{i+1},g_{\MB},s_{-1}f_i,\dots,s_{-1}f_{j},s_{-1}f_{j+1},s_{-1}f_{j-1},\dots,s_{-1}f_{1})
        \\&=(-1)^{\epsilon}\sum\limits_{\sigma\in\mathfrak{S}_{n-1}}(-1)^{\delta^{\sigma}(f_{n-1},\dots,f_j,f_{j+1},\dots,f_1)}s_{-1}\Phi_{g_{\MB}}^{\sigma}(f_{n-1},\dots,f_{j},f_{j+1},\dots,f_{1})
        \\&=(-1)^{\epsilon}\sum\limits_{\sigma\in\mathfrak{S}_{n-1}}(-1)^{\delta^{\sigma}(f_{n-1},\dots,f_1)+|f_j||f_{j+1}|}s_{-1}\Phi_{g_{\MB}}^{\sigma}(f_{n-1},\dots,f_{1})
        \\&=-(-1)^{|s_{-1}f_j||s_{-1}f_{j+1}|}\ell^{n}(s_{-1}f_{n-1},\dots,s_{-1}f_{i+1},g_{\MB},s_{-1}f_{i},\dots,s_{-1}f_{1})
    \end{split}
\end{equation}
where 
\begin{small}
\begin{equation}
    \epsilon=i+\sum\limits_{r=1}^{n-1}(r-1)|f_r|+|f_j|+|f_{j+1}|+i|g_{\MB}|+|g_{\MB}|\sum\limits_{r=i+1}^{n-1}|f_r|.
\end{equation}
\end{small}
The proof is completely similar for $j\in\llbracket i+2,n-1\rrbracket$. If $j=i$, we have 
\begin{equation}
    \begin{split}
        &\ell^{n}(s_{-1}f_{n-1},\dots,s_{-1}f_{i+1},s_{-1}f_i,g_{\MB},s_{-1}f_{i-1},\dots,s_{-1}f_{1})
        \\&=(-1)^{\epsilon'}\sum\limits_{\sigma\in\mathfrak{S}_{n-1}}(-1)^{\delta^{\sigma}(f_{n-1},\dots,f_1)}s_{-1}\Phi_{g_{\MB}}^{\sigma}(f_{n-1},\dots,f_{1})
        \\&=-(-1)^{|s_{-1}f_i||g_{\MB}|}\ell^{n}(s_{-1}f_{n-1   },\dots,s_{-1}f_{i+1},g_{\MB},s_{-1}f_{i},\dots,s_{-1}f_{1})
    \end{split}
\end{equation}
where
\begin{small}
\begin{equation}
    \epsilon'=i+\sum\limits_{r=1}^{n-1}(r-1)|f_r|+(i-1)|g_{\MB}|+|g_{\MB}|\sum\limits_{r=i}^{n-1}|f_r|.
\end{equation}
\end{small}
We can prove in the same way that 
\begin{equation}
    \begin{split}
        &\ell^{n}(s_{-1}f_{n-1},\dots,s_{-1}f_{i+2},g_{\MB},s_{-1}f_{i+1},s_{-1}f_{i},\dots,s_{-1}f_{1})
        \\&=-(-1)^{|s_{-1}f_{i+1}||g_{\MB}|}\ell^{n}(s_{-1}f_{n-1},\dots,s_{-1}f_{i+1},g_{\MB},s_{-1}f_{i},\dots,s_{-1}f_{1}).
    \end{split}
\end{equation}
Thus, \eqref{eq:l-bracket-antisymm} holds when $l_i\in\mathfrak{g}_{\MB}$ for some $i\in\llbracket 1,n\rrbracket$ and $l_j\in\mathfrak{h}$ for $j\in\llbracket 1,n\rrbracket$ with $j\neq i$. The fact that the latter identity holds when $l_i\in\mathfrak{g}_{\MA}$ for some $i\in\llbracket 1,n\rrbracket$ and $l_j\in\mathfrak{h}$ for $j\in\llbracket 1,n\rrbracket$ with $j\neq i$ can be proved in a similar manner.

We now show that the $\infty$-bracket $\ell$ satisfies the higher Jacobi identities \eqref{eq:Jac-l}.
We first note that by Remark \ref{remark:lie-is-linfinity} and by definition of $\ell$ the identity is trivially satisfied in the following cases:
\begin{itemize}
    \item $l_i\in\mathfrak{h}$ for every $i\in\llbracket 1,n\rrbracket$;
    \item $n=2$ and $l_1,l_2\in\mathfrak{g}_{\MA}\cup\mathfrak{g}_{\MB}$;
    \item there exists a unique $i\in\llbracket 1,n\rrbracket$ such that $l_i\in\mathfrak{g}_{\MA}\cup\mathfrak{g}_{\MB}$;
    \item there exists three different integers $i,j,k\in\llbracket 1,n\rrbracket$ such that $l_i,l_j,l_k\in\mathfrak{g}_{\MA}\cup\mathfrak{g}_{\MB}$.
\end{itemize}
Therefore, we only have to show that \eqref{eq:Jac-l} holds when $n\geq 3$ and there exists $i,j\in\llbracket 1,n\rrbracket$ with $i\neq j$ and $l_i,l_j\in\mathfrak{g}_{\MA}\cup\mathfrak{g}_{\MB}$ and for every $k\in\llbracket 1,n\rrbracket$ with $k\neq i,j$, $l_k\in \mathfrak{h}$.

We have to show that
\begin{equation}
    \begin{split}
    &\sum\limits_{k=3}^n(-1)^{k}\hskip-3mm\sum\limits_{\substack{(\bar{i},\bar{j})\in\mathcal{P}_k^n\\i=i_p, j=j_q}}\hskip-2mm(-1)^{\Delta'} \ell^{n-k+1}\big(\ell^k(s_{-1}l_{i_k},\dots,l_{i_p},\dots,s_{-1}l_{i_1}),s_{-1}l_{j_{n-k}},\dots,l_{j_q},\dots,s_{-1}l_{j_{1}}\big)
    \\&-\sum\limits_{k=3}^n(-1)^{k}\hskip-4mm\sum\limits_{\substack{(\bar{i},\bar{j})\in\mathcal{P}_k^n\\i=j_q, j=i_p}}\hskip-3mm(-1)^{\Delta'_1 } \ell^{n-k+1}\big(\ell^k(s_{-1}l_{i_k},\dots,l_{i_p},\dots,s_{-1}l_{i_1}),s_{-1}l_{j_{n-k}},\dots,l_{j_q},\dots,s_{-1}l_{j_{1}}\big)
    \\&+(-1)^{\Delta\dprime} \ell^{n-1}\big([l_i,l_j]_{\nec},s_{-1}l_{n},\dots,\Hat{l_i},\dots,\Hat{l_j},\dots,s_{-1}l_{1}\big)
    =0
    \end{split}
\end{equation}
with
\begin{small}
\allowdisplaybreaks
\begin{align*}
    \Delta'&=|l_{i_p}|\sum\limits_{\substack{t=i_p+1\\t\neq j_q}}^n(|l_t|+1)+|l_{i_p}|\sum\limits_{\substack{t=p+1}}^k(|l_{i_t}|+1)+\sum\limits_{\substack{t=1\\t\neq p}}^k(|l_{i_t}|+1)\sum\limits_{\substack{w=i_t+1\\w\neq i_p,j_q}}^n(|l_w|+1)+|l_{j_q}|\sum_{\substack{t=1\\t\neq p\\i_t<j_q}}^k(|l_{i_t}|+1)
    \\&+\sum\limits_{\substack{t=1\\t\neq p}}^k(|l_{i_t}|+1)\sum\limits_{\substack{w=t+1\\w\neq p}}^k(|l_{i_w}|+1)+n-i_p+k-p
    +\sum\limits_{\substack{t=1\\t\neq p}}^k\sum\limits_{\substack{w=i_t+1\\w\neq i_p}}^n1+\sum\limits_{\substack{t=1\\ t\neq p}}^k \sum\limits_{\substack{w=t+1\\w\neq p}}^k 1+j_q-q,
    \\ \Delta'_1&=\Delta'+|l_{i_p}||l_{j_q}|,
    \\\Delta\dprime&=|l_i|\sum\limits_{t=i+1}^{n}(|l_t|+1)+|l_j|\sum\limits_{\substack{t=j+1\\t\neq i}}^{n}(|l_t|+1)+i+j+1.
\end{align*}
\end{small}
Suppose that $l_i,l_j\in\mathfrak{g}_{\MA}$ for some $i,j\in\llbracket 1,n\rrbracket$ with $i>j$ and $l_k\in \mathfrak{h}$ for every $k\in\llbracket 1,n\rrbracket$ with $k\neq i,j$.

We have that 
\begin{equation}
    \begin{split}
        &\ell^{n-1}\big([l_i,l_j]_{\nec},s_{-1}l_{n},\dots,\Hat{l_i},\dots,\Hat{l_j},\dots,s_{-1}l_{1}\big)
        \\&=(-1)^{\epsilon} \sum\limits_{\sigma\in\mathfrak{S}_{n-2}}(-1)^{\delta^{\sigma}(l_{n},\dots,\Hat{l_i},\dots,\Hat{l_j},\dots,l_{1})}s_{-1}\Psi^{\sigma}_{[l_i,l_j]_{\nec}}(l_{n},\dots,\Hat{l_i},\dots,\Hat{l_j},\dots,l_{1})
        \\&=(-1)^{\epsilon} \sum\limits_{\sigma\in\mathfrak{S}_{n-2}}(-1)^{\delta^{\sigma}(l_{n},\dots,\Hat{l_i},\dots,\Hat{l_j},\dots,l_{1})}s_{-1}\Psi^{\sigma}_{l_i\underset{\nec}{\circ}l_j}(l_{n},\dots,\Hat{l_i},\dots,\Hat{l_j},\dots,l_{1})
        \\&-(-1)^{|l_i||l_j|+\epsilon}\sum\limits_{\sigma\in\mathfrak{S}_{n-2}}(-1)^{\delta^{\sigma}(l_{n},\dots,\Hat{l_i},\dots,\Hat{l_j},\dots,l_{1})}s_{-1}\Psi^{\sigma}_{l_j\underset{\nec}{\circ}l_i}(l_{n},\dots,\Hat{l_i},\dots,\Hat{l_j},\dots,l_{1})  
    \end{split}
\end{equation}
with 
\begin{small}
\begin{equation}
    \begin{split}
        \epsilon=n+(|l_i|+|l_j|)\sum\limits_{\substack{t=1\\t\neq i,j}}^n(|l_t|+1)+\sum\limits_{t=1}^{j-1}(t-1)|l_t|+\sum\limits_{t=j+1}^{i-1}t|l_t|+\sum\limits_{t=i+1}^{n}(t-1)|l_t|
    \end{split}
\end{equation}
\end{small}
and where $\Psi^{\sigma}_{l_i\underset{\nec}{\circ}l_j}(l_{n},\dots,\Hat{l_i},\dots,\Hat{l_j},\dots,l_{1})=\hskip-2mm\sum\limits_{1\leq u<v\leq n-2}\hskip-2mm\big((-1)^{|l_i|\sum\limits_{t=1}^{u}|l_{\sigma_t}|}\sum\mathcal{E}(\mathcalboondox{D_j})+\sum\mathcal{E}(\mathcalboondox{D_i})\big)$ where the last sum is over all the filled diagrams $\mathcalboondox{D_j}$ and $\mathcalboondox{D_i}$ of the form

\begin{minipage}{21cm}
    \begin{tikzpicture}[line cap=round,line join=round,x=1.0cm,y=1.0cm]
\clip(-3,-2) rectangle (5,2);
  \draw (0,0) circle (0.5cm);
    \draw [rotate around={0:(0,0)}][->,>=stealth] (0.5,0)--(0.9,0);
    \draw [rotate around={-45:(0,0)},shift={(0.5,0)}]\doublefleche;
    \draw [rotate around={45:(0,0)},shift={(0.5,0)}]\doublefleche;
    \draw(1.4,0) circle (0.5cm);
    \draw [rotate around={180:(1.4,0)}, shift={(1.9,0)}] \doubleflechescindeeleft;
    \draw [rotate around={180:(1.4,0)}, shift={(1.9,0)}] \doubleflechescindeeright;
    \draw[rotate around={-90:(1.4,0)}][->,>=stealth](1.9,0)--(2.3,0);
    \draw (1.4,1.2) circle (0.3cm);
    \draw[rotate around={-90:(1.4,1.2)},shift={(1.7,1.2)}] \doubleflechescindeeleft;
    \draw[rotate around={-90:(1.4,1.2)},shift={(1.7,1.2)}] \doubleflechescindeeright;
    \draw[rotate around={0:(1.4,1.2)}][->,>=stealth](1.7,1.2)--(2,1.2);
    \draw[rotate around={180:(1.4,1.2)}][->,>=stealth](1.7,1.2)--(2,1.2);
    \draw[rotate around={90:(1.4,0)}][->,>=stealth](1.9,0)--(2.3,0);
    \draw (1.4,-1.2) circle (0.3cm);
    \draw[rotate around={90:(1.4,-1.2)},shift={(1.7,-1.2)}] \doubleflechescindeeleft;
    \draw[rotate around={90:(1.4,-1.2)},shift={(1.7,-1.2)}] \doubleflechescindeeright;
    \draw[rotate around={0:(1.4,-1.2)}][->,>=stealth](1.7,-1.2)--(2,-1.2);
    \draw[rotate around={180:(1.4,-1.2)}][->,>=stealth](1.7,-1.2)--(2,-1.2);
    \draw[rotate=-90][->,>=stealth](0.5,0)--(0.9,0);
     \draw (0,-1.2) circle (0.3cm);
    \draw[rotate around={90:(0,-1.2)},shift={(0.3,-1.2)}] \doubleflechescindeeleft;
    \draw[rotate around={90:(0,-1.2)},shift={(0.3,-1.2)}] \doubleflechescindeeright;
    \draw[rotate around={0:(0,-1.2)}][->,>=stealth](0.3,-1.2)--(0.6,-1.2);
    \draw[rotate around={180:(0,-1.2)}][->,>=stealth](0.3,-1.2)--(0.6,-1.2);
    \draw[rotate=90][->,>=stealth](0.5,0)--(0.9,0);
     \draw (0,1.2) circle (0.3cm);
    \draw[rotate around={-90:(0,1.2)},shift={(0.3,1.2)}] \doubleflechescindeeleft;
    \draw[rotate around={-90:(0,1.2)},shift={(0.3,1.2)}] \doubleflechescindeeright;
    \draw[rotate around={0:(0,1.2)}][->,>=stealth](0.3,1.2)--(0.6,1.2);
    \draw[rotate around={180:(0,1.2)}][->,>=stealth](0.3,1.2)--(0.6,1.2);
    \draw[rotate=180][->,>=stealth](0.5,0)--(0.9,0);
     \draw (-1.2,0) circle (0.3cm);
    \draw[rotate around={0:(-1.2,0)},shift={(-0.9,0)}] \doubleflechescindeeleft;
    \draw[rotate around={0:(-1.2,0)},shift={(-0.9,0)}] \doubleflechescindeeright;
    \draw[rotate around={90:(-1.2,0)}][->,>=stealth](-0.9,0)--(-0.6,0);
    \draw[rotate around={-90:(-1.2,0)}][->,>=stealth](-0.9,0)--(-0.6,0);
\draw[rotate=45][fill=black](-0.6,0) circle (0.3pt);
\draw[rotate=45][fill=black](-0.55,0.2) circle (0.3pt);
\draw[rotate=45][fill=black](-0.55,-0.2) circle (0.3pt);
\draw[rotate=-45][fill=black](-0.6,0) circle (0.3pt);
\draw[rotate=-45][fill=black](-0.55,0.2) circle (0.3pt);
\draw[rotate=-45][fill=black](-0.55,-0.2) circle (0.3pt);
\draw[fill=black](2,0) circle (0.3pt);
\draw[fill=black](1.95,0.2) circle (0.3pt);
\draw[fill=black](1.95,-0.2) circle (0.3pt);
\draw[fill=black](1.4,1.6) circle (0.3pt);
\draw[fill=black](1.25,1.55) circle (0.3pt);
\draw[fill=black](1.55,1.55) circle (0.3pt);
\draw [rotate around={180:(1.4,0)}][fill=black](1.4,1.6) circle (0.3pt);
\draw [rotate around={180:(1.4,0)}][fill=black](1.25,1.55) circle (0.3pt);
\draw [rotate around={180:(1.4,0)}][fill=black](1.55,1.55) circle (0.3pt);
\draw[fill=black](0,1.6) circle (0.3pt);
\draw[fill=black](0.15,1.55) circle (0.3pt);
\draw[fill=black](-0.15,1.55) circle (0.3pt);
\draw[rotate=180][fill=black](0,1.6) circle (0.3pt);
\draw[rotate=180][fill=black](0.15,1.55) circle (0.3pt);
\draw[rotate=180][fill=black](-0.15,1.55) circle (0.3pt);
\draw (0,0.25)node[anchor=north]{$j$};
\draw (1.4,0.25)node[anchor=north]{$i$};
\begin{scriptsize}
\draw (-1.2,0.15)node[anchor=north]{$\scriptstyle{\sigma_1}$};
\draw (0,1.35)node[anchor=north]{$\scriptstyle{\sigma_u}$};
\draw (0,-1.05)node[anchor=north]{$\scriptstyle{\sigma_{\scriptscriptstyle{v+1}}}$};
\draw (1.4,1.35)node[anchor=north]{$\scriptstyle{\sigma_{\scriptscriptstyle{u+1}}}$};
\draw (1.4,-1.05)node[anchor=north]{$\scriptstyle{\sigma_{v}}$};
\end{scriptsize}
\draw (4,0.25) node[anchor=north]{and};
\end{tikzpicture}
\begin{tikzpicture}[line cap=round,line join=round,x=1.0cm,y=1.0cm]
\clip(-2,-2) rectangle (7.242319607404366,2);
    \draw (0,0) circle (0.5cm);
    \draw [rotate around={0:(0,0)}][->,>=stealth] (0.5,0)--(0.9,0);
    \draw [rotate around={120:(0,0)}][->,>=stealth] (0.5,0)--(0.9,0);
    \draw [rotate around={-120:(0,0)}][->,>=stealth] (0.5,0)--(0.9,0);
    \draw [rotate around={-60:(0,0)},shift={(0.5,0)}]\doublefleche;
    \draw [rotate around={60:(0,0)},shift={(0.5,0)}]\doublefleche;
    \draw(1.4,0) circle (0.5cm);
    \draw [rotate around={180:(1.4,0)}, shift={(1.9,0)}] \doubleflechescindeeleft;
    \draw [rotate around={180:(1.4,0)}, shift={(1.9,0)}] \doubleflechescindeeright;
    \draw[rotate around={-120:(1.4,0)}][->,>=stealth](1.9,0)--(2.3,0);
    \draw (0.8,1.03) circle (0.3cm);
    \draw[rotate around={-60:(0.8,1.03)},shift={(1.1,1.03)}] \doubleflechescindeeleft;
    \draw[rotate around={-60:(0.8,1.03)},shift={(1.1,1.03)}] \doubleflechescindeeright;
    \draw[rotate around={30:(0.8,1.03)}][->,>=stealth](1.1,1.03)--(1.4,1.03);
    \draw[rotate around={210:(0.8,1.03)}][->,>=stealth](1.1,1.03)--(1.4,1.03);
     \draw[rotate around={120:(1.4,0)}][->,>=stealth](1.9,0)--(2.3,0);
    \draw (0.8,-1.03) circle (0.3cm);
    \draw[rotate around={60:(0.8,-1.03)},shift={(1.1,-1.03)}] \doubleflechescindeeleft;
    \draw[rotate around={60:(0.8,-1.03)},shift={(1.1,-1.03)}] \doubleflechescindeeright;
    \draw[rotate around={-30:(0.8,-1.03)}][->,>=stealth](1.1,-1.03)--(1.4,-1.03);
    \draw[rotate around={-210:(0.8,-1.03)}][->,>=stealth](1.1,-1.03)--(1.4,-1.03);
    \draw (-0.6,1.03) circle (0.3cm);
    \draw[rotate around={-60:(-0.6,1.03)},shift={(-0.3,1.03)}] \doubleflechescindeeleft;
    \draw[rotate around={-60:(-0.6,1.03)},shift={(-0.3,1.03)}] \doubleflechescindeeright;
    \draw[rotate around={30:(-0.6,1.03)}][->,>=stealth](-0.3,1.03)--(0,1.03);
    \draw[rotate around={210:(-0.6,1.03)}][->,>=stealth](-0.3,1.03)--(0,1.03);
    \draw (-0.6,-1.03) circle (0.3cm);
     \draw[rotate around={60:(-0.6,-1.03)},shift={(-0.3,-1.03)}] \doubleflechescindeeleft;
    \draw[rotate around={60:(-0.6,-1.03)},shift={(-0.3,-1.03)}] \doubleflechescindeeright;
    \draw[rotate around={-30:(-0.6,-1.03)}][->,>=stealth](-0.3,-1.03)--(0,-1.03);
    \draw[rotate around={-210:(-0.6,-1.03)}][->,>=stealth](-0.3,-1.03)--(0,-1.03);
    \draw[->,>=stealth](1.9,0)--(2.3,0);
    \draw(2.6,0) circle (0.3cm);
    \draw[rotate around={90:(2.6,0)}][->,>=stealth](2.9,0)--(3.2,0);
    \draw[rotate around={-90:(2.6,0)}][->,>=stealth](2.9,0)--(3.2,0);
    \draw[rotate around={180:(2.6,0)},shift={(2.9,0)}] \doubleflechescindeeleft;
    \draw[rotate around={180:(2.6,0)},shift={(2.9,0)}] \doubleflechescindeeright;
\draw[fill=black](-0.6,0) circle (0.3pt);
\draw[fill=black](-0.55,0.2) circle (0.3pt);
\draw[fill=black](-0.55,-0.2) circle (0.3pt);
\draw[fill=black](3,0) circle (0.3pt);
\draw[fill=black](2.95,0.15) circle (0.3pt);
\draw[fill=black](2.95,-0.15) circle (0.3pt);
\draw[rotate around={60:(1.4,0)}][fill=black](2,0) circle (0.3pt);
\draw[rotate around={60:(1.4,0)}][fill=black](1.95,0.2) circle (0.3pt);
\draw[rotate around={60:(1.4,0)}][fill=black](1.95,-0.2) circle (0.3pt);
\draw[rotate around={-60:(1.4,0)}][fill=black](2,0) circle (0.3pt);
\draw[rotate around={-60:(1.4,0)}][fill=black](1.95,0.2) circle (0.3pt);
\draw[rotate around={-60:(1.4,0)}][fill=black](1.95,-0.2) circle (0.3pt);
\draw[fill=black](-0.6,1.4) circle (0.3pt);
\draw[fill=black](-0.8,1.35) circle (0.3pt);
\draw[fill=black](-0.95,1.2) circle (0.3pt);
\draw[fill=black](0.45,1.2) circle (0.3pt);
\draw[fill=black](0.6,1.35) circle (0.3pt);
\draw[fill=black](0.8,1.4) circle (0.3pt);
\draw [rotate=120][fill=black](-0.6,1.4) circle (0.3pt);
\draw [rotate=120][fill=black](-0.8,1.35) circle (0.3pt);
\draw [rotate=120][fill=black](-0.95,1.2) circle (0.3pt);
\draw[rotate around={120:(1.4,0)}][fill=black](0.45,1.2) circle (0.3pt);
\draw[rotate around={120:(1.4,0)}][fill=black](0.6,1.35) circle (0.3pt);
\draw[rotate around={120:(1.4,0)}][fill=black](0.8,1.4) circle (0.3pt);
\draw (0,0.25)node[anchor=north]{$j$};
\draw (1.4,0.25)node[anchor=north]{$i$};
\begin{scriptsize}
\draw (2.62,0.15)node[anchor=north]{$\scriptstyle{\sigma_1}$};
\draw (-0.6,1.2)node[anchor=north]{$\scriptstyle{\sigma_v}$};
\draw (0.8,1.2)node[anchor=north]{$\scriptstyle{\sigma_{\scriptscriptstyle{v+1}}}$};
\draw (-0.6,-0.85)node[anchor=north]{$\scriptstyle{\sigma_{\scriptscriptstyle{u+1}}}$};
\draw (0.8,-0.9)node[anchor=north]{$\scriptstyle{\sigma_{u}}$};
\end{scriptsize}
\end{tikzpicture}
\end{minipage}

\noindent respectively, where the bold arrow is placed such that $l_{\sigma_1}$ is the first in the application order of the diagram and where discs filled with $l_{\sigma_k}$ by discs filled with $\sigma_k$ to simplify. Moreover, we have that 
\begin{equation}
    \begin{split}
   &\sum\limits_{k=3}^n(-1)^{k}\hskip-2mm\sum\limits_{\substack{(\bar{i},\bar{j})\in\mathcal{P}_k^n\\i=i_p, j=j_q}}(-1)^{\Delta'} \ell^{n-k+1}\big(\ell^k(s_{-1}l_{i_k},\dots,l_{i_p},\dots,s_{-1}l_{i_1}),s_{-1}l_{j_{n-k}},\dots,l_{j_q},\dots,s_{-1}l_{j_{1}}\big)
    \\&=\sum\limits_{k=3}^n(-1)^{k}\hskip-2mm\sum\limits_{\substack{(\bar{i},\bar{j})\in\mathcal{P}_k^n\\i=i_p, j=j_q}}\hskip-2mm
    (-1)^{\Delta'}\hskip-3mm\sum\limits_{\sigma'\in\mathfrak{S}_{k-1}} \hskip-3mm(-1)^{\epsilon_1+\delta^{\sigma'}(l_{i_k},\dots,\Hat{l_{i_p}},\dots,l_{i_1})} 
    \\&\hskip4cm\ell^{n-k+1}\big(s_{-1}\Psi^{\sigma'}_{l_{i_p}}(
    l_{i_k},\dots,\Hat{l_{i_p}},\dots,l_{i_1}),s_{-1}l_{j_{n-k}},\dots,l_{j_q},\dots,s_{-1}l_{j_{1}}\big)
    \end{split}
\end{equation}
with 
\begin{small}
\begin{equation}
    \begin{split}
        \epsilon_1&=p-1+|l_{i_p}|\sum\limits_{t=1}^{p-1}(|l_{i_t}|+1)+\sum\limits_{t=1}^{p-1} (t-1)|l_{i_t}|+\sum\limits_{t=p+1}^{k} t|l_{i_t}|.
    \end{split}
\end{equation}
\end{small}

Therefore, we have that 
\begin{equation}
    \begin{split}
   &\sum\limits_{k=3}^n(-1)^{k}\hskip-2mm\sum\limits_{\substack{(\bar{i},\bar{j})\in\mathcal{P}_k^n\\i=i_p, j=j_q}}(-1)^{\Delta'} \ell^{n-k+1}\big(\ell^k(s_{-1}l_{i_k},\dots,l_{i_p},\dots,s_{-1}l_{i_1}),s_{-1}l_{j_{n-k}},\dots,l_{j_q},\dots,s_{-1}l_{j_{1}}\big)
    \\&=\sum\limits_{k=3}^n(-1)^{k}\hskip-2mm\sum\limits_{\substack{(\bar{i},\bar{j})\in\mathcal{P}_k^n\\i=i_p, j=j_q}}\hskip-2mm(-1)^{\Delta'}\hskip-2mm\sum\limits_{\sigma'\in\mathfrak{S}_{k-1}} \hskip-2mm(-1)^{\epsilon_1+\delta^{\sigma'}(l_{i_k},\dots,\Hat{l_{i_p}},\dots,l_{i_1})} \hskip-3mm\sum\limits_{\sigma\dprime\in\mathfrak{S}_{n-k-1}}\hskip-4mm (-1)^{\epsilon_1'+\delta^{\sigma\dprime}(l_{j_{n-k}},\dots,\Hat{l_{j_q}},\dots,l_{j_{1}})}
    \\&\hskip6cms_{-1}\Psi_{l_{j_q}}^{\sigma\dprime}\big(\Psi_{l_{i_p}}^{\sigma'}(l_{i_k},\dots,\Hat{l_{i_p}},\dots,l_{i_1}),l_{j_{n-k}},\dots,\Hat{l_{j_q}},\dots,l_{j_{1}}\big)
    \end{split}
\end{equation}
with 
\begin{small}
\begin{equation}
    \begin{split}
    \epsilon_1'&=q-1+|l_{j_q}|\sum\limits_{t=1}^{q-1}(|l_{j_t}|+1)+\sum\limits_{t=1}^{q-1}(t-1)|l_{j_t}|+\sum\limits_{t=q+1}^{n-k}t|l_{j_t}|+(n-k-1)\sum\limits_{t=1}^k|l_{i_t}|
    \end{split}
\end{equation}
\end{small}
where
\allowdisplaybreaks
\begin{align*}
\Psi_{l_{j_q}}^{\sigma\dprime}\big(\Psi_{l_{i_p}}^{\sigma'}(l_{i_k},\dots,\Hat{l_{i_p}},\dots,l_{i_1}),l_{j_{n-k}},\dots,\Hat{l_{j_q}},\dots,l_{j_{1}}\big)&=\sum_{r=1}^{n-k-1}(-1)^{\epsilon_1\dprimeind}\sum\mathcal{E}(\mathcalboondox{D_j'})\\&\phantom{=}+\sum\limits_{s=1}^{k-1}(-1)^{\epsilon_1\tprime}\sum\mathcal{E}(\mathcalboondox{D_i'})
\end{align*}
with 
\begin{small}
\begin{equation}
    \begin{split}
        \epsilon_1\dprimeind=\sum\limits_{t=1}^{k}|l_{i_t}|\sum\limits_{t=r+1}^{n-k-1}|l_{j_{\sigma_t\dprimeind}}| \text{ , }\epsilon_1\tprime=(|l_{i_p}|+\sum\limits_{t=1}^{s-1}|l_{i_{\sigma'_t}}|)\sum\limits_{\substack{t=1\\t\neq q}}^{n-k}|l_{j_{t}}|
    \end{split}
\end{equation}
\end{small}
where the sums are over all the filled diagrams $\mathcalboondox{D_j'}$ and $\mathcalboondox{D_i'}$ of the form 

\begin{minipage}{21cm}
\begin{small}
\begin{tikzpicture}[line cap=round,line join=round,x=1.0cm,y=1.0cm]
\clip(-3.5,-2) rectangle (4,2);
 \draw (0,0) circle (0.5cm);
    \draw [rotate around={0:(0,0)}][->,>=stealth] (0.5,0)--(0.9,0);
    \draw [rotate around={-45:(0,0)},shift={(0.5,0)}]\doublefleche;
    \draw [rotate around={45:(0,0)},shift={(0.5,0)}]\doublefleche;
    \draw(1.4,0) circle (0.5cm);
    \draw [rotate around={180:(1.4,0)}, shift={(1.9,0)}] \doubleflechescindeeleft;
    \draw [rotate around={180:(1.4,0)}, shift={(1.9,0)}] \doubleflechescindeeright;
    \draw[rotate around={-90:(1.4,0)}][->,>=stealth](1.9,0)--(2.3,0);
    \draw (1.4,1.2) circle (0.3cm);
    \draw[rotate around={-90:(1.4,1.2)},shift={(1.7,1.2)}] \doubleflechescindeeleft;
    \draw[rotate around={-90:(1.4,1.2)},shift={(1.7,1.2)}] \doubleflechescindeeright;
    \draw[rotate around={0:(1.4,1.2)}][->,>=stealth](1.7,1.2)--(2,1.2);
    \draw[rotate around={180:(1.4,1.2)}][->,>=stealth](1.7,1.2)--(2,1.2);
    \draw[rotate around={90:(1.4,0)}][->,>=stealth](1.9,0)--(2.3,0);
    \draw (1.4,-1.2) circle (0.3cm);
    \draw[rotate around={90:(1.4,-1.2)},shift={(1.7,-1.2)}] \doubleflechescindeeleft;
    \draw[rotate around={90:(1.4,-1.2)},shift={(1.7,-1.2)}] \doubleflechescindeeright;
    \draw[rotate around={0:(1.4,-1.2)}][->,>=stealth](1.7,-1.2)--(2,-1.2);
    \draw[rotate around={180:(1.4,-1.2)}][->,>=stealth](1.7,-1.2)--(2,-1.2);
    \draw[rotate=-90][->,>=stealth](0.5,0)--(0.9,0);
     \draw (0,-1.2) circle (0.3cm);
    \draw[rotate around={90:(0,-1.2)},shift={(0.3,-1.2)}] \doubleflechescindeeleft;
    \draw[rotate around={90:(0,-1.2)},shift={(0.3,-1.2)}] \doubleflechescindeeright;
    \draw[rotate around={0:(0,-1.2)}][->,>=stealth](0.3,-1.2)--(0.6,-1.2);
    \draw[rotate around={180:(0,-1.2)}][->,>=stealth](0.3,-1.2)--(0.6,-1.2);
    \draw[rotate=90][->,>=stealth](0.5,0)--(0.9,0);
     \draw (0,1.2) circle (0.3cm);
    \draw[rotate around={-90:(0,1.2)},shift={(0.3,1.2)}] \doubleflechescindeeleft;
    \draw[rotate around={-90:(0,1.2)},shift={(0.3,1.2)}] \doubleflechescindeeright;
    \draw[rotate around={0:(0,1.2)}][->,>=stealth](0.3,1.2)--(0.6,1.2);
    \draw[rotate around={180:(0,1.2)}][->,>=stealth](0.3,1.2)--(0.6,1.2);
    \draw[rotate=180][->,>=stealth](0.5,0)--(0.9,0);
     \draw (-1.2,0) circle (0.3cm);
    \draw[rotate around={0:(-1.2,0)},shift={(-0.9,0)}] \doubleflechescindeeleft;
    \draw[rotate around={0:(-1.2,0)},shift={(-0.9,0)}] \doubleflechescindeeright;
    \draw[rotate around={90:(-1.2,0)}][->,>=stealth](-0.9,0)--(-0.6,0);
    \draw[rotate around={-90:(-1.2,0)}][->,>=stealth](-0.9,0)--(-0.6,0);
\draw[rotate=45][fill=black](-0.6,0) circle (0.3pt);
\draw[rotate=45][fill=black](-0.55,0.2) circle (0.3pt);
\draw[rotate=45][fill=black](-0.55,-0.2) circle (0.3pt);
\draw[rotate=-45][fill=black](-0.6,0) circle (0.3pt);
\draw[rotate=-45][fill=black](-0.55,0.2) circle (0.3pt);
\draw[rotate=-45][fill=black](-0.55,-0.2) circle (0.3pt);
\draw[fill=black](2,0) circle (0.3pt);
\draw[fill=black](1.95,0.2) circle (0.3pt);
\draw[fill=black](1.95,-0.2) circle (0.3pt);
\draw[fill=black](1.4,1.6) circle (0.3pt);
\draw[fill=black](1.25,1.55) circle (0.3pt);
\draw[fill=black](1.55,1.55) circle (0.3pt);
\draw [rotate around={180:(1.4,0)}][fill=black](1.4,1.6) circle (0.3pt);
\draw [rotate around={180:(1.4,0)}][fill=black](1.25,1.55) circle (0.3pt);
\draw [rotate around={180:(1.4,0)}][fill=black](1.55,1.55) circle (0.3pt);
\draw[fill=black](0,1.6) circle (0.3pt);
\draw[fill=black](0.15,1.55) circle (0.3pt);
\draw[fill=black](-0.15,1.55) circle (0.3pt);
\draw[rotate=180][fill=black](0,1.6) circle (0.3pt);
\draw[rotate=180][fill=black](0.15,1.55) circle (0.3pt);
\draw[rotate=180][fill=black](-0.15,1.55) circle (0.3pt);
\draw (0,0.25)node[anchor=north]{$j_q$};
\draw (1.4,0.25)node[anchor=north]{$i_p$};
\draw (-1.2,0.25)node[anchor=north]{$\scriptscriptstyle{j_{\sigma_1\dprimeind}}$};
\draw (0,1.45)node[anchor=north]{$\scriptscriptstyle{j_{\sigma_r\dprimeind}}$};
\draw (-0.03,-1)node[anchor=north]{$\scriptscriptstyle{j_{\sigma_{r+1}\dprimeindl}}$};
\draw (1.4,1.45)node[anchor=north]{$\scriptscriptstyle{i_{\sigma'_1}}$};
\draw (1.47,-1)node[anchor=north]{$\scriptscriptstyle{i_{\sigma'_{k-1}}}$};
\draw (3.5,0.25) node[anchor=north]{and};
\end{tikzpicture}
\begin{tikzpicture}[line cap=round,line join=round,x=1.0cm,y=1.0cm]
\clip(-2,-2) rectangle (7.242319607404366,2);
    \draw (0,0) circle (0.5cm);
    \draw [rotate around={0:(0,0)}][->,>=stealth] (0.5,0)--(0.9,0);
    \draw [rotate around={120:(0,0)}][->,>=stealth] (0.5,0)--(0.9,0);
    \draw [rotate around={-120:(0,0)}][->,>=stealth] (0.5,0)--(0.9,0);
    \draw [rotate around={-60:(0,0)},shift={(0.5,0)}]\doublefleche;
    \draw [rotate around={60:(0,0)},shift={(0.5,0)}]\doublefleche;
    \draw(1.4,0) circle (0.5cm);
    \draw [rotate around={180:(1.4,0)}, shift={(1.9,0)}] \doubleflechescindeeleft;
    \draw [rotate around={180:(1.4,0)}, shift={(1.9,0)}] \doubleflechescindeeright;
    \draw[rotate around={-120:(1.4,0)}][->,>=stealth](1.9,0)--(2.3,0);
    \draw (0.8,1.03) circle (0.3cm);
    \draw[rotate around={-60:(0.8,1.03)},shift={(1.1,1.03)}] \doubleflechescindeeleft;
    \draw[rotate around={-60:(0.8,1.03)},shift={(1.1,1.03)}] \doubleflechescindeeright;
    \draw[rotate around={30:(0.8,1.03)}][->,>=stealth](1.1,1.03)--(1.4,1.03);
    \draw[rotate around={210:(0.8,1.03)}][->,>=stealth](1.1,1.03)--(1.4,1.03);
     \draw[rotate around={120:(1.4,0)}][->,>=stealth](1.9,0)--(2.3,0);
    \draw (0.8,-1.03) circle (0.3cm);
    \draw[rotate around={60:(0.8,-1.03)},shift={(1.1,-1.03)}] \doubleflechescindeeleft;
    \draw[rotate around={60:(0.8,-1.03)},shift={(1.1,-1.03)}] \doubleflechescindeeright;
    \draw[rotate around={-30:(0.8,-1.03)}][->,>=stealth](1.1,-1.03)--(1.4,-1.03);
    \draw[rotate around={-210:(0.8,-1.03)}][->,>=stealth](1.1,-1.03)--(1.4,-1.03);
    \draw (-0.6,1.03) circle (0.3cm);
    \draw[rotate around={-60:(-0.6,1.03)},shift={(-0.3,1.03)}] \doubleflechescindeeleft;
    \draw[rotate around={-60:(-0.6,1.03)},shift={(-0.3,1.03)}] \doubleflechescindeeright;
    \draw[rotate around={30:(-0.6,1.03)}][->,>=stealth](-0.3,1.03)--(0,1.03);
    \draw[rotate around={210:(-0.6,1.03)}][->,>=stealth](-0.3,1.03)--(0,1.03);
    \draw (-0.6,-1.03) circle (0.3cm);
     \draw[rotate around={60:(-0.6,-1.03)},shift={(-0.3,-1.03)}] \doubleflechescindeeleft;
    \draw[rotate around={60:(-0.6,-1.03)},shift={(-0.3,-1.03)}] \doubleflechescindeeright;
    \draw[rotate around={-30:(-0.6,-1.03)}][->,>=stealth](-0.3,-1.03)--(0,-1.03);
    \draw[rotate around={-210:(-0.6,-1.03)}][->,>=stealth](-0.3,-1.03)--(0,-1.03);
    \draw[->,>=stealth](1.9,0)--(2.3,0);
    \draw(2.6,0) circle (0.3cm);
    \draw[rotate around={90:(2.6,0)}][->,>=stealth](2.9,0)--(3.2,0);
    \draw[rotate around={-90:(2.6,0)}][->,>=stealth](2.9,0)--(3.2,0);
    \draw[rotate around={180:(2.6,0)},shift={(2.9,0)}] \doubleflechescindeeleft;
    \draw[rotate around={180:(2.6,0)},shift={(2.9,0)}] \doubleflechescindeeright;
\draw[fill=black](-0.6,0) circle (0.3pt);
\draw[fill=black](-0.55,0.2) circle (0.3pt);
\draw[fill=black](-0.55,-0.2) circle (0.3pt);
\draw[fill=black](3,0) circle (0.3pt);
\draw[fill=black](2.95,0.15) circle (0.3pt);
\draw[fill=black](2.95,-0.15) circle (0.3pt);
\draw[rotate around={60:(1.4,0)}][fill=black](2,0) circle (0.3pt);
\draw[rotate around={60:(1.4,0)}][fill=black](1.95,0.2) circle (0.3pt);
\draw[rotate around={60:(1.4,0)}][fill=black](1.95,-0.2) circle (0.3pt);
\draw[rotate around={-60:(1.4,0)}][fill=black](2,0) circle (0.3pt);
\draw[rotate around={-60:(1.4,0)}][fill=black](1.95,0.2) circle (0.3pt);
\draw[rotate around={-60:(1.4,0)}][fill=black](1.95,-0.2) circle (0.3pt);
\draw[fill=black](-0.6,1.4) circle (0.3pt);
\draw[fill=black](-0.8,1.35) circle (0.3pt);
\draw[fill=black](-0.95,1.2) circle (0.3pt);
\draw[fill=black](0.45,1.2) circle (0.3pt);
\draw[fill=black](0.6,1.35) circle (0.3pt);
\draw[fill=black](0.8,1.4) circle (0.3pt);
\draw [rotate=120][fill=black](-0.6,1.4) circle (0.3pt);
\draw [rotate=120][fill=black](-0.8,1.35) circle (0.3pt);
\draw [rotate=120][fill=black](-0.95,1.2) circle (0.3pt);
\draw[rotate around={120:(1.4,0)}][fill=black](0.45,1.2) circle (0.3pt);
\draw[rotate around={120:(1.4,0)}][fill=black](0.6,1.35) circle (0.3pt);
\draw[rotate around={120:(1.4,0)}][fill=black](0.8,1.4) circle (0.3pt);
\draw (0,0.25)node[anchor=north]{$j_q$};
\draw (1.4,0.25)node[anchor=north]{$i_p$};
\draw (2.62,0.25)node[anchor=north]{$\scriptscriptstyle{i_{\sigma_1'}}$};
\draw (-0.6,1.25)node[anchor=north]{$\scriptscriptstyle{j_{\sigma_{n-k-1}\dprimeindtl}}$};
\draw (0.8,1.25)node[anchor=north]{$\scriptscriptstyle{i_{\sigma_{s}'}}$};
\draw (-0.6,-0.8)node[anchor=north]{$\scriptstyle{j_{\sigma_{r+1}\dprimeindl}}$};
\draw (0.9,-0.76)node[anchor=north]{$\scriptstyle{i_{\sigma_{s-1}'}}$};
\end{tikzpicture}
\end{small}
\end{minipage}

\noindent respectively, where the bold arrow on the filled diagrams $\mathcalboondox{D_j'}$ is placed such that the disc filled with $j_{\sigma_1\dprimeind}$ is the first after the disc filled with $j_q$ in the application order of diagrams $\mathcalboondox{D_j'}$ and the bold arrow on the filled diagrams $\mathcalboondox{D_i'}$ is placed such that the disc filled with $i_{\sigma'_1}$ is the first after discs filled with $i_p,j_q$ in the application order of diagrams $\mathcalboondox{D_i'}$. 

The term $\sum\mathcal{E}(\mathcalboondox{D_j'})$ in $\Psi_{l_{j_q}}^{\sigma\dprime}\big(\Psi_{l_{i_p}}^{\sigma'}(l_{i_1},\dots,\Hat{l_{i_p}},\dots,l_{i_k}),l_{j_1},\dots,\Hat{l_{j_q}},\dots,l_{j_{n-k}}\big)$ indexed by $r$ cancels with the evaluation $\mathcal{E}(\mathcalboondox{D_j})$ indexed by $u=r$, $v=u+k-1$ in $\Psi^{\sigma}_{l_i\underset{\nec}{\circ}l_j}(l_n,\dots,\Hat{l_i},\dots,\Hat{l_j},\dots,l_1)$ when $\sigma$ is the permutation given by 

\begin{equation}
    \begin{split}
\sigma_1=j_{\sigma_1\dprimeind},\dots,\sigma_{u}=j_{\sigma_r\dprimeind},\sigma_{u+1}=i_{\sigma'_1},\dots,\sigma_{v}=i_{\sigma'_{k-1}},\sigma_{v+1}=j_{\sigma_{r+1}\dprimeindl}\hskip2.2mm,\dots,\sigma_n=j_{\sigma_{n-k-1}\dprimeindtl}\hspace{0.7cm}.
    \end{split}
\end{equation}
Indeed, it is straightforward to check that 
\begin{small}
\begin{equation}
    \begin{split}   
    &\Delta\dprime+\epsilon+\delta^{\sigma}(l_{n},\dots,\Hat{l_i},\dots,\Hat{l_j},\dots,l_1)+|l_i|\sum\limits_{t=1}^u|l_{\sigma_t}|+k+\Delta'+\epsilon_1+\delta^{\sigma'}(l_{i_k},\dots,\Hat{l_{i_p}},\dots,l_{i_1})
    \\&+\epsilon'_1+\delta^{\sigma\dprime}(l_{j_{n-k}},\dots,\Hat{l_ {j_q}},\dots,l_{j_1})+\epsilon_1\dprimeind=1\mod 2.
    \end{split}
\end{equation}
\end{small}

On the other hand, the term $\sum\mathcal{E}(\mathcalboondox{D_i'})$ in \[\Psi_{l_{j_q}}^{\sigma\dprime}\big(\Psi_{l_{i_p}}^{\sigma'}(l_{i_1},\dots,\Hat{l_{i_p}},\dots,l_{i_k}),l_{j_1},\dots,\Hat{l_{j_q}},\dots,l_{j_{n-k}}\big)\] indexed by $s$ cancels with the evaluation $\mathcal{E}(\mathcalboondox{D_i})$ indexed by $u=s-1$, $v=n-k-1+u$ in $\Psi^{\sigma}_{l_i\underset{\nec}{\circ}l_j}(l_1,\dots,l_n)$ when $\sigma$ is the permutation given by 

\begin{equation}
    \begin{split}
    &\sigma_1=i_{\sigma'_1},\dots,\sigma_u=i_{\sigma'_{s-1}},\sigma_{u+1}=j_{\sigma_1\dprimeind},\dots,\sigma_v=j_{\sigma_{n-k-1}\dprimeindtl}\hskip6.2mm,\sigma_{v+1}=i_{\sigma'_s},\dots,\sigma_n=i_{\sigma'_{k-1}}.
    \end{split}
\end{equation}
Therefore, we have that
\begin{equation}
    \begin{split}
    &\sum\limits_{k=3}^n(-1)^{k}\hskip-3mm\sum\limits_{\substack{(\bar{i},\bar{j})\in\mathcal{P}_k^n\\i=i_p, j=j_q}}\hskip-2mm(-1)^{\Delta'} \ell^{n-k+1}\big(\ell^k(s_{-1}l_{i_k},\dots,l_{i_p},\dots,s_{-1}l_{i_1}),s_{-1}l_{j_{n-k}},\dots,l_{j_q},\dots,s_{-1}l_{j_{1}}\big)
    \\&+(-1)^{\Delta\dprime} \ell^{n-1}\big(l_i\underset{\nec}{\circ}l_j,s_{-1}l_{n},\dots,\Hat{l_i},\dots,\Hat{l_j},\dots,s_{-1}l_{1}\big)
    =0
    \end{split}
\end{equation}
and one can show in a similar manner that \vskip-1cm
\begin{equation}
    \begin{split}
\\&\sum\limits_{k=3}^n(-1)^{k}\hskip-3mm\sum\limits_{\substack{(\bar{i},\bar{j})\in\mathcal{P}_k^n\\i=j_q, j=i_p}}\hskip-2mm(-1)^{\Delta'} \ell^{n-k+1}\big(\ell^k(s_{-1}l_{i_k},\dots,l_{i_p},\dots,s_{-1}l_{i_1}),s_{-1}l_{j_{n-k}},\dots,l_{j_q},\dots,s_{-1}l_{j_{1}}\big)
    \\&-(-1)^{\Delta\dprime+|l_i||l_j|} \ell^{n-1}\big(l_j\underset{\nec}{\circ}l_i,s_{-1}l_{n},\dots,\Hat{l_i},\dots,\Hat{l_j},\dots,s_{-1}l_{1}\big)
    =0.
    \end{split}
\end{equation}


We now suppose that $l_i,l_j\in\mathfrak{g}_{\MB}$ for some $i,j\in\llbracket 1,n\rrbracket$ with $i>j$ and $l_k\in \mathfrak{h}$ for every $k\in\llbracket 1,n\rrbracket$ with $k\neq i,j$.

We have that 
\begin{equation}
    \begin{split}
        &\ell^{n-1}\big([l_i,l_j]_{\nec},s_{-1}l_{n},\dots,\Hat{l_i},\dots,\Hat{l_j},\dots,s_{-1}l_{1}\big)
        \\&=(-1)^{\zeta} \sum\limits_{\tau\in\mathfrak{S}_{n-2}}(-1)^{\delta^{\tau}(l_{n},\dots,\Hat{l_i},\dots,\Hat{l_j},\dots,l_{1})}s_{-1}\Phi^{\tau}_{[l_i,l_j]_{\nec}}(l_{n},\dots,\Hat{l_i},\dots,\Hat{l_j},\dots,l_{1})
        \\&=(-1)^{\zeta} \sum\limits_{\tau\in\mathfrak{S}_{n-2}}(-1)^{\delta^{\tau}(l_{n},\dots,\Hat{l_i},\dots,\Hat{l_j},\dots,l_{1})}s_{-1}\Phi^{\tau}_{l_i\underset{\nec}{\circ}l_j}(l_{n},\dots,\Hat{l_i},\dots,\Hat{l_j},\dots,l_{1})
        \\&-(-1)^{|l_i||l_j|+\zeta}\sum\limits_{\tau\in\mathfrak{S}_{n-2}}(-1)^{\delta^{\tau}(l_{n},\dots,\Hat{l_i},\dots,\Hat{l_j},\dots,l_{1})}s_{-1}\Phi^{\tau}_{l_j\underset{\nec}{\circ}l_i}(l_{n},\dots,\Hat{l_i},\dots,\Hat{l_j},\dots,l_{1})  
    \end{split}
\end{equation}
with 
\begin{small}
\begin{equation}
    \begin{split}
        \zeta=n+1+n(|l_i|+|l_j|)+\sum\limits_{t=1}^{j-1}(t-1)|l_t|+\sum\limits_{t=j+1}^{i-1}t|l_t|+\sum\limits_{t=i+1}^{n}(t-1)|l_t|
    \end{split}
\end{equation}
\end{small}
and where $\Phi^{\tau}_{l_j\underset{\nec}{\circ}l_i}(l_{n},\dots,\Hat{l_i},\dots,\Hat{l_j},\dots,l_{1})=\hskip-3mm\sum\limits_{1\leq u<v\leq n-2}\hskip-3mm\big((-1)^{|l_i|\sum\limits_{r=v+1}^{n-2}|l_{\tau_r}|}\sum\mathcal{E}(\mathcalboondox{D_j^1})+\sum\mathcal{E}(\mathcalboondox{D_i^1})\big)$
where the last sum is over all the filled diagrams $\mathcalboondox{D_j^1}$ and $\mathcalboondox{D_i^1}$ of the form

\begin{minipage}{21cm}
\begin{small}
    \begin{tikzpicture}[line cap=round,line join=round,x=1.0cm,y=1.0cm]
\clip(-3,-2) rectangle (4.2,2);
    \draw (0,0) circle (0.5cm);
    \draw [rotate=90][->,>=stealth] (0.5,0)--(0.9,0);
    \draw [rotate=-90][->,>=stealth] (0.5,0)--(0.9,0);
    \draw [rotate=45][<-,>=stealth] (0.5,0)--(0.8,0);
    \draw [rotate=-45][<-,>=stealth] (0.5,0)--(0.8,0);
    \draw (0.78,0.78) circle (0.3cm);
    \draw[rotate around={-15:(0.78,0.78)}][->,>=stealth] (1.08,0.78)--(1.38,0.78);
    \draw[rotate around={105:(0.78,0.78)}][->,>=stealth] (1.08,0.78)--(1.38,0.78);
    \draw[rotate around={45:(0.78,0.78)},shift={(1.08,0.78)}]\doublefleche;
    \draw[rotate around={165:(0.78,0.78)},shift={(1.08,0.78)}]\doublefleche;
     \draw (0.78,-0.78) circle (0.3cm);
    \draw[rotate around={15:(0.78,-0.78)}][->,>=stealth] (1.08,-0.78)--(1.38,-0.78);
    \draw[rotate around={-105:(0.78,-0.78)}][->,>=stealth] (1.08,-0.78)--(1.38,-0.78);
    \draw[rotate around={75:(0.78,-0.78)},shift={(1.08,-0.78)}]\doublefleche;
    \draw[rotate around={-165:(0.78,-0.78)},shift={(1.08,-0.78)}]\doublefleche;
    \draw (2,0) circle (0.5cm);
    \draw[rotate around={-60:(2,0)}][->,>=stealth](2.5,0)--(2.9,0);
    \draw[rotate around={60:(2,0)}][->,>=stealth](2.5,0)--(2.9,0);
    \draw[rotate around={180:(2,0)}][->,>=stealth](2.5,0)--(3.5,0);
    \draw[rotate around={-90:(2,0)}][<-,>=stealth](2.5,0)--(2.8,0);
    \draw (2,-1.1) circle (0.3cm);
    \draw[rotate around={-30:(2,-1.1)}][->,>=stealth](2.3,-1.1)--(2.6,-1.1);
    \draw[rotate around={-150:(2,-1.1)}][->,>=stealth](2.3,-1.1)--(2.6,-1.1);
    \draw[rotate around={-90:(2,-1.1)},shift={(2.3,-1.1)}]\doublefleche;
    \draw[rotate around={-210:(2,-1.1)},shift={(2.3,-1.1)}]\doublefleche;
    \draw[rotate around={90:(2,0)}][<-,>=stealth](2.5,0)--(2.8,0);
    \draw (2,1.1) circle (0.3cm);
    \draw[rotate around={30:(2,1.1)}][->,>=stealth](2.3,1.1)--(2.6,1.1);
    \draw[rotate around={150:(2,1.1)}][->,>=stealth](2.3,1.1)--(2.6,1.1);
    \draw[rotate around={90:(2,1.1)},shift={(2.3,1.1)}]\doublefleche;
    \draw[rotate around={210:(2,1.1)},shift={(2.3,1.1)}]\doublefleche;
    \draw[rotate=180][<-,>=stealth](0.5,0)--(0.8,0);
    \draw(-1.1,0) circle (0.3cm);
    \draw[rotate around={120:(-1.1,0)}][->,>=stealth](-0.8,0)--(-0.5,0);
    \draw[rotate around={-120:(-1.1,0)}][->,>=stealth](-0.8,0)--(-0.5,0);
    \draw[rotate around={60:(-1.1,0)},shift={(-0.8,0)}]\doublefleche;
    \draw[rotate around={-60:(-1.1,0)},shift={(-0.8,0)}]\doublefleche;
\draw (0,0.25)node[anchor=north]{$j$};
\draw (2,0.25)node[anchor=north]{$i$};
\draw (-1.1,0.2)node[anchor=north]{$\scriptstyle{\tau_1}$};
\draw (2,1.3)node[anchor=north]{$\scriptstyle{\tau_{u+1}}$};
\draw (0.8,1)node[anchor=north]{$\scriptstyle{\tau_u}$};
\draw (2,-0.9)node[anchor=north]{$\scriptstyle{\tau_{v}}$};
\draw (0.8,-0.57)node[anchor=north]{$\scriptstyle{\tau_{v+1}}$};
\draw (3.7,0.25) node[anchor=north]{and};
\draw[rotate=45][fill=black] (0.5,0.35) circle (0.3pt);
\draw[rotate=45][fill=black] (0.55,0.25) circle (0.3pt);
\draw[rotate=45][fill=black] (0.58,0.15) circle (0.3pt);
\draw[rotate=-90][fill=black] (0.5,0.35) circle (0.3pt);
\draw[rotate=-90][fill=black] (0.55,0.25) circle (0.3pt);
\draw[rotate=-90][fill=black] (0.58,0.15) circle (0.3pt);
\draw[fill=black] (-0.4,0.45) circle (0.3pt);
\draw[fill=black] (-0.5,0.3) circle (0.3pt);
\draw[fill=black] (-0.25,0.55) circle (0.3pt);
\draw[rotate=90][fill=black] (-0.4,0.45) circle (0.3pt);
\draw[rotate=90][fill=black] (-0.5,0.3) circle (0.3pt);
\draw[rotate=90][fill=black] (-0.25,0.55) circle (0.3pt);
\draw[fill=black] (-1.5,0) circle (0.3pt);
\draw[fill=black] (-1.45,0.15) circle (0.3pt);
\draw[fill=black] (-1.45,-0.15) circle (0.3pt);
\draw[fill=black] (-1.5,0) circle (0.3pt);
\draw[fill=black] (0.7,0.4) circle (0.3pt);
\draw[fill=black] (0.88,0.4) circle (0.3pt);
\draw[fill=black] (1,0.5) circle (0.3pt);
\draw[fill=black] (1.05,-1.05) circle (0.3pt);
\draw[fill=black] (1.15,-0.9) circle (0.3pt);
\draw[fill=black] (0.9,-1.15) circle (0.3pt);
\draw[fill=black] (2.55,0.2) circle (0.3pt);
\draw[fill=black] (2.6,0) circle (0.3pt);
\draw[fill=black] (2.55,-0.2) circle (0.3pt);
\draw[fill=black] (2.25,-0.8) circle (0.3pt);
\draw[fill=black] (2.37,-0.95) circle (0.3pt);
\draw[fill=black] (2.38,-1.15) circle (0.3pt);
\draw[fill=black] (2.25,0.8) circle (0.3pt);
\draw[fill=black] (2.37,0.95) circle (0.3pt);
\draw[fill=black] (2.38,1.15) circle (0.3pt);
\end{tikzpicture}
\begin{tikzpicture}[line cap=round,line join=round,x=1.0cm,y=1.0cm]
\clip(-1.5,-2) rectangle (7.880223717617432,2);
    \draw (0,0) circle (0.5cm);
    \draw [rotate=90][->,>=stealth] (0.5,0)--(0.9,0);
    \draw [rotate=-90][->,>=stealth] (0.5,0)--(0.9,0);
    \draw [rotate=45][<-,>=stealth] (0.5,0)--(0.8,0);
    \draw [rotate=-45][<-,>=stealth] (0.5,0)--(0.8,0);
    \draw (0.78,0.78) circle (0.3cm);
    \draw[rotate around={-15:(0.78,0.78)}][->,>=stealth] (1.08,0.78)--(1.38,0.78);
    \draw[rotate around={105:(0.78,0.78)}][->,>=stealth] (1.08,0.78)--(1.38,0.78);
    \draw[rotate around={45:(0.78,0.78)},shift={(1.08,0.78)}]\doublefleche;
    \draw[rotate around={165:(0.78,0.78)},shift={(1.08,0.78)}]\doublefleche;
     \draw (0.78,-0.78) circle (0.3cm);
    \draw[rotate around={15:(0.78,-0.78)}][->,>=stealth] (1.08,-0.78)--(1.38,-0.78);
    \draw[rotate around={-105:(0.78,-0.78)}][->,>=stealth] (1.08,-0.78)--(1.38,-0.78);
    \draw[rotate around={75:(0.78,-0.78)},shift={(1.08,-0.78)}]\doublefleche;
    \draw[rotate around={-165:(0.78,-0.78)},shift={(1.08,-0.78)}]\doublefleche;
    \draw (2,0) circle (0.5cm);
    \draw[rotate around={-60:(2,0)}][->,>=stealth](2.5,0)--(2.9,0);
    \draw[rotate around={60:(2,0)}][->,>=stealth](2.5,0)--(2.9,0);
    \draw[rotate around={180:(2,0)}][->,>=stealth](2.5,0)--(3.5,0);
    \draw[rotate around={-90:(2,0)}][<-,>=stealth](2.5,0)--(2.8,0);
    \draw (2,-1.1) circle (0.3cm);
    \draw[rotate around={-30:(2,-1.1)}][->,>=stealth](2.3,-1.1)--(2.6,-1.1);
    \draw[rotate around={-150:(2,-1.1)}][->,>=stealth](2.3,-1.1)--(2.6,-1.1);
    \draw[rotate around={-90:(2,-1.1)},shift={(2.3,-1.1)}]\doublefleche;
    \draw[rotate around={-210:(2,-1.1)},shift={(2.3,-1.1)}]\doublefleche;
    \draw[rotate around={90:(2,0)}][<-,>=stealth](2.5,0)--(2.8,0);
    \draw (2,1.1) circle (0.3cm);
    \draw[rotate around={30:(2,1.1)}][->,>=stealth](2.3,1.1)--(2.6,1.1);
    \draw[rotate around={150:(2,1.1)}][->,>=stealth](2.3,1.1)--(2.6,1.1);
    \draw[rotate around={90:(2,1.1)},shift={(2.3,1.1)}]\doublefleche;
    \draw[rotate around={210:(2,1.1)},shift={(2.3,1.1)}]\doublefleche;
    \draw[rotate around={0:(2,0)}][<-,>=stealth](2.5,0)--(2.8,0);
    \draw (3.1,0) circle (0.3cm);
    \draw[rotate around={60:(3.1,0)}][->,>=stealth](3.4,0)--(3.7,0);
    \draw[rotate around={-60:(3.1,0)}][->,>=stealth](3.4,0)--(3.7,0);
    \draw[rotate around={0:(3.1,0)},shift={(3.4,0)}]\doublefleche;
    \draw[rotate around={120:(3.1,0)},shift={(3.4,0)}]\doublefleche;
\draw (0,0.25)node[anchor=north]{$j$};
\draw (2,0.25)node[anchor=north]{$i$};
\draw (3.1,0.2)node[anchor=north]{$\scriptstyle{\tau_1}$};
\draw (2,1.3)node[anchor=north]{$\scriptstyle{\tau_{v+1}}$};
\draw (0.8,1)node[anchor=north]{$\scriptstyle{\tau_v}$};
\draw (2,-0.9)node[anchor=north]{$\scriptstyle{\tau_{u}}$};
\draw (0.8,-0.57)node[anchor=north]{$\scriptstyle{\tau_{u+1}}$};
\draw[rotate=45][fill=black] (0.5,0.35) circle (0.3pt);
\draw[rotate=45][fill=black] (0.55,0.25) circle (0.3pt);
\draw[rotate=45][fill=black] (0.58,0.15) circle (0.3pt);
\draw[rotate=-90][fill=black] (0.5,0.35) circle (0.3pt);
\draw[rotate=-90][fill=black] (0.55,0.25) circle (0.3pt);
\draw[rotate=-90][fill=black] (0.58,0.15) circle (0.3pt);
\draw[rotate around={-100:(2,0)}][fill=black] (1.6,0.45) circle (0.3pt);
\draw[rotate around={-100:(2,0)}][fill=black] (1.5,0.3) circle (0.3pt);
\draw[rotate around={-100:(2,0)}][fill=black] (1.75,0.55) circle (0.3pt);
\draw[rotate around={-160:(2,0)}][fill=black] (1.6,0.45) circle (0.3pt);
\draw[rotate around={-160:(2,0)}][fill=black] (1.5,0.3) circle (0.3pt);
\draw[rotate around={-160:(2,0)}][fill=black] (1.75,0.55) circle (0.3pt);
\draw[fill=black] (0.7,0.4) circle (0.3pt);
\draw[fill=black] (0.88,0.4) circle (0.3pt);
\draw[fill=black] (1,0.5) circle (0.3pt);
\draw[fill=black] (1.05,-1.05) circle (0.3pt);
\draw[fill=black] (1.15,-0.9) circle (0.3pt);
\draw[fill=black] (0.9,-1.15) circle (0.3pt);
\draw[fill=black] (-0.55,0.2) circle (0.3pt);
\draw[fill=black] (-0.6,0) circle (0.3pt);
\draw[fill=black] (-0.55,-0.2) circle (0.3pt);
\draw[fill=black] (2.25,-0.8) circle (0.3pt);
\draw[fill=black] (2.37,-0.95) circle (0.3pt);
\draw[fill=black] (2.38,-1.15) circle (0.3pt);
\draw[fill=black] (2.25,0.8) circle (0.3pt);
\draw[fill=black] (2.37,0.95) circle (0.3pt);
\draw[fill=black] (2.38,1.15) circle (0.3pt);
\draw[fill=black] (2.8,-0.25) circle (0.3pt);
\draw[fill=black] (2.93,-0.35) circle (0.3pt);
\draw[fill=black] (3.1,-0.38) circle (0.3pt);
\end{tikzpicture}
\end{small}
\end{minipage}

\noindent respectively, where the bold arrow is placed such that the disc filled with $l_{\tau_1}$ is the first in the application order. Moreover, we have that 
\begin{equation}
    \begin{split}
   &\sum\limits_{k=3}^n(-1)^{k}\hskip-3mm\sum\limits_{\substack{(\bar{i},\bar{j})\in\mathcal{P}_k^n\\i=i_p, j=j_q}}\hskip-2mm(-1)^{\Delta'} \ell^{n-k+1}\big(\ell^k(s_{-1}l_{i_k},\dots,l_{i_p},\dots,s_{-1}l_{i_1}),s_{-1}l_{j_{n-k}},\dots,l_{j_q},\dots,s_{-1}l_{j_{1}}\big)
    \\&=\sum\limits_{k=3}^n(-1)^{k}\hskip-3mm\sum\limits_{\substack{(\bar{i},\bar{j})\in\mathcal{P}_k^n\\i=i_p, j=j_q}}
    \hskip-2mm(-1)^{\Delta'}\sum\limits_{\tau'\in\mathfrak{S}_{k-1}} (-1)^{\zeta_1+\delta^{\tau'}(l_{i_k},\dots,\Hat{l_{i_p}},\dots,l_{i_1})} 
    \\&\hspace{5cm}\ell^{n-k+1}\big(s_{-1}\Phi^{\tau'}_{l_{i_p}}(
    l_{i_k},\dots,\Hat{l_{i_p}},\dots,l_{i_1}),s_{-1}l_{j_{n-k}},\dots,l_{j_q},\dots,s_{-1}l_{j_{1}}\big)
    \end{split}
\end{equation}
with 
\begin{small}
\begin{equation}
    \begin{split}
        \zeta_1&=p+(p-1)|l_{i_p}|+|l_{i_p}|\sum\limits_{t=p+1}^{k}|l_{i_t}|+\sum\limits_{t=1}^{p-1} (t-1)|l_{i_t}|+\sum\limits_{t=p+1}^{k} t|l_{i_t}|.
    \end{split}
\end{equation}
\end{small}
Therefore, we have that 
\begin{equation}
    \begin{split}
   &\sum\limits_{k=3}^n(-1)^{k}\hskip-3mm\sum\limits_{\substack{(\bar{i},\bar{j})\in\mathcal{P}_k^n\\i=i_p, j=j_q}}\hskip-2mm(-1)^{\Delta'} \ell^{n-k+1}\big(\ell^k(s_{-1}l_{i_k},\dots,l_{i_p},\dots,s_{-1}l_{i_1}),s_{-1}l_{j_{n-k}},\dots,l_{j_q},\dots,s_{-1}l_{j_{1}}\big)
    \\&=\sum\limits_{k=3}^n(-1)^{k}\hskip-3mm\sum\limits_{\substack{(\bar{i},\bar{j})\in\mathcal{P}_k^n\\i=i_p, j=j_q}}\hskip-2mm(-1)^{\Delta'}\sum\limits_{\tau'\in\mathfrak{S}_{k-1}} (-1)^{\zeta_1+\delta^{\tau'}(l_{i_k},\dots,\Hat{l_{i_p}},\dots,l_{i_1})} \hskip-2mm\sum\limits_{\tau\dprime\in\mathfrak{S}_{n-k-1}} (-1)^{\zeta_1'+\delta^{\tau\dprime}(l_{j_{n-k}},\dots,\Hat{l_{j_q}},\dots,l_{j_{1}})}
    \\&\hspace{7cm}s_{-1}\Phi_{l_{j_q}}^{\tau\dprime}\big(\Phi_{l_{i_p}}^{\tau'}(l_{i_k},\dots,\Hat{l_{i_p}},\dots,l_{i_1}),l_{j_{n-k}},\dots,\Hat{l_{j_q}},\dots,l_{j_{1}}\big)
    \end{split}
\end{equation}
with 
\begin{small}
\begin{equation}
    \begin{split}
    \zeta_1'&=q+(q-1)|l_{j_q}|+|l_{j_q}|\big(\sum\limits_{t=q+1}^{n-k}|l_{j_t}|+\sum\limits_{t=1}^k|l_{i_t}|\big)+\sum\limits_{t=1}^{q-1}(t-1)|l_{j_t}|
    +\sum\limits_{t=q+1}^{n-k}t|l_{j_t}|+(n-k-1)\sum\limits_{t=1}^k|l_{i_t}|
    \end{split}
\end{equation}
\end{small}
\noindent where
\begin{equation}
\begin{split}
\Phi_{l_{j_q}}^{\tau\dprime}\big(\Phi_{l_{i_p}}^{\tau'}(l_{i_k},\dots,\Hat{l_{i_p}},\dots,l_{i_1}),l_{j_{n-k}},\dots,\Hat{l_{j_q}},\dots,l_{j_{1}}\big)&=\sum_{r=1}^{n-k-2}(-1)^{\zeta_1\dprimeind}\sum\mathcal{E}(\mathcalboondox{D_j^2})\\&\phantom{=}+\sum\limits_{s=2}^n(-1)^{\zeta_1\tprime}\sum\mathcal{E}(\mathcalboondox{D_i^2})
\end{split}
\end{equation}
with 
\begin{small}
\begin{equation}
    \begin{split}
        \zeta_1\dprimeind=\sum\limits_{t=1}^{k}|l_{i_t}|\sum\limits_{t=r+1}^{n-k-1}|l_{j_{\tau_t\dprimeind}}| \text{ , }
        \zeta_1\tprime&=\sum\limits_{t=1}^{s-1}|l_{i_t}|\sum\limits_{t=1}^{n-k-1}|l_{j_{\sigma_t\dprimeind}}|
    \end{split}
\end{equation}
\end{small}
where the sums are over all the filled diagrams $\mathcalboondox{D_j^2}$ and $\mathcalboondox{D_i^2}$ of the form 

\begin{minipage}{21cm}
\begin{small}
\begin{tikzpicture}[line cap=round,line join=round,x=1.0cm,y=1.0cm]
\clip(-4,-2) rectangle (4.5,2);
    \draw (0,0) circle (0.5cm);
    \draw [rotate=90][->,>=stealth] (0.5,0)--(0.9,0);
    \draw [rotate=-90][->,>=stealth] (0.5,0)--(0.9,0);
    \draw [rotate=45][<-,>=stealth] (0.5,0)--(0.8,0);
    \draw [rotate=-45][<-,>=stealth] (0.5,0)--(0.8,0);
    \draw (0.78,0.78) circle (0.3cm);
    \draw[rotate around={-15:(0.78,0.78)}][->,>=stealth] (1.08,0.78)--(1.38,0.78);
    \draw[rotate around={105:(0.78,0.78)}][->,>=stealth] (1.08,0.78)--(1.38,0.78);
    \draw[rotate around={45:(0.78,0.78)},shift={(1.08,0.78)}]\doublefleche;
    \draw[rotate around={165:(0.78,0.78)},shift={(1.08,0.78)}]\doublefleche;
     \draw (0.78,-0.78) circle (0.3cm);
    \draw[rotate around={15:(0.78,-0.78)}][->,>=stealth] (1.08,-0.78)--(1.38,-0.78);
    \draw[rotate around={-105:(0.78,-0.78)}][->,>=stealth] (1.08,-0.78)--(1.38,-0.78);
    \draw[rotate around={75:(0.78,-0.78)},shift={(1.08,-0.78)}]\doublefleche;
    \draw[rotate around={-165:(0.78,-0.78)},shift={(1.08,-0.78)}]\doublefleche;
    \draw (2,0) circle (0.5cm);
    \draw[rotate around={-60:(2,0)}][->,>=stealth](2.5,0)--(2.9,0);
    \draw[rotate around={60:(2,0)}][->,>=stealth](2.5,0)--(2.9,0);
    \draw[rotate around={180:(2,0)}][->,>=stealth](2.5,0)--(3.5,0);
    \draw[rotate around={-90:(2,0)}][<-,>=stealth](2.5,0)--(2.8,0);
    \draw (2,-1.1) circle (0.3cm);
    \draw[rotate around={-30:(2,-1.1)}][->,>=stealth](2.3,-1.1)--(2.6,-1.1);
    \draw[rotate around={-150:(2,-1.1)}][->,>=stealth](2.3,-1.1)--(2.6,-1.1);
    \draw[rotate around={-90:(2,-1.1)},shift={(2.3,-1.1)}]\doublefleche;
    \draw[rotate around={-210:(2,-1.1)},shift={(2.3,-1.1)}]\doublefleche;
    \draw[rotate around={90:(2,0)}][<-,>=stealth](2.5,0)--(2.8,0);
    \draw (2,1.1) circle (0.3cm);
    \draw[rotate around={30:(2,1.1)}][->,>=stealth](2.3,1.1)--(2.6,1.1);
    \draw[rotate around={150:(2,1.1)}][->,>=stealth](2.3,1.1)--(2.6,1.1);
    \draw[rotate around={90:(2,1.1)},shift={(2.3,1.1)}]\doublefleche;
    \draw[rotate around={210:(2,1.1)},shift={(2.3,1.1)}]\doublefleche;
    \draw[rotate=180][<-,>=stealth](0.5,0)--(0.8,0);
    \draw(-1.1,0) circle (0.3cm);
    \draw[rotate around={120:(-1.1,0)}][->,>=stealth](-0.8,0)--(-0.5,0);
    \draw[rotate around={-120:(-1.1,0)}][->,>=stealth](-0.8,0)--(-0.5,0);
    \draw[rotate around={60:(-1.1,0)},shift={(-0.8,0)}]\doublefleche;
    \draw[rotate around={-60:(-1.1,0)},shift={(-0.8,0)}]\doublefleche;
\draw (0,0.25)node[anchor=north]{$j_q$};
\draw (2,0.25)node[anchor=north]{$i_p$};
\draw (-1.1,0.2)node[anchor=north]{$\scriptscriptstyle{j_{\tau_1\dprimeind}}$};
\draw (2,1.3)node[anchor=north]{$\scriptscriptstyle{i_{\tau_{1}'}}$};
\draw (0.8,1)node[anchor=north]{$\scriptscriptstyle{j_{\tau_{r}\dprimeind}}$};
\draw (2.12,-0.85)node[anchor=north]{$\scriptscriptstyle{i_{\tau_{k-1}'}}$};
\draw (0.78,-0.5)node[anchor=north]{$\scriptstyle{j_{\tau_{r+1}\dprimeindl}}$};
\draw[rotate=45][fill=black] (0.5,0.35) circle (0.3pt);
\draw[rotate=45][fill=black] (0.55,0.25) circle (0.3pt);
\draw[rotate=45][fill=black] (0.58,0.15) circle (0.3pt);
\draw[rotate=-90][fill=black] (0.5,0.35) circle (0.3pt);
\draw[rotate=-90][fill=black] (0.55,0.25) circle (0.3pt);
\draw[rotate=-90][fill=black] (0.58,0.15) circle (0.3pt);
\draw[fill=black] (-0.4,0.45) circle (0.3pt);
\draw[fill=black] (-0.5,0.3) circle (0.3pt);
\draw[fill=black] (-0.25,0.55) circle (0.3pt);
\draw[rotate=90][fill=black] (-0.4,0.45) circle (0.3pt);
\draw[rotate=90][fill=black] (-0.5,0.3) circle (0.3pt);
\draw[rotate=90][fill=black] (-0.25,0.55) circle (0.3pt);
\draw[fill=black] (-1.5,0) circle (0.3pt);
\draw[fill=black] (-1.45,0.15) circle (0.3pt);
\draw[fill=black] (-1.45,-0.15) circle (0.3pt);
\draw[fill=black] (-1.5,0) circle (0.3pt);
\draw[fill=black] (0.7,0.4) circle (0.3pt);
\draw[fill=black] (0.88,0.4) circle (0.3pt);
\draw[fill=black] (1,0.5) circle (0.3pt);
\draw[fill=black] (1.05,-1.05) circle (0.3pt);
\draw[fill=black] (1.15,-0.9) circle (0.3pt);
\draw[fill=black] (0.9,-1.15) circle (0.3pt);
\draw[fill=black] (2.55,0.2) circle (0.3pt);
\draw[fill=black] (2.6,0) circle (0.3pt);
\draw[fill=black] (2.55,-0.2) circle (0.3pt);
\draw[fill=black] (2.25,-0.8) circle (0.3pt);
\draw[fill=black] (2.37,-0.95) circle (0.3pt);
\draw[fill=black] (2.38,-1.15) circle (0.3pt);
\draw[fill=black] (2.25,0.8) circle (0.3pt);
\draw[fill=black] (2.37,0.95) circle (0.3pt);
\draw[fill=black] (2.38,1.15) circle (0.3pt);
\draw (3.7,0.25) node[anchor=north]{and};
\end{tikzpicture}
    \begin{tikzpicture}[line cap=round,line join=round,x=1.0cm,y=1.0cm]
\clip(-1,-2) rectangle (4.2,2);
    \draw (0,0) circle (0.5cm);
    \draw [rotate=90][->,>=stealth] (0.5,0)--(0.9,0);
    \draw [rotate=-90][->,>=stealth] (0.5,0)--(0.9,0);
    \draw [rotate=45][<-,>=stealth] (0.5,0)--(0.8,0);
    \draw [rotate=-45][<-,>=stealth] (0.5,0)--(0.8,0);
    \draw (0.78,0.78) circle (0.3cm);
    \draw[rotate around={-15:(0.78,0.78)}][->,>=stealth] (1.08,0.78)--(1.38,0.78);
    \draw[rotate around={105:(0.78,0.78)}][->,>=stealth] (1.08,0.78)--(1.38,0.78);
    \draw[rotate around={45:(0.78,0.78)},shift={(1.08,0.78)}]\doublefleche;
    \draw[rotate around={165:(0.78,0.78)},shift={(1.08,0.78)}]\doublefleche;
     \draw (0.78,-0.78) circle (0.3cm);
    \draw[rotate around={15:(0.78,-0.78)}][->,>=stealth] (1.08,-0.78)--(1.38,-0.78);
    \draw[rotate around={-105:(0.78,-0.78)}][->,>=stealth] (1.08,-0.78)--(1.38,-0.78);
    \draw[rotate around={75:(0.78,-0.78)},shift={(1.08,-0.78)}]\doublefleche;
    \draw[rotate around={-165:(0.78,-0.78)},shift={(1.08,-0.78)}]\doublefleche;
    \draw (2,0) circle (0.5cm);
    \draw[rotate around={-60:(2,0)}][->,>=stealth](2.5,0)--(2.9,0);
    \draw[rotate around={60:(2,0)}][->,>=stealth](2.5,0)--(2.9,0);
    \draw[rotate around={180:(2,0)}][->,>=stealth](2.5,0)--(3.5,0);
    \draw[rotate around={-90:(2,0)}][<-,>=stealth](2.5,0)--(2.8,0);
    \draw (2,-1.1) circle (0.3cm);
    \draw[rotate around={-30:(2,-1.1)}][->,>=stealth](2.3,-1.1)--(2.6,-1.1);
    \draw[rotate around={-150:(2,-1.1)}][->,>=stealth](2.3,-1.1)--(2.6,-1.1);
    \draw[rotate around={-90:(2,-1.1)},shift={(2.3,-1.1)}]\doublefleche;
    \draw[rotate around={-210:(2,-1.1)},shift={(2.3,-1.1)}]\doublefleche;
    \draw[rotate around={90:(2,0)}][<-,>=stealth](2.5,0)--(2.8,0);
    \draw (2,1.1) circle (0.3cm);
    \draw[rotate around={30:(2,1.1)}][->,>=stealth](2.3,1.1)--(2.6,1.1);
    \draw[rotate around={150:(2,1.1)}][->,>=stealth](2.3,1.1)--(2.6,1.1);
    \draw[rotate around={90:(2,1.1)},shift={(2.3,1.1)}]\doublefleche;
    \draw[rotate around={210:(2,1.1)},shift={(2.3,1.1)}]\doublefleche;
    \draw[rotate around={0:(2,0)}][<-,>=stealth](2.5,0)--(2.8,0);
    \draw (3.1,0) circle (0.3cm);
    \draw[rotate around={60:(3.1,0)}][->,>=stealth](3.4,0)--(3.7,0);
    \draw[rotate around={-60:(3.1,0)}][->,>=stealth](3.4,0)--(3.7,0);
    \draw[rotate around={0:(3.1,0)},shift={(3.4,0)}]\doublefleche;
    \draw[rotate around={120:(3.1,0)},shift={(3.4,0)}]\doublefleche;
\draw (0,0.25)node[anchor=north]{$j_q$};
\draw (2,0.25)node[anchor=north]{$i_p$};
\draw (3.1,0.25)node[anchor=north]{$\scriptscriptstyle{i_{\tau'_1}}$};
\draw (2,1.35)node[anchor=north]{$\scriptscriptstyle{i_{\tau'_{v}}}$};
\draw (0.75,1.1)node[anchor=north]{$\scriptscriptstyle{j_{\tau_{n-k-1}\dprimeindtl}}$};
\draw (2.1,-0.85)node[anchor=north]{$\scriptscriptstyle{i_{\tau'_{s-1}}}$};
\draw (0.8,-0.55)node[anchor=north]{$\scriptscriptstyle{j_{\tau_1\dprimeind}}$};
\draw[rotate=45][fill=black] (0.5,0.35) circle (0.3pt);
\draw[rotate=45][fill=black] (0.55,0.25) circle (0.3pt);
\draw[rotate=45][fill=black] (0.58,0.15) circle (0.3pt);
\draw[rotate=-90][fill=black] (0.5,0.35) circle (0.3pt);
\draw[rotate=-90][fill=black] (0.55,0.25) circle (0.3pt);
\draw[rotate=-90][fill=black] (0.58,0.15) circle (0.3pt);
\draw[rotate around={-100:(2,0)}][fill=black] (1.6,0.45) circle (0.3pt);
\draw[rotate around={-100:(2,0)}][fill=black] (1.5,0.3) circle (0.3pt);
\draw[rotate around={-100:(2,0)}][fill=black] (1.75,0.55) circle (0.3pt);
\draw[rotate around={-160:(2,0)}][fill=black] (1.6,0.45) circle (0.3pt);
\draw[rotate around={-160:(2,0)}][fill=black] (1.5,0.3) circle (0.3pt);
\draw[rotate around={-160:(2,0)}][fill=black] (1.75,0.55) circle (0.3pt);
\draw[fill=black] (0.7,0.4) circle (0.3pt);
\draw[fill=black] (0.88,0.4) circle (0.3pt);
\draw[fill=black] (1,0.5) circle (0.3pt);
\draw[fill=black] (1.05,-1.05) circle (0.3pt);
\draw[fill=black] (1.15,-0.9) circle (0.3pt);
\draw[fill=black] (0.9,-1.15) circle (0.3pt);
\draw[fill=black] (-0.55,0.2) circle (0.3pt);
\draw[fill=black] (-0.6,0) circle (0.3pt);
\draw[fill=black] (-0.55,-0.2) circle (0.3pt);
\draw[fill=black] (2.25,-0.8) circle (0.3pt);
\draw[fill=black] (2.37,-0.95) circle (0.3pt);
\draw[fill=black] (2.38,-1.15) circle (0.3pt);
\draw[fill=black] (2.25,0.8) circle (0.3pt);
\draw[fill=black] (2.37,0.95) circle (0.3pt);
\draw[fill=black] (2.38,1.15) circle (0.3pt);
\draw[fill=black] (2.8,-0.25) circle (0.3pt);
\draw[fill=black] (2.93,-0.35) circle (0.3pt);
\draw[fill=black] (3.1,-0.38) circle (0.3pt);
\end{tikzpicture}
\end{small}
\end{minipage}

\noindent respectively.

Moreover, the term $\sum\mathcal{E}(\mathcalboondox{D^2_j})$ in $\Phi_{l_{j_q}}^{\tau\dprime}\big(\Phi_{l_{i_p}}^{\tau'}(l_{i_1},\dots,\Hat{l_{i_p}},\dots,l_{i_k}),l_{j_1},\dots,\Hat{l_{j_q}},\dots,l_{j_{n-k}}\big)$ indexed by $r$ cancels with the evaluation $\mathcal{E}(\mathcalboondox{D_j^1})$ indexed by $u=r$, $v=u+k-1$ in $\Phi^{\sigma}_{l_j\underset{\nec}{\circ}l_i}(l_n,\dots,\Hat{l_i},\dots,\Hat{l_j},\dots,l_1)$ when $\tau$ is the permutation given by 

\begin{equation}
    \begin{split}
\tau_1=j_{\tau_1\dprimeind},\dots,\tau_{u}=j_{\tau_r\dprimeind},\tau_{u+1}=i_{\tau'_1},\dots,\tau_{v}=i_{\tau'_{k-1}},\tau_{v+1}=j_{\tau_{r+1}\dprimeindl}\hskip2.2mm,\dots,\tau_n=j_{\tau_{n-k-1}\dprimeindtl}.
    \end{split}
\end{equation}
Indeed, it is straightforward to check that 
\begin{small}
\begin{equation}
    \begin{split}   
    &\Delta\dprime+|l_i||l_j|+1+\zeta+\delta^{\tau}(l_{n},\dots,\Hat{l_i},\dots,\Hat{l_j},\dots,l_1)+|l_i|\sum\limits_{t=v+1}^{n-2}|l_{\tau_t}|+k+\Delta'+\zeta_1
    \\&+\delta^{\tau'}(l_{i_k},\dots,\Hat{l_{i_p}},\dots,l_{i_1})
    +\zeta'_1+\delta^{\tau\dprime}(l_{j_{n-k}},\dots,\Hat{l_ {j_q}},\dots,l_{j_1})+\zeta_1\dprimeind=1\mod 2.
    \end{split}
\end{equation}
\end{small}
On the other hand, the term $\sum\mathcal{E}(\mathcalboondox{D_i^1})$ in $\Psi_{l_{j_q}}^{\sigma\dprime}\big(\Psi_{l_{i_p}}^{\sigma'}(l_{i_1},\dots,\Hat{l_{i_p}},\dots,l_{i_k}),l_{j_1},\dots,\Hat{l_{j_q}},\dots,l_{j_{n-k}}\big)$ indexed by $s$ cancels with the evaluation $\mathcal{E}(\mathcalboondox{D_i})$ indexed by $u=s-1$, $v=n-k-1+u$ in $\Psi^{\tau}_{l_j\underset{\nec}{\circ}l_i}(l_1,\dots,l_n)$ when $\sigma$ is the permutation given by 

\begin{equation}
    \begin{split}
    &\tau_1=i_{\tau'_1},\dots,\tau_u=i_{\tau'_{s-1}},\tau_{u+1}=j_{\tau_1\dprimeind},\dots,\tau_v=j_{\tau_{n-k-1}\dprimeindtl}\hskip6.2mm,\tau_{v+1}=i_{\tau'_s},\dots,\tau_n=i_{\tau'_{k-1}}.
    \end{split}
\end{equation}
Therefore, we have that
\begin{equation}
    \begin{split}
    &\sum\limits_{k=3}^n(-1)^{k}\hskip-3mm\sum\limits_{\substack{(\bar{i},\bar{j})\in\mathcal{P}_k^n\\i=i_p, j=j_q}}\hskip-2mm
    (-1)^{\Delta'} \ell^{n-k+1}\big(\ell^k(s_{-1}l_{i_k},\dots,l_{i_p},\dots,s_{-1}l_{i_1}),s_{-1}l_{j_{n-k}},\dots,l_{j_q},\dots,s_{-1}l_{j_{1}}\big)
    \\&-(-1)^{\Delta\dprime+|l_i||l_j|} \ell^{n-1}\big(l_j\underset{\nec}{\circ}l_i,s_{-1}l_{n},\dots,\Hat{l_i},\dots,\Hat{l_j},\dots,s_{-1}l_{1}\big)
    =0
    \end{split}
\end{equation}
and one can show in a similar manner that \vskip-1cm
\begin{equation}
    \begin{split}
\\&\sum\limits_{k=3}^n(-1)^{k}\hskip-3mm\sum\limits_{\substack{(\bar{i},\bar{j})\in\mathcal{P}_k^n\\i=j_q, j=i_p}}\hskip-2mm(-1)^{\Delta'} \ell^{n-k+1}\big(\ell^k(s_{-1}l_{i_k},\dots,l_{i_p},\dots,s_{-1}l_{i_1}),s_{-1}l_{j_{n-k}},\dots,l_{j_q},\dots,s_{-1}l_{j_{1}}\big)
    \\&+(-1)^{\Delta\dprime} \ell^{n-1}\big(l_i\underset{\nec}{\circ}l_j,s_{-1}l_{n},\dots,\Hat{l_i},\dots,\Hat{l_j},\dots,s_{-1}l_{1}\big)
    =0.
    \end{split}
\end{equation}


Finally, suppose that $l_i\in \mathfrak{g}_{\MA},l_j\in\mathfrak{g}_{\MB}$ for some $i,j\in\llbracket 1,n\rrbracket$ with $i>j$ and $l_k\in \mathfrak{h}$ for every $k\in\llbracket 1,n\rrbracket$ with $k\neq i,j$.

Since in this case $[l_i,l_j]_{\nec}=0$, we have to show that
\begin{equation}
    \begin{split}
    &\sum\limits_{k=3}^n(-1)^{k}\sum\limits_{\substack{(\bar{i},\bar{j})\in\mathcal{P}_k^n\\i=i_p, j=j_q}}(-1)^{\Delta'} \ell^{n-k+1}\big(\ell^k(s_{-1}l_{i_k},\dots,l_{i_p},\dots,s_{-1}l_{i_1}),s_{-1}l_{j_{n-k}},\dots,l_{j_q},\dots,s_{-1}l_{j_{1}}\big)
    \\&-\hskip-1mm\sum\limits_{k'=3}^n(-1)^{k'}\hskip-3mm\sum\limits_{\substack{(\bar{i'},\bar{j'})\in\mathcal{P}_k^n\\i=j'_q, j=i'_p}}\hskip-2mm(-1)^{\Delta'_2} \ell^{n-k'+1}\big(\ell^{k'}(s_{-1}l_{i'_{k'}},\dots,l_{i'_p},\dots,s_{-1}l_{i'_1}),s_{-1}l_{j'_{n-k'}},\dots,l_{j'_q},\dots,s_{-1}l_{j'_{1}}\big)
    \\&=0
    \end{split}
\end{equation}

where 
\begin{small}
\begin{equation}
    \begin{split}
        \Delta'_2&=|l_{i'_{p'}}|\sum\limits_{\substack{t=i'_{p'}+1\\t\neq j'_{q'}}}^n(|l_t|+1)+|l_{i'_{p'}}|\sum\limits_{\substack{t=p'+1}}^{k'}(|l_{i'_t}|+1)+\sum\limits_{\substack{t=1\\t\neq p'}}^{k'}(|l_{i'_t}|+1)\hskip-2mm\sum\limits_{\substack{w=i'_t+1\\w\neq i'_{p'},j'_{q'}}}^n\hskip-2mm(|l_w|+1)+|l_{j'_{q'}}|\sum_{\substack{t=1\\t\neq p'\\i'_t<j'_{q'}}}^{k'}(|l_{i'_t}|+1)
    \\&\phantom{=}+\sum\limits_{\substack{t=1\\t\neq p'}}^{k'}(|l_{i'_t}|+1)\hskip-2mm\sum\limits_{\substack{w=t+1\\w\neq p'}}^{k'}(|l_{i'_w}|+1)+n-i'_{p'}+k'-p'
    +\sum\limits_{\substack{t=1\\t\neq p'}}^{k'}\sum\limits_{\substack{w=i'_t+1\\w\neq i'_{p'}}}^n\hskip-2mm1+\sum\limits_{\substack{t=1\\ t\neq p'}}^{k'} \sum\limits_{\substack{w=t+1\\w\neq p'}}^{k'}\hskip-2mm1+j'_{q'}-q'+|l_{i'_p}||l_{j'_q}|.
    \end{split}
\end{equation}
\end{small}

We have that 
\begin{equation}
    \begin{split}
   &\sum\limits_{k=3}^n(-1)^{k}\hskip-3mm\sum\limits_{\substack{(\bar{i},\bar{j})\in\mathcal{P}_k^n\\i=i_p, j=j_q}}\hskip-2mm(-1)^{\Delta'} \ell^{n-k+1}\big(\ell^k(s_{-1}l_{i_k},\dots,l_{i_p},\dots,s_{-1}l_{i_1}),s_{-1}l_{j_{n-k}},\dots,l_{j_q},\dots,s_{-1}l_{j_{1}}\big)
    \\&=\sum\limits_{k=3}^n(-1)^{k}\hskip-3mm\sum\limits_{\substack{(\bar{i},\bar{j})\in\mathcal{P}_k^n\\i=i_p, j=j_q}}
    (-1)^{\Delta'}\sum\limits_{\upsilon'\in\mathfrak{S}_{k-1}} (-1)^{\eta_1+\delta^{\upsilon'}(l_{i_k},\dots,\Hat{l_{i_p}},\dots,l_{i_1})} 
    \\&\hskip4cm\ell^{n-k+1}\big(s_{-1}\Psi^{\upsilon'}_{l_{i_p}}(
    l_{i_k},\dots,\Hat{l_{i_p}},\dots,l_{i_1}),s_{-1}l_{j_{n-k}},\dots,l_{j_q},\dots,s_{-1}l_{j_{1}}\big)
    \end{split}
\end{equation}
with 
\begin{small}
\begin{equation}
    \begin{split}
        \eta_1&=p-1+|l_{i_p}|\sum\limits_{t=1}^{p-1}(|l_{i_t}|+1)+\sum\limits_{t=1}^{p-1} (t-1)|l_{i_t}|+\sum\limits_{t=p+1}^{k} t|l_{i_t}|.
    \end{split}
\end{equation}
\end{small}

Therefore, we have that 
\begin{equation}
    \begin{split}
   &\sum\limits_{k=3}^n(-1)^{k}\hskip-3mm\sum\limits_{\substack{(\bar{i},\bar{j})\in\mathcal{P}_k^n\\i=i_p, j=j_q}}\hskip-2mm(-1)^{\Delta'} \ell^{n-k+1}\big(\ell^k(s_{-1}l_{i_k},\dots,l_{i_p},\dots,s_{-1}l_{i_1}),s_{-1}l_{j_{n-k}},\dots,l_{j_q},\dots,s_{-1}l_{j_{1}}\big)
    \\&=\sum\limits_{k=3}^n(-1)^{k}\hskip-3mm\sum\limits_{\substack{(\bar{i},\bar{j})\in\mathcal{P}_k^n\\i=i_p, j=j_q}}\hskip-3mm(-1)^{\Delta'}\hskip-3mm\sum\limits_{\upsilon'\in\mathfrak{S}_{k-1}} \hskip-3mm(-1)^{\eta_1+\delta^{\upsilon'}(l_{i_k},\dots,\Hat{l_{i_p}},\dots,l_{i_1})} \hskip-2mm\sum\limits_{\upsilon\dprime\in\mathfrak{S}_{n-k-1}}\hskip-3mm (-1)^{\eta_1'+\delta^{\upsilon\dprime}(l_{j_{n-k}},\dots,\Hat{l_{j_q}},\dots,l_{j_{1}})}
    \\&\hskip6cms_{-1}\Phi^{\upsilon\dprime}_{l_{j_q}}(\big(\Psi^{\upsilon'}_{l_{i_p}}(l_{i_k},\dots,\Hat{l_{i_p}},\dots,l_{i_1}),l_{j_{n-k}},\dots,\Hat{l_{j_q}},\dots,l_{j_{1}}\big)
    \end{split}
\end{equation}
with 
\begin{small}
\begin{equation}
    \begin{split}
    \eta_1'&=q+(q-1)|l_{j_q}|+|l_{j_q}|\sum\limits_{t=q+1}^{n-k}|l_{j_t}|+|l_{j_q}|\sum\limits_{t=1}^k|l_{i_t}|+\sum\limits_{t=1}^{q-1}(t-1)|l_{j_t}|
    +\sum\limits_{t=q+1}^{n-k}t|l_{j_t}|
    \\&\phantom{=}+(n-k-1)\sum\limits_{t=1}^k|l_{i_t}|
    \end{split}
\end{equation}
\end{small}
where
\begin{equation}
\begin{split}
 \Phi^{\upsilon\dprime}_{l_{j_q}}
 \big(\Psi^{\upsilon'}_{l_{i_p}}(l_{i_k},\dots,\Hat{l_{i_p}},\dots,l_{i_1}),l_{j_{n-k}},\dots,\Hat{l_{j_q}},\dots,l_{j_{1}}\big)&=\sum\limits_{r=1}^{n-k-1}(-1)^{\eta_1\dprimeind}\sum\mathcal{E}(\mathcalboondox{D_1})
 \\&\phantom{=}+\sum\limits_{s=2}^k(-1)^{\eta_1\tprime}\sum\mathcal{E}(\mathcalboondox{D_1'})
 \end{split}
\end{equation}
with 
\begin{small}
\begin{equation}
    \begin{split}
        \eta_1\dprimeind&=\sum\limits_{t=1}^k|l_{i_t}|\sum\limits_{t=r+1}^{n-k-1}|l_{j_{\upsilon_t\dprimeind}}|
        \text{ , }\eta_1\tprime=(|l_{i_p}|+\sum\limits_{t=1}^{s-1}|l_{i_{\upsilon'_t}}|)\sum\limits_{t=1}^{n-k-1}|l_{j_{\upsilon_t\dprimeind}}|
    \end{split}
\end{equation}
\end{small}
where the sums are over all the filled diagrams $\mathcalboondox{D_1}$ and $\mathcalboondox{D_1'}$ of the form

\begin{minipage}{21cm}
\begin{small}
\begin{tikzpicture}[line cap=round,line join=round,x=1.0cm,y=1.0cm]
\clip(-3.5,-2) rectangle (5,2);
    \draw (0,0) circle (0.3cm);
    \draw[rotate=60][->,>=stealth] (0.3,0)--(0.7,0);
    \draw[rotate=180][->,>=stealth] (0.3,0)--(0.7,0);
    \draw[rotate=-60][->,>=stealth] (0.3,0)--(0.7,0);
    \draw[rotate=-120,shift={(0.3,0)}] \doublefleche;
    \draw[rotate=0,shift={(0.3,0)}] \doubleflechescindeeleft;
    \draw[rotate=0,shift={(0.3,0)}] \doubleflechescindeeright;
    \draw[rotate=0][<-,>=stealth] (0.3,0)--(0.7,0);
    \draw(1.2,0)circle (0.5cm);
    \draw[rotate around={0:(1.2,0)},shift={(1.7,0)}] \doublefleche;
    \draw[rotate around={-120:(1.2,0)},shift={(1.7,0)}] \doublefleche;
    \draw[rotate around={-60:(1.2,0)}][->,>=stealth] (1.7,0)--(2.1,0);
    \draw(1.8,1.03)circle (0.3cm);
    \draw[rotate around={120:(1.8,1.03)},shift={(2.1,1.03)}] \doublefleche;
    \draw[rotate around={-120:(1.8,1.03)},shift={(2.1,1.03)}] \doubleflechescindeeleft;
    \draw[rotate around={-120:(1.8,1.03)},shift={(2.1,1.03)}] \doubleflechescindeeright;
    \draw[rotate around={-60:(1.8,1.03)}][->,>=stealth] (2.1,1.03)--(2.4,1.03);
    \draw[rotate around={180:(1.8,1.03)}][->,>=stealth] (2.1,1.03)--(2.4,1.03);
    \draw[rotate around={60:(1.8,1.03)}][->,>=stealth] (2.1,1.03)--(2.4,1.03);
    \draw[rotate around={60:(1.2,0)}][->,>=stealth] (1.7,0)--(2.1,0);
    \draw(1.8,-1.03)circle (0.3cm);
    \draw[rotate around={0:(1.8,-1.03)},shift={(2.1,-1.03)}] \doublefleche;
    \draw[rotate around={120:(1.8,-1.03)},shift={(2.1,-1.03)}] \doubleflechescindeeright;
    \draw[rotate around={120:(1.8,-1.03)},shift={(2.1,-1.03)}] \doubleflechescindeeleft;
    \draw[rotate around={60:(1.8,-1.03)}][->,>=stealth] (2.1,-1.03)--(2.4,-1.03);
    \draw[rotate around={-60:(1.8,-1.03)}][->,>=stealth] (2.1,-1.03)--(2.4,-1.03);
    \draw[rotate around={180:(1.8,-1.03)}][->,>=stealth] (2.1,-1.03)--(2.4,-1.03);
    \draw(-1.2,0)circle (0.5cm);
    \draw [rotate around={-45:(-1.2,0)}][<-,>=stealth] (-0.7,0)--(-0.3,0);
    \draw [rotate around={45:(-1.2,0)}][<-,>=stealth] (-0.7,0)--(-0.3,0);
    \draw [rotate around={90:(-1.2,0)}][->,>=stealth] (-0.7,0)--(-0.3,0);
    \draw [rotate around={-90:(-1.2,0)}][->,>=stealth] (-0.7,0)--(-0.3,0);
    \draw(-0.35,-0.85)circle (0.3cm);
    \draw [rotate around={-120:(-0.35,-0.85)}][->,>=stealth] (-0.05,-0.85)--(0.25,-0.85);
    \draw [rotate around={0:(-0.35,-0.85)}][->,>=stealth] (-0.05,-0.85)--(0.25,-0.85);
    \draw [rotate around={180:(-0.35,-0.85)},shift={(-0.05,-0.85)}]\doublefleche;
     \draw [rotate around={-60:(-0.35,-0.85)},shift={(-0.05,-0.85)}]\doublefleche;
     \draw(-0.35,0.85)circle (0.3cm);
    \draw [rotate around={120:(-0.35,0.85)}][->,>=stealth] (-0.05,0.85)--(0.25,0.85);
    \draw [rotate around={0:(-0.35,0.85)}][->,>=stealth] (-0.05,0.85)--(0.25,0.85);
    \draw [rotate around={180:(-0.35,0.85)},shift={(-0.05,0.85)}]\doublefleche;
     \draw [rotate around={60:(-0.35,0.85)},shift={(-0.05,0.85)}]\doublefleche;
     \draw[rotate around={180:(-1.2,0)}][<-,>=stealth] (-0.7,0)--(-0.4,0);
     \draw (-2.3,0) circle (0.3cm);
     \draw[rotate around={120:(-2.3,0)}][->,>=stealth] (-2,0)--(-1.7,0);
     \draw[rotate around={-120:(-2.3,0)}][->,>=stealth] (-2,0)--(-1.7,0);
     \draw[rotate around={60:(-2.3,0)},shift={(-2,0)}]\doublefleche;
     \draw[rotate around={-60:(-2.3,0)},shift={(-2,0)}]\doublefleche;
\draw [fill=black] (-2.65,0.15) circle (0.3pt);
\draw [fill=black] (-2.7,0) circle (0.3pt);
\draw [fill=black] (-2.65,-0.15) circle (0.3pt);
\draw [fill=black] (-0.3,0.2) circle (0.3pt);
\draw [fill=black] (-0.2,0.3) circle (0.3pt);
\draw [fill=black] (-0.05,0.35) circle (0.3pt);
\draw [rotate around={-45:(-1.2,0)}][fill=black] (-0.3,0.2) circle (0.3pt);
\draw [rotate around={-45:(-1.2,0)}][fill=black] (-0.2,0.3) circle (0.3pt);
\draw [rotate around={-45:(-1.2,0)}][fill=black] (-0.05,0.35) circle (0.3pt);
\draw [fill=black] (-0.3,0.5) circle (0.3pt);
\draw [fill=black] (-0.15,0.54) circle (0.3pt);
\draw [fill=black] (-0.05,0.65) circle (0.3pt);
\draw [fill=black] (2.2,1.03) circle (0.3pt);
\draw [fill=black] (2.15,0.9) circle (0.3pt);
\draw [fill=black] (2.15,1.16) circle (0.3pt);
\draw [rotate around={-120:(1.2,0)}][fill=black] (2.2,1.03) circle (0.3pt);
\draw [rotate around={-120:(1.2,0)}][fill=black] (2.15,0.9) circle (0.3pt);
\draw [rotate around={-120:(1.2,0)}][fill=black] (2.15,1.16) circle (0.3pt);
\draw [fill=black] (0.75,0.35) circle (0.3pt);
\draw [fill=black] (0.9,0.5) circle (0.3pt);
\draw [fill=black] (1.1,0.55) circle (0.3pt);
\draw [rotate around={40:(-1.2,0)}][fill=black] (-0.78,0.4) circle (0.3pt);
\draw [rotate around={40:(-1.2,0)}][fill=black] (-0.7,0.3) circle (0.3pt);
\draw [rotate around={40:(-1.2,0)}][fill=black] (-0.65,0.18) circle (0.3pt);
\draw [rotate around={-100:(-1.2,0)}][fill=black] (-0.78,0.4) circle (0.3pt);
\draw [rotate around={-100:(-1.2,0)}][fill=black] (-0.7,0.3) circle (0.3pt);
\draw [rotate around={-100:(-1.2,0)}][fill=black] (-0.65,0.18) circle (0.3pt);
\draw [fill=black] (-1.7,0.3) circle (0.3pt);
\draw [fill=black] (-1.6,0.45) circle (0.3pt);
\draw [fill=black] (-1.45,0.55) circle (0.3pt);
\draw [rotate around={90:(-1.2,0)}][fill=black] (-1.7,0.3) circle (0.3pt);
\draw [rotate around={90:(-1.2,0)}][fill=black] (-1.6,0.45) circle (0.3pt);
\draw [rotate around={90:(-1.2,0)}][fill=black] (-1.45,0.55) circle (0.3pt);
\draw (3.5,0.2) node[anchor=north] {and};
\draw (1.2,0.25) node[anchor=north] {$i_p$};
\draw (-1.2,0.25) node[anchor=north] {$j_q$};
\draw (0,0.25) node[anchor=north] {$\scriptstyle{i_{\nu'_{1}}}$};
\draw (1.9,-0.78) node[anchor=north] {$\scriptstyle{i_{\nu'_{k-2}}}$};
\draw (1.9,1.28) node[anchor=north] {$\scriptstyle{i_{\nu'_{k-1}}}$};
\draw (-2.3,0.25) node[anchor=north] {$\scriptstyle{j_{\nu_{1}\dprimeind}}$};
\draw (-0.35,-0.6) node[anchor=north] {$\scriptstyle{j_{\nu_{r+1}\dprimeindl}}$};
\draw (-0.35,1.1) node[anchor=north] {$\scriptstyle{j_{\nu_{r}\dprimeind}}$};
\end{tikzpicture}
\begin{tikzpicture}[line cap=round,line join=round,x=1.0cm,y=1.0cm]
\clip(-2,-2) rectangle (8.32706640373885,2);
  \draw (0,0) circle (0.3cm);
    \draw[rotate=60][->,>=stealth] (0.3,0)--(0.7,0);
    \draw[rotate=180][->,>=stealth] (0.3,0)--(0.7,0);
    \draw[rotate=-60][->,>=stealth] (0.3,0)--(0.7,0);
    \draw[rotate=-120,shift={(0.3,0)}] \doublefleche;
    \draw[rotate=0,shift={(0.3,0)}] \doubleflechescindeeleft;
    \draw[rotate=0,shift={(0.3,0)}] \doubleflechescindeeright;
    \draw[rotate=0][<-,>=stealth] (0.3,0)--(0.7,0);
    \draw(1.2,0)circle (0.5cm);
    \draw[->,>=stealth] (1.7,0)--(2.1,0);
    \draw(2.4,0) circle (0.3cm);
    \draw[rotate around={180:(2.4,0)},shift={(2.7,0)}]\doubleflechescindeeleft;
    \draw[rotate around={180:(2.4,0)},shift={(2.7,0)}]\doubleflechescindeeright;
    \draw[rotate around={90:(2.4,0)}][->,>=stealth](2.7,0)--(3,0);
    \draw[rotate around={-90:(2.4,0)}][->,>=stealth](2.7,0)--(3,0);
    \draw[rotate around={45:(1.2,0)},shift={(1.7,0)}] \doublefleche;
    \draw[rotate around={-45:(1.2,0)},shift={(1.7,0)}] \doublefleche;
    \draw[rotate around={-90:(1.2,0)}][->,>=stealth] (1.7,0)--(2.1,0);
    \draw(1.2,-1.2)circle (0.3cm);
    \draw[rotate around={-150:(1.2,-1.2)},shift={(1.5,-1.2)}] \doublefleche;
    \draw[rotate around={90:(1.2,-1.2)},shift={(1.5,-1.2)}] \doubleflechescindeeleft;
    \draw[rotate around={90:(1.2,-1.2)},shift={(1.5,-1.2)}] \doubleflechescindeeright;
    \draw[rotate around={30:(1.2,-1.2)}][->,>=stealth] (1.5,-1.2)--(1.8,-1.2);
    \draw[rotate around={150:(1.2,-1.2)}][->,>=stealth] (1.5,-1.2)--(1.8,-1.2);
    \draw[rotate around={-90:(1.2,-1.2)}][->,>=stealth] (1.5,-1.2)--(1.8,-1.2);
    \draw[rotate around={90:(1.2,0)}][->,>=stealth] (1.7,0)--(2.1,0);
    \draw(1.2,1.2)circle (0.3cm);
    \draw[rotate around={150:(1.2,1.2)},shift={(1.5,1.2)}] \doublefleche;
    \draw[rotate around={-90:(1.2,1.2)},shift={(1.5,1.2)}] \doubleflechescindeeleft;
    \draw[rotate around={-90:(1.2,1.2)},shift={(1.5,1.2)}] \doubleflechescindeeright;
    \draw[rotate around={-30:(1.2,1.2)}][->,>=stealth] (1.5,1.2)--(1.8,1.2);
    \draw[rotate around={-150:(1.2,1.2)}][->,>=stealth] (1.5,1.2)--(1.8,1.2);
    \draw[rotate around={90:(1.2,1.2)}][->,>=stealth] (1.5,1.2)--(1.8,1.2);
    \draw(-1.2,0)circle (0.5cm);
    \draw [rotate around={-45:(-1.2,0)}][<-,>=stealth] (-0.7,0)--(-0.3,0);
    \draw [rotate around={45:(-1.2,0)}][<-,>=stealth] (-0.7,0)--(-0.3,0);
    \draw [rotate around={90:(-1.2,0)}][->,>=stealth] (-0.7,0)--(-0.3,0);
    \draw [rotate around={-90:(-1.2,0)}][->,>=stealth] (-0.7,0)--(-0.3,0);
    \draw(-0.35,-0.85)circle (0.3cm);
    \draw [rotate around={-120:(-0.35,-0.85)}][->,>=stealth] (-0.05,-0.85)--(0.25,-0.85);
    \draw [rotate around={0:(-0.35,-0.85)}][->,>=stealth] (-0.05,-0.85)--(0.25,-0.85);
    \draw [rotate around={180:(-0.35,-0.85)},shift={(-0.05,-0.85)}]\doublefleche;
     \draw [rotate around={-60:(-0.35,-0.85)},shift={(-0.05,-0.85)}]\doublefleche;
     \draw(-0.35,0.85)circle (0.3cm);
    \draw [rotate around={120:(-0.35,0.85)}][->,>=stealth] (-0.05,0.85)--(0.25,0.85);
    \draw [rotate around={0:(-0.35,0.85)}][->,>=stealth] (-0.05,0.85)--(0.25,0.85);
    \draw [rotate around={180:(-0.35,0.85)},shift={(-0.05,0.85)}]\doublefleche;
     \draw [rotate around={60:(-0.35,0.85)},shift={(-0.05,0.85)}]\doublefleche;
\draw [fill=black] (-1.75,0.2) circle (0.3pt);
\draw [fill=black] (-1.8,0) circle (0.3pt);
\draw [fill=black] (-1.75,-0.2) circle (0.3pt);
\draw [fill=black] (-0.3,0.2) circle (0.3pt);
\draw [fill=black] (-0.2,0.3) circle (0.3pt);
\draw [fill=black] (-0.05,0.35) circle (0.3pt);
\draw [rotate around={-45:(-1.2,0)}][fill=black] (-0.3,0.2) circle (0.3pt);
\draw [rotate around={-45:(-1.2,0)}][fill=black] (-0.2,0.3) circle (0.3pt);
\draw [rotate around={-45:(-1.2,0)}][fill=black] (-0.05,0.35) circle (0.3pt);
\draw [fill=black] (-0.3,0.5) circle (0.3pt);
\draw [fill=black] (-0.15,0.54) circle (0.3pt);
\draw [fill=black] (-0.05,0.65) circle (0.3pt);
\draw [rotate around={40:(-1.2,0)}][fill=black] (-0.78,0.4) circle (0.3pt);
\draw [rotate around={40:(-1.2,0)}][fill=black] (-0.7,0.3) circle (0.3pt);
\draw [rotate around={40:(-1.2,0)}][fill=black] (-0.65,0.18) circle (0.3pt);
\draw [rotate around={-100:(-1.2,0)}][fill=black] (-0.78,0.4) circle (0.3pt);
\draw [rotate around={-100:(-1.2,0)}][fill=black] (-0.7,0.3) circle (0.3pt);
\draw [rotate around={-100:(-1.2,0)}][fill=black] (-0.65,0.18) circle (0.3pt);
\draw [fill=black] (2.75,0.15) circle (0.3pt);
\draw [fill=black] (2.8,0) circle (0.3pt);
\draw [fill=black] (2.75,-0.15) circle (0.3pt);
\draw [fill=black] (1.58,1.18) circle (0.3pt);
\draw [fill=black] (1.55,1.35) circle (0.3pt);
\draw [fill=black] (1.45,1.5) circle (0.3pt);
\draw [fill=black] (1.58,-1.18) circle (0.3pt);
\draw [fill=black] (1.55,-1.35) circle (0.3pt);
\draw [fill=black] (1.45,-1.5) circle (0.3pt);
\draw [fill=black] (0.7,0.3) circle (0.3pt);
\draw [fill=black] (0.8,0.45) circle (0.3pt);
\draw [fill=black] (0.95,0.55) circle (0.3pt);
\draw [rotate around={90:(1.2,0)}][fill=black] (0.7,0.3) circle (0.3pt);
\draw [rotate around={90:(1.2,0)}][fill=black] (0.8,0.45) circle (0.3pt);
\draw [rotate around={90:(1.2,0)}][fill=black] (0.9,0.55) circle (0.3pt);
\draw (1.2,0.25) node[anchor=north] {$i_p$};
\draw (-1.2,0.25) node[anchor=north] {$j_q$};
\draw (2.4,0.25) node[anchor=north] {$\scriptstyle{i_{\nu'_{1}}}$};
\draw (1.2,-0.95) node[anchor=north] {$\scriptstyle{i_{\nu'_{2}}}$};
\draw (1.3,1.45) node[anchor=north] {$\scriptstyle{i_{\nu'_{k-1}}}$};
\draw (0,0.25) node[anchor=north] {$\scriptstyle{j_{\nu_{s}\dprimeind}}$};
\draw (-0.35,-0.6) node[anchor=north] {$\scriptstyle{j_{\nu_{s+1}\dprimeindl}}$};
\draw (-0.35,1.1) node[anchor=north] {$\scriptstyle{j_{\nu_{s-1}\dprimeindl}}$};
\end{tikzpicture}
\end{small}
\end{minipage}

\noindent respectively, where the bold arrow is placed such that $j_{\nu_1\dprimeind}$ (resp. $i_{\nu'_1}$) is the first (resp. after $i_p$) in the application order of diagrams of the form $\mathcalboondox{D_1}$ (resp. $\mathcalboondox{D_1'}$).

On the other hand, we have that 
\begin{small}
\begin{equation}
    \begin{split}
   &-\hskip-1mm\sum\limits_{k'=3}^n(-1)^{k'}\hskip-5mm\sum\limits_{\substack{(\bar{i'},\bar{j'})\in\mathcal{P}_{k'}^n\\i=j'_{q'}, j=i'_{p'}}}\hskip-4mm(-1)^{\Delta'_2} \ell^{n-k'+1}\big(\ell^{k'}(s_{-1}l_{i'_{k'}},\dots,l_{i'_{p'}},\dots,s_{-1}l_{i'_1}),s_{-1}l_{j'_{n-k'}},\dots,l_{j'_{q'}},\dots,s_{-1}l_{j'_{1}}\big)
    \\&=-\sum\limits_{k'=3}^n(-1)^{k'}\hskip-4mm\sum\limits_{\substack{(\bar{i'},\bar{j'})\in\mathcal{P}_{k'}^n\\i=j'_{q'}, j=i'_{p'}}}\hskip-3mm(-1)^{\Delta'_2}\hskip-3mm
    \sum\limits_{\rho'\in\mathfrak{S}_{k'-1}} (-1)^{\eta_2+\delta^{\rho'}(l_{i'_k},\dots,\Hat{l_{i'_{p'}}},\dots,l_{i'_1})} 
    \\&\hskip5cm\ell^{n-k'+1}\big(\Phi^{\rho'}_{l_{i'_{p'}}}( l_{i'_{k'}},\dots,\Hat{l_{i'_{p'}}},\dots,l_{i'_1}),s_{-1}l_{j'_{n-k'}},\dots,l_{j'_{q'}},\dots,s_{-1}l_{j'_{1}}\big)
    \end{split}
\end{equation}
\end{small}
with
\begin{small}
\begin{equation}
    \begin{split}
        \eta_2&=p'+(p'-1)|l_{i'_{p'}}|+|l_{i'_ {p'}}|\sum\limits_{t=p'+1}^{k'}|l_{i'_t}|+\sum\limits_{t=1}^{p'-1} (t-1)|l_{i'_t}|+\sum\limits_{t=p'+1}^{k'} t|l_{i'_t}|.
    \end{split}
\end{equation}
\end{small}
Therefore, we have that 
\begin{small}
\begin{equation}
    \begin{split}
   &-\hskip-1mm\sum\limits_{k'=3}^n(-1)^{k'}\hskip-5mm\sum\limits_{\substack{(\bar{i'},\bar{j'})\in\mathcal{P}_{k'}^n\\i=j'_{q'}, j=i'_{p'}}}\hskip-4mm(-1)^{\Delta'_2} \ell^{n-k'+1}\big(\ell^{k'}(s_{-1}l_{i'_{k'}},\dots,l_{i'_{p'}},\dots,s_{-1}l_{i'_1}),s_{-1}l_{j'_{n-k'}},\dots,l_{j'_{q'}},\dots,s_{-1}l_{j'_{1}}\big)
    \\&=-\sum\limits_{k'=3}^n(-1)^{k'}\hskip-5mm\sum\limits_{\substack{(\bar{i'},\bar{j'})\in\mathcal{P}_{k'}^n\\i=j'_{q'}, j=i'_{p'}}}\hskip-4mm(-1)^{\Delta'_2}\hskip-4mm
    \sum\limits_{\rho'\in\mathfrak{S}_{k'-1}} \hskip-3mm(-1)^{\eta_2+\delta^{\rho'}(l_{i'_{k'}},\dots,\Hat{l_{i'_{p'}}},\dots,l_{i'_1})} \sum\limits_{\rho\dprime\in\mathfrak{S}_{n-k'-1}} (-1)^{\eta_2'+\delta^{\rho\dprime}(l_{j'_{n-k'}},\dots,\Hat{l_{j'_{q'}}},\dots,l_{j'_{1}})}
    \\&\hspace{4cm}s_{-1}\Psi^{\rho\dprime}_{l_{j'_{q'}}}(\big(\Phi^{\rho'}_{l_{i'_{p'}}}(l_{i'_{k'}},\dots,\Hat{l_{i'_{p'}}},\dots,l_{i'_1}),l_{j'_{n-k'}},\dots,\Hat{l_{j'_{q'}}},\dots,l_{j'_{1}}\big)
    \end{split}
\end{equation}
\end{small}
with 
\begin{small}
\begin{equation}
    \begin{split}
    \eta_2'&=q'-1+|l_{j'_{q'}}|\sum\limits_{t=1}^{q'-1}(|l_{j'_t}|+1)+\sum\limits_{t=1}^{q'-1}(t-1)|l_{j'_t}|
    +\sum\limits_{t=q'+1}^{n-k'}t|l_{j'_t}|
    +(n-k'-1)\sum\limits_{t=1}^{k'}|l_{i'_t}|
    \end{split}
\end{equation}
\end{small}
where
\begin{equation}
\begin{split}
 &\Psi^{\rho\dprime}_{l_{j'_q}}
 \big(\Phi^{\rho'}_{l_{i'_p}}(l_{i'_{k'}},\dots,\Hat{l_{i'_p}},\dots,l_{i'_1}),l_{j'_{n-k'}},\dots,\Hat{l_{j'_q}},\dots,l_{j'_{1}}\big)
 \\&\hskip3cm=\sum\limits_{u=1}^{n-k-1}\big((-1)^{\eta_2\dprimeind}\sum\mathcal{E}(\mathcalboondox{D_2})+\sum\limits_{v=1}^{k'}(-1)^{\eta_2\tprime}\sum\mathcal{E}(\mathcalboondox{D_2'})\big)
 \end{split}
\end{equation}
with
\begin{small}
\begin{equation}
    \begin{split}
        \eta_2\dprimeind=\sum\limits_{t=1}^{n-k'}|l_{j'_{t}}|\sum\limits_{t=1}^{u}|l_{i'_{\rho'_t}}|
        \text{ , }\eta_2\tprime=\sum\limits_{t=v}^{n-k'}|l_{j'_{\rho_t\dprimeind}}|\sum\limits_{t=1}^{k'}|l_{i_{\rho'_t}}|
    \end{split}
\end{equation}
\end{small}
where the sums are over all the filled diagrams $\mathcalboondox{D_2}$ and $\mathcalboondox{D_2'}$ of the form 

\begin{minipage}{21cm}
\begin{small}
    \begin{tikzpicture}[line cap=round,line join=round,x=1.0cm,y=1.0cm]
\clip(-3.5,-2) rectangle (5,2);
    \draw (0,0) circle (0.3cm);
    \draw[rotate=60][->,>=stealth] (0.3,0)--(0.7,0);
    \draw[rotate=180][->,>=stealth] (0.3,0)--(0.7,0);
    \draw[rotate=-60][->,>=stealth] (0.3,0)--(0.7,0);
    \draw[rotate=-120,shift={(0.3,0)}] \doublefleche;
    \draw[rotate=0,shift={(0.3,0)}] \doubleflechescindeeleft;
    \draw[rotate=0,shift={(0.3,0)}] \doubleflechescindeeright;
    \draw[rotate=0][<-,>=stealth] (0.3,0)--(0.7,0);
    \draw(1.2,0)circle (0.5cm);
    \draw[rotate around={0:(1.2,0)},shift={(1.7,0)}] \doublefleche;
    \draw[rotate around={-120:(1.2,0)},shift={(1.7,0)}] \doublefleche;
    \draw[rotate around={-60:(1.2,0)}][->,>=stealth] (1.7,0)--(2.1,0);
    \draw(1.8,1.03)circle (0.3cm);
    \draw[rotate around={120:(1.8,1.03)},shift={(2.1,1.03)}] \doublefleche;
    \draw[rotate around={-120:(1.8,1.03)},shift={(2.1,1.03)}] \doubleflechescindeeleft;
    \draw[rotate around={-120:(1.8,1.03)},shift={(2.1,1.03)}] \doubleflechescindeeright;
    \draw[rotate around={-60:(1.8,1.03)}][->,>=stealth] (2.1,1.03)--(2.4,1.03);
    \draw[rotate around={180:(1.8,1.03)}][->,>=stealth] (2.1,1.03)--(2.4,1.03);
    \draw[rotate around={60:(1.8,1.03)}][->,>=stealth] (2.1,1.03)--(2.4,1.03);
    \draw[rotate around={60:(1.2,0)}][->,>=stealth] (1.7,0)--(2.1,0);
    \draw(1.8,-1.03)circle (0.3cm);
    \draw[rotate around={0:(1.8,-1.03)},shift={(2.1,-1.03)}] \doublefleche;
    \draw[rotate around={120:(1.8,-1.03)},shift={(2.1,-1.03)}] \doubleflechescindeeright;
    \draw[rotate around={120:(1.8,-1.03)},shift={(2.1,-1.03)}] \doubleflechescindeeleft;
    \draw[rotate around={60:(1.8,-1.03)}][->,>=stealth] (2.1,-1.03)--(2.4,-1.03);
    \draw[rotate around={-60:(1.8,-1.03)}][->,>=stealth] (2.1,-1.03)--(2.4,-1.03);
    \draw[rotate around={180:(1.8,-1.03)}][->,>=stealth] (2.1,-1.03)--(2.4,-1.03);
    \draw(-1.2,0)circle (0.5cm);
    \draw [rotate around={-45:(-1.2,0)}][<-,>=stealth] (-0.7,0)--(-0.3,0);
    \draw [rotate around={45:(-1.2,0)}][<-,>=stealth] (-0.7,0)--(-0.3,0);
    \draw [rotate around={90:(-1.2,0)}][->,>=stealth] (-0.7,0)--(-0.3,0);
    \draw [rotate around={-90:(-1.2,0)}][->,>=stealth] (-0.7,0)--(-0.3,0);
    \draw(-0.35,-0.85)circle (0.3cm);
    \draw [rotate around={-120:(-0.35,-0.85)}][->,>=stealth] (-0.05,-0.85)--(0.25,-0.85);
    \draw [rotate around={0:(-0.35,-0.85)}][->,>=stealth] (-0.05,-0.85)--(0.25,-0.85);
    \draw [rotate around={180:(-0.35,-0.85)},shift={(-0.05,-0.85)}]\doublefleche;
     \draw [rotate around={-60:(-0.35,-0.85)},shift={(-0.05,-0.85)}]\doublefleche;
     \draw(-0.35,0.85)circle (0.3cm);
    \draw [rotate around={120:(-0.35,0.85)}][->,>=stealth] (-0.05,0.85)--(0.25,0.85);
    \draw [rotate around={0:(-0.35,0.85)}][->,>=stealth] (-0.05,0.85)--(0.25,0.85);
    \draw [rotate around={180:(-0.35,0.85)},shift={(-0.05,0.85)}]\doublefleche;
     \draw [rotate around={60:(-0.35,0.85)},shift={(-0.05,0.85)}]\doublefleche;
     \draw[rotate around={180:(-1.2,0)}][<-,>=stealth] (-0.7,0)--(-0.4,0);
     \draw (-2.3,0) circle (0.3cm);
     \draw[rotate around={120:(-2.3,0)}][->,>=stealth] (-2,0)--(-1.7,0);
     \draw[rotate around={-120:(-2.3,0)}][->,>=stealth] (-2,0)--(-1.7,0);
     \draw[rotate around={60:(-2.3,0)},shift={(-2,0)}]\doublefleche;
     \draw[rotate around={-60:(-2.3,0)},shift={(-2,0)}]\doublefleche;
\draw [fill=black] (-2.65,0.15) circle (0.3pt);
\draw [fill=black] (-2.7,0) circle (0.3pt);
\draw [fill=black] (-2.65,-0.15) circle (0.3pt);
\draw [fill=black] (-0.3,0.2) circle (0.3pt);
\draw [fill=black] (-0.2,0.3) circle (0.3pt);
\draw [fill=black] (-0.05,0.35) circle (0.3pt);
\draw [rotate around={-45:(-1.2,0)}][fill=black] (-0.3,0.2) circle (0.3pt);
\draw [rotate around={-45:(-1.2,0)}][fill=black] (-0.2,0.3) circle (0.3pt);
\draw [rotate around={-45:(-1.2,0)}][fill=black] (-0.05,0.35) circle (0.3pt);
\draw [fill=black] (-0.3,0.5) circle (0.3pt);
\draw [fill=black] (-0.15,0.54) circle (0.3pt);
\draw [fill=black] (-0.05,0.65) circle (0.3pt);
\draw [fill=black] (2.2,1.03) circle (0.3pt);
\draw [fill=black] (2.15,0.9) circle (0.3pt);
\draw [fill=black] (2.15,1.16) circle (0.3pt);
\draw [rotate around={-120:(1.2,0)}][fill=black] (2.2,1.03) circle (0.3pt);
\draw [rotate around={-120:(1.2,0)}][fill=black] (2.15,0.9) circle (0.3pt);
\draw [rotate around={-120:(1.2,0)}][fill=black] (2.15,1.16) circle (0.3pt);
\draw [fill=black] (0.75,0.35) circle (0.3pt);
\draw [fill=black] (0.9,0.5) circle (0.3pt);
\draw [fill=black] (1.1,0.55) circle (0.3pt);
\draw [rotate around={40:(-1.2,0)}][fill=black] (-0.78,0.4) circle (0.3pt);
\draw [rotate around={40:(-1.2,0)}][fill=black] (-0.7,0.3) circle (0.3pt);
\draw [rotate around={40:(-1.2,0)}][fill=black] (-0.65,0.18) circle (0.3pt);
\draw [rotate around={-100:(-1.2,0)}][fill=black] (-0.78,0.4) circle (0.3pt);
\draw [rotate around={-100:(-1.2,0)}][fill=black] (-0.7,0.3) circle (0.3pt);
\draw [rotate around={-100:(-1.2,0)}][fill=black] (-0.65,0.18) circle (0.3pt);
\draw [fill=black] (-1.7,0.3) circle (0.3pt);
\draw [fill=black] (-1.6,0.45) circle (0.3pt);
\draw [fill=black] (-1.45,0.55) circle (0.3pt);
\draw [rotate around={90:(-1.2,0)}][fill=black] (-1.7,0.3) circle (0.3pt);
\draw [rotate around={90:(-1.2,0)}][fill=black] (-1.6,0.45) circle (0.3pt);
\draw [rotate around={90:(-1.2,0)}][fill=black] (-1.45,0.55) circle (0.3pt);
\draw (3.5,0.2) node[anchor=north] {and};
\draw (1.2,0.25) node[anchor=north] {$i_p$};
\draw (-1.2,0.25) node[anchor=north] {$j_q$};
\draw (0,0.25) node[anchor=north] {$\scriptstyle{i'_{\rho_{1}\dprimeind}}$};
\draw (1.85,-0.78) node[anchor=north] {$\scriptstyle{i'_{\rho_{n-k'-2}\dprimeindtl}}$};
\draw (1.85,1.28) node[anchor=north] {$\scriptstyle{i'_{\rho_{n-k'-1}\dprimeindtl}}$};
\draw (-2.3,0.27) node[anchor=north] {$\scriptstyle{j'_{\rho'_{1}}}$};
\draw (-0.25,-0.57) node[anchor=north] {$\scriptstyle{j'_{\rho'_{u+1}}}$};
\draw (-0.32,1.15) node[anchor=north] {$\scriptstyle{j'_{\rho'_{u}}}$};
\end{tikzpicture}
\begin{tikzpicture}[line cap=round,line join=round,x=1.0cm,y=1.0cm]
\clip(-2,-2) rectangle (8.32706640373885,2);
  \draw (0,0) circle (0.3cm);
    \draw[rotate=60][->,>=stealth] (0.3,0)--(0.7,0);
    \draw[rotate=180][->,>=stealth] (0.3,0)--(0.7,0);
    \draw[rotate=-60][->,>=stealth] (0.3,0)--(0.7,0);
    \draw[rotate=-120,shift={(0.3,0)}] \doublefleche;
    \draw[rotate=0,shift={(0.3,0)}] \doubleflechescindeeleft;
    \draw[rotate=0,shift={(0.3,0)}] \doubleflechescindeeright;
    \draw[rotate=0][<-,>=stealth] (0.3,0)--(0.7,0);
    \draw(1.2,0)circle (0.5cm);
    \draw[->,>=stealth] (1.7,0)--(2.1,0);
    \draw(2.4,0) circle (0.3cm);
    \draw[rotate around={180:(2.4,0)},shift={(2.7,0)}]\doubleflechescindeeleft;
    \draw[rotate around={180:(2.4,0)},shift={(2.7,0)}]\doubleflechescindeeright;
    \draw[rotate around={90:(2.4,0)}][->,>=stealth](2.7,0)--(3,0);
    \draw[rotate around={-90:(2.4,0)}][->,>=stealth](2.7,0)--(3,0);
    \draw[rotate around={45:(1.2,0)},shift={(1.7,0)}] \doublefleche;
    \draw[rotate around={-45:(1.2,0)},shift={(1.7,0)}] \doublefleche;
    \draw[rotate around={-90:(1.2,0)}][->,>=stealth] (1.7,0)--(2.1,0);
    \draw(1.2,-1.2)circle (0.3cm);
    \draw[rotate around={-150:(1.2,-1.2)},shift={(1.5,-1.2)}] \doublefleche;
    \draw[rotate around={90:(1.2,-1.2)},shift={(1.5,-1.2)}] \doubleflechescindeeleft;
    \draw[rotate around={90:(1.2,-1.2)},shift={(1.5,-1.2)}] \doubleflechescindeeright;
    \draw[rotate around={30:(1.2,-1.2)}][->,>=stealth] (1.5,-1.2)--(1.8,-1.2);
    \draw[rotate around={150:(1.2,-1.2)}][->,>=stealth] (1.5,-1.2)--(1.8,-1.2);
    \draw[rotate around={-90:(1.2,-1.2)}][->,>=stealth] (1.5,-1.2)--(1.8,-1.2);
    \draw[rotate around={90:(1.2,0)}][->,>=stealth] (1.7,0)--(2.1,0);
    \draw(1.2,1.2)circle (0.3cm);
    \draw[rotate around={150:(1.2,1.2)},shift={(1.5,1.2)}] \doublefleche;
    \draw[rotate around={-90:(1.2,1.2)},shift={(1.5,1.2)}] \doubleflechescindeeleft;
    \draw[rotate around={-90:(1.2,1.2)},shift={(1.5,1.2)}] \doubleflechescindeeright;
    \draw[rotate around={-30:(1.2,1.2)}][->,>=stealth] (1.5,1.2)--(1.8,1.2);
    \draw[rotate around={-150:(1.2,1.2)}][->,>=stealth] (1.5,1.2)--(1.8,1.2);
    \draw[rotate around={90:(1.2,1.2)}][->,>=stealth] (1.5,1.2)--(1.8,1.2);
    \draw(-1.2,0)circle (0.5cm);
    \draw [rotate around={-45:(-1.2,0)}][<-,>=stealth] (-0.7,0)--(-0.3,0);
    \draw [rotate around={45:(-1.2,0)}][<-,>=stealth] (-0.7,0)--(-0.3,0);
    \draw [rotate around={90:(-1.2,0)}][->,>=stealth] (-0.7,0)--(-0.3,0);
    \draw [rotate around={-90:(-1.2,0)}][->,>=stealth] (-0.7,0)--(-0.3,0);
    \draw(-0.35,-0.85)circle (0.3cm);
    \draw [rotate around={-120:(-0.35,-0.85)}][->,>=stealth] (-0.05,-0.85)--(0.25,-0.85);
    \draw [rotate around={0:(-0.35,-0.85)}][->,>=stealth] (-0.05,-0.85)--(0.25,-0.85);
    \draw [rotate around={180:(-0.35,-0.85)},shift={(-0.05,-0.85)}]\doublefleche;
     \draw [rotate around={-60:(-0.35,-0.85)},shift={(-0.05,-0.85)}]\doublefleche;
     \draw(-0.35,0.85)circle (0.3cm);
    \draw [rotate around={120:(-0.35,0.85)}][->,>=stealth] (-0.05,0.85)--(0.25,0.85);
    \draw [rotate around={0:(-0.35,0.85)}][->,>=stealth] (-0.05,0.85)--(0.25,0.85);
    \draw [rotate around={180:(-0.35,0.85)},shift={(-0.05,0.85)}]\doublefleche;
     \draw [rotate around={60:(-0.35,0.85)},shift={(-0.05,0.85)}]\doublefleche;
\draw [fill=black] (-1.75,0.2) circle (0.3pt);
\draw [fill=black] (-1.8,0) circle (0.3pt);
\draw [fill=black] (-1.75,-0.2) circle (0.3pt);
\draw [fill=black] (-0.3,0.2) circle (0.3pt);
\draw [fill=black] (-0.2,0.3) circle (0.3pt);
\draw [fill=black] (-0.05,0.35) circle (0.3pt);
\draw [rotate around={-45:(-1.2,0)}][fill=black] (-0.3,0.2) circle (0.3pt);
\draw [rotate around={-45:(-1.2,0)}][fill=black] (-0.2,0.3) circle (0.3pt);
\draw [rotate around={-45:(-1.2,0)}][fill=black] (-0.05,0.35) circle (0.3pt);
\draw [fill=black] (-0.3,0.5) circle (0.3pt);
\draw [fill=black] (-0.15,0.54) circle (0.3pt);
\draw [fill=black] (-0.05,0.65) circle (0.3pt);
\draw [rotate around={40:(-1.2,0)}][fill=black] (-0.78,0.4) circle (0.3pt);
\draw [rotate around={40:(-1.2,0)}][fill=black] (-0.7,0.3) circle (0.3pt);
\draw [rotate around={40:(-1.2,0)}][fill=black] (-0.65,0.18) circle (0.3pt);
\draw [rotate around={-100:(-1.2,0)}][fill=black] (-0.78,0.4) circle (0.3pt);
\draw [rotate around={-100:(-1.2,0)}][fill=black] (-0.7,0.3) circle (0.3pt);
\draw [rotate around={-100:(-1.2,0)}][fill=black] (-0.65,0.18) circle (0.3pt);
\draw [fill=black] (2.75,0.15) circle (0.3pt);
\draw [fill=black] (2.8,0) circle (0.3pt);
\draw [fill=black] (2.75,-0.15) circle (0.3pt);
\draw [fill=black] (1.58,1.18) circle (0.3pt);
\draw [fill=black] (1.55,1.35) circle (0.3pt);
\draw [fill=black] (1.45,1.5) circle (0.3pt);
\draw [fill=black] (1.58,-1.18) circle (0.3pt);
\draw [fill=black] (1.55,-1.35) circle (0.3pt);
\draw [fill=black] (1.45,-1.5) circle (0.3pt);
\draw [fill=black] (0.7,0.3) circle (0.3pt);
\draw [fill=black] (0.8,0.45) circle (0.3pt);
\draw [fill=black] (0.95,0.55) circle (0.3pt);
\draw [rotate around={90:(1.2,0)}][fill=black] (0.7,0.3) circle (0.3pt);
\draw [rotate around={90:(1.2,0)}][fill=black] (0.8,0.45) circle (0.3pt);
\draw [rotate around={90:(1.2,0)}][fill=black] (0.9,0.55) circle (0.3pt);
\draw (1.2,0.25) node[anchor=north] {$i_p$};
\draw (-1.2,0.25) node[anchor=north] {$j_q$};
\draw (2.4,0.25) node[anchor=north] {$\scriptstyle{i'_{\rho_{1}\dprimeind}}$};
\draw (1.2,-0.95) node[anchor=north] {$\scriptstyle{i'_{\rho_{2}\dprimeind}}$};
\draw (1.27,1.5) node[anchor=north] {$\scriptstyle{i'_{\rho_{n-k'-1}\dprimeindtl}}$};
\draw (0,0.25) node[anchor=north] {$\scriptstyle{j'_{\rho'_{v}}}$};
\draw (-0.25,-0.55) node[anchor=north] {$\scriptstyle{j'_{\rho'_{v+1}}}$};
\draw (-0.25,1.15) node[anchor=north] {$\scriptstyle{j'_{\rho'_{v-1}}}$};
\end{tikzpicture}
\end{small}
\end{minipage}
\noindent respectively, where the bold arrow is placed such that $j'_{\rho_1\dprimeind}$ (resp. $i'_{\rho'_1}$) is the first (resp. after $i_p$) in the application order of diagrams of the form $\mathcalboondox{D_2}$ (resp. $\mathcalboondox{D_2'}$).

Thus, the term $\sum\mathcal{E}(\mathcalboondox{D_1})$ in $\Phi^{\upsilon\dprime}_{l_{j_q}}
 \big(\Psi^{\upsilon'}_{l_{i_p}}(l_{i_k},\dots,\Hat{l_{i_p}},\dots,l_{i_1}),l_{j_{n-k}},\dots,\Hat{l_{j_q}},\dots,l_{j_{1}}\big)$ indexed by $r$ cancels with the term $\sum\mathcal{E}(\mathcalboondox{D_2})$ in $\Psi^{\rho\dprime}_{l_{j'_{q'}}}
 \big(\Phi^{\rho'}_{l_{i'_{p'}}}(l_{i'_{k'}},\dots,\Hat{l_{i'_{p'}}},\dots,l_{i'_1}),l_{j'_{n-k'}},\dots,\Hat{l_{j'_{q'}}},\dots,l_{j'_{1}}\big)$ indexed by $u$ when $k'=n-k$, $p'=q$, $q'=p$, $\upsilon'=\rho\dprime$, $\upsilon\dprime=\rho'$ and for every $t\in\llbracket 1,k'\rrbracket$, $t'\in \llbracket 1,n-k'\rrbracket$, $i'_t=j_t$ and $j'_{t'}=i_{t'}$.

Indeed, it is straightforward to check that

\begin{small}
\begin{equation}
    \begin{split}
        &k+\Delta'+\eta_1+\delta^{\upsilon'}(l_{i_k},\dots,\Hat{l_{i_p}},\dots,l_{i_1})+\eta_1'+\delta^{\upsilon\dprime}(l_{j_{n-k}},\dots,\Hat{l_{j_q}},\dots,l_{j_{1}})+\eta_1\dprimeind
        +k'\\&+\Delta'_2+\eta_2+\delta^{\rho'}(l_{i'_k},\dots,\Hat{l_{i'_{p'}}},\dots,l_{i'_1})+\eta'_2+\delta^{\rho\dprime}(l_{j'_{n-k}},\dots,\Hat{l_{j'_{q'}}},\dots,l_{j'_{1}})+\eta_2\dprimeind=0\mod 2.
    \end{split}
\end{equation}
\end{small}
Similarly, the term $\sum\mathcal{E}(\mathcalboondox{D'_1})$ in $\Phi^{\upsilon\dprime}_{l_{j_q}}
 \big(\Psi^{\upsilon'}_{l_{i_p}}(l_{i_k},\dots,\Hat{l_{i_p}},\dots,l_{i_1}),l_{j_{n-k}},\dots,\Hat{l_{j_q}},\dots,l_{j_{1}}\big)$ indexed by $s$ cancels with the term $\sum\mathcal{E}(\mathcalboondox{D'_2})$ in $\Psi^{\rho\dprime}_{l_{j'_{q'}}}
 \big(\Phi^{\rho'}_{l_{i'_{p'}}}(l_{i'_k},\dots,\Hat{l_{i'_{p'}}},\dots,l_{i'_1}),l_{j'_{n-k}},\dots,\Hat{l_{j'_{q'}}},\dots,l_{j'_{1}}\big)$ indexed by $v$ under the same conditions as before.

The case where $l_i\in\mathfrak{g}_{\MB}$ and $l_j\in\mathfrak{g}_{\MA}$ is completely similar. 
Therefore, $(\mathcal{L}_d^{\Phi_0}(\MA,\MB),\ell)$ is a graded $L_{\infty}$-algebra.

We now show that the Maurer-Cartan elements of this graded $L_{\infty}$-algebra are equivalent to $d$-pre-Calabi-Yau morphisms. By Lemma \ref{lemma:fin-diag}, a Maurer-Cartan element of $(\mathcal{L}_d^{\Phi_0}(\MA,\MB),\ell)$ is simply a degree $1$ element $L\in\mathcal{L}_d^{\Phi_0}(\MA,\MB)$ satisfying the Maurer-Cartan equation $\sum_{n\geq 0}\frac{1}{n!}\ell^n(L,\dots,L)^{\doubar{x}}=0$ for every $\doubar{x}\in\doubar{\MO}_{\MA}$.

We thus consider a Maurer-Cartan element of $(\mathcal{L}_d^{\Phi_0}(\MA,\MB),\ell)$, \textit{i.e.} a triple $(g_{\MB},s_{-1}l,g_{\MA})$ of degree $1$ such that 
\begin{itemize}
    \item $[g_{\MB},g_{\MB}]_{\nec}=0$,
    \item $\sum\limits_{n=2}^{\infty}\big(\ell^{n}(s_{-1}l,\dots,s_{-1}l,g_{\MB},\underbrace{s_{-1}l,\dots,s_{-1}l}_{i \text{ times }})+\ell^{n}(s_{-1}l,\dots,s_{-1}l,g_{\MA},\underbrace{s_{-1}l,\dots,s_{-1}l}_{i \text{ times }})\big)^{\doubar{x}}=0$ for every $\doubar{x}\in\doubar{\MO}_{\MA}$,
    \item $[g_{\MA},g_{\MA}]_{\nec}=0$.
\end{itemize}
Therefore, we conclude that $g_{\MB}$ and $g_{\MA}$ are Maurer-Cartan elements of the graded Lie algebras $(\Multi_d^{\bullet}(\MB)^{C_{\llg(\bullet)}}[d+1],[-,-]_{\nec})$ and $(\Multi_d^{\bullet}(\MA)^{C_{\llg(\bullet)}}[d+1],[-,-]_{\nec})$ respectively and thus define $d$-pre-Calabi-Yau structures on $\MB$ and $\MA$.
Moreover, we have that 
\begin{equation}
    \begin{split}
        &\sum\limits_{n=2}^{\infty}\sum\limits_{i=1}^n\frac{1}{n!}\ell^{n}(s_{-1}l,\dots,s_{-1}l,g_{\MB},\underbrace{s_{-1}l,\dots,s_{-1}l}_{i \text{ times }})^{\doubar{x}}
        =\sum\limits_{n=2}^{\infty}\frac{1}{(n-1)!}\ell^{n}(g_{\MB},s_{-1}l,\dots,s_{-1}l)^{\doubar{x}}
        \\&=\sum\limits_{n=2}^{\infty}\frac{1}{(n-1)!}
        \sum\limits_{\sigma\in\mathfrak{S}_{n-1}}(-1)^{\delta^{\sigma}(l,\dots,l)}s_{-1}\Phi_{g_{\MB}}^{\sigma}(l,\dots,l)^{\doubar{x}}\\&=\sum\limits_{n=2}^{\infty}s_{-1}\Phi_{g_{\MB}}^{\id}(l,\dots,l)^{\doubar{x}}
    \end{split}
\end{equation}
and similarly, 
\begin{equation}
    \begin{split}
        &\sum\limits_{n=2}^{\infty}\sum\limits_{i=1}^n\frac{1}{n!}\ell^{n}(s_{-1}l,\dots,s_{-1}l,g_{\MA},\underbrace{s_{-1}l,\dots,s_{-1}l}_{i \text{ times }})^{\doubar{x}}=-\sum\limits_{n=2}^{\infty}s_{-1}\Psi_{g_{\MA}}^{\id}(l,\dots,l)^{\doubar{x}}
    \end{split}
\end{equation}
for every $\doubar{x}\in\doubar{\MO}_{\MA}$. Therefore,
\[
\sum\limits_{n=2}^{\infty}\sum\limits_{i=1}^n\frac{1}{n!}\big(\ell^{n}(s_{-1}l,\dots,s_{-1}l,g_{\MB},\underbrace{s_{-1}l,\dots,s_{-1}l}_{i \text{ times }})+\ell^{n}(s_{-1}l,\dots,s_{-1}l,g_{\MA},\underbrace{s_{-1}l,\dots,s_{-1}l}_{i \text{ times }})\big)^{\doubar{x}}=0
\]
for every $\doubar{x}\in\doubar{\MO}_{\MA}$
is tantamount to 
\begin{equation}
    \begin{split}
        \sum\limits_{n=2}^{\infty}\Phi_{g_{\MB}}^{\id}(l,\dots,l)^{\doubar{x}}=\sum\limits_{n=2}^{\infty}\Psi_{g_{\MA}}^{\id}(l,\dots,l)^{\doubar{x}}
    \end{split}
\end{equation}
for every $\doubar{x}\in\doubar{\MO}_{\MA}$, which is tantamount to say that $l$ is a $d$-pre-Calabi-Yau morphism between the $d$-pre-Calabi-Yau categories $(\MA,g_{\MA})$ and $(\MB,g_{\MB})$.
On the other hand, $d$-pre-Calabi-Yau structures $s_{d+1}M_{\MB}$ and $s_{d+1}M_{\MA}$ together with a $d$-pre-Calabi-Yau morphism $(\Phi_0,s_{d+1}\mathbf{F})$ between them give rise to a Maurer-Cartan element $(s_{d+1}M_{\MB},s_{-1}(s_{d+1}\mathbf{F}),s_{d+1}M_{\MA})\in\mathcal{L}_d^{\Phi_0}(\MA,\MB)$.
\end{proof}

\begin{remark}
\label{remark:A-inf-MC}
    A particular case of this result is the case of $A_{\infty}$-structures recalled from \cite{borisov1} in Proposition \ref{prop:A-inf-L-inf-borisov}.
\end{remark}

\subsubsection{The case of fixed pre-Calabi-Yau structures}
\label{subsection:fixed-pCY}
As in Section \ref{section:A-inf-htpy} we now define a graded $L_{\infty}$-algebra whose Maurer-Cartan elements are in correspondence with $d$-pre-Calabi-Yau morphisms between fixed $d$-pre-Calabi-Yau categories. Throughout this subsection, we thus consider graded quivers $\MA$ and $\MB$ with respective sets of objects $\MO_{\MA}$ and $\MO_{\MB}$ together with a map $\Phi_0 : \MO_{\MA}\rightarrow\MO_{\MB}$ and $d$-pre-Calabi-Yau structures denoted by $s_{d+1}M_{\MA}$ and $s_{d+1}M_{\MB}$ respectively.

\begin{definition}
\label{def:bar-ell}
    Consider the graded vector space $\mathfrak{h}=\Multi_{d,\Phi_0}^{\bullet}(\MA,\MB)^{C_{\llg(\bullet)}}[d+1]$.
   We define an $\infty$-bracket $\Bar{\ell}$ on it by 
\begin{equation}
\begin{split}
    \Bar{\ell}^{n}(s_{-1}f_{n},\dots,s_{-1}f_{1})&=(-1)^{\epsilon}\sum\limits_{\sigma\in\mathfrak{S}_n}(-1)^{\delta^{\sigma}(f_{n},\dots,f_{1})}s_{-1}\Phi_{s_{d+1}M_{\MB}}^{\sigma}(f_{n},\dots,f_{1})
    \\&\phantom{=}-(-1)^{\epsilon'}\sum\limits_{\sigma\in\mathfrak{S}_n}(-1)^{\delta^{\sigma}(f_{n},\dots,f_{1})}s_{-1}\Psi_{s_{d+1}M_{\MA}}^{\sigma}(f_{n},\dots,f_{1})
\end{split}
\end{equation}
for $n\in\NN^*$ where 
\begin{small}
\begin{equation}
    \begin{split}
    \epsilon=\sum\limits_{j=1}^{n}(j-1)|f_j|
    \text{, }\epsilon'&=\sum\limits_{j=1}^n|f_j|+\sum\limits_{j=1}^{n}(j-1)|f_j|
    \end{split}
\end{equation}
\end{small}
and where $\Phi_{s_{d+1}M_{\MB}}^{\sigma}(f_{n},\dots,f_{1}), \Psi_{s_{d+1}M_{\MA}}^{\sigma}(f_{n},\dots,f_{1})\in\mathfrak{h}$ are given by $\Phi_{s_{d+1}M_{\MB}}^{\sigma}(f_{n},\dots,f_{1})^{\doubar{x}}=\sum\mathcal{E}(\mathcalboondox{D})$ and $\Psi_{s_{d+1}M_{\MA}}^{\sigma}(f_{n},\dots,f_{1})^{\doubar{x}}=\sum\mathcal{E}(\mathcalboondox{D'})$ where the sums are over all the filled diagrams $\mathcalboondox{D}$ and $\mathcalboondox{D'}$ of type $\doubar{x}$ and of the form

\begin{equation}
\label{eq:form-diag-2}
\begin{tikzpicture}[line cap=round,line join=round,x=1.0cm,y=1.0cm]
\clip(-3.5,-1.5) rectangle (4,1.8);
  \draw [line width=0.5pt] (0.,0.) circle (0.5cm);
     \draw [rotate=45] [<-,>=stealth,] (0.5,0)--(0.9,0);
     \draw [rotate=0] [->,>=stealth] (0.5,0)--(0.9,0);
     \draw [rotate=180] [->,>=stealth] (0.5,0)--(0.9,0);
     \draw [rotate=135] [<-,>=stealth] (0.5,0)--(0.9,0);
     \draw [rotate=90] [<-,>=stealth] (0.5,0)--(0.9,0);
     \draw [line width=0.5pt] (0.,1.15) circle (0.25cm);
     \draw [rotate around={150:(0,1.15)}] [->,>=stealth,] (0.25,1.15)--(0.55,1.15);
     \draw [rotate around={30:(0,1.15)}] [->,>=stealth,] (0.25,1.15)--(0.55,1.15);
     \draw [rotate around={90:(0,1.15)},shift={((0.25,1.15))}] \doublefleche;
     \draw [rotate around={210:(0,1.15)},shift={((0.25,1.15))}] \doublefleche;
     \draw [line width=0.5pt] (0.81,0.81) circle (0.25cm);
     \draw [rotate around={-15:(0.81,0.81)}] [->,>=stealth,] (1.06,0.81)--(1.36,0.81);
     \draw [rotate around={105:(0.81,0.81)}] [->,>=stealth,] (1.06,0.81)--(1.36,0.81);
     \draw [rotate around={165:(0.81,0.81)},shift={(1.06,0.81)}] \doublefleche;
     \draw [rotate around={45:(0.81,0.81)},shift={(1.06,0.81)}] \doublefleche;
     \draw [line width=0.5pt] (-0.81,0.81) circle (0.25cm);
     \draw [rotate around={75:(-0.81,0.81)}] [->,>=stealth,] (-0.56,0.81)--(-0.26,0.81);
     \draw [rotate around={195:(-0.81,0.81)}] [->,>=stealth,] (-0.56,0.81)--(-0.26,0.81);
     \draw [rotate around={135:(-0.81,0.81)},shift={(-0.56,0.81)}]\doublefleche;
     \draw [rotate around={255:(-0.81,0.81)},shift={(-0.56,0.81)}]\doublefleche;
\begin{scriptsize}
\draw [fill=black] (0.7,0.5) circle (0.3pt);
\draw [fill=black] (0.88,0.5) circle (0.3pt);
\draw [fill=black] (1.05,0.57) circle (0.3pt);
\draw [fill=black] (0.27,1) circle (0.3pt);
\draw [fill=black] (0.16,0.87) circle (0.3pt);
\draw [fill=black] (0.3,1.15) circle (0.3pt);
\draw [fill=black] (-0.55,1) circle (0.3pt);
\draw [fill=black] (-0.5,0.85) circle (0.3pt);
\draw [fill=black] (-0.52,0.7) circle (0.3pt);
\draw [fill=black] (0,-0.6) circle (0.3pt);
\draw [fill=black] (0.2,-0.55) circle (0.3pt);
\draw [fill=black] (-0.2,-0.55) circle (0.3pt);
\draw [rotate=-45][fill=black] (-0.5,0.3) circle (0.3pt);
\draw [rotate=-45][fill=black] (-0.55,0.2) circle (0.3pt);
\draw [rotate=-45][fill=black] (-0.57,0.1) circle (0.3pt);
\draw [rotate=45][fill=black] (0.5,0.3) circle (0.3pt);
\draw [rotate=45][fill=black] (0.55,0.2) circle (0.3pt);
\draw [rotate=45][fill=black] (0.57,0.1) circle (0.3pt);
\end{scriptsize}
\draw (0,0.25)node[anchor=north]{$M_{\MB}$};
\draw (-0.81,1.06)node[anchor=north]{$\scriptstyle{f_{\sigma_v}}$};
\draw (0.81,1.06)node[anchor=north]{$\scriptstyle{f_{\sigma_u}}$};
\draw (0,1.4)node[anchor=north]{$\scriptstyle{f_{\sigma_1}}$};
\draw (3.5,0.25) node[anchor=north]{and};
\end{tikzpicture}
    \begin{tikzpicture}[line cap=round,line join=round,x=1.0cm,y=1.0cm]
\clip(-4,-1.5) rectangle (5.249408935174429,1.8);
     \draw [line width=0.5pt] (0.,0.) circle (0.5cm);
     \shadedraw[rotate=30,shift={(0.5cm,0cm)}] \doublefleche;
     \shadedraw[rotate=150,shift={(0.5cm,0cm)}] \doublefleche;
     \draw [line width=0.5pt] (0,1.3) circle (0.3cm);
     \shadedraw [shift={(0cm,1cm)},rotate=-90] \doubleflechescindeeleft;
     \shadedraw [shift={(0cm,1cm)},rotate=-90] \doubleflechescindeeright;
     \shadedraw [shift={(0cm,1cm)},rotate=-90] \fleche;
     \draw [->,>=stealth] (0.3,1.3)--(0.6,1.3);
     \draw [->,>=stealth] (-0.3,1.3)--(-0.6,1.3);
     \draw [line width=0.5pt] (1.12,-0.65) circle (0.3cm);
     \shadedraw[shift={(0.86cm,-0.5cm)},rotate=150] \doubleflechescindeeleft;
     \shadedraw[shift={(0.86cm,-0.5cm)},rotate=150] \doubleflechescindeeright;
     \shadedraw[shift={(0.86cm,-0.5cm)},rotate=150] \fleche;
     \draw [rotate around ={60:(1.12,-0.65)}] [->,>=stealth] (1.43,-0.65)--(1.73,-0.65);
     \draw [rotate around ={-120:(1.12,-0.65)}] [->,>=stealth] (1.43,-0.65)--(1.73,-0.65);
     \draw [line width=0.5pt] (-1.12,-0.65) circle (0.3cm);
     \shadedraw[shift={(-0.86cm,-0.5cm)},rotate=30] \doubleflechescindeeleft;
      \shadedraw[shift={(-0.86cm,-0.5cm)},rotate=30] \doubleflechescindeeright;
       \shadedraw[shift={(-0.86cm,-0.5cm)},rotate=30] \fleche;
      \draw [rotate around ={-60:(-1.12,-0.65)}] [->,>=stealth] (-1.43,-0.65)--(-1.73,-0.65);
     \draw [rotate around ={120:(-1.12,-0.65)}] [->,>=stealth] (-1.43,-0.65)--(-1.73,-0.65);
\begin{scriptsize}
\draw [fill=black] (0,1.7) circle (0.3pt);
\draw [fill=black] (0.2,1.65) circle (0.3pt);
\draw [fill=black] (-0.2,1.65) circle (0.3pt);
\draw [fill=black] (0,-0.6) circle (0.3pt);
\draw [fill=black] (0.2,-0.55) circle (0.3pt);
\draw [fill=black] (-0.2,-0.55) circle (0.3pt);

\draw [fill=black] (1.45,-0.85) circle (0.3pt);
\draw [fill=black] (1.5,-0.67) circle (0.3pt);
\draw [fill=black] (1.33,-0.97) circle (0.3pt);
\draw [fill=black] (-1.45,-0.85) circle (0.3pt);
\draw [fill=black] (-1.5,-0.67) circle (0.3pt);
\draw [fill=black] (-1.33,-0.97) circle (0.3pt);
\end{scriptsize}
\draw (0,0.25)node[anchor=north]{$M_{\MA}$};
\draw (0,1.55)node[anchor=north]{$\scriptstyle{f_{\sigma_1}}$};
\draw (-1.12,-0.4)node[anchor=north]{$\scriptstyle{f_{\sigma_n}}$};
\draw (1.12,-0.4)node[anchor=north]{$\scriptstyle{f_{\sigma_2}}$};
\end{tikzpicture}
\end{equation}
\noindent respectively.
\end{definition}

\begin{lemma}
    Given $(i,j)\in \NN^2$, there exists a finite number of diagrams of the forms \eqref{eq:form-diag-2} and of type $\doubar{x}$ where $\llg(\doubar{x})=j$ and $N(\doubar{x})=i$. In particular, given $F\in\mathcal{L}_d^{\Phi_0}(\MA,\MB)^1$, the projection $\pi_{\doubar{x}}\circ \sum_{n\geq 0}\bar{\ell}^n(F,\dots,F)$ on 
    $\Multi_{d,\Phi_0}^{\doubar{x}}(\MA,\MB)^{\llg(\doubar{x})}[d+1]$ is a finite sum for every $\doubar{x}\in\doubar{\MO}_{\MA}$.
\end{lemma}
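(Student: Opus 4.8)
The plan is to deduce this from Lemma~\ref{lemma:fin-diag}. A diagram of the form \eqref{eq:form-diag-2} is obtained from one of the form \eqref{eq:form-diag} by filling the distinguished disc with the fixed structure $s_{d+1}M_{\MB}$ (resp.\ $s_{d+1}M_{\MA}$) in place of an arbitrary element $g_{\MB}\in\mathfrak{g}_{\MB}$ (resp.\ $g_{\MA}\in\mathfrak{g}_{\MA}$). Since $s_{d+1}M_{\MB}$ and $g_{\MB}$ lie in the same graded vector space $\Multi_d^{\bullet}(\MB)^{C_{\llg(\bullet)}}[d+1]$, and likewise on the $\MA$ side, the collection of admissible combinatorial shapes of a diagram of a prescribed type $\doubar{x}$ is the same in both cases. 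Hence the counting performed in the proof of Lemma~\ref{lemma:fin-diag} applies word for word and gives, for each $(i,j)\in\NN^2$, only finitely many diagrams of the form \eqref{eq:form-diag-2} of type $\doubar{x}$ with $\llg(\doubar{x})=j$ and $N(\doubar{x})=i$.

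Spelled out, I would fix such an $(i,j)$ and bound the two kinds of diagrams separately. For a diagram $\mathcalboondox{D}$ with distinguished disc filled by $M_{\MB}$: if $j'$ denotes the number of its outgoing arrows then $1\le j'\le j$, the discs filled with some $f_r$ and carrying more than one outgoing arrow are at most $j-j'$ in number, and those carrying a single outgoing arrow are at most $i$ in number; so the number of $f$-discs is bounded, each of them admits finitely many possible types, and there are thus finitely many such $\mathcalboondox{D}$. For a diagram $\mathcalboondox{D'}$ with distinguished disc filled by $M_{\MA}$: it consists of the $M_{\MA}$-disc, carrying at most $j$ outgoing and $i$ incoming arrows, together with at most $j$ discs filled with the $f_r$, each carrying at most $j$ outgoing and $i+1$ incoming arrows; again there are finitely many discs and finitely many types for each, which settles the first claim.

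For the final assertion, fix $\doubar{x}\in\doubar{\MO}_{\MA}$ and put $j=\llg(\doubar{x})$, $i=N(\doubar{x})$. By the definition of $\bar{\ell}$, the projection $\pi_{\doubar{x}}\circ\sum_{n\ge 0}\bar{\ell}^{n}(F,\dots,F)$ onto $\Multi_{d,\Phi_0}^{\doubar{x}}(\MA,\MB)^{\llg(\doubar{x})}[d+1]$ is a signed sum of the evaluations $\mathcal{E}(\mathcalboondox{D})$ and $\mathcal{E}(\mathcalboondox{D'})$ taken over all filled diagrams of the form \eqref{eq:form-diag-2} of type $\doubar{x}$, with the index $n$ recording the number of discs filled by the entries $f_r$; by the first part this index set is finite, hence the sum is finite. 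I do not anticipate any real difficulty: the only step that needs a word of justification is the observation that the shape count in Lemma~\ref{lemma:fin-diag} depends only on the ambient graded vector spaces and not on the particular tensors filling the discs, which is immediate, together with keeping track of the mild edge case in which a block $\MA[1]^{\otimes\bar{x}^i}$ of an $f$-disc is an empty tensor product, which only affects whether certain of the bounds above are sharp.
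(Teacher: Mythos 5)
Your proposal is correct and follows the same route as the paper, which simply observes that the proof is completely similar to that of Lemma \ref{lemma:fin-diag}: the counting there depends only on the numbers of incoming and outgoing arrows of each disc, not on which tensors fill them, so it applies verbatim when the distinguished disc carries the fixed structure $s_{d+1}M_{\MA}$ or $s_{d+1}M_{\MB}$. Your spelled-out version of the two bounds and the reduction of the Maurer–Cartan sum to a finite index set is exactly the intended argument.
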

\begin{proof}
    The proof is completely similar as the proof of Lemma \ref{lemma:fin-diag}.
\end{proof}

\begin{proposition}
    $(\mathcal{L}_d^{\Phi_0}(\MA,\MB),\bar{\ell})$ is a graded $L_{\infty}$-algebra whose Maurer-Cartan elements are in correspondence with $d$-pre-Calabi-Yau morphisms $(\Phi_0,s_{d+1}\mathbf{F}) : (\MA,s_{d+1}M_{\MA})\rightarrow (\MB,s_{d+1}M_{\MB})$.
\end{proposition}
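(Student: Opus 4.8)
The plan is to mirror, as closely as possible, the argument already carried out in Proposition~\ref{prop:A-inf-L-inf} for the $A_\infty$-case, together with the bookkeeping of signs developed in the proof of the previous proposition (the one for $(\mathcal{L}_d^{\Phi_0}(\MA,\MB),\ell)$). First I would establish that $\bar\ell$ is anti-symmetric. This is the easy part: since $\bar\ell^n$ is, up to the global sign $(-1)^{\epsilon}$ (resp.\ $(-1)^{\epsilon'}$), a sum over $\sigma\in\mathfrak{S}_n$ of the diagram-evaluations $s_{-1}\Phi^\sigma_{s_{d+1}M_\MB}(f_n,\dots,f_1)$ and $s_{-1}\Psi^\sigma_{s_{d+1}M_\MA}(f_n,\dots,f_1)$, transposing $f_i$ and $f_{i+1}$ amounts to reindexing the summation over $\mathfrak{S}_n$ and produces exactly the Koszul factor $-(-1)^{|f_i||f_{i+1}|}$ once the change in $\epsilon$ (and $\epsilon'$) is accounted for, exactly as in the first computation of the proof of Proposition~\ref{prop:A-inf-L-inf}.

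Next I would verify the higher Jacobi identities \eqref{eq:Jac-l}. The strategy is to expand $\bar\ell^{n-k+1}(\bar\ell^k(f_{i_k},\dots,f_{i_1}),f_{j_{n-k}},\dots,f_{j_1})$ in terms of nested diagram-evaluations, and observe that the resulting terms come in four families according to whether the inner and outer brackets contribute an $M_\MB$-disc or an $M_\MA$-disc: an $(M_\MB,M_\MB)$ family, an $(M_\MB,M_\MA)$ family, an $(M_\MA,M_\MB)$ family, and an $(M_\MA,M_\MA)$ family. Within the $(M_\MB,M_\MB)$ family and the $(M_\MA,M_\MA)$ family, the terms where the two $M$-discs are adjacent combine, via the necklace Stasheff identity \eqref{eq:stasheff-pCY} for $s_{d+1}M_\MB$ (resp.\ $s_{d+1}M_\MA$), into the ``$k=1$'' and ``$k=n$'' boundary contributions, just as in Proposition~\ref{prop:A-inf-L-inf}; the terms where the $M$-discs are not adjacent cancel pairwise by associativity of the necklace composition of diagrams, reindexing which disc plays the role of the inner/outer one — this is precisely the kind of cancellation carried out in great detail (with the $\mathcalboondox{D_j}/\mathcalboondox{D_i}$ and $\mathcalboondox{D'_j}/\mathcalboondox{D'_i}$ bookkeeping) in the proof of the preceding proposition, and I would cite that argument rather than repeat it. The mixed families $(M_\MB,M_\MA)$ and $(M_\MA,M_\MB)$ cancel against each other: a diagram with an outer $M_\MB$-disc and an inner $M_\MA$-disc connected through a chain of $f$-discs is identified with the diagram with the roles exchanged, and the sign comparison $\Delta+\epsilon+\dots$ works out to differ by $1\bmod 2$, again exactly as in the corresponding step of the non-fixed case. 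I expect this sign-matching for the mixed family to be the main obstacle — it is the one place where the two distinct global prefactors $\epsilon$ and $\epsilon'$ interact — but it is a finite, explicit verification of the same shape as the ones already done.

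Finally I would identify the Maurer-Cartan elements. By the preceding lemma, a Maurer-Cartan element is simply a degree-$1$ element $s_{-1}\mathbf{F}\in\mathfrak{h}$ with $\sum_{n\geq 1}\frac{1}{n!}\bar\ell^n(s_{-1}\mathbf{F},\dots,s_{-1}\mathbf{F})^{\doubar{x}}=0$ for all $\doubar{x}$. Using the symmetry over $\mathfrak{S}_n$ exactly as in Proposition~\ref{prop:A-inf-L-inf} and in the non-fixed case, the $\frac1{n!}$ and the sum over $\sigma$ collapse, and $\frac1{n!}\bar\ell^n(s_{-1}\mathbf{F},\dots,s_{-1}\mathbf{F})$ reduces to $s_{-1}\big(\Phi^{\id}_{s_{d+1}M_\MB}(\mathbf{F},\dots,\mathbf{F})-\Psi^{\id}_{s_{d+1}M_\MA}(\mathbf{F},\dots,\mathbf{F})\big)$; summing over $n$ and unwinding the definitions of $\Phi^{\id}$ and $\Psi^{\id}$ recovers the expressions $(s_{d+1}\mathbf{F}\upperset{\multinec}{\circ}s_{d+1}M_\MA)^{\doubar{x}}$ and $(s_{d+1}M_\MB\upperset{\pre}{\circ}s_{d+1}\mathbf{F})^{\doubar{x}}$ from Definition~\ref{def:pcY-morphism}. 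Hence the Maurer-Cartan equation is exactly \eqref{eq:morphism-pCY} for every $\doubar{x}\in\doubar{\MO}_\MA$, i.e.\ $(\Phi_0,s_{d+1}\mathbf{F})$ is a $d$-pre-Calabi-Yau morphism, and conversely every such morphism gives a Maurer-Cartan element; this establishes the bijection and completes the proof.
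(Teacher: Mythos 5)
Your plan coincides with the paper's proof in all essentials: anti-symmetry by reindexing the sum over $\mathfrak{S}_n$, the four-family decomposition of the higher Jacobi identity with the two mixed families cancelling pairwise under a reindexing and the two pure families killed by the necklace Stasheff identities, and the collapse of the Maurer--Cartan equation (via the $1/n!$ against the $\mathfrak{S}_n$-sum) to the identity \eqref{eq:morphism-pCY}. One correction to your description of the pure families: unlike in Proposition \ref{prop:A-inf-L-inf}, where only $\bar{\ell}^1$ contains the $m_{\MA}$-part and the Stasheff identity must therefore be assembled from the $k=1$, $k=n$ and $1<k<n$ contributions together, here every $\bar{\ell}^n$ (including $n=1$) carries both a $\Phi_{s_{d+1}M_{\MB}}$-part and a $\Psi_{s_{d+1}M_{\MA}}$-part, so there is no distinguished boundary contribution and no adjacent/non-adjacent split to perform: for each fixed $k$, partition and pair of permutations, the nested sum $\Phi^{\sigma'}_{s_{d+1}M_{\MB}}\big(\Phi^{\sigma}_{s_{d+1}M_{\MB}}(\cdots),\cdots\big)$ (resp. $\Psi\circ\Psi$) already vanishes outright by \eqref{eq:stasheff-pCY} for $s_{d+1}M_{\MB}$ (resp. $s_{d+1}M_{\MA}$), and only the mixed families require the pairwise sign-matching you correctly single out as the delicate step.
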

\begin{proof}
    We first show that $\Bar{\ell}$ is anti-symmetric. Consider $n\geq 2$, $i\in\llbracket 1,n\rrbracket$ and $l_1,\dots,l_n\in\mathfrak{h}$. Then, we have that 
    \begin{equation}
        \begin{split}
            &\Bar{\ell}^n(s_{-1}l_n,\dots,s_{-1}l_i,s_{-1}l_{i+1},\dots,s_{-1}l_1)
            \\&=(-1)^{\eta}\sum\limits_{\sigma\in\mathfrak{S}_n}(-1)^{\delta^{\sigma}(l_{n},\dots,l_i,l_{i+1},\dots,l_{1})}s_{-1}\Phi_{s_{d+1}M_{\MB}}^{\sigma}(l_{n},\dots,l_i,l_{i+1},\dots,l_{1})
    \\&\phantom{=}-(-1)^{\eta'}\sum\limits_{\sigma\in\mathfrak{S}_n}(-1)^{\delta^{\sigma}(l_{n},\dots,l_i,l_{i+1},\dots,l_{1})}s_{-1}\Psi_{s_{d+1}M_{\MA}}^{\sigma}(l_{n},\dots,l_i,l_{i+1},\dots,l_{1})
        \end{split}
    \end{equation}
    where 
    \begin{small}
    \begin{equation}
        \begin{split}
            \eta=\sum\limits_{\substack{j=1\\j\neq i,i+1}}^{n}(j-1)|l_j|+i|l_i|+(i-1)|l_{i+1}|\text{, } \eta'=\sum\limits_{j=1}^n|l_j|+\sum\limits_{\substack{j=1\\j\neq i,i+1}}^{n}(j-1)|l_j|+i|l_i|+(i-1)|l_{i+1}|
        \end{split}
    \end{equation}
    \end{small}
    which gives that
    \begin{equation}
        \begin{split}
            &\Bar{\ell}^n(s_{-1}l_n,\dots,s_{-1}l_i,s_{-1}l_{i+1},\dots,s_{-1}l_1)
            \\&=(-1)^{\epsilon+|l_i|+|l_{i+1}|}\sum\limits_{\sigma\in\mathfrak{S}_n}(-1)^{\delta^{\sigma}(l_{n},\dots,l_{1})+|l_i||l_{i+1}|}s_{-1}\Phi_{s_{d+1}M_{\MB}}^{\sigma}(l_{n},\dots,l_{1})
    \\&\phantom{=}-(-1)^{\epsilon'+|l_i|+|l_{i+1}|}\sum\limits_{\sigma\in\mathfrak{S}_n}(-1)^{\delta^{\sigma}(l_{n},\dots,l_{1})+|l_i||l_{i+1}|}s_{-1}\Psi_{s_{d+1}M_{\MA}}^{\sigma}(l_{n},\dots,l_{1})
    \\&=-(-1)^{|s_{-1}l_i||s_{-1}l_{i+1}|}\Bar{\ell}^n(s_{-1}l_n,\dots,s_{-1}l_1)
        \end{split}
    \end{equation}
    which means that $\Bar{\ell}$ is anti-symmetric.
    
    We now prove that $\Bar{\ell}$ satisfies the higher Jacobi identities. 
    We have that 
    \begin{equation}
        \begin{split}
            &\sum\limits_{k=1}^n(-1)^{k}\sum\limits_{(\bar{i},\bar{j})\in\mathcal{P}_k^n}(-1)^{\Delta} \Bar{\ell}^{n-k+1}\big(\Bar{\ell}^k(s_{-1}l_{i_k},\dots,s_{-1}l_{i_1}),s_{-1}l_{j_{n-k}},\dots,s_{-1}l_{j_{1}}\big)
            \\&=\sum\limits_{k=1}^n(-1)^{k}\sum\limits_{(\bar{i},\bar{j})\in\mathcal{P}_k^n}(-1)^{\Delta+\epsilon_1}\sum\limits_{\sigma\in\mathfrak{S}_k}(-1)^{\delta^{\sigma}(l_{i_k},\dots,l_{i_1})} 
            \\&\hskip5cm\Bar{\ell}^{n-k+1}\big(s_{-1}\Phi^{\sigma}_{s_{d+1}M_{\MB}}(l_{i_k},\dots,l_{i_1}),s_{-1}l_{j_{n-k}},\dots,s_{-1}l_{j_{1}}\big)
            \\&\phantom{=}-\sum\limits_{k'=1}^n(-1)^{k'}\hskip-5mm\sum\limits_{(\bar{i}',\bar{j}')\in\mathcal{P}_{k'}^n}\hskip-3mm(-1)^{\Delta'+\eta_1}\hskip-2mm\sum\limits_{\tau\in\mathfrak{S}_{k'}}(-1)^{\delta^{\tau}(l_{i'_{k'}},\dots,l_{i'_1})} \\&\hskip5cm\Bar{\ell}^{n-k'+1}\big(s_{-1}\Psi^{\tau}_{s_{d+1}M_{\MA}}(l_{i'_{k'}},\dots,l_{i'_1}),s_{-1}l_{j'_{n-{k'}}},\dots,s_{-1}l_{j'_{1}}\big)
        \end{split}
    \end{equation}
    where
    \begin{small}
    \begin{equation}
        \begin{split}
            \epsilon_1&=\sum\limits_{r=1}^k(r-1)|l_{i_r}| \text{, } \eta_1=\sum\limits_{r=1}^{k'}(r-1)|l_{i'_r}|+\sum\limits_{r=1}^{k'}|l_{i'_r}|,
            \\ \Delta&=\sum\limits_{r=1}^k(|l_{i_r}|+1)\sum\limits_{s=i_r+1}^n(|l_s|+1)+\sum\limits_{r=1}^k(|l_{i_r}|+1)\sum\limits_{s=r+1}^k(|l_{i_s}|+1)+\sum\limits_{r=1}^k\sum\limits_{s=i_r+1}^n1+\sum\limits_{r=1}^k \sum\limits_{s=r+1}^k1,
            \\\Delta'&=\sum\limits_{r=1}^{k'}(|l_{i'_r}|+1)\sum\limits_{s=i'_r+1}^n(|l_s|+1)+\sum\limits_{r=1}^{k'}(|l_{i'_r}|+1)\sum\limits_{s=r+1}^{k'}(|l_{i'_s}|+1)+\sum\limits_{r=1}^{k'}\sum\limits_{s=i'_r+1}^n1+\sum\limits_{r=1}^{k'} \sum\limits_{s=r+1}^{k'}1.
        \end{split}
    \end{equation}
    \end{small}
    Moreover, we have that 
    \begin{small}
    \begin{equation}
    \label{eq:higher-Jac-bar-ell-1}
        \begin{split}
            &\sum\limits_{k=1}^n(-1)^{k}\hskip-3mm\sum\limits_{(\bar{i},\bar{j})\in\mathcal{P}_k^n}\hskip-2mm(-1)^{\Delta+\epsilon_1}\hskip-2mm\sum\limits_{\sigma\in\mathfrak{S}_k}(-1)^{\delta^{\sigma}(l_{i_k},\dots,l_{i_1})} \Bar{\ell}^{n-k+1}\big(s_{-1}\Phi^{\sigma}_{s_{d+1}M_{\MB}}(l_{i_k},\dots,l_{i_1}),s_{-1}l_{j_{n-k}},\dots,s_{-1}l_{j_{1}}\big)
            \\&=\sum\limits_{k=1}^n(-1)^{k}\sum\limits_{(\bar{i},\bar{j})\in\mathcal{P}_k^n}(-1)^{\Delta+\epsilon_1}\sum\limits_{\sigma\in\mathfrak{S}_k}(-1)^{\delta^{\sigma}(l_{i_k},\dots,l_{i_1})+\epsilon_2}\sum\limits_{\sigma'\in\mathfrak{S}_{n-k}}(-1)^{\delta^{\sigma'}(l_{j_{n-k}},\dots,l_{j_1})} \\&\hskip7cms_{-1}\Phi^{\sigma'}_{s_{d+1}M_{\MB}}\big(\Phi^{\sigma}_{s_{d+1}M_{\MB}}(l_{i_k},\dots,l_{i_1}),l_{j_{n-k}},\dots,l_{j_{1}}\big)
            \\&\phantom{=}-\sum\limits_{k=1}^n(-1)^{k}\sum\limits_{(\bar{i},\bar{j})\in\mathcal{P}_k^n}(-1)^{\Delta+\epsilon_1}\sum\limits_{\sigma\in\mathfrak{S}_k}(-1)^{\delta^{\sigma}(l_{i_k},\dots,l_{i_1})+\eta_2}\sum\limits_{\tau'\in\mathfrak{S}_{n-k}}(-1)^{\delta^{\tau'}(l_{j_{n-k}},\dots,l_{j_1})} \\&\hskip7cms_{-1}\Psi^{\tau'}_{s_{d+1}M_{\MA}}\big(\Phi^{\sigma}_{s_{d+1}M_{\MB}}(l_{i_k},\dots,l_{i_1}),l_{j_{n-k}},\dots,l_{j_{1}}\big)
        \end{split}
    \end{equation}
    \end{small}
    where 
    \begin{small}
    \begin{equation}
        \begin{split}
             \epsilon_2&=\sum\limits_{s=1}^{n-k}(s-1)|l_{j_s}|+(n-k)(\sum\limits_{r=1}^k|l_{i_r}|+1),
            \\ \eta_2&=\sum\limits_{s=1}^{n-k}(s-1)|l_{j_s}|+(n-k)(\sum\limits_{r=1}^k|l_{i_r}|+1)+\sum\limits_{r=1}^k|l_{i_r}|+\sum\limits_{s=1}^{n-k}|l_{j_s}|+1
        \end{split}
    \end{equation}
    \end{small}
    and where 
    \begin{equation}
        \begin{split}
            \Phi^{\sigma'}_{s_{d+1}M_{\MB}}\big(\Phi^{\sigma}_{s_{d+1}M_{\MB}}(l_{i_k},\dots,l_{i_1}),l_{j_{n-k}},\dots,l_{j_{1}}\big)&=\sum\limits_{u=1}^{n-k}(-1)^{(\sum\limits_{r=1}^k|l_{i_r}|+1)\sum\limits_{s=u+1}^{n-k}|l_{j_{\sigma'_s}}|}\sum\mathcal{E}(\mathcalboondox{D_1})
            \\&\phantom{=}+\sum\limits_{v=1}^k(-1)^{\sum\limits_{r=1}^v|l_{i_{\sigma_r}}|\sum\limits_{s=1}^{n-k}|l_{j_s}|}\sum\mathcal{E}(\mathcalboondox{D_1'})
            \\\Psi^{\tau'}_{s_{d+1}M_{\MA}}\big(\Phi^{\sigma}_{s_{d+1}M_{\MB}}(l_{i_k},\dots,l_{i_1}),l_{j_{n-k}},\dots,l_{j_{1}}\big)&=\sum\limits_{u=1}^{n-k}(-1)^{(\sum\limits_{r=1}^k|l_{i_r}|+1)\sum\limits_{s=u+1}^{n-k}|l_{j_{\tau'_s}}|}\sum\mathcal{E}(\mathcalboondox{D_2})
            \\&\phantom{=}+\sum\limits_{v=1}^k(-1)^{\sum\limits_{r=1}^v|l_{i_{\sigma_r}}|(\sum\limits_{s=1}^{n-k}|l_{j_s}|+1)}\sum\mathcal{E}(\mathcalboondox{D_2'})
        \end{split}
    \end{equation}
    where the sums are over all the filled diagrams $\mathcalboondox{D_1}$, $\mathcalboondox{D_1'}$, $\mathcalboondox{D_2}$ and $\mathcalboondox{D_2'}$ of the form

    \begin{minipage}{21cm}
\begin{tikzpicture}[line cap=round,line join=round,x=1.0cm,y=1.0cm]
\clip(-4,-2) rectangle (4.5,2);
    \draw (0,0) circle (0.5cm);
    \draw [rotate=90][->,>=stealth] (0.5,0)--(0.9,0);
    \draw [rotate=-90][->,>=stealth] (0.5,0)--(0.9,0);
    \draw [rotate=45][<-,>=stealth] (0.5,0)--(0.8,0);
    \draw [rotate=-45][<-,>=stealth] (0.5,0)--(0.8,0);
    \draw (0.78,0.78) circle (0.3cm);
    \draw[rotate around={-15:(0.78,0.78)}][->,>=stealth] (1.08,0.78)--(1.38,0.78);
    \draw[rotate around={105:(0.78,0.78)}][->,>=stealth] (1.08,0.78)--(1.38,0.78);
    \draw[rotate around={45:(0.78,0.78)},shift={(1.08,0.78)}]\doublefleche;
    \draw[rotate around={165:(0.78,0.78)},shift={(1.08,0.78)}]\doublefleche;
     \draw (0.78,-0.78) circle (0.3cm);
    \draw[rotate around={15:(0.78,-0.78)}][->,>=stealth] (1.08,-0.78)--(1.38,-0.78);
    \draw[rotate around={-105:(0.78,-0.78)}][->,>=stealth] (1.08,-0.78)--(1.38,-0.78);
    \draw[rotate around={75:(0.78,-0.78)},shift={(1.08,-0.78)}]\doublefleche;
    \draw[rotate around={-165:(0.78,-0.78)},shift={(1.08,-0.78)}]\doublefleche;
    \draw (2,0) circle (0.5cm);
    \draw[rotate around={-60:(2,0)}][->,>=stealth](2.5,0)--(2.9,0);
    \draw[rotate around={60:(2,0)}][->,>=stealth](2.5,0)--(2.9,0);
    \draw[rotate around={180:(2,0)}][->,>=stealth](2.5,0)--(3.5,0);
    \draw[rotate around={-90:(2,0)}][<-,>=stealth](2.5,0)--(2.8,0);
    \draw (2,-1.1) circle (0.3cm);
    \draw[rotate around={-30:(2,-1.1)}][->,>=stealth](2.3,-1.1)--(2.6,-1.1);
    \draw[rotate around={-150:(2,-1.1)}][->,>=stealth](2.3,-1.1)--(2.6,-1.1);
    \draw[rotate around={-90:(2,-1.1)},shift={(2.3,-1.1)}]\doublefleche;
    \draw[rotate around={-210:(2,-1.1)},shift={(2.3,-1.1)}]\doublefleche;
    \draw[rotate around={90:(2,0)}][<-,>=stealth](2.5,0)--(2.8,0);
    \draw (2,1.1) circle (0.3cm);
    \draw[rotate around={30:(2,1.1)}][->,>=stealth](2.3,1.1)--(2.6,1.1);
    \draw[rotate around={150:(2,1.1)}][->,>=stealth](2.3,1.1)--(2.6,1.1);
    \draw[rotate around={90:(2,1.1)},shift={(2.3,1.1)}]\doublefleche;
    \draw[rotate around={210:(2,1.1)},shift={(2.3,1.1)}]\doublefleche;
    \draw[rotate=180][<-,>=stealth](0.5,0)--(0.8,0);
    \draw(-1.1,0) circle (0.3cm);
    \draw[rotate around={120:(-1.1,0)}][->,>=stealth](-0.8,0)--(-0.5,0);
    \draw[rotate around={-120:(-1.1,0)}][->,>=stealth](-0.8,0)--(-0.5,0);
    \draw[rotate around={60:(-1.1,0)},shift={(-0.8,0)}]\doublefleche;
    \draw[rotate around={-60:(-1.1,0)},shift={(-0.8,0)}]\doublefleche;
\draw (0,0.25)node[anchor=north]{$M_{\MB}$};
\draw (2,0.25)node[anchor=north]{$M_{\MB}$};
\draw (-1.1,0.2)node[anchor=north]{$\scriptscriptstyle{j_{\sigma'_1}}$};
\draw (2,1.3)node[anchor=north]{$\scriptscriptstyle{i_{\sigma_{1}}}$};
\draw (0.8,1)node[anchor=north]{$\scriptscriptstyle{j_{\sigma'_{u}}}$};
\draw (2.05,-0.9)node[anchor=north]{$\scriptscriptstyle{i_{\sigma_{k}}}$};
\draw (0.9,-0.55)node[anchor=north]{$\scriptstyle{j_{\sigma'_{u+1}}}$};
\draw[rotate=45][fill=black] (0.5,0.35) circle (0.3pt);
\draw[rotate=45][fill=black] (0.55,0.25) circle (0.3pt);
\draw[rotate=45][fill=black] (0.58,0.15) circle (0.3pt);
\draw[rotate=-90][fill=black] (0.5,0.35) circle (0.3pt);
\draw[rotate=-90][fill=black] (0.55,0.25) circle (0.3pt);
\draw[rotate=-90][fill=black] (0.58,0.15) circle (0.3pt);
\draw[fill=black] (-0.4,0.45) circle (0.3pt);
\draw[fill=black] (-0.5,0.3) circle (0.3pt);
\draw[fill=black] (-0.25,0.55) circle (0.3pt);
\draw[rotate=90][fill=black] (-0.4,0.45) circle (0.3pt);
\draw[rotate=90][fill=black] (-0.5,0.3) circle (0.3pt);
\draw[rotate=90][fill=black] (-0.25,0.55) circle (0.3pt);
\draw[fill=black] (-1.5,0) circle (0.3pt);
\draw[fill=black] (-1.45,0.15) circle (0.3pt);
\draw[fill=black] (-1.45,-0.15) circle (0.3pt);
\draw[fill=black] (-1.5,0) circle (0.3pt);
\draw[fill=black] (0.7,0.4) circle (0.3pt);
\draw[fill=black] (0.88,0.4) circle (0.3pt);
\draw[fill=black] (1,0.5) circle (0.3pt);
\draw[fill=black] (1.05,-1.05) circle (0.3pt);
\draw[fill=black] (1.15,-0.9) circle (0.3pt);
\draw[fill=black] (0.9,-1.15) circle (0.3pt);
\draw[fill=black] (2.55,0.2) circle (0.3pt);
\draw[fill=black] (2.6,0) circle (0.3pt);
\draw[fill=black] (2.55,-0.2) circle (0.3pt);
\draw[fill=black] (2.25,-0.8) circle (0.3pt);
\draw[fill=black] (2.37,-0.95) circle (0.3pt);
\draw[fill=black] (2.38,-1.15) circle (0.3pt);
\draw[fill=black] (2.25,0.8) circle (0.3pt);
\draw[fill=black] (2.37,0.95) circle (0.3pt);
\draw[fill=black] (2.38,1.15) circle (0.3pt);
\draw (3.5,0) node[anchor=north]{,};
\end{tikzpicture}
    \begin{tikzpicture}[line cap=round,line join=round,x=1.0cm,y=1.0cm]
\clip(-1,-2) rectangle (4.2,2);
    \draw (0,0) circle (0.5cm);
    \draw [rotate=90][->,>=stealth] (0.5,0)--(0.9,0);
    \draw [rotate=-90][->,>=stealth] (0.5,0)--(0.9,0);
    \draw [rotate=45][<-,>=stealth] (0.5,0)--(0.8,0);
    \draw [rotate=-45][<-,>=stealth] (0.5,0)--(0.8,0);
    \draw (0.78,0.78) circle (0.3cm);
    \draw[rotate around={-15:(0.78,0.78)}][->,>=stealth] (1.08,0.78)--(1.38,0.78);
    \draw[rotate around={105:(0.78,0.78)}][->,>=stealth] (1.08,0.78)--(1.38,0.78);
    \draw[rotate around={45:(0.78,0.78)},shift={(1.08,0.78)}]\doublefleche;
    \draw[rotate around={165:(0.78,0.78)},shift={(1.08,0.78)}]\doublefleche;
     \draw (0.78,-0.78) circle (0.3cm);
    \draw[rotate around={15:(0.78,-0.78)}][->,>=stealth] (1.08,-0.78)--(1.38,-0.78);
    \draw[rotate around={-105:(0.78,-0.78)}][->,>=stealth] (1.08,-0.78)--(1.38,-0.78);
    \draw[rotate around={75:(0.78,-0.78)},shift={(1.08,-0.78)}]\doublefleche;
    \draw[rotate around={-165:(0.78,-0.78)},shift={(1.08,-0.78)}]\doublefleche;
    \draw (2,0) circle (0.5cm);
    \draw[rotate around={-60:(2,0)}][->,>=stealth](2.5,0)--(2.9,0);
    \draw[rotate around={60:(2,0)}][->,>=stealth](2.5,0)--(2.9,0);
    \draw[rotate around={180:(2,0)}][->,>=stealth](2.5,0)--(3.5,0);
    \draw[rotate around={-90:(2,0)}][<-,>=stealth](2.5,0)--(2.8,0);
    \draw (2,-1.1) circle (0.3cm);
    \draw[rotate around={-30:(2,-1.1)}][->,>=stealth](2.3,-1.1)--(2.6,-1.1);
    \draw[rotate around={-150:(2,-1.1)}][->,>=stealth](2.3,-1.1)--(2.6,-1.1);
    \draw[rotate around={-90:(2,-1.1)},shift={(2.3,-1.1)}]\doublefleche;
    \draw[rotate around={-210:(2,-1.1)},shift={(2.3,-1.1)}]\doublefleche;
    \draw[rotate around={90:(2,0)}][<-,>=stealth](2.5,0)--(2.8,0);
    \draw (2,1.1) circle (0.3cm);
    \draw[rotate around={30:(2,1.1)}][->,>=stealth](2.3,1.1)--(2.6,1.1);
    \draw[rotate around={150:(2,1.1)}][->,>=stealth](2.3,1.1)--(2.6,1.1);
    \draw[rotate around={90:(2,1.1)},shift={(2.3,1.1)}]\doublefleche;
    \draw[rotate around={210:(2,1.1)},shift={(2.3,1.1)}]\doublefleche;
    \draw[rotate around={0:(2,0)}][<-,>=stealth](2.5,0)--(2.8,0);
    \draw (3.1,0) circle (0.3cm);
    \draw[rotate around={60:(3.1,0)}][->,>=stealth](3.4,0)--(3.7,0);
    \draw[rotate around={-60:(3.1,0)}][->,>=stealth](3.4,0)--(3.7,0);
    \draw[rotate around={0:(3.1,0)},shift={(3.4,0)}]\doublefleche;
    \draw[rotate around={120:(3.1,0)},shift={(3.4,0)}]\doublefleche;
\draw (0,0.25)node[anchor=north]{$M_{\MB}$};
\draw (2,0.25)node[anchor=north]{$M_{\MB}$};
\draw (3.1,0.2)node[anchor=north]{$\scriptscriptstyle{i_{\sigma_1}}$};
\draw (2.1,1.35)node[anchor=north]{$\scriptscriptstyle{i_{\sigma_{v+1}}}$};
\draw (0.9,1)node[anchor=north]{$\scriptscriptstyle{j_{\sigma'_{n-k}}}$};
\draw (2.05,-0.9)node[anchor=north]{$\scriptscriptstyle{i_{\sigma_{v}}}$};
\draw (0.8,-0.55)node[anchor=north]{$\scriptscriptstyle{j_{\sigma'_1}}$};
\draw[rotate=45][fill=black] (0.5,0.35) circle (0.3pt);
\draw[rotate=45][fill=black] (0.55,0.25) circle (0.3pt);
\draw[rotate=45][fill=black] (0.58,0.15) circle (0.3pt);
\draw[rotate=-90][fill=black] (0.5,0.35) circle (0.3pt);
\draw[rotate=-90][fill=black] (0.55,0.25) circle (0.3pt);
\draw[rotate=-90][fill=black] (0.58,0.15) circle (0.3pt);
\draw[rotate around={-100:(2,0)}][fill=black] (1.6,0.45) circle (0.3pt);
\draw[rotate around={-100:(2,0)}][fill=black] (1.5,0.3) circle (0.3pt);
\draw[rotate around={-100:(2,0)}][fill=black] (1.75,0.55) circle (0.3pt);
\draw[rotate around={-160:(2,0)}][fill=black] (1.6,0.45) circle (0.3pt);
\draw[rotate around={-160:(2,0)}][fill=black] (1.5,0.3) circle (0.3pt);
\draw[rotate around={-160:(2,0)}][fill=black] (1.75,0.55) circle (0.3pt);
\draw[fill=black] (0.7,0.4) circle (0.3pt);
\draw[fill=black] (0.88,0.4) circle (0.3pt);
\draw[fill=black] (1,0.5) circle (0.3pt);
\draw[fill=black] (1.05,-1.05) circle (0.3pt);
\draw[fill=black] (1.15,-0.9) circle (0.3pt);
\draw[fill=black] (0.9,-1.15) circle (0.3pt);
\draw[fill=black] (-0.55,0.2) circle (0.3pt);
\draw[fill=black] (-0.6,0) circle (0.3pt);
\draw[fill=black] (-0.55,-0.2) circle (0.3pt);
\draw[fill=black] (2.25,-0.8) circle (0.3pt);
\draw[fill=black] (2.37,-0.95) circle (0.3pt);
\draw[fill=black] (2.38,-1.15) circle (0.3pt);
\draw[fill=black] (2.25,0.8) circle (0.3pt);
\draw[fill=black] (2.37,0.95) circle (0.3pt);
\draw[fill=black] (2.38,1.15) circle (0.3pt);
\draw[fill=black] (2.8,-0.25) circle (0.3pt);
\draw[fill=black] (2.93,-0.35) circle (0.3pt);
\draw[fill=black] (3.1,-0.38) circle (0.3pt);
\end{tikzpicture}
\end{minipage}
\begin{minipage}{21cm}
\begin{tikzpicture}[line cap=round,line join=round,x=1.0cm,y=1.0cm]
\clip(-3,-2) rectangle (5,2);
  \draw (0,0) circle (0.3cm);
    \draw[rotate=60][->,>=stealth] (0.3,0)--(0.7,0);
    \draw[rotate=180][->,>=stealth] (0.3,0)--(0.7,0);
    \draw[rotate=-60][->,>=stealth] (0.3,0)--(0.7,0);
    \draw[rotate=-120,shift={(0.3,0)}] \doublefleche;
    \draw[rotate=0,shift={(0.3,0)}] \doubleflechescindeeleft;
    \draw[rotate=0,shift={(0.3,0)}] \doubleflechescindeeright;
    \draw[rotate=0][<-,>=stealth] (0.3,0)--(0.7,0);
    \draw(1.2,0)circle (0.5cm);
    \draw[->,>=stealth] (1.7,0)--(2.1,0);
    \draw(2.4,0) circle (0.3cm);
    \draw[rotate around={180:(2.4,0)},shift={(2.7,0)}]\doubleflechescindeeleft;
    \draw[rotate around={180:(2.4,0)},shift={(2.7,0)}]\doubleflechescindeeright;
    \draw[rotate around={90:(2.4,0)}][->,>=stealth](2.7,0)--(3,0);
    \draw[rotate around={-90:(2.4,0)}][->,>=stealth](2.7,0)--(3,0);
    \draw[rotate around={45:(1.2,0)},shift={(1.7,0)}] \doublefleche;
    \draw[rotate around={-45:(1.2,0)},shift={(1.7,0)}] \doublefleche;
    \draw[rotate around={-90:(1.2,0)}][->,>=stealth] (1.7,0)--(2.1,0);
    \draw(1.2,-1.2)circle (0.3cm);
    \draw[rotate around={-150:(1.2,-1.2)},shift={(1.5,-1.2)}] \doublefleche;
    \draw[rotate around={90:(1.2,-1.2)},shift={(1.5,-1.2)}] \doubleflechescindeeleft;
    \draw[rotate around={90:(1.2,-1.2)},shift={(1.5,-1.2)}] \doubleflechescindeeright;
    \draw[rotate around={30:(1.2,-1.2)}][->,>=stealth] (1.5,-1.2)--(1.8,-1.2);
    \draw[rotate around={150:(1.2,-1.2)}][->,>=stealth] (1.5,-1.2)--(1.8,-1.2);
    \draw[rotate around={-90:(1.2,-1.2)}][->,>=stealth] (1.5,-1.2)--(1.8,-1.2);
    \draw[rotate around={90:(1.2,0)}][->,>=stealth] (1.7,0)--(2.1,0);
    \draw(1.2,1.2)circle (0.3cm);
    \draw[rotate around={150:(1.2,1.2)},shift={(1.5,1.2)}] \doublefleche;
    \draw[rotate around={-90:(1.2,1.2)},shift={(1.5,1.2)}] \doubleflechescindeeleft;
    \draw[rotate around={-90:(1.2,1.2)},shift={(1.5,1.2)}] \doubleflechescindeeright;
    \draw[rotate around={-30:(1.2,1.2)}][->,>=stealth] (1.5,1.2)--(1.8,1.2);
    \draw[rotate around={-150:(1.2,1.2)}][->,>=stealth] (1.5,1.2)--(1.8,1.2);
    \draw[rotate around={90:(1.2,1.2)}][->,>=stealth] (1.5,1.2)--(1.8,1.2);
    \draw(-1.2,0)circle (0.5cm);
    \draw [rotate around={-45:(-1.2,0)}][<-,>=stealth] (-0.7,0)--(-0.3,0);
    \draw [rotate around={45:(-1.2,0)}][<-,>=stealth] (-0.7,0)--(-0.3,0);
    \draw [rotate around={90:(-1.2,0)}][->,>=stealth] (-0.7,0)--(-0.3,0);
    \draw [rotate around={-90:(-1.2,0)}][->,>=stealth] (-0.7,0)--(-0.3,0);
    \draw(-0.35,-0.85)circle (0.3cm);
    \draw [rotate around={-120:(-0.35,-0.85)}][->,>=stealth] (-0.05,-0.85)--(0.25,-0.85);
    \draw [rotate around={0:(-0.35,-0.85)}][->,>=stealth] (-0.05,-0.85)--(0.25,-0.85);
    \draw [rotate around={180:(-0.35,-0.85)},shift={(-0.05,-0.85)}]\doublefleche;
     \draw [rotate around={-60:(-0.35,-0.85)},shift={(-0.05,-0.85)}]\doublefleche;
     \draw(-0.35,0.85)circle (0.3cm);
    \draw [rotate around={120:(-0.35,0.85)}][->,>=stealth] (-0.05,0.85)--(0.25,0.85);
    \draw [rotate around={0:(-0.35,0.85)}][->,>=stealth] (-0.05,0.85)--(0.25,0.85);
    \draw [rotate around={180:(-0.35,0.85)},shift={(-0.05,0.85)}]\doublefleche;
     \draw [rotate around={60:(-0.35,0.85)},shift={(-0.05,0.85)}]\doublefleche;
\draw [fill=black] (-1.75,0.2) circle (0.3pt);
\draw [fill=black] (-1.8,0) circle (0.3pt);
\draw [fill=black] (-1.75,-0.2) circle (0.3pt);
\draw [fill=black] (-0.3,0.2) circle (0.3pt);
\draw [fill=black] (-0.2,0.3) circle (0.3pt);
\draw [fill=black] (-0.05,0.35) circle (0.3pt);
\draw [rotate around={-45:(-1.2,0)}][fill=black] (-0.3,0.2) circle (0.3pt);
\draw [rotate around={-45:(-1.2,0)}][fill=black] (-0.2,0.3) circle (0.3pt);
\draw [rotate around={-45:(-1.2,0)}][fill=black] (-0.05,0.35) circle (0.3pt);
\draw [fill=black] (-0.3,0.5) circle (0.3pt);
\draw [fill=black] (-0.15,0.54) circle (0.3pt);
\draw [fill=black] (-0.05,0.65) circle (0.3pt);
\draw [rotate around={40:(-1.2,0)}][fill=black] (-0.78,0.4) circle (0.3pt);
\draw [rotate around={40:(-1.2,0)}][fill=black] (-0.7,0.3) circle (0.3pt);
\draw [rotate around={40:(-1.2,0)}][fill=black] (-0.65,0.18) circle (0.3pt);
\draw [rotate around={-100:(-1.2,0)}][fill=black] (-0.78,0.4) circle (0.3pt);
\draw [rotate around={-100:(-1.2,0)}][fill=black] (-0.7,0.3) circle (0.3pt);
\draw [rotate around={-100:(-1.2,0)}][fill=black] (-0.65,0.18) circle (0.3pt);
\draw [fill=black] (2.75,0.15) circle (0.3pt);
\draw [fill=black] (2.8,0) circle (0.3pt);
\draw [fill=black] (2.75,-0.15) circle (0.3pt);
\draw [fill=black] (1.58,1.18) circle (0.3pt);
\draw [fill=black] (1.55,1.35) circle (0.3pt);
\draw [fill=black] (1.45,1.5) circle (0.3pt);
\draw [fill=black] (1.58,-1.18) circle (0.3pt);
\draw [fill=black] (1.55,-1.35) circle (0.3pt);
\draw [fill=black] (1.45,-1.5) circle (0.3pt);
\draw [fill=black] (0.7,0.3) circle (0.3pt);
\draw [fill=black] (0.8,0.45) circle (0.3pt);
\draw [fill=black] (0.95,0.55) circle (0.3pt);
\draw [rotate around={90:(1.2,0)}][fill=black] (0.7,0.3) circle (0.3pt);
\draw [rotate around={90:(1.2,0)}][fill=black] (0.8,0.45) circle (0.3pt);
\draw [rotate around={90:(1.2,0)}][fill=black] (0.9,0.55) circle (0.3pt);
\draw (1.2,0.25) node[anchor=north] {$M_{\MA}$};
\draw (-1.2,0.25) node[anchor=north] {$M_{\MB}$};
\draw (2.4,0.25) node[anchor=north] {$\scriptstyle{j_{\tau'_{1}}}$};
\draw (1.2,-0.95) node[anchor=north] {$\scriptstyle{j_{\tau'_{2}}}$};
\draw (1.3,1.45) node[anchor=north] {$\scriptstyle{j_{\tau'_{n-k}}}$};
\draw (0,0.25) node[anchor=north] {$\scriptstyle{i_{\sigma_{u}}}$};
\draw (-0.26,-0.6) node[anchor=north] {$\scriptstyle{i_{\sigma_{u+1}}}$};
\draw (-0.26,1.1) node[anchor=north] {$\scriptstyle{i_{\sigma_{u-1}}}$};
\draw (4,0.2) node[anchor=north] {and};
\end{tikzpicture}
    \begin{tikzpicture}[line cap=round,line join=round,x=1.0cm,y=1.0cm]
\clip(-3.5,-2) rectangle (5,2);
    \draw (0,0) circle (0.3cm);
    \draw[rotate=60][->,>=stealth] (0.3,0)--(0.7,0);
    \draw[rotate=180][->,>=stealth] (0.3,0)--(0.7,0);
    \draw[rotate=-60][->,>=stealth] (0.3,0)--(0.7,0);
    \draw[rotate=-120,shift={(0.3,0)}] \doublefleche;
    \draw[rotate=0,shift={(0.3,0)}] \doubleflechescindeeleft;
    \draw[rotate=0,shift={(0.3,0)}] \doubleflechescindeeright;
    \draw[rotate=0][<-,>=stealth] (0.3,0)--(0.7,0);
    \draw(1.2,0)circle (0.5cm);
    \draw[rotate around={0:(1.2,0)},shift={(1.7,0)}] \doublefleche;
    \draw[rotate around={-120:(1.2,0)},shift={(1.7,0)}] \doublefleche;
    \draw[rotate around={-60:(1.2,0)}][->,>=stealth] (1.7,0)--(2.1,0);
    \draw(1.8,1.03)circle (0.3cm);
    \draw[rotate around={120:(1.8,1.03)},shift={(2.1,1.03)}] \doublefleche;
    \draw[rotate around={-120:(1.8,1.03)},shift={(2.1,1.03)}] \doubleflechescindeeleft;
    \draw[rotate around={-120:(1.8,1.03)},shift={(2.1,1.03)}] \doubleflechescindeeright;
    \draw[rotate around={-60:(1.8,1.03)}][->,>=stealth] (2.1,1.03)--(2.4,1.03);
    \draw[rotate around={180:(1.8,1.03)}][->,>=stealth] (2.1,1.03)--(2.4,1.03);
    \draw[rotate around={60:(1.8,1.03)}][->,>=stealth] (2.1,1.03)--(2.4,1.03);
    \draw[rotate around={60:(1.2,0)}][->,>=stealth] (1.7,0)--(2.1,0);
    \draw(1.8,-1.03)circle (0.3cm);
    \draw[rotate around={0:(1.8,-1.03)},shift={(2.1,-1.03)}] \doublefleche;
    \draw[rotate around={120:(1.8,-1.03)},shift={(2.1,-1.03)}] \doubleflechescindeeright;
    \draw[rotate around={120:(1.8,-1.03)},shift={(2.1,-1.03)}] \doubleflechescindeeleft;
    \draw[rotate around={60:(1.8,-1.03)}][->,>=stealth] (2.1,-1.03)--(2.4,-1.03);
    \draw[rotate around={-60:(1.8,-1.03)}][->,>=stealth] (2.1,-1.03)--(2.4,-1.03);
    \draw[rotate around={180:(1.8,-1.03)}][->,>=stealth] (2.1,-1.03)--(2.4,-1.03);
    \draw(-1.2,0)circle (0.5cm);
    \draw [rotate around={-45:(-1.2,0)}][<-,>=stealth] (-0.7,0)--(-0.3,0);
    \draw [rotate around={45:(-1.2,0)}][<-,>=stealth] (-0.7,0)--(-0.3,0);
    \draw [rotate around={90:(-1.2,0)}][->,>=stealth] (-0.7,0)--(-0.3,0);
    \draw [rotate around={-90:(-1.2,0)}][->,>=stealth] (-0.7,0)--(-0.3,0);
    \draw(-0.35,-0.85)circle (0.3cm);
    \draw [rotate around={-120:(-0.35,-0.85)}][->,>=stealth] (-0.05,-0.85)--(0.25,-0.85);
    \draw [rotate around={0:(-0.35,-0.85)}][->,>=stealth] (-0.05,-0.85)--(0.25,-0.85);
    \draw [rotate around={180:(-0.35,-0.85)},shift={(-0.05,-0.85)}]\doublefleche;
     \draw [rotate around={-60:(-0.35,-0.85)},shift={(-0.05,-0.85)}]\doublefleche;
     \draw(-0.35,0.85)circle (0.3cm);
    \draw [rotate around={120:(-0.35,0.85)}][->,>=stealth] (-0.05,0.85)--(0.25,0.85);
    \draw [rotate around={0:(-0.35,0.85)}][->,>=stealth] (-0.05,0.85)--(0.25,0.85);
    \draw [rotate around={180:(-0.35,0.85)},shift={(-0.05,0.85)}]\doublefleche;
     \draw [rotate around={60:(-0.35,0.85)},shift={(-0.05,0.85)}]\doublefleche;
     \draw[rotate around={180:(-1.2,0)}][<-,>=stealth] (-0.7,0)--(-0.4,0);
     \draw (-2.3,0) circle (0.3cm);
     \draw[rotate around={120:(-2.3,0)}][->,>=stealth] (-2,0)--(-1.7,0);
     \draw[rotate around={-120:(-2.3,0)}][->,>=stealth] (-2,0)--(-1.7,0);
     \draw[rotate around={60:(-2.3,0)},shift={(-2,0)}]\doublefleche;
     \draw[rotate around={-60:(-2.3,0)},shift={(-2,0)}]\doublefleche;
\draw [fill=black] (-2.65,0.15) circle (0.3pt);
\draw [fill=black] (-2.7,0) circle (0.3pt);
\draw [fill=black] (-2.65,-0.15) circle (0.3pt);
\draw [fill=black] (-0.3,0.2) circle (0.3pt);
\draw [fill=black] (-0.2,0.3) circle (0.3pt);
\draw [fill=black] (-0.05,0.35) circle (0.3pt);
\draw [rotate around={-45:(-1.2,0)}][fill=black] (-0.3,0.2) circle (0.3pt);
\draw [rotate around={-45:(-1.2,0)}][fill=black] (-0.2,0.3) circle (0.3pt);
\draw [rotate around={-45:(-1.2,0)}][fill=black] (-0.05,0.35) circle (0.3pt);
\draw [fill=black] (-0.3,0.5) circle (0.3pt);
\draw [fill=black] (-0.15,0.54) circle (0.3pt);
\draw [fill=black] (-0.05,0.65) circle (0.3pt);
\draw [fill=black] (2.2,1.03) circle (0.3pt);
\draw [fill=black] (2.15,0.9) circle (0.3pt);
\draw [fill=black] (2.15,1.16) circle (0.3pt);
\draw [rotate around={-120:(1.2,0)}][fill=black] (2.2,1.03) circle (0.3pt);
\draw [rotate around={-120:(1.2,0)}][fill=black] (2.15,0.9) circle (0.3pt);
\draw [rotate around={-120:(1.2,0)}][fill=black] (2.15,1.16) circle (0.3pt);
\draw [fill=black] (0.75,0.35) circle (0.3pt);
\draw [fill=black] (0.9,0.5) circle (0.3pt);
\draw [fill=black] (1.1,0.55) circle (0.3pt);
\draw [rotate around={40:(-1.2,0)}][fill=black] (-0.78,0.4) circle (0.3pt);
\draw [rotate around={40:(-1.2,0)}][fill=black] (-0.7,0.3) circle (0.3pt);
\draw [rotate around={40:(-1.2,0)}][fill=black] (-0.65,0.18) circle (0.3pt);
\draw [rotate around={-100:(-1.2,0)}][fill=black] (-0.78,0.4) circle (0.3pt);
\draw [rotate around={-100:(-1.2,0)}][fill=black] (-0.7,0.3) circle (0.3pt);
\draw [rotate around={-100:(-1.2,0)}][fill=black] (-0.65,0.18) circle (0.3pt);
\draw [fill=black] (-1.7,0.3) circle (0.3pt);
\draw [fill=black] (-1.6,0.45) circle (0.3pt);
\draw [fill=black] (-1.45,0.55) circle (0.3pt);
\draw [rotate around={90:(-1.2,0)}][fill=black] (-1.7,0.3) circle (0.3pt);
\draw [rotate around={90:(-1.2,0)}][fill=black] (-1.6,0.45) circle (0.3pt);
\draw [rotate around={90:(-1.2,0)}][fill=black] (-1.45,0.55) circle (0.3pt);
\draw (1.2,0.25) node[anchor=north] {$M_{\MA}$};
\draw (-1.2,0.25) node[anchor=north] {$M_{\MB}$};
\draw (0,0.25) node[anchor=north] {$\scriptstyle{j_{\tau'_{1}}}$};
\draw (2.1,-0.78) node[anchor=north] {$\scriptstyle{j_{\tau'_{n-k-1}}}$};
\draw (2,1.28) node[anchor=north] {$\scriptstyle{j_{\tau'_{n-k}}}$};
\draw (-2.3,0.25) node[anchor=north] {$\scriptstyle{i_{\sigma_{1}}}$};
\draw (-0.26,-0.6) node[anchor=north] {$\scriptstyle{i_{\sigma_{v+1}}}$};
\draw (-0.35,1.1) node[anchor=north] {$\scriptstyle{i_{\sigma_{v}}}$};
\end{tikzpicture}
\end{minipage}

    \noindent respectively. Since $s_{d+1}M_{\MB}$ is a $d$-pre-Calabi-Yau structure on $\MB$, we have that 
    \[
    s_{-1}\Phi^{\sigma'}_{s_{d+1}M_{\MB}}\big(\Phi^{\sigma}_{s_{d+1}M_{\MB}}(l_{i_k},\dots,l_{i_1}),l_{j_{n-k}},\dots,l_{j_{1}}\big)=0.
    \] 
    On the other hand, we have that 
    \begin{small}
    \begin{equation}
    \label{eq:higher-Jac-bar-ell-2}
        \begin{split}
            &\sum\limits_{k'=1}^n(-1)^{k'}\hskip-4mm\sum\limits_{(\bar{i}',\bar{j}')\in\mathcal{P}_{k'}^n}\hskip-4mm(-1)^{\Delta'+\eta_1}\hskip-2mm\sum\limits_{\tau\in\mathfrak{S}_{k'}}\hskip-2mm(-1)^{\delta^{\tau}(l_{i'_{k'}},\dots,l_{i'_1})} \Bar{\ell}^{n-k'+1}\big(s_{-1}\Psi^{\tau}_{s_{d+1}M_{\MA}}(l_{i'_{k'}},\dots,l_{i'_1}),s_{-1}l_{j'_{n-k'}},\dots,s_{-1}l_{j'_{1}}\big)
            \\&=\sum\limits_{k'=1}^n(-1)^{k'}\sum\limits_{(\bar{i}',\bar{j}')\in\mathcal{P}_{k'}^n}\hskip-3mm(-1)^{\Delta'+\eta_1}\sum\limits_{\tau\in\mathfrak{S}_{k'}}\hskip-2mm(-1)^{\delta^{\tau}(l_{i'_{k'}},\dots,l_{i'_1})+\epsilon_2'} \sum\limits_{\sigma\dprime\in\mathfrak{S}_{n-k'}}(-1)^{\delta^{\sigma\dprime}(l_{j'_{n-k'}},\dots,l_{j'_1})}\\&\hskip6cms_{-1}\Phi^{\sigma\dprime}_{s_{d+1}M_{\MB}}\big(\Psi^{\tau}_{s_{d+1}M_{\MA}}(l_{i'_{k'}},\dots,l_{i'_1}),l_{j'_{n-k'}},\dots,l_{j'_{1}}\big)
            \\&\phantom{=}-\sum\limits_{k'=1}^n(-1)^{k'}\sum\limits_{(\bar{i}',\bar{j}')\in\mathcal{P}_{k'}^n}(-1)^{\Delta'+\eta_1}\sum\limits_{\tau\in\mathfrak{S}_{k'}}(-1)^{\delta^{\tau}(l_{i'_{k'}},\dots,l_{i'_1})+\eta_2'} \sum\limits_{\tau\dprime\in\mathfrak{S}_{n-k'}}(-1)^{\delta^{\tau\dprime}(l_{j'_{n-k'}},\dots,l_{j'_1})}\\&\hskip6cms_{-1}\Psi^{\tau\dprime}_{s_{d+1}M_{\MA}}\big(\Psi^{\tau}_{s_{d+1}M_{\MA}}(l_{i'_{k'}},\dots,l_{i'_1}),l_{j'_{n-k'}},\dots,l_{j'_{1}}\big)
        \end{split}
    \end{equation}
    \end{small}
    where
    \begin{small}
    \begin{equation}
        \begin{split}
            \epsilon_2'&=\sum\limits_{s=1}^{n-k'}(s-1)|l_{j'_s}|+(n-k')(\sum\limits_{r=1}^{k'}|l_{i'_r}|+1),
            \\ \eta_2'&=\sum\limits_{s=1}^{n-k'}(s-1)|l_{j'_s}|+(n-k')(\sum\limits_{r=1}^{k'}|l_{i'_r}|+1)+\sum\limits_{r=1}^{k'}|l_{i'_r}|+\sum\limits_{s=1}^{n-k'}|l_{j'_s}|+1
        \end{split}
    \end{equation}
    \end{small}
   and where 
   \allowdisplaybreaks
    \begin{align*}            &\Phi^{\sigma\dprime}_{s_{d+1}M_{\MB}}\big(\Psi^{\tau}_{s_{d+1}M_{\MA}}(l_{i'_{k'}},\dots,l_{i'_1}),l_{j'_{n-k'}},\dots,l_{j'_{1}}\big)\\&=\sum\limits_{u'=1}^{n-k'}(-1)^{(\sum\limits_{r=1}^{k'}|l_{i'_r}|+1)\sum\limits_{s=u'+1}^{n-k'}|l_{j'_{\sigma_s\dprimeind}}|}\sum\mathcal{E}(\mathcalboondox{D_3})
            +\sum\limits_{v'=1}^{k'}(-1)^{(1+\sum\limits_{r=1}^{v'}|l_{i'_{\tau_r}}|)\sum\limits_{s=1}^{n-k'}|l_{j'_s}|+\sum\limits_{r=v'+1}^{k'}|l_{i'_{\tau_r}}|}\sum\mathcal{E}(\mathcalboondox{D_3'})
            \\&\Psi^{\tau\dprime}_{s_{d+1}M_{\MA}}\big(\Psi^{\tau}_{s_{d+1}M_{\MA}}(l_{i'_{k'}},\dots,l_{i'_1}),l_{j'_{n-k'}},\dots,l_{j'_{1}}\big)\\&=\sum\limits_{u'=1}^{n-k'}(-1)^{(\sum\limits_{r=1}^{k'}|l_{i'_r}|+1)\sum\limits_{s=u'+1}^{n-k'}|l_{j'_{\sigma'_s}}|}\sum\mathcal{E}(\mathcalboondox{D_4})
            +\sum\limits_{v'=1}^{k'}(-1)^{\sum\limits_{r=1}^{v'}|l_{i'_{\tau_r}}|\sum\limits_{s=1}^{n-k'}|l_{j'_s}|}\sum\mathcal{E}(\mathcalboondox{D_4'})
        \end{align*}
    where the sums are over all the filled diagrams $\mathcalboondox{D_3}$, $\mathcalboondox{D_3'}$, $\mathcalboondox{D_4}$ and $\mathcalboondox{D_4'}$ of the form

    \begin{minipage}{21cm}
    \begin{tikzpicture}[line cap=round,line join=round,x=1.0cm,y=1.0cm]
\clip(-3.5,-2) rectangle (5,2);
    \draw (0,0) circle (0.3cm);
    \draw[rotate=60][->,>=stealth] (0.3,0)--(0.7,0);
    \draw[rotate=180][->,>=stealth] (0.3,0)--(0.7,0);
    \draw[rotate=-60][->,>=stealth] (0.3,0)--(0.7,0);
    \draw[rotate=-120,shift={(0.3,0)}] \doublefleche;
    \draw[rotate=0,shift={(0.3,0)}] \doubleflechescindeeleft;
    \draw[rotate=0,shift={(0.3,0)}] \doubleflechescindeeright;
    \draw[rotate=0][<-,>=stealth] (0.3,0)--(0.7,0);
    \draw(1.2,0)circle (0.5cm);
    \draw[rotate around={0:(1.2,0)},shift={(1.7,0)}] \doublefleche;
    \draw[rotate around={-120:(1.2,0)},shift={(1.7,0)}] \doublefleche;
    \draw[rotate around={-60:(1.2,0)}][->,>=stealth] (1.7,0)--(2.1,0);
    \draw(1.8,1.03)circle (0.3cm);
    \draw[rotate around={120:(1.8,1.03)},shift={(2.1,1.03)}] \doublefleche;
    \draw[rotate around={-120:(1.8,1.03)},shift={(2.1,1.03)}] \doubleflechescindeeleft;
    \draw[rotate around={-120:(1.8,1.03)},shift={(2.1,1.03)}] \doubleflechescindeeright;
    \draw[rotate around={-60:(1.8,1.03)}][->,>=stealth] (2.1,1.03)--(2.4,1.03);
    \draw[rotate around={180:(1.8,1.03)}][->,>=stealth] (2.1,1.03)--(2.4,1.03);
    \draw[rotate around={60:(1.8,1.03)}][->,>=stealth] (2.1,1.03)--(2.4,1.03);
    \draw[rotate around={60:(1.2,0)}][->,>=stealth] (1.7,0)--(2.1,0);
    \draw(1.8,-1.03)circle (0.3cm);
    \draw[rotate around={0:(1.8,-1.03)},shift={(2.1,-1.03)}] \doublefleche;
    \draw[rotate around={120:(1.8,-1.03)},shift={(2.1,-1.03)}] \doubleflechescindeeright;
    \draw[rotate around={120:(1.8,-1.03)},shift={(2.1,-1.03)}] \doubleflechescindeeleft;
    \draw[rotate around={60:(1.8,-1.03)}][->,>=stealth] (2.1,-1.03)--(2.4,-1.03);
    \draw[rotate around={-60:(1.8,-1.03)}][->,>=stealth] (2.1,-1.03)--(2.4,-1.03);
    \draw[rotate around={180:(1.8,-1.03)}][->,>=stealth] (2.1,-1.03)--(2.4,-1.03);
    \draw(-1.2,0)circle (0.5cm);
    \draw [rotate around={-45:(-1.2,0)}][<-,>=stealth] (-0.7,0)--(-0.3,0);
    \draw [rotate around={45:(-1.2,0)}][<-,>=stealth] (-0.7,0)--(-0.3,0);
    \draw [rotate around={90:(-1.2,0)}][->,>=stealth] (-0.7,0)--(-0.3,0);
    \draw [rotate around={-90:(-1.2,0)}][->,>=stealth] (-0.7,0)--(-0.3,0);
    \draw(-0.35,-0.85)circle (0.3cm);
    \draw [rotate around={-120:(-0.35,-0.85)}][->,>=stealth] (-0.05,-0.85)--(0.25,-0.85);
    \draw [rotate around={0:(-0.35,-0.85)}][->,>=stealth] (-0.05,-0.85)--(0.25,-0.85);
    \draw [rotate around={180:(-0.35,-0.85)},shift={(-0.05,-0.85)}]\doublefleche;
     \draw [rotate around={-60:(-0.35,-0.85)},shift={(-0.05,-0.85)}]\doublefleche;
     \draw(-0.35,0.85)circle (0.3cm);
    \draw [rotate around={120:(-0.35,0.85)}][->,>=stealth] (-0.05,0.85)--(0.25,0.85);
    \draw [rotate around={0:(-0.35,0.85)}][->,>=stealth] (-0.05,0.85)--(0.25,0.85);
    \draw [rotate around={180:(-0.35,0.85)},shift={(-0.05,0.85)}]\doublefleche;
     \draw [rotate around={60:(-0.35,0.85)},shift={(-0.05,0.85)}]\doublefleche;
     \draw[rotate around={180:(-1.2,0)}][<-,>=stealth] (-0.7,0)--(-0.4,0);
     \draw (-2.3,0) circle (0.3cm);
     \draw[rotate around={120:(-2.3,0)}][->,>=stealth] (-2,0)--(-1.7,0);
     \draw[rotate around={-120:(-2.3,0)}][->,>=stealth] (-2,0)--(-1.7,0);
     \draw[rotate around={60:(-2.3,0)},shift={(-2,0)}]\doublefleche;
     \draw[rotate around={-60:(-2.3,0)},shift={(-2,0)}]\doublefleche;
\draw [fill=black] (-2.65,0.15) circle (0.3pt);
\draw [fill=black] (-2.7,0) circle (0.3pt);
\draw [fill=black] (-2.65,-0.15) circle (0.3pt);
\draw [fill=black] (-0.3,0.2) circle (0.3pt);
\draw [fill=black] (-0.2,0.3) circle (0.3pt);
\draw [fill=black] (-0.05,0.35) circle (0.3pt);
\draw [rotate around={-45:(-1.2,0)}][fill=black] (-0.3,0.2) circle (0.3pt);
\draw [rotate around={-45:(-1.2,0)}][fill=black] (-0.2,0.3) circle (0.3pt);
\draw [rotate around={-45:(-1.2,0)}][fill=black] (-0.05,0.35) circle (0.3pt);
\draw [fill=black] (-0.3,0.5) circle (0.3pt);
\draw [fill=black] (-0.15,0.54) circle (0.3pt);
\draw [fill=black] (-0.05,0.65) circle (0.3pt);
\draw [fill=black] (2.2,1.03) circle (0.3pt);
\draw [fill=black] (2.15,0.9) circle (0.3pt);
\draw [fill=black] (2.15,1.16) circle (0.3pt);
\draw [rotate around={-120:(1.2,0)}][fill=black] (2.2,1.03) circle (0.3pt);
\draw [rotate around={-120:(1.2,0)}][fill=black] (2.15,0.9) circle (0.3pt);
\draw [rotate around={-120:(1.2,0)}][fill=black] (2.15,1.16) circle (0.3pt);
\draw [fill=black] (0.75,0.35) circle (0.3pt);
\draw [fill=black] (0.9,0.5) circle (0.3pt);
\draw [fill=black] (1.1,0.55) circle (0.3pt);
\draw [rotate around={40:(-1.2,0)}][fill=black] (-0.78,0.4) circle (0.3pt);
\draw [rotate around={40:(-1.2,0)}][fill=black] (-0.7,0.3) circle (0.3pt);
\draw [rotate around={40:(-1.2,0)}][fill=black] (-0.65,0.18) circle (0.3pt);
\draw [rotate around={-100:(-1.2,0)}][fill=black] (-0.78,0.4) circle (0.3pt);
\draw [rotate around={-100:(-1.2,0)}][fill=black] (-0.7,0.3) circle (0.3pt);
\draw [rotate around={-100:(-1.2,0)}][fill=black] (-0.65,0.18) circle (0.3pt);
\draw [fill=black] (-1.7,0.3) circle (0.3pt);
\draw [fill=black] (-1.6,0.45) circle (0.3pt);
\draw [fill=black] (-1.45,0.55) circle (0.3pt);
\draw [rotate around={90:(-1.2,0)}][fill=black] (-1.7,0.3) circle (0.3pt);
\draw [rotate around={90:(-1.2,0)}][fill=black] (-1.6,0.45) circle (0.3pt);
\draw [rotate around={90:(-1.2,0)}][fill=black] (-1.45,0.55) circle (0.3pt);
\draw (3.5,0.2) node[anchor=north] {and};
\draw (1.2,0.25) node[anchor=north] {$M_{\MA}$};
\draw (-1.2,0.25) node[anchor=north] {$M_{\MB}$};
\draw (0,0.25) node[anchor=north] {$\scriptstyle{i'_{\tau_{1}}}$};
\draw (1.9,-0.78) node[anchor=north] {$\scriptstyle{i'_{\tau_{k-1}}}$};
\draw (1.85,1.28) node[anchor=north] {$\scriptstyle{i'_{\tau_{k}}}$};
\draw (-2.3,0.25) node[anchor=north] {$\scriptstyle{j'_{\sigma_{1}\dprimeind}}$};
\draw (-0.35,-0.6) node[anchor=north] {$\scriptstyle{j'_{\sigma_{u'+1}\dprimeindl}}$};
\draw (-0.35,1.1) node[anchor=north] {$\scriptstyle{j'_{\sigma_{u'}\dprimeind}}$};
\end{tikzpicture}
\begin{tikzpicture}[line cap=round,line join=round,x=1.0cm,y=1.0cm]
\clip(-2,-2) rectangle (8.32706640373885,2);
  \draw (0,0) circle (0.3cm);
    \draw[rotate=60][->,>=stealth] (0.3,0)--(0.7,0);
    \draw[rotate=180][->,>=stealth] (0.3,0)--(0.7,0);
    \draw[rotate=-60][->,>=stealth] (0.3,0)--(0.7,0);
    \draw[rotate=-120,shift={(0.3,0)}] \doublefleche;
    \draw[rotate=0,shift={(0.3,0)}] \doubleflechescindeeleft;
    \draw[rotate=0,shift={(0.3,0)}] \doubleflechescindeeright;
    \draw[rotate=0][<-,>=stealth] (0.3,0)--(0.7,0);
    \draw(1.2,0)circle (0.5cm);
    \draw[->,>=stealth] (1.7,0)--(2.1,0);
    \draw(2.4,0) circle (0.3cm);
    \draw[rotate around={180:(2.4,0)},shift={(2.7,0)}]\doubleflechescindeeleft;
    \draw[rotate around={180:(2.4,0)},shift={(2.7,0)}]\doubleflechescindeeright;
    \draw[rotate around={90:(2.4,0)}][->,>=stealth](2.7,0)--(3,0);
    \draw[rotate around={-90:(2.4,0)}][->,>=stealth](2.7,0)--(3,0);
    \draw[rotate around={45:(1.2,0)},shift={(1.7,0)}] \doublefleche;
    \draw[rotate around={-45:(1.2,0)},shift={(1.7,0)}] \doublefleche;
    \draw[rotate around={-90:(1.2,0)}][->,>=stealth] (1.7,0)--(2.1,0);
    \draw(1.2,-1.2)circle (0.3cm);
    \draw[rotate around={-150:(1.2,-1.2)},shift={(1.5,-1.2)}] \doublefleche;
    \draw[rotate around={90:(1.2,-1.2)},shift={(1.5,-1.2)}] \doubleflechescindeeleft;
    \draw[rotate around={90:(1.2,-1.2)},shift={(1.5,-1.2)}] \doubleflechescindeeright;
    \draw[rotate around={30:(1.2,-1.2)}][->,>=stealth] (1.5,-1.2)--(1.8,-1.2);
    \draw[rotate around={150:(1.2,-1.2)}][->,>=stealth] (1.5,-1.2)--(1.8,-1.2);
    \draw[rotate around={-90:(1.2,-1.2)}][->,>=stealth] (1.5,-1.2)--(1.8,-1.2);
    \draw[rotate around={90:(1.2,0)}][->,>=stealth] (1.7,0)--(2.1,0);
    \draw(1.2,1.2)circle (0.3cm);
    \draw[rotate around={150:(1.2,1.2)},shift={(1.5,1.2)}] \doublefleche;
    \draw[rotate around={-90:(1.2,1.2)},shift={(1.5,1.2)}] \doubleflechescindeeleft;
    \draw[rotate around={-90:(1.2,1.2)},shift={(1.5,1.2)}] \doubleflechescindeeright;
    \draw[rotate around={-30:(1.2,1.2)}][->,>=stealth] (1.5,1.2)--(1.8,1.2);
    \draw[rotate around={-150:(1.2,1.2)}][->,>=stealth] (1.5,1.2)--(1.8,1.2);
    \draw[rotate around={90:(1.2,1.2)}][->,>=stealth] (1.5,1.2)--(1.8,1.2);
    \draw(-1.2,0)circle (0.5cm);
    \draw [rotate around={-45:(-1.2,0)}][<-,>=stealth] (-0.7,0)--(-0.3,0);
    \draw [rotate around={45:(-1.2,0)}][<-,>=stealth] (-0.7,0)--(-0.3,0);
    \draw [rotate around={90:(-1.2,0)}][->,>=stealth] (-0.7,0)--(-0.3,0);
    \draw [rotate around={-90:(-1.2,0)}][->,>=stealth] (-0.7,0)--(-0.3,0);
    \draw(-0.35,-0.85)circle (0.3cm);
    \draw [rotate around={-120:(-0.35,-0.85)}][->,>=stealth] (-0.05,-0.85)--(0.25,-0.85);
    \draw [rotate around={0:(-0.35,-0.85)}][->,>=stealth] (-0.05,-0.85)--(0.25,-0.85);
    \draw [rotate around={180:(-0.35,-0.85)},shift={(-0.05,-0.85)}]\doublefleche;
     \draw [rotate around={-60:(-0.35,-0.85)},shift={(-0.05,-0.85)}]\doublefleche;
     \draw(-0.35,0.85)circle (0.3cm);
    \draw [rotate around={120:(-0.35,0.85)}][->,>=stealth] (-0.05,0.85)--(0.25,0.85);
    \draw [rotate around={0:(-0.35,0.85)}][->,>=stealth] (-0.05,0.85)--(0.25,0.85);
    \draw [rotate around={180:(-0.35,0.85)},shift={(-0.05,0.85)}]\doublefleche;
     \draw [rotate around={60:(-0.35,0.85)},shift={(-0.05,0.85)}]\doublefleche;
\draw [fill=black] (-1.75,0.2) circle (0.3pt);
\draw [fill=black] (-1.8,0) circle (0.3pt);
\draw [fill=black] (-1.75,-0.2) circle (0.3pt);
\draw [fill=black] (-0.3,0.2) circle (0.3pt);
\draw [fill=black] (-0.2,0.3) circle (0.3pt);
\draw [fill=black] (-0.05,0.35) circle (0.3pt);
\draw [rotate around={-45:(-1.2,0)}][fill=black] (-0.3,0.2) circle (0.3pt);
\draw [rotate around={-45:(-1.2,0)}][fill=black] (-0.2,0.3) circle (0.3pt);
\draw [rotate around={-45:(-1.2,0)}][fill=black] (-0.05,0.35) circle (0.3pt);
\draw [fill=black] (-0.3,0.5) circle (0.3pt);
\draw [fill=black] (-0.15,0.54) circle (0.3pt);
\draw [fill=black] (-0.05,0.65) circle (0.3pt);
\draw [rotate around={40:(-1.2,0)}][fill=black] (-0.78,0.4) circle (0.3pt);
\draw [rotate around={40:(-1.2,0)}][fill=black] (-0.7,0.3) circle (0.3pt);
\draw [rotate around={40:(-1.2,0)}][fill=black] (-0.65,0.18) circle (0.3pt);
\draw [rotate around={-100:(-1.2,0)}][fill=black] (-0.78,0.4) circle (0.3pt);
\draw [rotate around={-100:(-1.2,0)}][fill=black] (-0.7,0.3) circle (0.3pt);
\draw [rotate around={-100:(-1.2,0)}][fill=black] (-0.65,0.18) circle (0.3pt);
\draw [fill=black] (2.75,0.15) circle (0.3pt);
\draw [fill=black] (2.8,0) circle (0.3pt);
\draw [fill=black] (2.75,-0.15) circle (0.3pt);
\draw [fill=black] (1.58,1.18) circle (0.3pt);
\draw [fill=black] (1.55,1.35) circle (0.3pt);
\draw [fill=black] (1.45,1.5) circle (0.3pt);
\draw [fill=black] (1.58,-1.18) circle (0.3pt);
\draw [fill=black] (1.55,-1.35) circle (0.3pt);
\draw [fill=black] (1.45,-1.5) circle (0.3pt);
\draw [fill=black] (0.7,0.3) circle (0.3pt);
\draw [fill=black] (0.8,0.45) circle (0.3pt);
\draw [fill=black] (0.95,0.55) circle (0.3pt);
\draw [rotate around={90:(1.2,0)}][fill=black] (0.7,0.3) circle (0.3pt);
\draw [rotate around={90:(1.2,0)}][fill=black] (0.8,0.45) circle (0.3pt);
\draw [rotate around={90:(1.2,0)}][fill=black] (0.9,0.55) circle (0.3pt);
\draw (1.2,0.25) node[anchor=north] {$M_{\MA}$};
\draw (-1.2,0.25) node[anchor=north] {$M_{\MB}$};
\draw (2.4,0.25) node[anchor=north] {$\scriptstyle{i'_{\tau_{1}}}$};
\draw (1.2,-0.95) node[anchor=north] {$\scriptstyle{i'_{\tau_{2}}}$};
\draw (1.3,1.45) node[anchor=north] {$\scriptstyle{i'_{\tau_{k}}}$};
\draw (0,0.25) node[anchor=north] {$\scriptstyle{j'_{\sigma_{v'}\dprimeind}}$};
\draw (-0.35,-0.6) node[anchor=north] {$\scriptstyle{j'_{\sigma_{v'+1}\dprimeindl}}$};
\draw (-0.35,1.1) node[anchor=north] {$\scriptstyle{j'_{\sigma_{v'-1}\dprimeindl}}$};
\end{tikzpicture}
\end{minipage}
\begin{minipage}{21cm}
\begin{tikzpicture}[line cap=round,line join=round,x=1.0cm,y=1.0cm]
\clip(-3.5,-2) rectangle (4,2);
 \draw (0,0) circle (0.5cm);
    \draw [rotate around={0:(0,0)}][->,>=stealth] (0.5,0)--(0.9,0);
    \draw [rotate around={-45:(0,0)},shift={(0.5,0)}]\doublefleche;
    \draw [rotate around={45:(0,0)},shift={(0.5,0)}]\doublefleche;
    \draw(1.4,0) circle (0.5cm);
    \draw [rotate around={180:(1.4,0)}, shift={(1.9,0)}] \doubleflechescindeeleft;
    \draw [rotate around={180:(1.4,0)}, shift={(1.9,0)}] \doubleflechescindeeright;
    \draw[rotate around={-90:(1.4,0)}][->,>=stealth](1.9,0)--(2.3,0);
    \draw (1.4,1.2) circle (0.3cm);
    \draw[rotate around={-90:(1.4,1.2)},shift={(1.7,1.2)}] \doubleflechescindeeleft;
    \draw[rotate around={-90:(1.4,1.2)},shift={(1.7,1.2)}] \doubleflechescindeeright;
    \draw[rotate around={0:(1.4,1.2)}][->,>=stealth](1.7,1.2)--(2,1.2);
    \draw[rotate around={180:(1.4,1.2)}][->,>=stealth](1.7,1.2)--(2,1.2);
    \draw[rotate around={90:(1.4,0)}][->,>=stealth](1.9,0)--(2.3,0);
    \draw (1.4,-1.2) circle (0.3cm);
    \draw[rotate around={90:(1.4,-1.2)},shift={(1.7,-1.2)}] \doubleflechescindeeleft;
    \draw[rotate around={90:(1.4,-1.2)},shift={(1.7,-1.2)}] \doubleflechescindeeright;
    \draw[rotate around={0:(1.4,-1.2)}][->,>=stealth](1.7,-1.2)--(2,-1.2);
    \draw[rotate around={180:(1.4,-1.2)}][->,>=stealth](1.7,-1.2)--(2,-1.2);
    \draw[rotate=-90][->,>=stealth](0.5,0)--(0.9,0);
     \draw (0,-1.2) circle (0.3cm);
    \draw[rotate around={90:(0,-1.2)},shift={(0.3,-1.2)}] \doubleflechescindeeleft;
    \draw[rotate around={90:(0,-1.2)},shift={(0.3,-1.2)}] \doubleflechescindeeright;
    \draw[rotate around={0:(0,-1.2)}][->,>=stealth](0.3,-1.2)--(0.6,-1.2);
    \draw[rotate around={180:(0,-1.2)}][->,>=stealth](0.3,-1.2)--(0.6,-1.2);
    \draw[rotate=90][->,>=stealth](0.5,0)--(0.9,0);
     \draw (0,1.2) circle (0.3cm);
    \draw[rotate around={-90:(0,1.2)},shift={(0.3,1.2)}] \doubleflechescindeeleft;
    \draw[rotate around={-90:(0,1.2)},shift={(0.3,1.2)}] \doubleflechescindeeright;
    \draw[rotate around={0:(0,1.2)}][->,>=stealth](0.3,1.2)--(0.6,1.2);
    \draw[rotate around={180:(0,1.2)}][->,>=stealth](0.3,1.2)--(0.6,1.2);
    \draw[rotate=180][->,>=stealth](0.5,0)--(0.9,0);
     \draw (-1.2,0) circle (0.3cm);
    \draw[rotate around={0:(-1.2,0)},shift={(-0.9,0)}] \doubleflechescindeeleft;
    \draw[rotate around={0:(-1.2,0)},shift={(-0.9,0)}] \doubleflechescindeeright;
    \draw[rotate around={90:(-1.2,0)}][->,>=stealth](-0.9,0)--(-0.6,0);
    \draw[rotate around={-90:(-1.2,0)}][->,>=stealth](-0.9,0)--(-0.6,0);
\draw[rotate=45][fill=black](-0.6,0) circle (0.3pt);
\draw[rotate=45][fill=black](-0.55,0.2) circle (0.3pt);
\draw[rotate=45][fill=black](-0.55,-0.2) circle (0.3pt);
\draw[rotate=-45][fill=black](-0.6,0) circle (0.3pt);
\draw[rotate=-45][fill=black](-0.55,0.2) circle (0.3pt);
\draw[rotate=-45][fill=black](-0.55,-0.2) circle (0.3pt);
\draw[fill=black](2,0) circle (0.3pt);
\draw[fill=black](1.95,0.2) circle (0.3pt);
\draw[fill=black](1.95,-0.2) circle (0.3pt);
\draw[fill=black](1.4,1.6) circle (0.3pt);
\draw[fill=black](1.25,1.55) circle (0.3pt);
\draw[fill=black](1.55,1.55) circle (0.3pt);
\draw [rotate around={180:(1.4,0)}][fill=black](1.4,1.6) circle (0.3pt);
\draw [rotate around={180:(1.4,0)}][fill=black](1.25,1.55) circle (0.3pt);
\draw [rotate around={180:(1.4,0)}][fill=black](1.55,1.55) circle (0.3pt);
\draw[fill=black](0,1.6) circle (0.3pt);
\draw[fill=black](0.15,1.55) circle (0.3pt);
\draw[fill=black](-0.15,1.55) circle (0.3pt);
\draw[rotate=180][fill=black](0,1.6) circle (0.3pt);
\draw[rotate=180][fill=black](0.15,1.55) circle (0.3pt);
\draw[rotate=180][fill=black](-0.15,1.55) circle (0.3pt);
\draw (0,0.25)node[anchor=north]{$M_{\MA}$};
\draw (1.4,0.25)node[anchor=north]{$M_{\MA}$};
\draw (-1.2,0.25)node[anchor=north]{$\scriptscriptstyle{j'_{\tau_1\dprimeind}}$};
\draw (0,1.45)node[anchor=north]{$\scriptscriptstyle{j'_{\tau_{u'}\dprimeind}}$};
\draw (0,-1)node[anchor=north]{$\scriptscriptstyle{j'_{\tau_{u'+1}\dprimeindl}}$};
\draw (1.4,1.45)node[anchor=north]{$\scriptscriptstyle{i'_{\tau_1}}$};
\draw (1.47,-1)node[anchor=north]{$\scriptscriptstyle{i'_{\tau_{k-1}}}$};
\draw (3.5,0.25) node[anchor=north]{and};
\end{tikzpicture}
\begin{tikzpicture}[line cap=round,line join=round,x=1.0cm,y=1.0cm]
\clip(-2,-2) rectangle (7.242319607404366,2);
    \draw (0,0) circle (0.5cm);
    \draw [rotate around={0:(0,0)}][->,>=stealth] (0.5,0)--(0.9,0);
    \draw [rotate around={120:(0,0)}][->,>=stealth] (0.5,0)--(0.9,0);
    \draw [rotate around={-120:(0,0)}][->,>=stealth] (0.5,0)--(0.9,0);
    \draw [rotate around={-60:(0,0)},shift={(0.5,0)}]\doublefleche;
    \draw [rotate around={60:(0,0)},shift={(0.5,0)}]\doublefleche;
    \draw(1.4,0) circle (0.5cm);
    \draw [rotate around={180:(1.4,0)}, shift={(1.9,0)}] \doubleflechescindeeleft;
    \draw [rotate around={180:(1.4,0)}, shift={(1.9,0)}] \doubleflechescindeeright;
    \draw[rotate around={-120:(1.4,0)}][->,>=stealth](1.9,0)--(2.3,0);
    \draw (0.8,1.03) circle (0.3cm);
    \draw[rotate around={-60:(0.8,1.03)},shift={(1.1,1.03)}] \doubleflechescindeeleft;
    \draw[rotate around={-60:(0.8,1.03)},shift={(1.1,1.03)}] \doubleflechescindeeright;
    \draw[rotate around={30:(0.8,1.03)}][->,>=stealth](1.1,1.03)--(1.4,1.03);
    \draw[rotate around={210:(0.8,1.03)}][->,>=stealth](1.1,1.03)--(1.4,1.03);
     \draw[rotate around={120:(1.4,0)}][->,>=stealth](1.9,0)--(2.3,0);
    \draw (0.8,-1.03) circle (0.3cm);
    \draw[rotate around={60:(0.8,-1.03)},shift={(1.1,-1.03)}] \doubleflechescindeeleft;
    \draw[rotate around={60:(0.8,-1.03)},shift={(1.1,-1.03)}] \doubleflechescindeeright;
    \draw[rotate around={-30:(0.8,-1.03)}][->,>=stealth](1.1,-1.03)--(1.4,-1.03);
    \draw[rotate around={-210:(0.8,-1.03)}][->,>=stealth](1.1,-1.03)--(1.4,-1.03);
    \draw (-0.6,1.03) circle (0.3cm);
    \draw[rotate around={-60:(-0.6,1.03)},shift={(-0.3,1.03)}] \doubleflechescindeeleft;
    \draw[rotate around={-60:(-0.6,1.03)},shift={(-0.3,1.03)}] \doubleflechescindeeright;
    \draw[rotate around={30:(-0.6,1.03)}][->,>=stealth](-0.3,1.03)--(0,1.03);
    \draw[rotate around={210:(-0.6,1.03)}][->,>=stealth](-0.3,1.03)--(0,1.03);
    \draw (-0.6,-1.03) circle (0.3cm);
     \draw[rotate around={60:(-0.6,-1.03)},shift={(-0.3,-1.03)}] \doubleflechescindeeleft;
    \draw[rotate around={60:(-0.6,-1.03)},shift={(-0.3,-1.03)}] \doubleflechescindeeright;
    \draw[rotate around={-30:(-0.6,-1.03)}][->,>=stealth](-0.3,-1.03)--(0,-1.03);
    \draw[rotate around={-210:(-0.6,-1.03)}][->,>=stealth](-0.3,-1.03)--(0,-1.03);
    \draw[->,>=stealth](1.9,0)--(2.3,0);
    \draw(2.6,0) circle (0.3cm);
    \draw[rotate around={90:(2.6,0)}][->,>=stealth](2.9,0)--(3.2,0);
    \draw[rotate around={-90:(2.6,0)}][->,>=stealth](2.9,0)--(3.2,0);
    \draw[rotate around={180:(2.6,0)},shift={(2.9,0)}] \doubleflechescindeeleft;
    \draw[rotate around={180:(2.6,0)},shift={(2.9,0)}] \doubleflechescindeeright;
\draw[fill=black](-0.6,0) circle (0.3pt);
\draw[fill=black](-0.55,0.2) circle (0.3pt);
\draw[fill=black](-0.55,-0.2) circle (0.3pt);
\draw[fill=black](3,0) circle (0.3pt);
\draw[fill=black](2.95,0.15) circle (0.3pt);
\draw[fill=black](2.95,-0.15) circle (0.3pt);
\draw[rotate around={60:(1.4,0)}][fill=black](2,0) circle (0.3pt);
\draw[rotate around={60:(1.4,0)}][fill=black](1.95,0.2) circle (0.3pt);
\draw[rotate around={60:(1.4,0)}][fill=black](1.95,-0.2) circle (0.3pt);
\draw[rotate around={-60:(1.4,0)}][fill=black](2,0) circle (0.3pt);
\draw[rotate around={-60:(1.4,0)}][fill=black](1.95,0.2) circle (0.3pt);
\draw[rotate around={-60:(1.4,0)}][fill=black](1.95,-0.2) circle (0.3pt);
\draw[fill=black](-0.6,1.4) circle (0.3pt);
\draw[fill=black](-0.8,1.35) circle (0.3pt);
\draw[fill=black](-0.95,1.2) circle (0.3pt);
\draw[fill=black](0.45,1.2) circle (0.3pt);
\draw[fill=black](0.6,1.35) circle (0.3pt);
\draw[fill=black](0.8,1.4) circle (0.3pt);
\draw [rotate=120][fill=black](-0.6,1.4) circle (0.3pt);
\draw [rotate=120][fill=black](-0.8,1.35) circle (0.3pt);
\draw [rotate=120][fill=black](-0.95,1.2) circle (0.3pt);
\draw[rotate around={120:(1.4,0)}][fill=black](0.45,1.2) circle (0.3pt);
\draw[rotate around={120:(1.4,0)}][fill=black](0.6,1.35) circle (0.3pt);
\draw[rotate around={120:(1.4,0)}][fill=black](0.8,1.4) circle (0.3pt);
\draw (0,0.25)node[anchor=north]{$M_{\MA}$};
\draw (1.4,0.25)node[anchor=north]{$M_{\MA}$};
\draw (2.62,0.25)node[anchor=north]{$\scriptscriptstyle{i'_{\tau_1}}$};
\draw (-0.6,1.25)node[anchor=north]{$\scriptscriptstyle{j'_{\tau_{n-k'}\dprimeindl}}$};
\draw (0.8,1.25)node[anchor=north]{$\scriptscriptstyle{i'_{\tau_{v'}}}$};
\draw (-0.6,-0.8)node[anchor=north]{$\scriptstyle{j'_{\sigma_{1}\dprimeind}}$};
\draw (0.9,-0.78)node[anchor=north]{$\scriptstyle{i'_{\tau_{k'}}}$};
\end{tikzpicture}
\end{minipage}

    \noindent respectively. Since $s_{d+1}M_{\MA}$ is a $d$-pre-Calabi-Yau structure, we have that 
    \[
    \Psi^{\tau\dprime}_{s_{d+1}M_{\MA}}\big(\Psi^{\tau}_{s_{d+1}M_{\MA}}(l_{i_k},\dots,l_{i_1}),l_{j_{n-k}},\dots,l_{j_{1}}\big)=0.
    \]
    Moreover, the term of \eqref{eq:higher-Jac-bar-ell-1} indexed by $k$, $\Bar{i},\Bar{j}$, $\sigma$, $\tau'$ and $u$ cancels with the term of \eqref{eq:higher-Jac-bar-ell-2} indexed by $k'=n-k$, $i_r=j'_r$ pour $r\in \llbracket 1,k\rrbracket$, $j_s=i'_s$ pour $s\in\llbracket 1,n-k\rrbracket$, $\tau=\tau'$, $\sigma\dprime=\sigma$ and $v'=u$
    and similarly the term of \eqref{eq:higher-Jac-bar-ell-1} indexed by $k$, $\Bar{i},\Bar{j}$, $\sigma$, $\tau'$ and $v$ cancels with the term of \eqref{eq:higher-Jac-bar-ell-2} indexed by $k'=n-k$, $i_r=j'_r$ pour $r\in \llbracket 1,k\rrbracket$, $j_s=i'_s$ pour $s\in\llbracket 1,n-k\rrbracket$, $\tau=\tau'$, $\sigma\dprime=\sigma$ and $u'=v$. Therefore, $(\mathfrak{h},\Bar{\ell})$ is a graded $L_{\infty}$-algebra. 

    Finally, a Maurer-Cartan element of $(\mathfrak{h},\Bar{\ell})$ is a degree $1$ element $s_{-1}(s_{d+1}\mathbf{F})$ of the graded vector space $\Multi_{d,\Phi_0}^{\bullet}(\MA,\MB)[d+1][-1]$, \textit{i.e.} $s_{d+1}\mathbf{F}\in\Multi_{d,\Phi_0}^{\bullet}(\MA,\MB)[d+1]$ of degree $0$, such that 
\begin{equation}
     \sum\limits_{n=1}^{\infty}\frac{1}{n!}\Bar{\ell}^n(s_{-1}s_{d+1}\mathbf{F},\dots,s_{-1}s_{d+1}\mathbf{F})=0
\end{equation}
which gives that $s_{d+1}\mathbf{F}$ is a $d$-pre-Calabi-Yau morphism. Reciprocally, given a $d$-pre-Calabi-Yau morphism $(\Phi_0,s_{d+1}\mathbf{F}):(\MA,s_{d+1}M_{\MA})\rightarrow (\MB,s_{d+1}M_{\MB}$), $s_{-1}s_{d+1}\mathbf{F}$ is clearly a Maurer-Cartan element of $(\mathfrak{h},\Bar{\ell})$.
\end{proof}

\begin{remark}
\label{remark:A-inf-MC-bar}
   A particular case of this result is the case of $A_{\infty}$-structures presented in Proposition \ref{prop:A-inf-L-inf}.
\end{remark}


\subsection{Homotopy and weak homotopy for pre-Calabi-Yau morphisms}\label{section:htpy}

In this section, we present two different notions of homotopy for pre-Calabi-Yau morphisms and their first properties.
Throughout this section, we denote 
\[
\mathcal{L}_d^{\Phi_0}(\MA,\MB)=\Multi_d^{\bullet}(\MB)^{C_{\llg(\bullet)}}[d+1]\oplus \Multi_{d,\Phi_0}^{\bullet}(\MA,\MB)^{C_{\llg(\bullet)}}[d+1][-1]\oplus \Multi_d^{\bullet}(\MA)^{C_{\llg(\bullet)}}[d+1]
\]
and given $d$-pre-Calabi-Yau categories $(\MA,s_{d+1}M_{\MA})$, $(\MB,s_{d+1}M_{\MB})$ together with $\Phi_0 : \MO_{\MA}\rightarrow\MO_{\MB}$ of the respective underlying graded quivers, we denote $(\Bar{\mathcal{L}}_d^{\Phi_0}(\MA,\MB),\Bar{\ell}_{M_{\MA},M_{\MB}})$ the graded $L_{\infty}$-algebra defined in Definition \ref{def:bar-ell} whose underlying graded vector space is $\Multi_{d,\Phi_0}^{\bullet}(\MA,\MB)^{C_{\llg(\bullet)}}[d+1][-1]$.

\begin{definition}\label{def:weak-htpy}
    Given graded quivers $\MA$ and $\MB$ with respective sets of objects $\MO_{\MA}$ and $\MO_{\MB}$ and a map $\Phi_0 : \MO_{\MA}\rightarrow\MO_{\MB}$, consider $d$-pre-Calabi-Yau structures $s_{d+1}M_{\MA}$, $s_{d+1}M'_{\MA}$ on $\MA$ and $s_{d+1}M_{\MB}$, $s_{d+1}M'_{\MB}$ on $\MB$.
    We say that two $d$-pre-Calabi-Yau morphisms 
    \begin{equation}
        (\Phi_0,s_{d+1}\mathbf{F}) : (\MA,s_{d+1}M_{\MA})\rightarrow (\MB,s_{d+1}M_{\MB}) \text{ and } (\Phi_0,s_{d+1}\mathbf{G}) : (\MA,s_{d+1}M'_{\MA})\rightarrow (\MB,s_{d+1}M'_{\MB})
    \end{equation} are \textbf{\textcolor{ultramarine}{weakly homotopic}} if the corresponding Maurer-Cartan elements $(s_{d+1}M_{\MB},s_{-1}(s_{d+1}\mathbf{F}),s_{d+1}M_{\MA})$ and $(s_{d+1}M'_{\MA},s_{-1}(s_{d+1}\mathbf{G}),s_{d+1}M'_{\MB})$ of $(\mathcal{L}_d^{\Phi_0}(\MA,\MB),\ell)$ are homotopic in the sense of Definition \ref{def:htpic-MC-elts}.
\end{definition}

\begin{definition}\label{def:htpy}
     Given $d$-pre-Calabi-Yau categories $(\MA,s_{d+1}M_{\MA})$ and $(\MB,s_{d+1}M_{\MB})$ and denote $\MA$ and $\MB$ the sets of objects $\MO_{\MA}$ and $\MO_{\MB}$ of the repsective underlying graded quivers.
    We say that two $d$-pre-Calabi-Yau morphisms $(\Phi_0,s_{d+1}\mathbf{F}), (\Phi_0,s_{d+1}\mathbf{G}) : (\MA,s_{d+1}M_{\MA})\rightarrow (\MB,s_{d+1}M_{\MB})$ are \textbf{\textcolor{ultramarine}{homotopic}} if the corresponding Maurer-Cartan elements $s_{-1}(s_{d+1}\mathbf{F})$ and $s_{-1}(s_{d+1}\mathbf{G})$ of $(\Bar{\mathcal{L}}_d^{\Phi_0}(\MA,\MB),\Bar{\ell}_{M_{\MA},M_{\MB}})$ are homotopic in the sense of \ref{def:htpic-MC-elts}. 
    
    This is equivalent by Remark \ref{remark:htpic-MC-elts-equiv} to the existence of an element $a(t)+b(t)dt\in \Bar{\mathcal{L}}_d^{\Phi_0}(\MA,\MB)[t,dt]$ such that $sa(0)=s_{d+1}\mathbf{F}$, $sa(1)=s_{d+1}\mathbf{G}$, $a(t)$ is a Maurer-Cartan element of $(\Bar{\mathcal{L}}_d^{\Phi_0}(\MA,\MB),\Bar{\ell}_{M_{\MA},M_{\MB}})$ and \[
    \frac{\partial a}{\partial t}(t)^{\doubar{x}}=\sum\limits_{n=2}^{\infty}\frac{1}{(n-1)!}\ell^n\big(a(t),\dots,a(t),b(t)\big)^{\doubar{x}}
    \]
    for every $t\in\Bbbk$, $\doubar{x}\in\doubar{\MO}_{\MA}$.
\end{definition}

We now relate those two notions of homotopy together and the notion homotopy with chain homotopy of maps of dg quivers.

\begin{proposition}
     Consider two $d$-pre-Calabi-Yau categories $(\MA,s_{d+1}M_{\MA})$ and $(\MB,s_{d+1}M_{\MB})$ and denote by $\MO_{\MA}$, $\MO_{\MB}$ the sets of objects of the respective underlying graded quivers. Then, two homotopic $d$-pre-Calabi-Yau morphisms $(\Phi_0,s_{d+1}\mathbf{F}), (\Phi_0,s_{d+1}\mathbf{G}) : (\MA,s_{d+1}M_{\MA})\rightarrow (\MB,s_{d+1}M_{\MB})$ are weakly homotopic.
\end{proposition}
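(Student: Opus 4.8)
The plan is to mimic exactly the argument used in the $A_\infty$-case (the Proposition following Definition~\ref{def:weak-htpy-A-inf}), which shows that homotopic $A_\infty$-morphisms are weakly homotopic. The key point is that a homotopy in the ``fixed structures'' $L_\infty$-algebra $(\Bar{\mathcal{L}}_d^{\Phi_0}(\MA,\MB),\Bar{\ell}_{M_{\MA},M_{\MB}})$ can be promoted to a homotopy in the ``non-fixed structures'' $L_\infty$-algebra $(\mathcal{L}_d^{\Phi_0}(\MA,\MB),\ell)$ by simply sitting in the middle summand $\mathfrak{h}[-1]$ and adjoining the constant Maurer--Cartan data $s_{d+1}M_{\MA}$, $s_{d+1}M_{\MB}$ on the two outer summands.

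First I would unwind the hypothesis using Remark~\ref{remark:htpic-MC-elts-equiv} and Definition~\ref{def:htpy}: there is $a(t)+b(t)dt\in\Bar{\mathcal{L}}_d^{\Phi_0}(\MA,\MB)[t,dt]$ with $sa(0)=s_{d+1}\mathbf{F}$, $sa(1)=s_{d+1}\mathbf{G}$, with $a(t)$ a Maurer--Cartan element of $(\Bar{\mathcal{L}}_d^{\Phi_0}(\MA,\MB),\Bar{\ell}_{M_{\MA},M_{\MB}})$ for every $t$, and satisfying
\[
\frac{\partial a}{\partial t}(t)^{\doubar{x}}=\sum_{n=2}^{\infty}\frac{1}{(n-1)!}\Bar{\ell}_{M_{\MA},M_{\MB}}^n\big(a(t),\dots,a(t),b(t)\big)^{\doubar{x}}
\]
for every $t\in\kk$, $\doubar{x}\in\doubar{\MO}_{\MA}$. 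Then I would define $A(t)=(s_{d+1}M_{\MB},a(t),s_{d+1}M_{\MA})\in\mathcal{L}_d^{\Phi_0}(\MA,\MB)$ and $B(t)=(0,b(t),0)$, and claim that $A(t)+B(t)dt$ is a weak homotopy between the Maurer--Cartan elements $(s_{d+1}M_{\MB},s_{-1}(s_{d+1}\mathbf{F}),s_{d+1}M_{\MA})$ and $(s_{d+1}M_{\MB},s_{-1}(s_{d+1}\mathbf{G}),s_{d+1}M_{\MA})$ of $(\mathcal{L}_d^{\Phi_0}(\MA,\MB),\ell)$. The endpoint conditions $A(0)=(s_{d+1}M_{\MB},s_{-1}(s_{d+1}\mathbf{F}),s_{d+1}M_{\MA})$ and $A(1)=(s_{d+1}M_{\MB},s_{-1}(s_{d+1}\mathbf{G}),s_{d+1}M_{\MA})$ are immediate, and that $A(t)$ is a Maurer--Cartan element of $(\mathcal{L}_d^{\Phi_0}(\MA,\MB),\ell)$ for each $t$ follows from the previous Proposition characterizing such elements as triples (a $d$-pre-Calabi-Yau structure on $\MB$, a $d$-pre-Calabi-Yau morphism, a $d$-pre-Calabi-Yau structure on $\MA$), since $s_{d+1}M_{\MB}$ and $s_{d+1}M_{\MA}$ are fixed $d$-pre-Calabi-Yau structures and $a(t)$ is a $d$-pre-Calabi-Yau morphism between them.

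The main computational step — and the only place where something needs checking — is the condition from Remark~\ref{remark:htpic-MC-elts-equiv} that
\[
\frac{\partial A}{\partial t}(t)^{\doubar{x}}=\sum_{n=2}^{\infty}\frac{1}{(n-1)!}\ell^n\big(A(t),\dots,A(t),B(t)\big)^{\doubar{x}}
\]
for every $\doubar{x}$. On the left, $\frac{\partial A}{\partial t}(t)=(0,\frac{\partial a}{\partial t}(t),0)$. On the right, I would expand $\ell^n$ using its definition (Definition~\ref{def:ell}): because $B(t)$ lies purely in $\mathfrak{h}[-1]$ and the outer slots are filled with the single elements $s_{d+1}M_{\MB}$ or $s_{d+1}M_{\MA}$, the vanishing clauses of $\ell$ force all contributing terms to have exactly one outer-slot input, namely one copy of $s_{d+1}M_{\MB}$ or one copy of $s_{d+1}M_{\MA}$; summing over the position of that slot and comparing with the definition of $\Bar{\ell}_{M_{\MA},M_{\MB}}$ (which is precisely $\Phi_{s_{d+1}M_{\MB}}^{\sigma}(\cdots)-\Psi_{s_{d+1}M_{\MA}}^{\sigma}(\cdots)$), the two expressions match in the middle component, with the signs $\epsilon_i,\epsilon'_i$ reorganizing into $\epsilon,\epsilon'$ exactly as in the $A_\infty$-case. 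The outer components of $\sum\ell^n(A(t),\dots,A(t),B(t))$ vanish because $\ell^n$ applied to any list containing at least one $\mathfrak{h}$-element and landing in $\mathfrak{g}_{\MA}$ or $\mathfrak{g}_{\MB}$ is zero by definition. I expect the sign bookkeeping to be the main (though routine) obstacle; the structural content is that ``adjoining constant endpoints on the outer summands'' converts a homotopy in $\Bar{\mathcal{L}}$ into one in $\mathcal{L}$. I would present this by pointing out the computation is parallel to the proof that homotopic $A_\infty$-morphisms are weakly homotopic, with $m_{\MB},m_{\MA}$ replaced by $s_{d+1}M_{\MB},s_{d+1}M_{\MA}$ and the diagrammatic terms replaced by the necklace ones, and conclude that the Maurer--Cartan elements corresponding to $(\Phi_0,s_{d+1}\mathbf{F})$ and $(\Phi_0,s_{d+1}\mathbf{G})$ in $(\mathcal{L}_d^{\Phi_0}(\MA,\MB),\ell)$ are homotopic, i.e. the two morphisms are weakly homotopic.
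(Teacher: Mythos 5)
Your proposal is correct and follows essentially the same route as the paper: the paper's proof likewise sets $A(t)=(s_{d+1}M_{\MB},a(t),s_{d+1}M_{\MA})$, $B(t)=(0,b(t),0)$, checks the endpoint and Maurer--Cartan conditions, and reduces the derivative identity to the definition of $\Bar{\ell}_{M_{\MA},M_{\MB}}$ by observing that only the terms of $\ell^n$ with exactly one outer-slot input survive. No gaps to report.
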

\begin{proof}
    Consider a homotopy $a(t)+b(t)dt\in \Bar{\mathcal{L}}_d^{\Phi_0}(\MA,\MB)[t,dt]$ between the Maurer-Cartan elements corresponding to the $d$-pre-Calabi-Yau morphisms $(\Phi_0,s_{d+1}\mathbf{F})$ and $(\Phi_0,s_{d+1}\mathbf{G})$. Then, we have that $A(t)+B(t)dt\in \mathcal{L}_d^{\Phi_0}(\MA,\MB)[t,dt]$ where $A(t)=(s_{d+1}M_{\MB},a(t),s_{d+1}M_{\MA})$ and $B(t)=(0,b(t),0)$ is a weak homotopy between $(\Phi_0,s_{d+1}\mathbf{F})$ and $(\Phi_0,s_{d+1}\mathbf{G})$. Indeed, by definition $a(t)$ is a Maurer-Cartan element of $(\Bar{\mathcal{L}}_d^{\Phi_0}(\MA,\MB),\Bar{\ell}_{M_{\MA},M_{\MB}})$ for any $t\in\Bbbk$, \textit{i.e.} $sa(t)$ is a $d$-pre-Calabi-Yau morphism $(\MA,s_{d+1}M_{\MA})\rightarrow (\MB,s_{d+1}M_{\MB})$, meaning that $A(t)$ is a Maurer-Cartan element of $(\mathcal{L}_d^{\Phi_0}(\MA,\MB),\ell)$. Moreover, $A(0)=(s_{d+1}M_{\MB},s_{-1}s_{d+1}\mathbf{F},s_{d+1}M_{\MA})$ and $A(1)=(s_{d+1}M_{\MB},s_{-1}s_{d+1}\mathbf{G},s_{d+1}M_{\MA})$. Finally, we have that $\frac{\partial A}{\partial t}(t)=(0,\frac{\partial a}{\partial t}(t),0)$ and 
    \begin{equation}
        \begin{split}
            \sum\limits_{n=2}^{\infty}\frac{1}{(n-1)!}& \ell^n\big(A(t),\dots,A(t),B(t)\big)^{\doubar{x}}
            \\&=(0,\sum\limits_{n=2}^{\infty}\frac{1}{(n-1)!}  \sum\limits_{i=0}^{n-2}\ell^n\big(\underbrace{a(t),\dots,a(t)}_{i\text{ times}},s_{d+1}M_{\MA},a(t),\dots,a(t),b(t)\big)^{\doubar{x}},0)
            \\&\phantom{=}(0,\sum\limits_{n=2}^{\infty}\frac{1}{(n-1)!}  \sum\limits_{i=0}^{n-2}\ell^n\big(\underbrace{a(t),\dots,a(t)}_{i\text{ times}},s_{d+1}M_{\MB},a(t),\dots,a(t),b(t)\big)^{\doubar{x}},0)
            \\&=(0,\sum\limits_{n=2}^{\infty} \frac{1}{(n-1)!} \Bar{\ell}_{M_{\MA},M_{\MB}}^n\big(a(t),\dots,a(t),b(t)\big)^{\doubar{x}},0)
            \\&=(0,\frac{\partial a}{\partial t}(t)^{\doubar{x}},0)
        \end{split}
    \end{equation}
    for every $\doubar{x}\in\doubar{\MO}_{\MA}$ since $a(t)+b(t)dt$ is a homotopy between $(\Phi_0,s_{d+1}\mathbf{F})$ and $(\Phi_0,s_{d+1}\mathbf{G})$.
\end{proof}

\begin{proposition}
\label{prop:chain-htpy-strong-htpy}
Let $(\MA,s_{d+1}M_{\MA})$, $(\MB,s_{d+1}M_{\MB})$ be $d$-pre-Calabi-Yau categories and denote by $\MO_{\MA}$, $\MO_{\MB}$ the sets of objects of the respective underlying graded quivers. Consider two homotopic morphisms $(\Phi_0,s_{d+1}\mathbf{F}), (\Phi_0,s_{d+1}\mathbf{G}) : (\MA,s_{d+1}M_{\MA})\rightarrow (\MB,s_{d+1}M_{\MB})$. Then, the following holds:
\begin{enumerate}[label=(\roman*)]
    \item\label{item:chain-htpy} The chain maps $s_{d+1}F^{x,y},s_{d+1}G^{x,y}$ are chain homotopic for every $x,y\in\MO_{\MA}$.
    \item\label{item:htpy-q-iso} If $(\Phi_0,s_{d+1}\mathbf{F})$ is a quasi-isomorphism of $d$-pre-Calabi-Yau categories then so is $(\Phi_0,s_{d+1}\mathbf{G})$.
\end{enumerate}
\end{proposition}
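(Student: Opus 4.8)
The plan is to extract the homotopy data and restrict it to the relevant quivers of morphisms. Let $a(t)+b(t)dt\in\bar{\mathcal{L}}_d^{\Phi_0}(\MA,\MB)[t,dt]$ be a homotopy between the Maurer-Cartan elements $s_{-1}(s_{d+1}\mathbf{F})$ and $s_{-1}(s_{d+1}\mathbf{G})$, as provided by Definition \ref{def:htpy}. For each pair $x,y\in\MO_{\MA}$, the component of $s_{d+1}\mathbf{F}$ of length $1$ and of the form ${}_x\MA_y\to{}_{\Phi_0(x)}\MB_{\Phi_0(y)}$ (more precisely, indexed by $\doubar{x}=(x,y)$) is exactly the chain map $s_{d+1}F^{x,y}$ up to the shifts; I will call its analogue extracted from $a(t)$ simply $a^{x,y}(t)$, and similarly $b^{x,y}(t)$ from $b(t)$. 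First I would isolate, in the equation
\[
\frac{\partial a}{\partial t}(t)^{\doubar{x}}=\sum_{n=2}^{\infty}\frac{1}{(n-1)!}\ell^n\big(a(t),\dots,a(t),b(t)\big)^{\doubar{x}},
\]
the contribution at $\doubar{x}=(x,y)$, i.e. diagrams of the forms \eqref{eq:form-diag-2} that have a single output disc carrying precisely one input and one output. Inspecting the diagrams, the only diagrams of that shape contributing to $\bar\ell^1$ are the ones with a single disc filled with $M_{\MB}$ (the binary product $m_{\MB}^{2}$) post-composed, or a single disc filled with $M_{\MA}$ pre-composed; all higher $\bar\ell^n$ with $n\geq 2$ produce discs with more than one incoming arrow, hence do not contribute at $(x,y)$. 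Therefore the restriction of the homotopy equation at $(x,y)$ collapses to
\[
\frac{\partial a^{x,y}}{\partial t}(t)=m_{\MB}^{2}\circ sb^{x,y}(t)\pm sb^{x,y}(t)\circ m_{\MA}^{2}=\big(d_{\MB}\circ sh^{x,y}+sh^{x,y}\circ d_{\MA}\big),
\]
where $h^{x,y}$ is (up to sign and shift) $b^{x,y}(0)$ reinterpreted as a degree $-1$ map; since $a(t)$ is affine in $t$ in the relevant component (its $t$-derivative is $t$-independent), integrating from $0$ to $1$ gives $s_{d+1}G^{x,y}-s_{d+1}F^{x,y}=d_{\MB}\circ H^{x,y}+H^{x,y}\circ d_{\MA}$ for $H^{x,y}=\int_0^1 sh^{x,y}(t)dt$, which is precisely assertion \ref{item:chain-htpy}. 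I expect the main obstacle here to be the bookkeeping of signs and of the shift functors $s_{d+1}$, $s_{-1}$ relative to the differentials induced by $M_{\MA},M_{\MB}$, and confirming rigorously that no length-$>1$ or higher-arity diagram sneaks into the $(x,y)$-component — this is the one place where the diagrammatic calculus of \cite{ktv,moi} must be invoked carefully.

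For assertion \ref{item:htpy-q-iso}, once \ref{item:chain-htpy} is established the argument is formal: a quasi-isomorphism of $d$-pre-Calabi-Yau categories is by definition (see the proof of Theorem \ref{thm:equiv-htpy-qiso-A-inf} and \cite{moi}) a morphism $(\Phi_0,s_{d+1}\mathbf{F})$ such that each $s_{d+1}F^{x,y}$ is a quasi-isomorphism of complexes ${}_x\MA_y\to{}_{\Phi_0(x)}\MB_{\Phi_0(y)}$. Chain homotopic maps induce the same map on cohomology, so if $s_{d+1}F^{x,y}$ is a quasi-isomorphism for all $x,y$, then so is $s_{d+1}G^{x,y}$, and hence $(\Phi_0,s_{d+1}\mathbf{G})$ is a quasi-isomorphism. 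I would state this in one or two sentences. The only thing to be careful about is to use the correct notion of ``quasi-isomorphism of $d$-pre-Calabi-Yau categories'' as used elsewhere in the paper (componentwise quasi-isomorphism of the underlying dg quivers, with $\Phi_0=$ the fixed object map), which makes the deduction immediate.

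In summary, the proof proceeds in three short steps: (1) take the homotopy $a(t)+b(t)dt$ and restrict the defining equation to the length-one component indexed by $(x,y)$; (2) observe that on this component only the terms coming from the binary products $m_{\MA}^2,m_{\MB}^2$ survive, so the equation becomes the standard chain-homotopy relation, and integrate in $t$ to get \ref{item:chain-htpy}; (3) deduce \ref{item:htpy-q-iso} from the fact that chain homotopic maps agree on cohomology. The technical heart is step (2) — pinning down the precise surviving diagrams and their signs through the diagrammatic formalism — while steps (1) and (3) are essentially formal manipulations of the $L_\infty$-structure $\bar\ell$ and its extension to $\bar{\mathcal{L}}_d^{\Phi_0}(\MA,\MB)[t,dt]$.
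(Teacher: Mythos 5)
Your proposal is correct and follows essentially the same route as the paper: restrict the homotopy equation to the length-one component indexed by $(x,y)$, where only the unary terms involving $M_{\MA}^{x,y}$ and $M_{\MB}^{\Phi_0(x),\Phi_0(y)}$ survive, integrate over $t\in[0,1]$ so that $\int_0^1 b(t)^{x,y}\,dt$ becomes the chain homotopy, and then deduce \ref{item:htpy-q-iso} formally. The only blemish is your parenthetical claim that $a(t)$ is affine in $t$ (its derivative depends on $t$ through $b(t)$); this is neither true in general nor needed, since the fundamental theorem of calculus plus the $t$-independence of the differentials already gives the conclusion.
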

\begin{proof}
    We first prove \ref{item:chain-htpy}. We thus consider an element $a(t)+b(t)dt\in \Bar{\mathcal{L}}_d^{\Phi_0}(\MA,\MB)[t,dt]$ satisfying that $sa(0)=s_{d+1}\mathbf{F}$, $sa(1)=s_{d+1}\mathbf{G}$, $a(t)$ is a Maurer-Cartan element of $(\Bar{\mathcal{L}}_d^{\Phi_0}(\MA,\MB),\Bar{\ell}_{M_{\MA},M_{\MB}})$ and

    \[
    \frac{\partial a}{\partial t}(t)^{\doubar{x}}=\sum\limits_{n=2}^{\infty}\frac{1}{(n-1)!} \ell^n\big(a(t),\dots,a(t),b(t)\big)^{\doubar{x}}
    \]
    for every $t\in\Bbbk$, $\doubar{x}\in\doubar{\MO}_{\MA}$.
   
   Therefore, we have that
    \begin{equation}
        \begin{split}
            \big(a(1)-a(0)\big)^{x,y}&=\int_{0}^{1} \big(s_{d+1}M_{\MB}^{\Phi_0(x),\Phi_0(y)}\circ b(t)^{x,y}+b(t)^{x,y}\circ s_{d+1}M_{\MA}^{x,y}\big) \,dt
            \\&=s_{d+1}M_{\MB}^{\Phi_0(x),\Phi_0(y)}\circ \int_{0}^{1} b(t)^{x,y}dt+\int_{0}^{1} b(t)^{x,y}dt \circ s_{d+1}M_{\MA}^{x,y}
        \end{split}
    \end{equation}
    for every $x,y\in\MO_{\MA}$ which means that $\int_0^1 b(t)^{x,y}dt$ is a chain homotopy between $s_{d+1}F^{x,y}$ and $s_{d+1}G^{x,y}$.
    
    We now prove \ref{item:htpy-q-iso}. Suppose that $(\Phi_0,s_{d+1}\mathbf{F})$ is a quasi-isomorphism of $d$-pre-Calabi-Yau categories. Then, $s_{d+1}F^{x,y}$ is a quasi-isomorphism of complexes. By \ref{item:chain-htpy}, $s_{d+1}F^{x,y}$ and $s_{d+1}G^{x,y}$ are chain homotopic so $s_{d+1}G^{x,y}$ is a quasi-isomorphism of complexes, meaning that $(\Phi_0,s_{d+1}\mathbf{G})$ is a quasi-isomorphism of $d$-pre-Calabi-Yau categories.
\end{proof}

\begin{proposition}
\label{prop:htpy-cp-right}
    Let $(\MA,s_{d+1}M_{\MA})$, $(\MB,s_{d+1}M_{\MB})$ and $(\mathcal{C},s_{d+1}M_{\mathcal{C}})$ be $d$-pre-Calabi-Yau categories and denote by $\MO_{\MA}$, $\MO_{\MB}$ and $
    \MO_{\mathcal{C}}$ the sets of objects of the respective underlying graded quivers.
    Consider two homotopic morphisms $(\Phi_0,s_{d+1}\mathbf{F}),(\Phi_0,s_{d+1}\mathbf{G}) : (\MA,s_{d+1}M_{\MA})\rightarrow (\MB,s_{d+1}M_{\MB})$ as well as a $d$-pre-Calabi-Yau morphism $(\Psi_0,s_{d+1}\mathbf{H}) : (\mathcal{C},s_{d+1}M_{\mathcal{C}})\rightarrow (\MA,s_{d+1}M_{\MA})$. 
    Then, $(\Phi_0\circ \Psi_0,s_{d+1}\mathbf{F}\circ s_{d+1}\mathbf{H})$ and $(\Phi_0\circ \Psi_0,s_{d+1}\mathbf{G}\circ s_{d+1}\mathbf{H})$ are homotopic $d$-pre-Calabi-Yau morphisms.
\end{proposition}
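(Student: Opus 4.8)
The plan is to mimic the proof of Proposition~\ref{prop:htpic-A-inf-2} in the $A_{\infty}$-setting, transported to the pre-Calabi-Yau diagrammatic calculus. First I would start from a homotopy $a(t)+b(t)dt\in\Bar{\mathcal{L}}_d^{\Phi_0}(\MA,\MB)[t,dt]$ between the Maurer-Cartan elements corresponding to $(\Phi_0,s_{d+1}\mathbf{F})$ and $(\Phi_0,s_{d+1}\mathbf{G})$, as provided by Definition~\ref{def:htpy} via Remark~\ref{remark:htpic-MC-elts-equiv}. The idea is to right-compose everything with $s_{d+1}\mathbf{H}$: set $sx(t)=sa(t)\circ s_{d+1}\mathbf{H}$ (composition of $d$-pre-Calabi-Yau morphisms as in Definition~\ref{def:cp-pCY}), which is for each $t\in\Bbbk$ a $d$-pre-Calabi-Yau morphism $(\Phi_0\circ\Psi_0,\cdot):(\mathcal{C},s_{d+1}M_{\mathcal{C}})\to(\MB,s_{d+1}M_{\MB})$ by Proposition~\ref{prop:pCY-category}, and which satisfies $sx(0)=s_{d+1}\mathbf{F}\circ s_{d+1}\mathbf{H}$ and $sx(1)=s_{d+1}\mathbf{G}\circ s_{d+1}\mathbf{H}$. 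The candidate homotopy term $y(t)$ should be defined diagrammatically as the sum over all filled diagrams built from one disc filled with $b(t)$, arbitrarily many discs filled with $a(t)$, and arbitrarily many discs filled with $\mathbf{H}$, arranged so that each outgoing arrow of an $\mathbf{H}$-disc feeds an incoming arrow of the $a(t)$-disc carrying $b(t)$ above it (the pre-Calabi-Yau analogue of the last displayed diagram in the proof of Proposition~\ref{prop:htpic-A-inf-2}).

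Next I would check that $x(t)$ is a Maurer-Cartan element of $(\Bar{\mathcal{L}}_d^{\Psi_0,\Phi_0}(\mathcal{C},\MB),\Bar{\ell}_{M_{\mathcal{C}},M_{\MB}})$ for every $t$ — this is immediate from the fact that composites of $d$-pre-Calabi-Yau morphisms are $d$-pre-Calabi-Yau morphisms — and then verify the homotopy equation
\[
\frac{\partial x}{\partial t}(t)^{\doubar{x}}=\sum\limits_{n=2}^{\infty}\frac{1}{(n-1)!}\Bar{\ell}_{M_{\mathcal{C}},M_{\MB}}^n\big(x(t),\dots,x(t),y(t)\big)^{\doubar{x}}
\]
for every $\doubar{x}\in\doubar{\MO}_{\mathcal{C}}$. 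The left-hand side I would compute by the chain rule: $\frac{\partial x}{\partial t}(t)$ equals a sum of diagrams where exactly one $a(t)$-disc in the composite $sa(t)\circ s_{d+1}\mathbf{H}$ is replaced by $\frac{\partial a}{\partial t}(t)$, and then I substitute the homotopy equation for $a(t)$, which expresses $\frac{\partial a}{\partial t}(t)$ in terms of diagrams with a distinguished $b(t)$-disc, an $M_{\MB}$-disc or an $M_{\MA}$-disc, and $a(t)$-discs. The right-hand side, expanded via the definitions of $\Bar{\ell}_{M_{\mathcal{C}},M_{\MB}}$ and of $y(t)$, produces three families of diagrams: those with a top $M_{\MB}$-disc (matching the $M_{\MB}$-part of $\partial a/\partial t$ after composing with $\mathbf{H}$), those with an $M_{\mathcal{C}}$-disc at the bottom that cancel because $s_{d+1}\mathbf{H}$ is itself a $d$-pre-Calabi-Yau morphism — so the $M_{\mathcal{C}}$-term is rewritten as an $M_{\MA}$-term via $(\operatorname{MI}^{\doubar{x}})$ for $\mathbf{H}$ — and those where an $M_{\MA}$-disc sits between $\mathbf{H}$-discs and $a(t)$-discs, matching the $M_{\MA}$-part of $\partial a/\partial t$. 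The bookkeeping is to match these three families against the two families coming from the chain-rule expansion of $\partial x/\partial t$ (one with $b(t)$ atop an $a(t)$-disc and an $M_{\MB}$-disc, one with $b(t)$ atop an $a(t)$-disc and an $M_{\MA}$-disc routed through $\mathbf{H}$), exactly as is done with the diagrams $\mathcalboondox{D_i},\mathcalboondox{D_i'}$ in Proposition~\ref{prop:htpic-A-inf-2}.

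I would also remark that there is a second, slightly slicker route: by Lemma~\ref{lemma:fin-diag} the finiteness conditions hold, so one could instead lift the homotopy to $\mathcal{L}_d^{\Psi_0\to\Phi_0}(\mathcal{C},\MB)$ via the weak-homotopy picture, but since the pre-Calabi-Yau structures on $\mathcal{C}$ and $\MB$ are fixed, the direct argument at the level of $\Bar{\ell}$ is cleaner and parallels the $A_{\infty}$-proof verbatim. The main obstacle will be the sign analysis: unlike the $A_{\infty}$-case, every disc in a pre-Calabi-Yau diagram can carry several inputs and several outputs, so the Koszul signs attached to the $\tau^{\sigma}$-permutations in the definitions of $\Phi^{\sigma}_{M_{\MB}}$, $\Psi^{\sigma}_{M_{\MA}}$ and of the composition in Definition~\ref{def:cp-pCY} interact in a more intricate way, and one must check that the diagram-by-diagram cancellations carry the correct signs — this is the pre-Calabi-Yau analogue of the ``straightforward to check'' sign congruences appearing throughout the proof of Definition~\ref{def:ell}'s associated proposition, and it is where all the real work lies; the combinatorial matching of diagrams itself is routine once the $\mathbf{H}$-composition is inserted in the right place.
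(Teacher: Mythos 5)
Your proposal follows essentially the same route as the paper's proof: right-compose the homotopy $a(t)+b(t)dt$ with $s_{d+1}\mathbf{H}$ to get $sa(t)\circ s_{d+1}\mathbf{H}$, define the homotopy term diagrammatically by replacing one $a(t)$-disc by $b(t)$ in the composite diagrams of Definition~\ref{def:cp-pCY}, and verify the homotopy equation by matching the chain-rule expansion of $\partial_t(a(t)\circ\mathbf{H})$ against the $\bar{\ell}$-expansion, using the morphism identity $(\operatorname{MI}^{\doubar{x}})$ for $\mathbf{H}$ and the Maurer--Cartan property of $a(t)$ to convert the $M_{\mathcal{C}}$- and $M_{\MB}$-rooted diagrams into $M_{\MA}$-rooted ones. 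This is exactly the argument given in the paper, including your (correct) observation that the sign bookkeeping attached to the multi-input, multi-output discs is where the real work lies.
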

\begin{proof}
    Consider $a(t)+b(t)dt\in \Bar{\mathcal{L}}_d^{\Phi_0}(\MA,\MB)[t,dt]$ be such that $sa(0)=s_{d+1}\mathbf{F}$, $sa(1)=s_{d+1}\mathbf{G}$, $a(t)$ is a Maurer-Cartan element of $(\Bar{\mathcal{L}}_d^{\Phi_0}(\MA,\MB),\Bar{\ell}_{M_{\MA},M_{\MB}})$ and
    \[
    \frac{\partial a}{\partial t}(t)^{\doubar{x}}=\sum\limits_{n=2}^{\infty}\frac{1}{(n-1)!}\ell^n\big(a(t),\dots,a(t),b(t)\big)^{\doubar{x}}
    \] 
    for every $t\in\Bbbk$, $\doubar{x}\in\doubar{\MO}_{\MA}$.
    Then, by Proposition \ref{prop:pCY-category}, $sa'(t)=sa(t)\circ s_{d+1}\mathbf{H}$ is a Maurer-Cartan element of the $L_{\infty}$-algebra of $(\Bar{\mathcal{L}}_d^{\Phi_0}(\mathcal{C},\MB),\Bar{\ell}_{M_{\mathcal{C}},M_{\MB}})$ for every $t\in\Bbbk$. Moreover, it is clear that $sa'(0)=s_{d+1}\mathbf{F}\circ s_{d+1}\mathbf{H}$ and $sa'(1)=s_{d+1}\mathbf{G}\circ s_{d+1}\mathbf{H}$. We define $sb'(t)^{\doubar{x}}=\sum\mathcal{E}(\mathcalboondox{D})$ where the sum is over all the filled diagrams $\mathcalboondox{D}$ of type $\doubar{x}$ and of the form

   \begin{tikzpicture}[line cap=round,line join=round,x=1.0cm,y=1.0cm]
\clip(-7,-2.7) rectangle (5,1);
       \draw (0,0) circle (0.5cm);
       \draw[->,>=stealth] (0.5,0)--(0.9,0);
       \draw[rotate=180][->,>=stealth] (0.5,0)--(0.9,0);
       \draw[rotate=-45][<-,>=stealth] (0.5,0)--(0.9,0);
       \draw[rotate=-135][<-,>=stealth] (0.5,0)--(0.9,0);
       \draw (-0.85,-0.85) circle (0.3cm);
       \draw[rotate around={135:(-0.85,-0.85)},shift={(-0.55,-0.85)}]\doublefleche;
       \draw (0.85,-0.85) circle (0.3cm);
       \draw[rotate around={45:(0.85,-0.85)},shift={(1.15,-0.85)}]\doublefleche;
       \draw[rotate around={-45:(0.85,-0.85)}][->,>=stealth](1.15,-0.85)--(1.55,-0.85);
       \draw (1.7,-1.7) circle (0.5cm);
       \draw[rotate around={0:(1.7,-1.7)}][->,>=stealth](2.2,-1.7)--(2.6,-1.7);
       \draw[rotate around={180:(1.7,-1.7)}][->,>=stealth](2.2,-1.7)--(2.6,-1.7);
       \draw[rotate around={45:(1.7,-1.7)}][<-,>=stealth](2.2,-1.7)--(2.6,-1.7);
       \draw (2.55,-0.85) circle (0.3cm);
       \draw[rotate around={45:(2.55,-0.85)},shift={(2.85,-0.85)}]\doublefleche;
        \draw[rotate around={-45:(-2.55,-0.85)}][->,>=stealth](-2.25,-0.85)--(-1.85,-0.85);
       \draw (-1.7,-1.7) circle (0.5cm);
       \draw[rotate around={0:(-1.7,-1.7)}][->,>=stealth](-1.2,-1.7)--(-0.8,-1.7);
       \draw[rotate around={180:(-1.7,-1.7)}][->,>=stealth](-1.2,-1.7)--(-0.8,-1.7);
       \draw[rotate around={45:(-1.7,-1.7)}][<-,>=stealth](-1.2,-1.7)--(-0.8,-1.7);
       \draw (-2.55,-0.85) circle (0.3cm);
       \draw[rotate around={135:(-2.55,-0.85)},shift={(-2.25,-0.85)}]\doublefleche;
    \draw (0,0.25) node[anchor=north]{$b(t)$};
    \draw (-1.7,-1.45) node[anchor=north]{$a(t)$};
    \draw (1.7,-1.45) node[anchor=north]{$a(t)$};
    \draw (-2.55,-0.6) node[anchor=north]{$\mathbf{H}$};
    \draw (2.55,-0.6) node[anchor=north]{$\mathbf{H}$};
    \draw (0.85,-0.6) node[anchor=north]{$\mathbf{H}$};
    \draw (-0.85,-0.6) node[anchor=north]{$\mathbf{H}$};
    \draw[fill=black] (0,0.6) circle (0.3pt);
    \draw[fill=black] (0.2,0.55) circle (0.3pt);
    \draw[fill=black] (-0.2,0.55) circle (0.3pt);
    \draw[fill=black] (0,-0.6) circle (0.3pt);
    \draw[fill=black] (0.2,-0.55) circle (0.3pt);
    \draw[fill=black] (-0.2,-0.55) circle (0.3pt);
    \draw[fill=black] (0.5,-1) circle (0.3pt);
    \draw[fill=black] (0.57,-1.13) circle (0.3pt);
    \draw[fill=black] (0.7,-1.2) circle (0.3pt);
    \draw[fill=black] (-0.5,-1) circle (0.3pt);
    \draw[fill=black] (-0.57,-1.13) circle (0.3pt);
    \draw[fill=black] (-0.7,-1.2) circle (0.3pt);
    \draw[fill=black] (1.7,0.6-1.7) circle (0.3pt);
    \draw[fill=black] (1.9,0.55-1.7) circle (0.3pt);
    \draw[fill=black] (1.5,0.55-1.7) circle (0.3pt);
   \draw[fill=black] (1.7,-0.6-1.7) circle (0.3pt);
    \draw[fill=black] (1.9,-0.55-1.7) circle (0.3pt);
    \draw[fill=black] (1.5,-0.55-1.7) circle (0.3pt);
    \draw[fill=black] (-1.7,0.6-1.7) circle (0.3pt);
    \draw[fill=black] (-1.9,0.55-1.7) circle (0.3pt);
    \draw[fill=black] (-1.5,0.55-1.7) circle (0.3pt);
   \draw[fill=black] (-1.7,-0.6-1.7) circle (0.3pt);
    \draw[fill=black] (-1.9,-0.55-1.7) circle (0.3pt);
    \draw[fill=black] (-1.5,-0.55-1.7) circle (0.3pt);
   \end{tikzpicture}
   
    \noindent for every $\doubar{x}\in\doubar{\MO}_{\mathcal{C}}$. 
    We have to show that 
    \[\frac{\partial a'}{\partial t}(t)^{\doubar{x}}=\sum\limits_{n=2}^{\infty}\frac{1}{(n-1)!}\ell^n\big(a'(t),\dots,a'(t),b'(t)\big)^{\doubar{x}}
    \] 
    for every $\doubar{x}\in\doubar{\MO}_{\mathcal{C}}$. We have that 
    \[
    \frac{\partial a'}{\partial t}(t)^{\doubar{x}}=\sum\mathcal{E}(\mathcalboondox{D_1})+\sum\mathcal{E}(\mathcalboondox{D_2})
    \]where the sums are over all the filled diagrams $\mathcalboondox{D_1}$ and $\mathcalboondox{D_2}$ of type $\doubar{x}$ and of the form

       \begin{minipage}{21cm}

    \end{minipage}
    
    \noindent respectively. More precisely, diagrams of the form $\mathcalboondox{D'_3}$ are diagrams composed of one disc filled with $M_{\mathcal{C}}$ sharing each of its outgoing arrows with discs filled with $\mathbf{H}$ which do not share any outgoing arrow with the disc filled with $b(t)$ in the diagram. We will call the number of discs filled with $\mathbf{H}$ between the disc filled with $M_{\mathcal{C}}$ and the disc filled with $b(t)$ the distance from $M_{\mathcal{C}}$ to $b(t)$. Note that diagrams of the form $\mathcalboondox{D'_1}$ are the only diagrams where this distance is $1$. Moreover, the distance from $M_{\mathcal{C}}$ to $b(t)$ for diagrams of the form $\mathcalboondox{D'_3}$ is at least $2$ and at most $\llg(\doubar{x})$.
    Similarly, diagrams of the form $\mathcalboondox{D'_4}$ are diagrams composed of one disc filled with $M_{\mathcal{B}}$ sharing each of its incoming arrows witch discs filled with $a(t)$. We will call the number of discs filled with $\mathbf{H}$ between the disc filled with $M_{\mathcal{B}}$ and the disc filled with $b(t)$ the distance from $M_{\mathcal{B}}$ to $b(t)$. Note that diagrams of the form $\mathcalboondox{D'_2}$ are the only diagrams where this distance is $0$. The distance from $M_{\mathcal{B}}$ to $b(t)$ for diagrams of the form $\mathcalboondox{D'_4}$ is at least $1$ and at most $\llg(\doubar{x})-1$. Moreover, diagrams of the form $\mathcalboondox{D'_3}$ (resp. $\mathcalboondox{D'_4}$) carry a minus sign if $b(t)$ is before $M_{\mathcal{C}}$ (resp. after $M_{\mathcal{B}}$) in the application order since $|b(t)|=-1$. 
    We have that $\sum\mathcal{E}(\mathcalboondox{D'_2})=\sum\mathcal{E}(\mathcalboondox{D_2})$. Moreover, since $(\Psi_0,s_{d+1}\mathbf{H})$ is a $d$-pre-Calabi-Yau morphism, we have that $\sum\mathcal{E}(\mathcalboondox{D'_1})=\sum\mathcal{E}(\mathcalboondox{D_1})+\sum\mathcal{E}(\mathcalboondox{D_1\dprimeind})$ where the last sum is over all the filled diagrams of type $\doubar{x}$ and of the form 

\begin{tikzpicture}[line cap=round,line join=round,x=1.0cm,y=1.0cm]
\clip(-7,-3.3) rectangle (5,1.5);
        \draw (0,0) circle (0.5cm);
        \draw[->,>=stealth] (0.5,0)--(2,0);
    \draw (2.5,0) circle (0.5cm);
    \draw[rotate around={90:(2.5,0)}] [->,>=stealth] (3,0)--(3.4,0);
     \draw[rotate around={-90:(2.5,0)}] [->,>=stealth](3,0)--(3.4,0);
     \draw[rotate around={120:(2.5,0)}][<-,>=stealth] (3,0)--(3.4,0);
     \draw (1.9,1.03) circle (0.3cm);
     \draw[rotate around={120:(1.9,1.03)},shift={(2.2,1.03)}]\doublefleche;
     \draw[rotate around={-120:(2.5,0)}][<-,>=stealth] (3,0)--(3.4,0);
     \draw (1.9,-1.03) circle (0.3cm);
     \draw[rotate around={-120:(1.9,-1.03)},shift={(2.2,-1.03)}]\doublefleche;
     \draw (-2.5,0) circle (0.5cm);
    \draw[rotate around={90:(-2.5,0)}] [->,>=stealth] (-2,0)--(-1.6,0);
     \draw[rotate around={-90:(-2.5,0)}] [->,>=stealth] (-2,0)--(-1.6,0);
     \draw[rotate around={60:(-2.5,0)}][<-,>=stealth] (-2,0)--(-1.6,0);
     \draw (-1.9,-1.03) circle (0.3cm);
     \draw[rotate around={60:(-1.9,1.03)},shift={(-1.6,1.03)}]\doublefleche;
     \draw[rotate around={-60:(-2.5,0)}][<-,>=stealth] (-2,0)--(-1.6,0);
     \draw (-1.9,1.03) circle (0.3cm);
     \draw[rotate around={-60:(-1.9,-1.03)},shift={(-1.6,-1.03)}]\doublefleche;
     \draw[rotate=180][->,>=stealth] (0.5,0)--(2,0);
        \draw[rotate=-135][<-,>=stealth](0.5,0)--(0.8,0);
        \draw (-0.77,-0.77) circle (0.3cm);
        \draw[rotate around={-135:(-0.77,-0.77)},shift={(-0.47,-0.77)}]\doublefleche;
        \draw[rotate=-45][<-,>=stealth](0.5,0)--(0.8,0);
        \draw (0.77,-0.77) circle (0.3cm);
        \draw[rotate around={-45:(0.77,-0.77)},shift={(1.07,-0.77)}]\doublefleche;
        \draw[rotate=-90][<-,>=stealth](0.5,0)--(1,0);
        \draw (0,-1.3) circle (0.3cm);
        \draw[rotate around={0:(0,-1.3)},shift={(0.3,-1.3))}] \doublefleche;
        \draw[rotate around={-90:(0,-1.3)}][->,>=stealth] (0.3,-1.3)--(0.8,-1.3);
        \draw (0,-2.6) circle (0.5cm);
        \draw[rotate around={0:(0,-2.6)}][->,>=stealth] (0.5,-2.6)--(0.9,-2.6);
        \draw[rotate around={180:(0,-2.6)}][->,>=stealth] (0.5,-2.6)--(0.9,-2.6);
        \draw[rotate around={45:(0,-2.6)}][<-,>=stealth](0.5,-2.6)--(0.8,-2.6);
        \draw (0.77,-1.83) circle (0.3cm);
        \draw [rotate around={45:(0.77,-1.83)},shift={(1.07,-1.83)}] \doublefleche;
        \draw[rotate around={135:(0,-2.6)}][<-,>=stealth](0.5,-2.6)--(0.8,-2.6);
        \draw (-0.77,-1.83) circle (0.3cm);
        \draw [rotate around={135:(-0.77,-1.83)},shift={(-0.47,-1.83)}] \doublefleche;
\draw (0,0.25) node[anchor=north]{$M_{\MA}$};
\draw (-2.5,0.25) node[anchor=north]{$a(t)$};
\draw (2.5,0.25) node[anchor=north]{$a(t)$};
\draw (0,-2.35) node[anchor=north]{$b(t)$};
\draw (0,-1.05) node[anchor=north]{$\mathbf{H}$};
\draw (-0.77,-1.58) node[anchor=north]{$\mathbf{H}$};
\draw (0.77,-1.58) node[anchor=north]{$\mathbf{H}$};
\draw (0.77,-0.52) node[anchor=north]{$\mathbf{H}$};
\draw (-0.77,-0.52) node[anchor=north]{$\mathbf{H}$};
\draw (1.9,1.28) node[anchor=north]{$\mathbf{H}$};
\draw (-1.9,1.28) node[anchor=north]{$\mathbf{H}$};
\draw (1.9,-0.78) node[anchor=north]{$\mathbf{H}$};
\draw (-1.9,-0.78) node[anchor=north]{$\mathbf{H}$};
    \draw[fill=black] (0,0.6) circle (0.3pt);
    \draw[fill=black] (0.2,0.55) circle (0.3pt);
    \draw[fill=black] (-0.2,0.55) circle (0.3pt);
    \draw[fill=black] (-0.25,-0.6) circle (0.3pt);
    \draw[fill=black] (-0.35,-0.55) circle (0.3pt);
    \draw[fill=black] (-0.13,-0.62) circle (0.3pt);
    \draw[fill=black] (0.25,-0.6) circle (0.3pt);
    \draw[fill=black] (0.35,-0.55) circle (0.3pt);
    \draw[fill=black] (0.13,-0.62) circle (0.3pt);
    \draw[fill=black] (-3.1,0) circle (0.3pt);
    \draw[fill=black] (-3.05,0.2) circle (0.3pt);
    \draw[fill=black] (-3.05,-0.2) circle (0.3pt);
     \draw[fill=black] (3.1,0) circle (0.3pt);
    \draw[fill=black] (3.05,0.2) circle (0.3pt);
    \draw[fill=black] (3.05,-0.2) circle (0.3pt);
    \draw[fill=black] (-0.37,-1.3) circle (0.3pt);
    \draw[fill=black] (-0.32,-1.15) circle (0.3pt);
    \draw[fill=black] (-0.32,-1.45) circle (0.3pt);
    \draw[fill=black] (-1.95,0.2) circle (0.3pt);
    \draw[fill=black] (-2,0.32) circle (0.3pt);
    \draw[fill=black] (-2.1,0.4) circle (0.3pt);
    \draw[fill=black] (-1.95,-0.2) circle (0.3pt);
    \draw[fill=black] (-2,-0.32) circle (0.3pt);
    \draw[fill=black] (-2.1,-0.4) circle (0.3pt);
    \draw[fill=black] (1.95,0.2) circle (0.3pt);
    \draw[fill=black] (2,0.32) circle (0.3pt);
    \draw[fill=black] (2.1,0.4) circle (0.3pt);
    \draw[fill=black] (1.95,-0.2) circle (0.3pt);
    \draw[fill=black] (2,-0.32) circle (0.3pt);
    \draw[fill=black] (2.1,-0.4) circle (0.3pt);
    \draw[fill=black] (0.2,-3.15) circle (0.3pt);
    \draw[fill=black] (-0.2,-3.15) circle (0.3pt);
    \draw[fill=black] (0,-3.2) circle (0.3pt);
    \draw[fill=black] (-0.12,-2.02) circle (0.3pt);
    \draw[fill=black] (-0.22,-2.05) circle (0.3pt);
    \draw[fill=black] (-0.32,-2.1) circle (0.3pt);
    \draw[fill=black] (0.12,-2.02) circle (0.3pt);
    \draw[fill=black] (0.22,-2.05) circle (0.3pt);
    \draw[fill=black] (0.32,-2.1) circle (0.3pt);
    \end{tikzpicture}  

    \noindent carrying a minus sign if the disc filled with $b(t)$ is after the one filled with $M_{\MA}$ in the application order. Note that the distance between $M_{\MA}$ and $b(t)$ in diagrams of this form is $1$. Using that $(\Psi_0,s_{d+1}\mathbf{H})$ is a $d$-pre-Calabi-Yau morphism, we get $\sum\mathcal{E}(\mathcalboondox{D'_3})=\sum\mathcal{E}(\mathcalboondox{D_3\dprimeind})+\sum\mathcal{E}(\mathcalboondox{D_3\tprime})$ where the sums are over all the filled diagrams of type $\doubar{x}$ and of the form 

\begin{minipage}{21cm}
    \begin{tikzpicture}[line cap=round,line join=round,x=1.0cm,y=1.0cm]
\clip(-3.2,-3.5) rectangle (4.5,2);
        \draw (0,0) circle (0.5cm);
        \draw[->,>=stealth] (0.5,0)--(2,0);
    \draw (2.5,0) circle (0.5cm);
    \draw[rotate around={90:(2.5,0)}] [->,>=stealth] (3,0)--(3.4,0);
     \draw[rotate around={-90:(2.5,0)}] [->,>=stealth](3,0)--(3.4,0);
     \draw[rotate around={120:(2.5,0)}][<-,>=stealth] (3,0)--(3.4,0);
     \draw (1.9,1.03) circle (0.3cm);
     \draw[rotate around={120:(1.9,1.03)},shift={(2.2,1.03)}]\doublefleche;
     \draw[rotate around={-120:(2.5,0)}][<-,>=stealth] (3,0)--(3.4,0);
     \draw (1.9,-1.03) circle (0.3cm);
     \draw[rotate around={180:(1.9,-1.03)},shift={(2.2,-1.03)}]\doublefleche;
     \draw[rotate around={-60:(1.9,-1.03)}][->,>=stealth] (2.2,-1.03)--(2.6,-1.03);
     \draw (2.5,-2.06) circle (0.5cm);
     \draw[rotate around={0:(2.5,-2.06)}][->,>=stealth](3,-2.06)--(3.4,-2.06);
     \draw[rotate around={180:(2.5,-2.06)}][->,>=stealth](3,-2.06)--(3.4,-2.06);
     \draw[rotate around={60:(2.5,-2.06)}][<-,>=stealth](3,-2.06)--(3.4,-2.06);
     \draw (3.1,-1.03) circle (0.3cm);
    \draw[rotate around={60:(3.1,-1.03)},shift={(3.4,-1.03)}]\doublefleche;
     \draw (-2.5,0) circle (0.5cm);
    \draw[rotate around={90:(-2.5,0)}] [->,>=stealth] (-2,0)--(-1.6,0);
     \draw[rotate around={-90:(-2.5,0)}] [->,>=stealth] (-2,0)--(-1.6,0);
     \draw[rotate around={60:(-2.5,0)}][<-,>=stealth] (-2,0)--(-1.6,0);
     \draw (-1.9,-1.03) circle (0.3cm);
     \draw[rotate around={60:(-1.9,1.03)},shift={(-1.6,1.03)}]\doublefleche;
     \draw[rotate around={-60:(-2.5,0)}][<-,>=stealth] (-2,0)--(-1.6,0);
     \draw (-1.9,1.03) circle (0.3cm);
     \draw[rotate around={-60:(-1.9,-1.03)},shift={(-1.6,-1.03)}]\doublefleche;
     \draw[rotate=180][->,>=stealth] (0.5,0)--(2,0);
        \draw[rotate=-135][<-,>=stealth](0.5,0)--(0.8,0);
        \draw (-0.77,-0.77) circle (0.3cm);
        \draw[rotate around={-135:(-0.77,-0.77)},shift={(-0.47,-0.77)}]\doublefleche;
        \draw[rotate=-45][<-,>=stealth](0.5,0)--(0.8,0);
        \draw (0.77,-0.77) circle (0.3cm);
        \draw[rotate around={-45:(0.77,-0.77)},shift={(1.07,-0.77)}]\doublefleche;
        \draw[rotate=-90][<-,>=stealth](0.5,0)--(1,0);
        \draw (0,-1.3) circle (0.3cm);
        \draw[rotate around={0:(0,-1.3)},shift={(0.3,-1.3))}] \doublefleche;
        \draw[rotate around={-90:(0,-1.3)}][->,>=stealth] (0.3,-1.3)--(0.8,-1.3);
        \draw (0,-2.6) circle (0.5cm);
        \draw[rotate around={0:(0,-2.6)}][->,>=stealth] (0.5,-2.6)--(0.9,-2.6);
        \draw[rotate around={180:(0,-2.6)}][->,>=stealth] (0.5,-2.6)--(0.9,-2.6);
        \draw[rotate around={45:(0,-2.6)}][<-,>=stealth](0.5,-2.6)--(0.8,-2.6);
        \draw (0.77,-1.83) circle (0.3cm);
        \draw [rotate around={45:(0.77,-1.83)},shift={(1.07,-1.83)}] \doublefleche;
        \draw[rotate around={135:(0,-2.6)}][<-,>=stealth](0.5,-2.6)--(0.8,-2.6);
        \draw (-0.77,-1.83) circle (0.3cm);
        \draw [rotate around={135:(-0.77,-1.83)},shift={(-0.47,-1.83)}] \doublefleche;
\draw (0,0.25) node[anchor=north]{$M_{\MA}$};
\draw (-2.5,0.25) node[anchor=north]{$a(t)$};
\draw (2.5,0.25) node[anchor=north]{$a(t)$};
\draw (0,-2.35) node[anchor=north]{$a(t)$};
\draw (2.5,-1.8) node[anchor=north]{$b(t)$};
\draw (0,-1.05) node[anchor=north]{$\mathbf{H}$};
\draw (-0.77,-1.58) node[anchor=north]{$\mathbf{H}$};
\draw (0.77,-1.58) node[anchor=north]{$\mathbf{H}$};
\draw (0.77,-0.52) node[anchor=north]{$\mathbf{H}$};
\draw (-0.77,-0.52) node[anchor=north]{$\mathbf{H}$};
\draw (1.9,1.28) node[anchor=north]{$\mathbf{H}$};
\draw (-1.9,1.28) node[anchor=north]{$\mathbf{H}$};
\draw (1.9,-0.78) node[anchor=north]{$\mathbf{H}$};
\draw (-1.9,-0.78) node[anchor=north]{$\mathbf{H}$};
\draw (3.1,-0.78) node[anchor=north]{$\mathbf{H}$};
\draw (3.9,0.2) node[anchor=north] {and};
    \draw[fill=black] (0,0.6) circle (0.3pt);
    \draw[fill=black] (0.2,0.55) circle (0.3pt);
    \draw[fill=black] (-0.2,0.55) circle (0.3pt);
    \draw[fill=black] (-0.25,-0.6) circle (0.3pt);
    \draw[fill=black] (-0.35,-0.55) circle (0.3pt);
    \draw[fill=black] (-0.13,-0.62) circle (0.3pt);
    \draw[fill=black] (0.25,-0.6) circle (0.3pt);
    \draw[fill=black] (0.35,-0.55) circle (0.3pt);
    \draw[fill=black] (0.13,-0.62) circle (0.3pt);
    \draw[fill=black] (-3.1,0) circle (0.3pt);
    \draw[fill=black] (-3.05,0.2) circle (0.3pt);
    \draw[fill=black] (-3.05,-0.2) circle (0.3pt);
     \draw[fill=black] (3.1,0) circle (0.3pt);
    \draw[fill=black] (3.05,0.2) circle (0.3pt);
    \draw[fill=black] (3.05,-0.2) circle (0.3pt);
    \draw[fill=black] (-0.37,-1.3) circle (0.3pt);
    \draw[fill=black] (-0.32,-1.15) circle (0.3pt);
    \draw[fill=black] (-0.32,-1.45) circle (0.3pt);
    \draw[fill=black] (-1.95,0.2) circle (0.3pt);
    \draw[fill=black] (-2,0.32) circle (0.3pt);
    \draw[fill=black] (-2.1,0.4) circle (0.3pt);
    \draw[fill=black] (-1.95,-0.2) circle (0.3pt);
    \draw[fill=black] (-2,-0.32) circle (0.3pt);
    \draw[fill=black] (-2.1,-0.4) circle (0.3pt);
    \draw[fill=black] (1.95,0.2) circle (0.3pt);
    \draw[fill=black] (2,0.32) circle (0.3pt);
    \draw[fill=black] (2.1,0.4) circle (0.3pt);
    \draw[fill=black] (1.95,-0.2) circle (0.3pt);
    \draw[fill=black] (2,-0.32) circle (0.3pt);
    \draw[fill=black] (2.1,-0.4) circle (0.3pt);
    \draw[fill=black] (0.2,-3.15) circle (0.3pt);
    \draw[fill=black] (-0.2,-3.15) circle (0.3pt);
    \draw[fill=black] (0,-3.2) circle (0.3pt);
    \draw[fill=black] (-0.12,-2.02) circle (0.3pt);
    \draw[fill=black] (-0.22,-2.05) circle (0.3pt);
    \draw[fill=black] (-0.32,-2.1) circle (0.3pt);
    \draw[fill=black] (0.12,-2.02) circle (0.3pt);
    \draw[fill=black] (0.22,-2.05) circle (0.3pt);
    \draw[fill=black] (0.32,-2.1) circle (0.3pt);
    \draw[fill=black] (2.5,-2.66) circle (0.3pt);
    \draw[fill=black] (2.3,-2.61) circle (0.3pt);
    \draw[fill=black] (2.7,-2.61) circle (0.3pt);
    \draw[fill=black] (2.5,-1.47) circle (0.3pt);
    \draw[fill=black] (2.35,-1.51) circle (0.3pt);
    \draw[fill=black] (2.65,-1.51) circle (0.3pt);
    \draw[fill=black] (2.23,-0.9) circle (0.3pt);
    \draw[fill=black] (2.23,-1.15) circle (0.3pt);
    \draw[fill=black] (2.27,-1.02) circle (0.3pt);
 \end{tikzpicture}
    \begin{tikzpicture}[line cap=round,line join=round,x=1.0cm,y=1.0cm]
\clip(-3.2,-3.5) rectangle (5,2);
        \draw (0,0) circle (0.5cm);
        \draw[->,>=stealth] (0.5,0)--(2,0);
    \draw (2.5,0) circle (0.5cm);
    \draw[rotate around={90:(2.5,0)}] [->,>=stealth] (3,0)--(3.4,0);
     \draw[rotate around={-90:(2.5,0)}] [->,>=stealth](3,0)--(3.4,0);
     \draw[rotate around={120:(2.5,0)}][<-,>=stealth] (3,0)--(3.4,0);
     \draw (1.9,1.03) circle (0.3cm);
     \draw[rotate around={120:(1.9,1.03)},shift={(2.2,1.03)}]\doublefleche;
     \draw[rotate around={-120:(2.5,0)}][<-,>=stealth] (3,0)--(3.4,0);
     \draw (1.9,-1.03) circle (0.3cm);
     \draw[rotate around={-120:(1.9,-1.03)},shift={(2.2,-1.03)}]\doublefleche;
     \draw (-2.5,0) circle (0.5cm);
    \draw[rotate around={90:(-2.5,0)}] [->,>=stealth] (-2,0)--(-1.6,0);
     \draw[rotate around={-90:(-2.5,0)}] [->,>=stealth] (-2,0)--(-1.6,0);
     \draw[rotate around={60:(-2.5,0)}][<-,>=stealth] (-2,0)--(-1.6,0);
     \draw (-1.9,-1.03) circle (0.3cm);
     \draw[rotate around={60:(-1.9,1.03)},shift={(-1.6,1.03)}]\doublefleche;
     \draw[rotate around={-60:(-2.5,0)}][<-,>=stealth] (-2,0)--(-1.6,0);
     \draw (-1.9,1.03) circle (0.3cm);
     \draw[rotate around={-60:(-1.9,-1.03)},shift={(-1.6,-1.03)}]\doublefleche;
     \draw[rotate=180][->,>=stealth] (0.5,0)--(2,0);
        \draw[rotate=-135][<-,>=stealth](0.5,0)--(0.8,0);
        \draw (-0.77,-0.77) circle (0.3cm);
        \draw[rotate around={-135:(-0.77,-0.77)},shift={(-0.47,-0.77)}]\doublefleche;
        \draw[rotate=-45][<-,>=stealth](0.5,0)--(0.8,0);
        \draw (0.77,-0.77) circle (0.3cm);
        \draw[rotate around={-45:(0.77,-0.77)},shift={(1.07,-0.77)}]\doublefleche;
        \draw[rotate=-90][<-,>=stealth](0.5,0)--(1,0);
        \draw (0,-1.3) circle (0.3cm);
        \draw[rotate around={0:(0,-1.3)},shift={(0.3,-1.3))}] \doublefleche;
        \draw[rotate around={-90:(0,-1.3)}][->,>=stealth] (0.3,-1.3)--(0.8,-1.3);
        \draw (0,-2.6) circle (0.5cm);
        \draw[rotate around={0:(0,-2.6)}][->,>=stealth] (0.5,-2.6)--(0.9,-2.6);
        \draw[rotate around={180:(0,-2.6)}][->,>=stealth] (0.5,-2.6)--(0.9,-2.6);
        \draw[rotate around={30:(0,-2.6)}][<-,>=stealth](0.5,-2.6)--(0.9,-2.6);
        \draw (1.03,-2) circle (0.3cm);
        \draw [rotate around={90:(1.03,-2)},shift={(1.33,-2)}] \doublefleche;
        \draw[rotate around={-30:(1.03,-2)}][->,>=stealth](1.33,-2)--(1.73,-2);
        \draw (2.06,-2.6) circle (0.5cm);
         \draw[rotate around={0:(2.06,-2.6)}][->,>=stealth] (2.56,-2.6)--(2.96,-2.6);
        \draw[rotate around={180:(2.06,-2.6)}][->,>=stealth] (2.56,-2.6)--(2.96,-2.6);
        \draw[rotate around={30:(2.06,-2.6)}][<-,>=stealth] (2.56,-2.6)--(2.96,-2.6);
        \draw (3.09,-2) circle (0.3cm);
        \draw[rotate around={30:(3.09,-2)},shift={(3.39,-2)}]\doublefleche;
        \draw[rotate around={150:(0,-2.6)}][<-,>=stealth](0.5,-2.6)--(0.9,-2.6);
        \draw (-1.03,-2) circle (0.3cm);
        \draw [rotate around={150:(-1.03,-2)},shift={(-0.73,-2)}] \doublefleche;
\draw (0,0.25) node[anchor=north]{$M_{\MA}$};
\draw (-2.5,0.25) node[anchor=north]{$a(t)$};
\draw (2.5,0.25) node[anchor=north]{$a(t)$};
\draw(2.06,-2.35) node[anchor=north]{$b(t)$};
\draw (0,-2.35) node[anchor=north]{$a(t)$};
\draw (0,-1.05) node[anchor=north]{$\mathbf{H}$};
\draw (3.09,-1.75) node[anchor=north]{$\mathbf{H}$};
\draw (-1.03,-1.75) node[anchor=north]{$\mathbf{H}$};
\draw (1.03,-1.75) node[anchor=north]{$\mathbf{H}$};
\draw (0.77,-0.52) node[anchor=north]{$\mathbf{H}$};
\draw (-0.77,-0.52) node[anchor=north]{$\mathbf{H}$};
\draw (1.9,1.28) node[anchor=north]{$\mathbf{H}$};
\draw (-1.9,1.28) node[anchor=north]{$\mathbf{H}$};
\draw (1.9,-0.78) node[anchor=north]{$\mathbf{H}$};
\draw (-1.9,-0.78) node[anchor=north]{$\mathbf{H}$};
    \draw[fill=black] (0,0.6) circle (0.3pt);
    \draw[fill=black] (0.2,0.55) circle (0.3pt);
    \draw[fill=black] (-0.2,0.55) circle (0.3pt);
    \draw[fill=black] (-0.25,-0.6) circle (0.3pt);
    \draw[fill=black] (-0.35,-0.55) circle (0.3pt);
    \draw[fill=black] (-0.13,-0.62) circle (0.3pt);
    \draw[fill=black] (0.25,-0.6) circle (0.3pt);
    \draw[fill=black] (0.35,-0.55) circle (0.3pt);
    \draw[fill=black] (0.13,-0.62) circle (0.3pt);
    \draw[fill=black] (-3.1,0) circle (0.3pt);
    \draw[fill=black] (-3.05,0.2) circle (0.3pt);
    \draw[fill=black] (-3.05,-0.2) circle (0.3pt);
     \draw[fill=black] (3.1,0) circle (0.3pt);
    \draw[fill=black] (3.05,0.2) circle (0.3pt);
    \draw[fill=black] (3.05,-0.2) circle (0.3pt);
    \draw[fill=black] (-0.37,-1.3) circle (0.3pt);
    \draw[fill=black] (-0.32,-1.15) circle (0.3pt);
    \draw[fill=black] (-0.32,-1.45) circle (0.3pt);
    \draw[fill=black] (-1.95,0.2) circle (0.3pt);
    \draw[fill=black] (-2,0.32) circle (0.3pt);
    \draw[fill=black] (-2.1,0.4) circle (0.3pt);
    \draw[fill=black] (-1.95,-0.2) circle (0.3pt);
    \draw[fill=black] (-2,-0.32) circle (0.3pt);
    \draw[fill=black] (-2.1,-0.4) circle (0.3pt);
    \draw[fill=black] (1.95,0.2) circle (0.3pt);
    \draw[fill=black] (2,0.32) circle (0.3pt);
    \draw[fill=black] (2.1,0.4) circle (0.3pt);
    \draw[fill=black] (1.95,-0.2) circle (0.3pt);
    \draw[fill=black] (2,-0.32) circle (0.3pt);
    \draw[fill=black] (2.1,-0.4) circle (0.3pt);
    \draw[fill=black] (0.2,-3.15) circle (0.3pt);
    \draw[fill=black] (-0.2,-3.15) circle (0.3pt);
    \draw[fill=black] (0,-3.2) circle (0.3pt);
    \draw[fill=black] (-0.15,-2.02) circle (0.3pt);
    \draw[fill=black] (-0.3,-2.08) circle (0.3pt);
    \draw[fill=black] (-0.43,-2.18) circle (0.3pt);
    \draw[fill=black] (0.15,-2.02) circle (0.3pt);
    \draw[fill=black] (0.3,-2.08) circle (0.3pt);
    \draw[fill=black] (0.43,-2.18) circle (0.3pt);
    \draw[fill=black] (1.03,-2.35) circle (0.3pt);
    \draw[fill=black] (0.86,-2.32) circle (0.3pt);
    \draw[fill=black] (1.2,-2.32) circle (0.3pt);
    \draw[fill=black] (2.06,-3.2) circle (0.3pt);
    \draw[fill=black] (2.26,-3.15) circle (0.3pt);
    \draw[fill=black] (1.86,-3.15) circle (0.3pt);
    \draw[fill=black] (2.06,-2) circle (0.3pt);
    \draw[fill=black] (2.26,-2.05) circle (0.3pt);
    \draw[fill=black] (1.86,-2.05) circle (0.3pt);
    \end{tikzpicture}
    \end{minipage}

    \noindent respectively, where the distance between $M_{\MA}$ and $b(t)$ is at least $1$ in diagrams of the form $\mathcalboondox{D_3\dprimeind}$ and at least $2$ in diagrams of the form $\mathcalboondox{D_3\tprime}$ and where the diagrams carry a minus sign if the disc filled with $b(t)$ is after the one filled with $M_{\MA}$ in the application order.
    Finally, using that $a(t)$ is a Maurer-Cartan element of $(\bar {\mathcal{L}}^{\Phi_0}_d(\MA,\MB),\Bar{\ell}_{M_{\MA},M_{\MB}})$, we obtain the equality $\sum\mathcal{E}(\mathcalboondox{D_4'})=\sum\mathcal{E}(\mathcalboondox{D_4\dprimeind})+\sum\mathcal{E}(\mathcalboondox{D_4\tprime})$ where the sums are over all the filled diagrams of type $\doubar{x}$ and of the form

        \begin{minipage}{21cm}
             \begin{tikzpicture}[line cap=round,line join=round,x=1.0cm,y=1.0cm]
\clip(-3.2,-3.5) rectangle (4.5,2);
        \draw (0,0) circle (0.5cm);
        \draw[->,>=stealth] (0.5,0)--(2,0);
    \draw (2.5,0) circle (0.5cm);
    \draw[rotate around={90:(2.5,0)}] [->,>=stealth] (3,0)--(3.4,0);
     \draw[rotate around={-90:(2.5,0)}] [->,>=stealth](3,0)--(3.4,0);
     \draw[rotate around={120:(2.5,0)}][<-,>=stealth] (3,0)--(3.4,0);
     \draw (1.9,1.03) circle (0.3cm);
     \draw[rotate around={120:(1.9,1.03)},shift={(2.2,1.03)}]\doublefleche;
     \draw[rotate around={-120:(2.5,0)}][<-,>=stealth] (3,0)--(3.4,0);
     \draw (1.9,-1.03) circle (0.3cm);
     \draw[rotate around={-120:(1.9,-1.03)},shift={(2.2,-1.03)}]\doublefleche;
     \draw (-2.5,0) circle (0.5cm);
    \draw[rotate around={90:(-2.5,0)}] [->,>=stealth] (-2,0)--(-1.6,0);
     \draw[rotate around={-90:(-2.5,0)}] [->,>=stealth] (-2,0)--(-1.6,0);
     \draw[rotate around={60:(-2.5,0)}][<-,>=stealth] (-2,0)--(-1.6,0);
     \draw (-1.9,-1.03) circle (0.3cm);
     \draw[rotate around={60:(-1.9,1.03)},shift={(-1.6,1.03)}]\doublefleche;
     \draw[rotate around={-60:(-2.5,0)}][<-,>=stealth] (-2,0)--(-1.6,0);
     \draw (-1.9,1.03) circle (0.3cm);
     \draw[rotate around={-60:(-1.9,-1.03)},shift={(-1.6,-1.03)}]\doublefleche;
     \draw[rotate=180][->,>=stealth] (0.5,0)--(2,0);
        \draw[rotate=-135][<-,>=stealth](0.5,0)--(0.8,0);
        \draw (-0.77,-0.77) circle (0.3cm);
        \draw[rotate around={-135:(-0.77,-0.77)},shift={(-0.47,-0.77)}]\doublefleche;
        \draw[rotate=-45][<-,>=stealth](0.5,0)--(0.8,0);
        \draw (0.77,-0.77) circle (0.3cm);
        \draw[rotate around={-45:(0.77,-0.77)},shift={(1.07,-0.77)}]\doublefleche;
        \draw[rotate=-90][<-,>=stealth](0.5,0)--(1,0);
        \draw (0,-1.3) circle (0.3cm);
        \draw[rotate around={0:(0,-1.3)},shift={(0.3,-1.3))}] \doublefleche;
        \draw[rotate around={-90:(0,-1.3)}][->,>=stealth] (0.3,-1.3)--(0.8,-1.3);
        \draw (0,-2.6) circle (0.5cm);
        \draw[rotate around={0:(0,-2.6)}][->,>=stealth] (0.5,-2.6)--(0.9,-2.6);
        \draw[rotate around={180:(0,-2.6)}][->,>=stealth] (0.5,-2.6)--(0.9,-2.6);
        \draw[rotate around={45:(0,-2.6)}][<-,>=stealth](0.5,-2.6)--(0.8,-2.6);
        \draw (0.77,-1.83) circle (0.3cm);
        \draw [rotate around={45:(0.77,-1.83)},shift={(1.07,-1.83)}] \doublefleche;
        \draw[rotate around={135:(0,-2.6)}][<-,>=stealth](0.5,-2.6)--(0.8,-2.6);
        \draw (-0.77,-1.83) circle (0.3cm);
        \draw [rotate around={135:(-0.77,-1.83)},shift={(-0.47,-1.83)}] \doublefleche;
\draw (0,0.25) node[anchor=north]{$M_{\MA}$};
\draw (-2.5,0.25) node[anchor=north]{$a(t)$};
\draw (2.5,0.25) node[anchor=north]{$a(t)$};
\draw (0,-2.35) node[anchor=north]{$b(t)$};
\draw (0,-1.05) node[anchor=north]{$\mathbf{H}$};
\draw (-0.77,-1.58) node[anchor=north]{$\mathbf{H}$};
\draw (0.77,-1.58) node[anchor=north]{$\mathbf{H}$};
\draw (0.77,-0.52) node[anchor=north]{$\mathbf{H}$};
\draw (-0.77,-0.52) node[anchor=north]{$\mathbf{H}$};
\draw (1.9,1.28) node[anchor=north]{$\mathbf{H}$};
\draw (-1.9,1.28) node[anchor=north]{$\mathbf{H}$};
\draw (1.9,-0.78) node[anchor=north]{$\mathbf{H}$};
\draw (-1.9,-0.78) node[anchor=north]{$\mathbf{H}$};
\draw (3.9,0.2) node[anchor=north] {and};
    \draw[fill=black] (0,0.6) circle (0.3pt);
    \draw[fill=black] (0.2,0.55) circle (0.3pt);
    \draw[fill=black] (-0.2,0.55) circle (0.3pt);
    \draw[fill=black] (-0.25,-0.6) circle (0.3pt);
    \draw[fill=black] (-0.35,-0.55) circle (0.3pt);
    \draw[fill=black] (-0.13,-0.62) circle (0.3pt);
    \draw[fill=black] (0.25,-0.6) circle (0.3pt);
    \draw[fill=black] (0.35,-0.55) circle (0.3pt);
    \draw[fill=black] (0.13,-0.62) circle (0.3pt);
    \draw[fill=black] (-3.1,0) circle (0.3pt);
    \draw[fill=black] (-3.05,0.2) circle (0.3pt);
    \draw[fill=black] (-3.05,-0.2) circle (0.3pt);
     \draw[fill=black] (3.1,0) circle (0.3pt);
    \draw[fill=black] (3.05,0.2) circle (0.3pt);
    \draw[fill=black] (3.05,-0.2) circle (0.3pt);
    \draw[fill=black] (-0.37,-1.3) circle (0.3pt);
    \draw[fill=black] (-0.32,-1.15) circle (0.3pt);
    \draw[fill=black] (-0.32,-1.45) circle (0.3pt);
    \draw[fill=black] (-1.95,0.2) circle (0.3pt);
    \draw[fill=black] (-2,0.32) circle (0.3pt);
    \draw[fill=black] (-2.1,0.4) circle (0.3pt);
    \draw[fill=black] (-1.95,-0.2) circle (0.3pt);
    \draw[fill=black] (-2,-0.32) circle (0.3pt);
    \draw[fill=black] (-2.1,-0.4) circle (0.3pt);
    \draw[fill=black] (1.95,0.2) circle (0.3pt);
    \draw[fill=black] (2,0.32) circle (0.3pt);
    \draw[fill=black] (2.1,0.4) circle (0.3pt);
    \draw[fill=black] (1.95,-0.2) circle (0.3pt);
    \draw[fill=black] (2,-0.32) circle (0.3pt);
    \draw[fill=black] (2.1,-0.4) circle (0.3pt);
    \draw[fill=black] (0.2,-3.15) circle (0.3pt);
    \draw[fill=black] (-0.2,-3.15) circle (0.3pt);
    \draw[fill=black] (0,-3.2) circle (0.3pt);
    \draw[fill=black] (-0.12,-2.02) circle (0.3pt);
    \draw[fill=black] (-0.22,-2.05) circle (0.3pt);
    \draw[fill=black] (-0.32,-2.1) circle (0.3pt);
    \draw[fill=black] (0.12,-2.02) circle (0.3pt);
    \draw[fill=black] (0.22,-2.05) circle (0.3pt);
    \draw[fill=black] (0.32,-2.1) circle (0.3pt);
    \end{tikzpicture}
    \begin{tikzpicture}[line cap=round,line join=round,x=1.0cm,y=1.0cm]
\clip(-3,-3.5) rectangle (5,2);
        \draw (0,0) circle (0.5cm);
        \draw[->,>=stealth] (0.5,0)--(2,0);
    \draw (2.5,0) circle (0.5cm);
    \draw[rotate around={90:(2.5,0)}] [->,>=stealth] (3,0)--(3.4,0);
     \draw[rotate around={-90:(2.5,0)}] [->,>=stealth](3,0)--(3.4,0);
     \draw[rotate around={120:(2.5,0)}][<-,>=stealth] (3,0)--(3.4,0);
     \draw (1.9,1.03) circle (0.3cm);
     \draw[rotate around={120:(1.9,1.03)},shift={(2.2,1.03)}]\doublefleche;
     \draw[rotate around={-120:(2.5,0)}][<-,>=stealth] (3,0)--(3.4,0);
     \draw (1.9,-1.03) circle (0.3cm);
     \draw[rotate around={-120:(1.9,-1.03)},shift={(2.2,-1.03)}]\doublefleche;
     \draw (-2.5,0) circle (0.5cm);
    \draw[rotate around={90:(-2.5,0)}] [->,>=stealth] (-2,0)--(-1.6,0);
     \draw[rotate around={-90:(-2.5,0)}] [->,>=stealth] (-2,0)--(-1.6,0);
     \draw[rotate around={60:(-2.5,0)}][<-,>=stealth] (-2,0)--(-1.6,0);
     \draw (-1.9,-1.03) circle (0.3cm);
     \draw[rotate around={60:(-1.9,1.03)},shift={(-1.6,1.03)}]\doublefleche;
     \draw[rotate around={-60:(-2.5,0)}][<-,>=stealth] (-2,0)--(-1.6,0);
     \draw (-1.9,1.03) circle (0.3cm);
     \draw[rotate around={-60:(-1.9,-1.03)},shift={(-1.6,-1.03)}]\doublefleche;
     \draw[rotate=180][->,>=stealth] (0.5,0)--(2,0);
        \draw[rotate=-135][<-,>=stealth](0.5,0)--(0.8,0);
        \draw (-0.77,-0.77) circle (0.3cm);
        \draw[rotate around={-135:(-0.77,-0.77)},shift={(-0.47,-0.77)}]\doublefleche;
        \draw[rotate=-45][<-,>=stealth](0.5,0)--(0.8,0);
        \draw (0.77,-0.77) circle (0.3cm);
        \draw[rotate around={-45:(0.77,-0.77)},shift={(1.07,-0.77)}]\doublefleche;
        \draw[rotate=-90][<-,>=stealth](0.5,0)--(1,0);
        \draw (0,-1.3) circle (0.3cm);
        \draw[rotate around={0:(0,-1.3)},shift={(0.3,-1.3))}] \doublefleche;
        \draw[rotate around={-90:(0,-1.3)}][->,>=stealth] (0.3,-1.3)--(0.8,-1.3);
        \draw (0,-2.6) circle (0.5cm);
        \draw[rotate around={0:(0,-2.6)}][->,>=stealth] (0.5,-2.6)--(0.9,-2.6);
        \draw[rotate around={180:(0,-2.6)}][->,>=stealth] (0.5,-2.6)--(0.9,-2.6);
        \draw[rotate around={30:(0,-2.6)}][<-,>=stealth](0.5,-2.6)--(0.9,-2.6);
        \draw (1.03,-2) circle (0.3cm);
        \draw [rotate around={90:(1.03,-2)},shift={(1.33,-2)}] \doublefleche;
        \draw[rotate around={-30:(1.03,-2)}][->,>=stealth](1.33,-2)--(1.73,-2);
        \draw (2.06,-2.6) circle (0.5cm);
         \draw[rotate around={0:(2.06,-2.6)}][->,>=stealth] (2.56,-2.6)--(2.96,-2.6);
        \draw[rotate around={180:(2.06,-2.6)}][->,>=stealth] (2.56,-2.6)--(2.96,-2.6);
        \draw[rotate around={30:(2.06,-2.6)}][<-,>=stealth] (2.56,-2.6)--(2.96,-2.6);
        \draw (3.09,-2) circle (0.3cm);
        \draw[rotate around={30:(3.09,-2)},shift={(3.39,-2)}]\doublefleche;
        \draw[rotate around={150:(0,-2.6)}][<-,>=stealth](0.5,-2.6)--(0.9,-2.6);
        \draw (-1.03,-2) circle (0.3cm);
        \draw [rotate around={150:(-1.03,-2)},shift={(-0.73,-2)}] \doublefleche;
\draw (0,0.25) node[anchor=north]{$M_{\MA}$};
\draw (-2.5,0.25) node[anchor=north]{$a(t)$};
\draw (2.5,0.25) node[anchor=north]{$a(t)$};
\draw(2.06,-2.35) node[anchor=north]{$b(t)$};
\draw (0,-2.35) node[anchor=north]{$a(t)$};
\draw (0,-1.05) node[anchor=north]{$\mathbf{H}$};
\draw (3.09,-1.75) node[anchor=north]{$\mathbf{H}$};
\draw (-1.03,-1.75) node[anchor=north]{$\mathbf{H}$};
\draw (1.03,-1.75) node[anchor=north]{$\mathbf{H}$};
\draw (0.77,-0.52) node[anchor=north]{$\mathbf{H}$};
\draw (-0.77,-0.52) node[anchor=north]{$\mathbf{H}$};
\draw (1.9,1.28) node[anchor=north]{$\mathbf{H}$};
\draw (-1.9,1.28) node[anchor=north]{$\mathbf{H}$};
\draw (1.9,-0.78) node[anchor=north]{$\mathbf{H}$};
\draw (-1.9,-0.78) node[anchor=north]{$\mathbf{H}$};
    \draw[fill=black] (0,0.6) circle (0.3pt);
    \draw[fill=black] (0.2,0.55) circle (0.3pt);
    \draw[fill=black] (-0.2,0.55) circle (0.3pt);
    \draw[fill=black] (-0.25,-0.6) circle (0.3pt);
    \draw[fill=black] (-0.35,-0.55) circle (0.3pt);
    \draw[fill=black] (-0.13,-0.62) circle (0.3pt);
    \draw[fill=black] (0.25,-0.6) circle (0.3pt);
    \draw[fill=black] (0.35,-0.55) circle (0.3pt);
    \draw[fill=black] (0.13,-0.62) circle (0.3pt);
    \draw[fill=black] (-3.1,0) circle (0.3pt);
    \draw[fill=black] (-3.05,0.2) circle (0.3pt);
    \draw[fill=black] (-3.05,-0.2) circle (0.3pt);
     \draw[fill=black] (3.1,0) circle (0.3pt);
    \draw[fill=black] (3.05,0.2) circle (0.3pt);
    \draw[fill=black] (3.05,-0.2) circle (0.3pt);
    \draw[fill=black] (-0.37,-1.3) circle (0.3pt);
    \draw[fill=black] (-0.32,-1.15) circle (0.3pt);
    \draw[fill=black] (-0.32,-1.45) circle (0.3pt);
    \draw[fill=black] (-1.95,0.2) circle (0.3pt);
    \draw[fill=black] (-2,0.32) circle (0.3pt);
    \draw[fill=black] (-2.1,0.4) circle (0.3pt);
    \draw[fill=black] (-1.95,-0.2) circle (0.3pt);
    \draw[fill=black] (-2,-0.32) circle (0.3pt);
    \draw[fill=black] (-2.1,-0.4) circle (0.3pt);
    \draw[fill=black] (1.95,0.2) circle (0.3pt);
    \draw[fill=black] (2,0.32) circle (0.3pt);
    \draw[fill=black] (2.1,0.4) circle (0.3pt);
    \draw[fill=black] (1.95,-0.2) circle (0.3pt);
    \draw[fill=black] (2,-0.32) circle (0.3pt);
    \draw[fill=black] (2.1,-0.4) circle (0.3pt);
    \draw[fill=black] (0.2,-3.15) circle (0.3pt);
    \draw[fill=black] (-0.2,-3.15) circle (0.3pt);
    \draw[fill=black] (0,-3.2) circle (0.3pt);
    \draw[fill=black] (-0.12,-2.02) circle (0.3pt);
    \draw[fill=black] (-0.22,-2.05) circle (0.3pt);
    \draw[fill=black] (-0.32,-2.1) circle (0.3pt);
    \draw[fill=black] (0.12,-2.02) circle (0.3pt);
    \draw[fill=black] (0.22,-2.05) circle (0.3pt);
    \draw[fill=black] (0.32,-2.1) circle (0.3pt);
    \draw[fill=black] (1.03,-2.35) circle (0.3pt);
    \draw[fill=black] (0.86,-2.32) circle (0.3pt);
    \draw[fill=black] (1.2,-2.32) circle (0.3pt);
    \draw[fill=black] (2.06,-3.2) circle (0.3pt);
    \draw[fill=black] (2.26,-3.15) circle (0.3pt);
    \draw[fill=black] (1.86,-3.15) circle (0.3pt);
    \draw[fill=black] (2.06,-2) circle (0.3pt);
    \draw[fill=black] (2.26,-2.05) circle (0.3pt);
    \draw[fill=black] (1.86,-2.05) circle (0.3pt);
    \end{tikzpicture}   
    \end{minipage}

    \noindent respectively where the distance between $M_{\MA}$ and $b(t)$ is at least $1$ in both diagrams and where the diagrams carry a minus sign if the disc filled with $b(t)$ is before the one filled with $M_{\MA}$ in the application order. 
    
    Therefore, $\sum\mathcal{E}(\mathcalboondox{D_1}\dprimeind)+\sum\mathcal{E}(\mathcalboondox{D_3}\dprimeind)+\sum\mathcal{E}(\mathcalboondox{D_3}\tprime)=-\big(\sum\mathcal{E}(\mathcalboondox{D_4\dprimeind})+\sum\mathcal{E}(\mathcalboondox{D_4\tprime})\big)$ which concludes the proof.
\end{proof}

\begin{proposition}
\label{prop:htpy-cp-left}
    Let $(\MA,s_{d+1}M_{\MA})$, $(\MB,s_{d+1}M_{\MB})$ and $(\mathcal{C},s_{d+1}M_{\mathcal{C}})$ be $d$-pre-Calabi-Yau categories and denote by $\MO_{\MA}$, $\MO_{\MB}$ and $
    \MO_{\mathcal{C}}$ the sets of objects of the respective underlying graded quivers. Consider two homotopic morphisms $(\Phi_0,s_{d+1}\mathbf{F}),(\Phi_0,s_{d+1}\mathbf{G}) : (\MA,s_{d+1}M_{\MA})\rightarrow (\MB,s_{d+1}M_{\MB})$ as well as a $d$-pre-Calabi-Yau morphism $(\Phi_0,s_{d+1}\mathbf{H}) : (\MB,s_{d+1}M_{\MB})\rightarrow (\mathcal{C},s_{d+1}M_{\mathcal{C}})$. 
    Then, $(\Phi_0\circ \Psi_0,s_{d+1}\mathbf{H}\circ s_{d+1}\mathbf{F})$ and $(\Phi_0\circ \Psi_0,s_{d+1}\mathbf{H}\circ s_{d+1}\mathbf{G})$ are homotopic $d$-pre-Calabi-Yau morphisms.
\end{proposition}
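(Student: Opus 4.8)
The plan is to argue as in the proofs of Proposition~\ref{prop:htpic-A-inf-1} and Proposition~\ref{prop:htpy-cp-right}, this time composing on the left by $s_{d+1}\mathbf{H}$; write $\Psi_0 : \MO_{\MB}\rightarrow \MO_{\mathcal{C}}$ for the object map of $\mathbf{H}$. I would start from a homotopy $a(t)+b(t)dt\in \Bar{\mathcal{L}}_d^{\Phi_0}(\MA,\MB)[t,dt]$ between the Maurer--Cartan elements corresponding to $(\Phi_0,s_{d+1}\mathbf{F})$ and $(\Phi_0,s_{d+1}\mathbf{G})$, so that $sa(0)=s_{d+1}\mathbf{F}$, $sa(1)=s_{d+1}\mathbf{G}$, the element $a(t)$ is a Maurer--Cartan element of $(\Bar{\mathcal{L}}_d^{\Phi_0}(\MA,\MB),\Bar{\ell}_{M_{\MA},M_{\MB}})$ for every $t\in\Bbbk$, and, by Remark~\ref{remark:htpic-MC-elts-equiv},
\[
\frac{\partial a}{\partial t}(t)^{\doubar{x}}=\sum_{n=2}^{\infty}\frac{1}{(n-1)!}\ell^n\big(a(t),\dots,a(t),b(t)\big)^{\doubar{x}}
\]
for every $\doubar{x}\in\doubar{\MO}_{\MA}$. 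I would set $sa'(t)=s_{d+1}\mathbf{H}\circ sa(t)$; by Proposition~\ref{prop:pCY-category} the composite of two $d$-pre-Calabi--Yau morphisms is a $d$-pre-Calabi--Yau morphism, so $sa'(t)$ is a $d$-pre-Calabi--Yau morphism $(\MA,s_{d+1}M_{\MA})\rightarrow(\mathcal{C},s_{d+1}M_{\mathcal{C}})$ for each $t$, i.e. a Maurer--Cartan element of the graded $L_{\infty}$-algebra $(\Bar{\mathcal{L}}_d^{\Phi_0\circ\Psi_0}(\MA,\mathcal{C}),\Bar{\ell}_{M_{\MA},M_{\mathcal{C}}})$, and clearly $sa'(0)=s_{d+1}\mathbf{H}\circ s_{d+1}\mathbf{F}$, $sa'(1)=s_{d+1}\mathbf{H}\circ s_{d+1}\mathbf{G}$.

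Next I would define $sb'(t)\in\Multi^{\bullet}_{d,\Phi_0\circ\Psi_0}(\MA,\mathcal{C})[d+1]$ by $sb'(t)^{\doubar{x}}=\sum\mathcal{E}(\mathcalboondox{D})$, where the sum is over all filled diagrams of type $\doubar{x}$ obtained from the composition diagram defining $s_{d+1}\mathbf{H}\circ sa(t)$ by replacing exactly one disc filled with $a(t)$ by a disc filled with $b(t)$; this is the direct analogue of the element $sy(t)$ in the proof of Proposition~\ref{prop:htpic-A-inf-1} and of the element $sb'(t)$ in the proof of Proposition~\ref{prop:htpy-cp-right}. By Remark~\ref{remark:htpic-MC-elts-equiv}, and since $a'(t)$ is already a Maurer--Cartan element of $(\Bar{\mathcal{L}}_d^{\Phi_0\circ\Psi_0}(\MA,\mathcal{C}),\Bar{\ell}_{M_{\MA},M_{\mathcal{C}}})$ for each fixed $t$, it then suffices to prove that
\[
\frac{\partial a'}{\partial t}(t)^{\doubar{x}}=\sum_{n=2}^{\infty}\frac{1}{(n-1)!}\Bar{\ell}_{M_{\MA},M_{\mathcal{C}}}^n\big(a'(t),\dots,a'(t),b'(t)\big)^{\doubar{x}}
\]
for every $\doubar{x}\in\doubar{\MO}_{\MA}$.

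The verification of this identity is the core of the argument. By the Leibniz rule, its left-hand side is the sum of the diagrams obtained from $s_{d+1}\mathbf{H}\circ sa(t)$ by replacing exactly one disc filled with $a(t)$ by $\frac{\partial a}{\partial t}(t)$; feeding in the homotopy equation for $a(t),b(t)$ recalled above, this expands into diagrams carrying a core disc filled with $M_{\MB}$ whose outputs pass, through the surrounding discs filled with $\mathbf{H}$, to the outgoing strands, together with diagrams carrying a core disc filled with $M_{\MA}$ feeding the discs filled with $a(t)$ and $b(t)$. By the definition of $\Bar{\ell}_{M_{\MA},M_{\mathcal{C}}}$ (Definition~\ref{def:bar-ell}), the right-hand side expands into diagrams with a core disc filled with $M_{\mathcal{C}}$ and diagrams with a core disc filled with $M_{\MA}$. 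The $M_{\MA}$-core contributions match termwise, whereas the $M_{\MB}$-core contributions on the left are converted into the $M_{\mathcal{C}}$-core contributions on the right by means of the pre-Calabi--Yau morphism identity $s_{d+1}\mathbf{H}\upperset{\multinec}{\circ}s_{d+1}M_{\MB}=s_{d+1}M_{\mathcal{C}}\upperset{\pre}{\circ}s_{d+1}\mathbf{H}$ for $\mathbf{H}$. Exactly as in the proof of Proposition~\ref{prop:htpy-cp-right}, the delicate point is that the core disc --- $M_{\MB}$ on the left, $M_{\mathcal{C}}$ on the right --- need not be adjacent, in the application order of the diagram, to the disc filled with $b(t)$: one has to use the pre-Calabi--Yau identity for $\mathbf{H}$ and the Maurer--Cartan equation for $a(t)$ repeatedly, introducing the intermediate families of diagrams indexed by the ``distance'' from the core disc to the disc filled with $b(t)$ and checking that they cancel in telescoping pairs, keeping careful track of the sign $|b(t)|=-1$. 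This diagram-and-sign bookkeeping is the main, and essentially the only, obstacle; the rest is formal. Once it is settled, Remark~\ref{remark:htpic-MC-elts-equiv} shows that $a'(t)+b'(t)dt$ is a homotopy between the Maurer--Cartan elements corresponding to $(\Phi_0\circ\Psi_0,s_{d+1}\mathbf{H}\circ s_{d+1}\mathbf{F})$ and $(\Phi_0\circ\Psi_0,s_{d+1}\mathbf{H}\circ s_{d+1}\mathbf{G})$, so that these two $d$-pre-Calabi--Yau morphisms are homotopic.
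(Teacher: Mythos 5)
Your proposal follows essentially the same route as the paper's proof: the same candidate homotopy $a'(t)=s_{d+1}\mathbf{H}\circ sa(t)$ together with $b'(t)$ given by the composition diagrams with exactly one $a(t)$-disc replaced by $b(t)$, and the same verification scheme, matching the $M_{\MA}$-core terms directly and converting the $M_{\MB}$-core terms into $M_{\mathcal{C}}$-core terms via the morphism identity for $\mathbf{H}$ and the Maurer--Cartan equation for $a(t)$, with the telescoping cancellation organised by the distance from the core disc to the $b(t)$-disc. The diagram-and-sign bookkeeping you defer is exactly what the paper's proof carries out explicitly, so the argument is correct as a sketch of that proof.
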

\begin{proof}
    Consider $a(t)+b(t)dt\in \Bar{\mathcal{L}}_d^{\Phi_0}(\MA,\MB)[t,dt]$ such that $a(0)=s_{d+1}\mathbf{F}$, $a(1)=s_{d+1}\mathbf{G}$, $a(t)$ is a Maurer-Cartan element of $(\Bar{\mathcal{L}}_d^{\Phi_0}(\MA,\MB),\Bar{\ell}_{M_{\MA},M_{\MB}})$ and
    \[
    \frac{\partial a}{\partial t}(t)^{\doubar{x}}=\sum\limits_{n=2}^{\infty}\frac{1}{(n-1)!}\ell^n\big(a(t),\dots,a(t),b(t)\big)^{\doubar{x}}
    \] 
    for every $t\in\Bbbk$, $\doubar{x}\in\doubar{\MO}_{\MA}$. Then, by Proposition \ref{prop:pCY-category}, $s_{-1}(sa'(t))=s_{-1}(s_{d+1}\mathbf{H}\circ sa(t))$ is a Maurer-Cartan element of the $L_{\infty}$-algebra of $(\Bar{\mathcal{L}}_d^{\Phi_0}(\mathcal{C},\MB),\Bar{\ell}_{M_{\mathcal{C}},M_{\MB}})$ for every $t\in\Bbbk$. Moreover, it is clear that we have $sa'(0)=s_{d+1}\mathbf{H}\circ s_{d+1}\mathbf{F}$ and $sa'(1)=s_{d+1}\mathbf{H}\circ s_{d+1}\mathbf{G}$.
    We define $sb'(t)^{\doubar{x}}=\sum\mathcal{E}(\mathcalboondox{D})$ where the sum is over all the filled diagrams $\mathcalboondox{D}$ of type $\doubar{x}$ and of the form

    \begin{tikzpicture}[line cap=round,line join=round,x=1.0cm,y=1.0cm]
\clip(-7,-3.5) rectangle (5,1);
        \draw (0,0) circle (0.5cm);
        \draw[->,>=stealth] (0.5,0)--(0.9,0);
        \draw[rotate=180][->,>=stealth] (0.5,0)--(0.9,0);
        \draw[rotate=-150][<-,>=stealth](0.5,0)--(0.9,0);
        \draw (-1.03,-0.6) circle (0.3cm);
        \draw[rotate around={135:(-1.03,-0.6)},shift={(-0.73,-0.6))}] \doublefleche;
        \draw[rotate around={-150:(-1.03,-0.6)}][->,>=stealth] (-0.73,-0.6)--(-0.33,-0.6);
        \draw (-2.06,-1.2) circle (0.5cm);
        \draw[rotate around={180:(-2.06,-1.2)}][->,>=stealth](-1.56,-1.2)--(-1.16,-1.2);
        \draw[rotate around={0:(-2.06,-1.2)}][->,>=stealth](-1.56,-1.2)--(-1.16,-1.2);
        \draw[rotate around={150:(-2.06,-1.2)}][<-,>=stealth](-1.56,-1.2)--(-1.16,-1.2);
        \draw (-3.09,-0.6) circle (0.3cm);
        \draw[rotate around={150:(-3.09,-0.6)},shift={(-2.79,-0.6))}] \doublefleche;
        \draw[rotate=-30][<-,>=stealth](0.5,0)--(0.9,0);
        \draw (1.03,-0.6) circle (0.3cm);
        \draw[rotate around={45:(1.03,-0.6)},shift={(1.33,-0.6)}] \doublefleche;
        \draw[rotate around={-30:(1.03,-0.6)}][->,>=stealth] (1.33,-0.6)--(1.73,-0.6);
        \draw (2.06,-1.2) circle (0.5cm);
        \draw[rotate around={180:(2.06,-1.2)}][->,>=stealth](2.56,-1.2)--(2.96,-1.2);
        \draw[rotate around={0:(2.06,-1.2)}][->,>=stealth](2.56,-1.2)--(2.96,-1.2);
        \draw[rotate around={30:(2.06,-1.2)}][<-,>=stealth](2.56,-1.2)--(2.96,-1.2);
        \draw (3.09,-0.6) circle (0.3cm);
        \draw[rotate around={30:(3.09,-0.6)},shift={(3.39,-0.6))}] \doublefleche;
        \draw[rotate=-90][<-,>=stealth](0.5,0)--(1,0);
        \draw (0,-1.3) circle (0.3cm);
        \draw[rotate around={0:(0,-1.3)},shift={(0.3,-1.3))}] \doublefleche;
        \draw[rotate around={-90:(0,-1.3)}][->,>=stealth] (0.3,-1.3)--(0.8,-1.3);
        \draw (0,-2.6) circle (0.5cm);
        \draw[rotate around={0:(0,-2.6)}][->,>=stealth] (0.5,-2.6)--(0.9,-2.6);
        \draw[rotate around={180:(0,-2.6)}][->,>=stealth] (0.5,-2.6)--(0.9,-2.6);
        \draw[rotate around={45:(0,-2.6)}][<-,>=stealth](0.5,-2.6)--(0.9,-2.6);
        \draw (0.85,-1.75) circle (0.3cm);
        \draw [rotate around={45:(0.85,-1.75)},shift={(1.15,-1.75)}] \doublefleche;
        \draw[rotate around={135:(0,-2.6)}][<-,>=stealth](0.5,-2.6)--(0.9,-2.6);
        \draw (-0.85,-1.75) circle (0.3cm);
        \draw [rotate around={135:(-0.85,-1.75)},shift={(-0.55,-1.75)}] \doublefleche;
\draw (0,0.25) node[anchor=north] {$\mathbf{H}$};
\draw (2.06,-0.95) node[anchor=north] {$\mathbf{H}$};
\draw (0,-2.4) node[anchor=north] {$\mathbf{H}$};
\draw (-2.06,-0.95) node[anchor=north] {$\mathbf{H}$};
\draw (0,-1.05) node[anchor=north] {$\scriptstyle{b(t)}$};
\draw (0.85,-1.5) node[anchor=north] {$\scriptstyle{a(t)}$};
\draw (-0.85,-1.5) node[anchor=north] {$\scriptstyle{a(t)}$};
\draw (-3.09,-0.35) node[anchor=north] {$\scriptstyle{a(t)}$};
\draw (3.09,-0.35) node[anchor=north] {$\scriptstyle{a(t)}$};
\draw (-1.03,-0.35) node[anchor=north] {$\scriptstyle{a(t)}$};
\draw (1.03,-0.35) node[anchor=north] {$\scriptstyle{a(t)}$};
    \draw[fill=black] (0,0.6) circle (0.3pt);
    \draw[fill=black] (0.2,0.55) circle (0.3pt);
    \draw[fill=black] (-0.2,0.55) circle (0.3pt);
    \draw[fill=black] (-0.3,-0.55) circle (0.3pt);
    \draw[fill=black] (-0.45,-0.45) circle (0.3pt);
    \draw[fill=black] (-0.13,-0.62) circle (0.3pt);
    \draw[fill=black] (0.3,-0.55) circle (0.3pt);
    \draw[fill=black] (0.45,-0.45) circle (0.3pt);
    \draw[fill=black] (0.13,-0.62) circle (0.3pt);
    \draw[fill=black] (2.06,-0.6) circle (0.3pt);
    \draw[fill=black] (2.26,-0.65) circle (0.3pt);
    \draw[fill=black] (1.86,-0.65) circle (0.3pt);
    \draw[fill=black] (2.06,-1.8) circle (0.3pt);
    \draw[fill=black] (2.26,-1.75) circle (0.3pt);
    \draw[fill=black] (1.86,-1.75) circle (0.3pt);
    \draw[fill=black] (-2.06,-0.6) circle (0.3pt);
    \draw[fill=black] (-2.26,-0.65) circle (0.3pt);
    \draw[fill=black] (-1.86,-0.65) circle (0.3pt);
    \draw[fill=black] (-2.06,-1.8) circle (0.3pt);
    \draw[fill=black] (-2.26,-1.75) circle (0.3pt);
    \draw[fill=black] (-1.86,-1.75) circle (0.3pt);
    \draw[fill=black] (-0.37,-1.3) circle (0.3pt);
    \draw[fill=black] (-0.32,-1.15) circle (0.3pt);
    \draw[fill=black] (-0.32,-1.45) circle (0.3pt);
    \draw[fill=black] (0.2,-3.15) circle (0.3pt);
    \draw[fill=black] (-0.2,-3.15) circle (0.3pt);
    \draw[fill=black] (0,-3.2) circle (0.3pt);
    \draw[fill=black] (-0.12,-2.02) circle (0.3pt);
    \draw[fill=black] (-0.22,-2.05) circle (0.3pt);
    \draw[fill=black] (-0.32,-2.1) circle (0.3pt);
    \draw[fill=black] (0.12,-2.02) circle (0.3pt);
    \draw[fill=black] (0.22,-2.05) circle (0.3pt);
    \draw[fill=black] (0.32,-2.1) circle (0.3pt);
    \end{tikzpicture}

\noindent for every $\doubar{x}\in\doubar{\MO}_{\MA}$. We have to show that 
    \[
    \frac{\partial a'}{\partial t}(t)^{\doubar{x}}=\sum\limits_{n=2}^{\infty}\frac{1}{(n-1)!}\ell^n\big(a'(t),\dots,a'(t),b'(t)\big)^{\doubar{x}}
    \]
    for every $\doubar{x}\in\doubar{\MO}_{\MA}$. We have that 
    \[
    \frac{\partial a'}{\partial t}(t)^{\doubar{x}}=\sum\mathcal{E}(\mathcalboondox{D_1})+\sum\mathcal{E}(\mathcalboondox{D_2})
    \]
    where the sums are over all the filled diagrams $\mathcalboondox{D_1}$ and $\mathcalboondox{D_2}$ of type $\doubar{x}$ and of the form

    \begin{minipage}{21cm}

    
    \end{minipage}
    
    \noindent respectively. More precisely, diagrams of the form $\mathcalboondox{D'_3}$ are diagrams composed of one disc filled with $M_{\mathcal{A}}$ sharing each of its outgoing arrows witch discs filled with $a(t)$. We will call the number of discs filled with $\mathbf{H}$ between the disc filled with $M_{\mathcal{A}}$ and the disc filled with $b(t)$ the distance from $M_{\mathcal{A}}$ to $b(t)$. Note that diagrams of the form $\mathcalboondox{D'_1}$ are the only diagrams where this distance is $0$. Moreover, the distance from $M_{\mathcal{A}}$ to $b(t)$ for diagrams of the form $\mathcalboondox{D'_3}$ is at least $1$ and at most $\llg(\doubar{x})-1$.
    Similarly, diagrams of the form $\mathcalboondox{D'_4}$ are diagrams composed of one disc filled with $M_{\mathcal{C}}$ sharing each of its incoming arrows witch discs filled with $\mathbf{H}$ which do not share any of their incoming arrows with a disc filled with $b(t)$. We will call the number of discs filled with $\mathbf{H}$ between the disc filled with $M_{\mathcal{C}}$ and the disc filled with $b(t)$ the distance from $M_{\mathcal{C}}$ to $b(t)$. Note that diagrams of the form $\mathcalboondox{D'_2}$ are the only diagrams where this distance is $1$. The distance from $M_{\mathcal{C}}$ to $b(t)$ for diagrams of the form $\mathcalboondox{D'_4}$ is at least $2$ and at most $\llg(\doubar{x})$. Moreover, diagrams of the form $\mathcalboondox{D'_3}$ (resp. $\mathcalboondox{D'_4}$) carry a minus sign if $b(t)$ is before $M_{\mathcal{A}}$ (resp. after $M_{\mathcal{C}}$) in the application order since $|b(t)|=-1$. 
    We have that $\sum\mathcal{E}(\mathcalboondox{D'_1})=\sum\mathcal{E}(\mathcalboondox{D_1})$ and since $(\Phi_0,sa(t))$ is a $d$-pre-Calabi-Yau morphism, we have that $\sum\mathcal{E}(\mathcalboondox{D'_3})=\sum\mathcal{E}(\mathcalboondox{D_3\dprimeind})+\sum\mathcal{E}(\mathcalboondox{D_3\tprime})$ where the sums are over all the filled diagrams of type $\doubar{x}$ and of the form 

   \begin{minipage}{21cm}
           \begin{tikzpicture}[line cap=round,line join=round,x=1.0cm,y=1.0cm]
\clip(-3.2,-3.5) rectangle (5,2);
        \draw (0,0) circle (0.5cm);
        \draw[->,>=stealth] (0.5,0)--(2,0);
    \draw (2.5,0) circle (0.5cm);
    \draw[rotate around={90:(2.5,0)}] [->,>=stealth] (3,0)--(3.4,0);
     \draw[rotate around={-90:(2.5,0)}] [->,>=stealth](3,0)--(3.4,0);
     \draw[rotate around={120:(2.5,0)}][<-,>=stealth] (3,0)--(3.4,0);
     \draw (1.9,1.03) circle (0.3cm);
     \draw[rotate around={120:(1.9,1.03)},shift={(2.2,1.03)}]\doublefleche;
     \draw[rotate around={-120:(2.5,0)}][<-,>=stealth] (3,0)--(3.4,0);
     \draw (1.9,-1.03) circle (0.3cm);
     \draw[rotate around={-120:(1.9,-1.03)},shift={(2.2,-1.03)}]\doublefleche;
     \draw (-2.5,0) circle (0.5cm);
    \draw[rotate around={90:(-2.5,0)}] [->,>=stealth] (-2,0)--(-1.6,0);
     \draw[rotate around={-90:(-2.5,0)}] [->,>=stealth] (-2,0)--(-1.6,0);
     \draw[rotate around={60:(-2.5,0)}][<-,>=stealth] (-2,0)--(-1.6,0);
     \draw (-1.9,-1.03) circle (0.3cm);
     \draw[rotate around={60:(-1.9,1.03)},shift={(-1.6,1.03)}]\doublefleche;
     \draw[rotate around={-60:(-2.5,0)}][<-,>=stealth] (-2,0)--(-1.6,0);
     \draw (-1.9,1.03) circle (0.3cm);
     \draw[rotate around={-60:(-1.9,-1.03)},shift={(-1.6,-1.03)}]\doublefleche;
     \draw[rotate=180][->,>=stealth] (0.5,0)--(2,0);
        \draw[rotate=-135][<-,>=stealth](0.5,0)--(0.8,0);
        \draw (-0.77,-0.77) circle (0.3cm);
        \draw[rotate around={-135:(-0.77,-0.77)},shift={(-0.47,-0.77)}]\doublefleche;
        \draw[rotate=-45][<-,>=stealth](0.5,0)--(0.8,0);
        \draw (0.77,-0.77) circle (0.3cm);
        \draw[rotate around={-45:(0.77,-0.77)},shift={(1.07,-0.77)}]\doublefleche;
        \draw[rotate=-90][<-,>=stealth](0.5,0)--(1,0);
        \draw (0,-1.3) circle (0.3cm);
        \draw[rotate around={0:(0,-1.3)},shift={(0.3,-1.3))}] \doublefleche;
        \draw[rotate around={-90:(0,-1.3)}][->,>=stealth] (0.3,-1.3)--(0.8,-1.3);
        \draw (0,-2.6) circle (0.5cm);
        \draw[rotate around={0:(0,-2.6)}][->,>=stealth] (0.5,-2.6)--(0.9,-2.6);
        \draw[rotate around={180:(0,-2.6)}][->,>=stealth] (0.5,-2.6)--(0.9,-2.6);
        \draw[rotate around={45:(0,-2.6)}][<-,>=stealth](0.5,-2.6)--(0.8,-2.6);
        \draw (0.77,-1.83) circle (0.3cm);
        \draw [rotate around={45:(0.77,-1.83)},shift={(1.07,-1.83)}] \doublefleche;
        \draw[rotate around={135:(0,-2.6)}][<-,>=stealth](0.5,-2.6)--(0.8,-2.6);
        \draw (-0.77,-1.83) circle (0.3cm);
        \draw [rotate around={135:(-0.77,-1.83)},shift={(-0.47,-1.83)}] \doublefleche;
\draw (0,0.25) node[anchor=north]{$M_{\MB}$};
\draw (-2.5,0.25) node[anchor=north]{$\mathbf{H}$};
\draw (2.5,0.25) node[anchor=north]{$\mathbf{H}$};
\draw (0,-2.35) node[anchor=north]{$\mathbf{H}$};
\draw (0,-1.05) node[anchor=north]{$\scriptstyle{a(t)}$};
\draw (-0.77,-1.58) node[anchor=north]{$\scriptstyle{a(t)}$};
\draw (0.77,-1.58) node[anchor=north]{$\scriptstyle{a(t)}$};
\draw (0.77,-0.52) node[anchor=north]{$\scriptstyle{a(t)}$};
\draw (-0.77,-0.52) node[anchor=north]{$\scriptstyle{a(t)}$};
\draw (1.9,1.28) node[anchor=north]{$\scriptstyle{b(t)}$};
\draw (-1.9,1.28) node[anchor=north]{$\scriptstyle{a(t)}$};
\draw (1.9,-0.78) node[anchor=north]{$\scriptstyle{a(t)}$};
\draw (-1.9,-0.78) node[anchor=north]{$\scriptstyle{a(t)}$};
\draw (4,0.2) node[anchor=north]{and};
    \draw[fill=black] (0,0.6) circle (0.3pt);
    \draw[fill=black] (0.2,0.55) circle (0.3pt);
    \draw[fill=black] (-0.2,0.55) circle (0.3pt);
    \draw[fill=black] (-0.25,-0.6) circle (0.3pt);
    \draw[fill=black] (-0.35,-0.55) circle (0.3pt);
    \draw[fill=black] (-0.13,-0.62) circle (0.3pt);
    \draw[fill=black] (0.25,-0.6) circle (0.3pt);
    \draw[fill=black] (0.35,-0.55) circle (0.3pt);
    \draw[fill=black] (0.13,-0.62) circle (0.3pt);
    \draw[fill=black] (-3.1,0) circle (0.3pt);
    \draw[fill=black] (-3.05,0.2) circle (0.3pt);
    \draw[fill=black] (-3.05,-0.2) circle (0.3pt);
     \draw[fill=black] (3.1,0) circle (0.3pt);
    \draw[fill=black] (3.05,0.2) circle (0.3pt);
    \draw[fill=black] (3.05,-0.2) circle (0.3pt);
    \draw[fill=black] (-0.37,-1.3) circle (0.3pt);
    \draw[fill=black] (-0.32,-1.15) circle (0.3pt);
    \draw[fill=black] (-0.32,-1.45) circle (0.3pt);
    \draw[fill=black] (-1.95,0.2) circle (0.3pt);
    \draw[fill=black] (-2,0.32) circle (0.3pt);
    \draw[fill=black] (-2.1,0.4) circle (0.3pt);
    \draw[fill=black] (-1.95,-0.2) circle (0.3pt);
    \draw[fill=black] (-2,-0.32) circle (0.3pt);
    \draw[fill=black] (-2.1,-0.4) circle (0.3pt);
    \draw[fill=black] (1.95,0.2) circle (0.3pt);
    \draw[fill=black] (2,0.32) circle (0.3pt);
    \draw[fill=black] (2.1,0.4) circle (0.3pt);
    \draw[fill=black] (1.95,-0.2) circle (0.3pt);
    \draw[fill=black] (2,-0.32) circle (0.3pt);
    \draw[fill=black] (2.1,-0.4) circle (0.3pt);
    \draw[fill=black] (0.2,-3.15) circle (0.3pt);
    \draw[fill=black] (-0.2,-3.15) circle (0.3pt);
    \draw[fill=black] (0,-3.2) circle (0.3pt);
    \draw[fill=black] (-0.12,-2.02) circle (0.3pt);
    \draw[fill=black] (-0.22,-2.05) circle (0.3pt);
    \draw[fill=black] (-0.32,-2.1) circle (0.3pt);
    \draw[fill=black] (0.12,-2.02) circle (0.3pt);
    \draw[fill=black] (0.22,-2.05) circle (0.3pt);
    \draw[fill=black] (0.32,-2.1) circle (0.3pt);
    \end{tikzpicture}
        \begin{tikzpicture}[line cap=round,line join=round,x=1.0cm,y=1.0cm]
\clip(-3.2,-3.5) rectangle (5,2);
        \draw (0,0) circle (0.5cm);
        \draw[->,>=stealth] (0.5,0)--(2,0);
    \draw (2.5,0) circle (0.5cm);
    \draw[rotate around={90:(2.5,0)}] [->,>=stealth] (3,0)--(3.4,0);
     \draw[rotate around={-90:(2.5,0)}] [->,>=stealth](3,0)--(3.4,0);
     \draw[rotate around={120:(2.5,0)}][<-,>=stealth] (3,0)--(3.4,0);
     \draw (1.9,1.03) circle (0.3cm);
     \draw[rotate around={120:(1.9,1.03)},shift={(2.2,1.03)}]\doublefleche;
     \draw[rotate around={-120:(2.5,0)}][<-,>=stealth] (3,0)--(3.4,0);
     \draw (1.9,-1.03) circle (0.3cm);
     \draw[rotate around={-120:(1.9,-1.03)},shift={(2.2,-1.03)}]\doublefleche;
     \draw (-2.5,0) circle (0.5cm);
    \draw[rotate around={90:(-2.5,0)}] [->,>=stealth] (-2,0)--(-1.6,0);
     \draw[rotate around={-90:(-2.5,0)}] [->,>=stealth] (-2,0)--(-1.6,0);
     \draw[rotate around={60:(-2.5,0)}][<-,>=stealth] (-2,0)--(-1.6,0);
     \draw (-1.9,-1.03) circle (0.3cm);
     \draw[rotate around={60:(-1.9,1.03)},shift={(-1.6,1.03)}]\doublefleche;
     \draw[rotate around={-60:(-2.5,0)}][<-,>=stealth] (-2,0)--(-1.6,0);
     \draw (-1.9,1.03) circle (0.3cm);
     \draw[rotate around={-60:(-1.9,-1.03)},shift={(-1.6,-1.03)}]\doublefleche;
     \draw[rotate=180][->,>=stealth] (0.5,0)--(2,0);
        \draw[rotate=-135][<-,>=stealth](0.5,0)--(0.8,0);
        \draw (-0.77,-0.77) circle (0.3cm);
        \draw[rotate around={-135:(-0.77,-0.77)},shift={(-0.47,-0.77)}]\doublefleche;
        \draw[rotate=-45][<-,>=stealth](0.5,0)--(0.8,0);
        \draw (0.77,-0.77) circle (0.3cm);
        \draw[rotate around={-45:(0.77,-0.77)},shift={(1.07,-0.77)}]\doublefleche;
        \draw[rotate=-90][<-,>=stealth](0.5,0)--(1,0);
        \draw (0,-1.3) circle (0.3cm);
        \draw[rotate around={0:(0,-1.3)},shift={(0.3,-1.3))}] \doublefleche;
        \draw[rotate around={-90:(0,-1.3)}][->,>=stealth] (0.3,-1.3)--(0.8,-1.3);
        \draw (0,-2.6) circle (0.5cm);
        \draw[rotate around={0:(0,-2.6)}][->,>=stealth] (0.5,-2.6)--(0.9,-2.6);
        \draw[rotate around={180:(0,-2.6)}][->,>=stealth] (0.5,-2.6)--(0.9,-2.6);
        \draw[rotate around={45:(0,-2.6)}][<-,>=stealth](0.5,-2.6)--(0.8,-2.6);
        \draw (0.77,-1.83) circle (0.3cm);
        \draw [rotate around={45:(0.77,-1.83)},shift={(1.07,-1.83)}] \doublefleche;
        \draw[rotate around={135:(0,-2.6)}][<-,>=stealth](0.5,-2.6)--(0.8,-2.6);
        \draw (-0.77,-1.83) circle (0.3cm);
        \draw [rotate around={135:(-0.77,-1.83)},shift={(-0.47,-1.83)}] \doublefleche;
\draw (0,0.25) node[anchor=north]{$M_{\MB}$};
\draw (-2.5,0.25) node[anchor=north]{$\mathbf{H}$};
\draw (2.5,0.25) node[anchor=north]{$\mathbf{H}$};
\draw (0,-2.35) node[anchor=north]{$\mathbf{H}$};
\draw (0,-1.05) node[anchor=north]{$\scriptstyle{a(t)}$};
\draw (-0.77,-1.58) node[anchor=north]{$\scriptstyle{a(t)}$};
\draw (0.77,-1.58) node[anchor=north]{$\scriptstyle{b(t)}$};
\draw (0.77,-0.52) node[anchor=north]{$\scriptstyle{a(t)}$};
\draw (-0.77,-0.52) node[anchor=north]{$\scriptstyle{a(t)}$};
\draw (1.9,1.28) node[anchor=north]{$\scriptstyle{a(t)}$};
\draw (-1.9,1.28) node[anchor=north]{$\scriptstyle{a(t)}$};
\draw (1.9,-0.78) node[anchor=north]{$\scriptstyle{a(t)}$};
\draw (-1.9,-0.78) node[anchor=north]{$\scriptstyle{a(t)}$};
    \draw[fill=black] (0,0.6) circle (0.3pt);
    \draw[fill=black] (0.2,0.55) circle (0.3pt);
    \draw[fill=black] (-0.2,0.55) circle (0.3pt);
    \draw[fill=black] (-0.25,-0.6) circle (0.3pt);
    \draw[fill=black] (-0.35,-0.55) circle (0.3pt);
    \draw[fill=black] (-0.13,-0.62) circle (0.3pt);
    \draw[fill=black] (0.25,-0.6) circle (0.3pt);
    \draw[fill=black] (0.35,-0.55) circle (0.3pt);
    \draw[fill=black] (0.13,-0.62) circle (0.3pt);
    \draw[fill=black] (-3.1,0) circle (0.3pt);
    \draw[fill=black] (-3.05,0.2) circle (0.3pt);
    \draw[fill=black] (-3.05,-0.2) circle (0.3pt);
     \draw[fill=black] (3.1,0) circle (0.3pt);
    \draw[fill=black] (3.05,0.2) circle (0.3pt);
    \draw[fill=black] (3.05,-0.2) circle (0.3pt);
    \draw[fill=black] (-0.37,-1.3) circle (0.3pt);
    \draw[fill=black] (-0.32,-1.15) circle (0.3pt);
    \draw[fill=black] (-0.32,-1.45) circle (0.3pt);
    \draw[fill=black] (-1.95,0.2) circle (0.3pt);
    \draw[fill=black] (-2,0.32) circle (0.3pt);
    \draw[fill=black] (-2.1,0.4) circle (0.3pt);
    \draw[fill=black] (-1.95,-0.2) circle (0.3pt);
    \draw[fill=black] (-2,-0.32) circle (0.3pt);
    \draw[fill=black] (-2.1,-0.4) circle (0.3pt);
    \draw[fill=black] (1.95,0.2) circle (0.3pt);
    \draw[fill=black] (2,0.32) circle (0.3pt);
    \draw[fill=black] (2.1,0.4) circle (0.3pt);
    \draw[fill=black] (1.95,-0.2) circle (0.3pt);
    \draw[fill=black] (2,-0.32) circle (0.3pt);
    \draw[fill=black] (2.1,-0.4) circle (0.3pt);
    \draw[fill=black] (0.2,-3.15) circle (0.3pt);
    \draw[fill=black] (-0.2,-3.15) circle (0.3pt);
    \draw[fill=black] (0,-3.2) circle (0.3pt);
    \draw[fill=black] (-0.12,-2.02) circle (0.3pt);
    \draw[fill=black] (-0.22,-2.05) circle (0.3pt);
    \draw[fill=black] (-0.32,-2.1) circle (0.3pt);
    \draw[fill=black] (0.12,-2.02) circle (0.3pt);
    \draw[fill=black] (0.22,-2.05) circle (0.3pt);
    \draw[fill=black] (0.32,-2.1) circle (0.3pt);
    \end{tikzpicture}
   \end{minipage}

    \noindent respectively, where the diagrams carry a minus sign if the disc filled with $b(t)$ is before the one filled with $M_{\MA}$ in the application order. 
    Since $(\Psi_0,s_{d+1}\mathbf{H})$ is a $d$-pre-Calabi-Yau morphism, we have that $\sum\mathcal{E}(\mathcalboondox{D'_2})=\sum\mathcal{E}(\mathcalboondox{D_2})-\sum\mathcal{E}(\mathcalboondox{D_3\dprimeind})$. Again, since $(\Psi_0,s_{d+1}\mathbf{H})$ is a $d$-pre-Calabi-Yau morphism, we have that $\sum\mathcal{E}(\mathcalboondox{D'_4})=-\sum\mathcal{E}(\mathcalboondox{D_3\tprime})$ which concludes the proof.
\end{proof}

\begin{corollary}
      Let $(\MA,s_{d+1}M_{\MA})$, $(\MB,s_{d+1}M_{\MB})$ and $(\mathcal{C},s_{d+1}M_{\mathcal{C}})$ be $d$-pre-Calabi-Yau categories and $\MO_{\MA}$, $\MO_{\MB}$, $
    \MO_{\mathcal{C}}$ the sets of objects of the respective underlying graded quivers. Consider pairs of homotopic morphisms 
    \begin{align*}
        &(\Phi_0,s_{d+1}\mathbf{F}),(\Phi_0,s_{d+1}\mathbf{G}) : (\MA,s_{d+1}M_{\MA})\rightarrow (\MB,s_{d+1}M_{\MB})
        \\&\text{ and } (\Phi_0,s_{d+1}\mathbf{H}),(\Phi_0,s_{d+1}\mathbf{I}) : (\MB,s_{d+1}M_{\MB})\rightarrow (\mathcal{C},s_{d+1}M_{\mathcal{C}}).
    \end{align*} 
    \noindent Then, $s_{d+1}\mathbf{H}\circ s_{d+1}\mathbf{F}$ and $s_{d+1}\mathbf{I}\circ s_{d+1}\mathbf{G}$ are homotopic $d$-pre-Calabi-Yau morphisms.
\end{corollary}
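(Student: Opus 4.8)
The plan is to deduce the statement directly from the two stability results Propositions \ref{prop:htpy-cp-left} and \ref{prop:htpy-cp-right}, combined with the fact that being homotopic is an equivalence relation (Proposition \ref{prop:htpy-equiv-rel}). Before invoking them I would first record, using Proposition \ref{prop:pCY-category}, that $s_{d+1}\mathbf{H}\circ s_{d+1}\mathbf{F}$, $s_{d+1}\mathbf{H}\circ s_{d+1}\mathbf{G}$ and $s_{d+1}\mathbf{I}\circ s_{d+1}\mathbf{G}$ are all genuine $d$-pre-Calabi-Yau morphisms $(\MA,s_{d+1}M_{\MA})\rightarrow(\mathcal{C},s_{d+1}M_{\mathcal{C}})$ over the same map on objects, so that asking whether they are homotopic is meaningful and the intermediate homotopies below live over the same pair of $d$-pre-Calabi-Yau categories.

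The argument then runs in two steps. First, fixing the $d$-pre-Calabi-Yau morphism $(\Phi_0,s_{d+1}\mathbf{H}):(\MB,s_{d+1}M_{\MB})\rightarrow(\mathcal{C},s_{d+1}M_{\mathcal{C}})$ and using that $(\Phi_0,s_{d+1}\mathbf{F})$ and $(\Phi_0,s_{d+1}\mathbf{G})$ are homotopic, Proposition \ref{prop:htpy-cp-left} gives that $s_{d+1}\mathbf{H}\circ s_{d+1}\mathbf{F}$ and $s_{d+1}\mathbf{H}\circ s_{d+1}\mathbf{G}$ are homotopic. Second, fixing the $d$-pre-Calabi-Yau morphism $(\Phi_0,s_{d+1}\mathbf{G}):(\MA,s_{d+1}M_{\MA})\rightarrow(\MB,s_{d+1}M_{\MB})$ and using that $(\Phi_0,s_{d+1}\mathbf{H})$ and $(\Phi_0,s_{d+1}\mathbf{I})$ are homotopic, Proposition \ref{prop:htpy-cp-right} gives that $s_{d+1}\mathbf{H}\circ s_{d+1}\mathbf{G}$ and $s_{d+1}\mathbf{I}\circ s_{d+1}\mathbf{G}$ are homotopic. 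Concatenating these two homotopies via Proposition \ref{prop:htpy-equiv-rel} (transitivity) yields that $s_{d+1}\mathbf{H}\circ s_{d+1}\mathbf{F}$ is homotopic to $s_{d+1}\mathbf{I}\circ s_{d+1}\mathbf{G}$, which is the claim.

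I do not expect a genuine obstacle: the entire computational content has already been absorbed into Propositions \ref{prop:htpy-cp-left} and \ref{prop:htpy-cp-right}. The only point that needs a sentence of care is the bookkeeping of the object components of the morphisms, so that the two intermediate homotopies really are homotopies between $d$-pre-Calabi-Yau morphisms $(\MA,s_{d+1}M_{\MA})\rightarrow(\mathcal{C},s_{d+1}M_{\mathcal{C}})$ over one and the same map on objects and can therefore legitimately be composed.
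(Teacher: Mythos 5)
Your proof is correct and is exactly the paper's argument: the paper also deduces the corollary by combining Propositions \ref{prop:htpy-cp-left} and \ref{prop:htpy-cp-right} through the intermediate morphism $s_{d+1}\mathbf{H}\circ s_{d+1}\mathbf{G}$ and then invoking transitivity from Proposition \ref{prop:htpy-equiv-rel}. Your extra remark about checking that the intermediate composites are genuine $d$-pre-Calabi-Yau morphisms over the same map on objects is sound bookkeeping, though the paper leaves it implicit.
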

\begin{proof}
    It suffices to combine Propositions \ref{prop:htpy-cp-left} and  \ref{prop:htpy-cp-right} together with Proposition \ref{prop:htpy-equiv-rel}.
\end{proof}

\begin{definition}
    Let $(\MA,s_{d+1}M_{\MA})$, $(\MB,s_{d+1}M_{\MB})$ be $d$-pre-Calabi-Yau categories, denote by $\MO_{\MA}$, $\MO_{\MB}$ the sets of objects of the respective underlying graded quivers and consider a map $\MO_{\MA}\rightarrow\MO_{\MB}$. A $d$-pre-Calabi-Yau morphism $(\Phi_0,s_{d+1}\mathbf{F}) : (\MA,s_{d+1}M_{\MA})\rightarrow (\MB,s_{d+1}M_{\MB})$ is a \textbf{\textcolor{ultramarine}{homotopy equivalence}} if there exists a $d$-pre-Calabi-Yau morphism $(\Psi_0,s_{d+1}\mathbf{G}) : (\MB,s_{d+1}M_{\MB})\rightarrow (\MA,s_{d+1}M_{\MA})$ such that the composition $(\Psi_0\circ \Psi_0,s_{d+1}\mathbf{G}\circ s_{d+1}\mathbf{F})$ (resp. $(\Phi_0\circ \Psi_0,s_{d+1}\mathbf{F}\circ s_{d+1}\mathbf{G})$) is homotopic to $\id_{\MA}$ (resp. $\id_{\MB}$).
    In that case, we call the $d$-pre-Calabi-Yau morphism $(\Psi_0,s_{d+1}\mathbf{G})$ a \textbf{\textcolor{ultramarine}{homotopy inverse}} of $(\Phi_0,s_{d+1}\mathbf{F})$.
\end{definition}

\begin{proposition}\label{prop:htpy-equiv-q-iso}
    If a $d$-pre-Calabi-Yau morphism is a homotopy equivalence, then it is a quasi-isomorphism. 
\end{proposition}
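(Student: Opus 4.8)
The plan is to follow the same strategy as in the proof of Theorem \ref{thm:equiv-htpy-qiso-A-inf} for the $A_{\infty}$-case, using the results established in this section. Suppose $(\Phi_0,s_{d+1}\mathbf{F}) : (\MA,s_{d+1}M_{\MA})\rightarrow (\MB,s_{d+1}M_{\MB})$ is a homotopy equivalence, so there exists a $d$-pre-Calabi-Yau morphism $(\Psi_0,s_{d+1}\mathbf{G}) : (\MB,s_{d+1}M_{\MB})\rightarrow (\MA,s_{d+1}M_{\MA})$ with $(\Psi_0\circ\Phi_0,s_{d+1}\mathbf{G}\circ s_{d+1}\mathbf{F})$ homotopic to $\id_{\MA}$ and $(\Phi_0\circ\Psi_0,s_{d+1}\mathbf{F}\circ s_{d+1}\mathbf{G})$ homotopic to $\id_{\MB}$. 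First I would apply Proposition \ref{prop:chain-htpy-strong-htpy}\ref{item:chain-htpy} to the homotopy between $s_{d+1}\mathbf{G}\circ s_{d+1}\mathbf{F}$ and $\operatorname{Id}_{\MA}$: this tells us that the chain maps $(s_{d+1}(\mathbf{G}\circ\mathbf{F}))^{x,y}$ and $\operatorname{Id}^{x,y}=\id_{{}_x\MA_y[1]}$ are chain homotopic for every $x,y\in\MO_{\MA}$, hence induce the same map on cohomology.

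Next I would unwind the composition formula of Definition \ref{def:cp-pCY} in length $1$ to identify $(s_{d+1}(\mathbf{G}\circ\mathbf{F}))^{x,y}$: the only filled diagram of type $((x,y))$ contributing with a single disc filled with $\mathbf{F}$ and a single disc filled with $\mathbf{G}$ is the one yielding $s_{d+1}G^{\Phi_0(x),\Phi_0(y)}\circ s_{d+1}F^{x,y}$, so that on cohomology $H(s_{d+1}G^{\Phi_0(x),\Phi_0(y)})\circ H(s_{d+1}F^{x,y})=\id$. Symmetrically, using the homotopy between $s_{d+1}\mathbf{F}\circ s_{d+1}\mathbf{G}$ and $\operatorname{Id}_{\MB}$, I would obtain $H(s_{d+1}F^{\Psi_0(u),\Psi_0(w)})\circ H(s_{d+1}G^{u,w})=\id$ for every $u,w\in\MO_{\MB}$; here one also uses that $\Psi_0$ is the inverse of $\Phi_0$ (which follows since $\Psi_0\circ\Phi_0$ and $\Phi_0\circ\Psi_0$ must agree with the identity on objects, the object part of $\operatorname{Id}$). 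Combining these two one-sided inverses on cohomology shows each $H(s_{d+1}F^{x,y})$ is an isomorphism, i.e. $s_{d+1}F^{x,y}$ is a quasi-isomorphism of complexes for every $x,y\in\MO_{\MA}$, which is precisely the statement that $(\Phi_0,s_{d+1}\mathbf{F})$ is a quasi-isomorphism of $d$-pre-Calabi-Yau categories.

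The step I expect to be the main (though modest) obstacle is the bookkeeping needed to extract the length-$1$ component of the composition $s_{d+1}\mathbf{G}\circ s_{d+1}\mathbf{F}$ from the diagrammatic definition and to check that, at the level of ${}_x\MA_y[1]$, it really reduces to the ordinary composite $s_{d+1}G^{\Phi_0(x),\Phi_0(y)}\circ s_{d+1}F^{x,y}$ with no extra contributions from diagrams involving discs of larger arity; this requires a small arity/degree count on the admissible filled diagrams of type $((x,y))$. Once that is in place, everything else is a direct appeal to Proposition \ref{prop:chain-htpy-strong-htpy} and the standard two-sided-inverse-on-cohomology argument, exactly parallel to the first half of the proof of Theorem \ref{thm:equiv-htpy-qiso-A-inf}.
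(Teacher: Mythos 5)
Your proposal is correct and follows essentially the same route as the paper: the paper's own proof is a one-line appeal to Proposition \ref{prop:chain-htpy-strong-htpy}, and your argument is exactly the expected unwinding of that reference (apply part \ref{item:chain-htpy} to the two homotopies with the identities, check that the length-one component of the composition is the ordinary composite of chain maps, and conclude by the two-sided-inverse-on-cohomology argument).
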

\begin{proof}
    It follows directly from Proposition \ref{prop:chain-htpy-strong-htpy}.
\end{proof}

\section{\texorpdfstring{From homotopic pre-Calabi-Yau morphisms to weak homotopic $A_{\infty}$-morphisms}{Relation with A-infinity morphisms}}\label{section:relation-htpy}

We now present the relation between the notion of homotopy for pre-Calabi-Yau morphisms and the notion of weak homotopy for $A_{\infty}$-morphisms. 

\begin{definition}
    Two morphisms $(sm_{\MA\oplus\MB^*},s\varphi_{\MA},s\varphi_{\MB})$ and $(sm'_{\MA\oplus\MB^*},s\psi_{\MA},s\psi_{\MB})$ of the category $\widehat{A}_{\infty}$ are \textbf{\textcolor{ultramarine}{homotopic}} if $(s\varphi_{\MA},s\psi_{\MA})$ and $(s\varphi_{\MB},s\psi_{\MB})$ are pairs of weakly homotopic $A_{\infty}$-morphisms.
\end{definition}

Let $(\MA,s_{d+1}M_{\MA})$, $(\MB,s_{d+1}M_{\MB})$ be $d$-pre-Calabi-Yau categories and consider $d$-pre-Calabi-Yau morphisms $(\Phi_0,s_{d+1}\mathbf{F}),(\Phi_0,s_{d+1}\mathbf{G}): (\MA,s_{d+1}M_{\MA})\rightarrow (\MB,s_{d+1}M_{\MB})$. By Theorem \ref{thm:main-article-1}, $s_{d+1}\mathbf{F}$ induces an $A_{\infty}$-structure $sm_{\mathbf{F}}$ on $\MA\oplus\MB^*[d-1]$ together with $A_{\infty}$-morphisms 
   \begin{equation}
\begin{tikzcd}
&(\MA \oplus \MB^*[d-1],sm_{\mathbf{F}}) \arrow[swap,"s\varphi_{\mathbf{F}}"]{dl} \arrow[swap, "s\psi_{\mathbf{F}}"]{dr}\\
(\MA \oplus \MA^*[d-1],sm_{\MA\oplus\MA^*})&& (\MB \oplus \MB^*[d-1], sm_{\MB\oplus\MB^*})
\end{tikzcd}
\end{equation}
and 
$s_{d+1}\mathbf{G}$ induces an $A_{\infty}$-structure $sm_{\mathbf{G}}$ on $\MA\oplus\MB^*[d-1]$ together with $A_{\infty}$-morphisms 
 \begin{equation}
\begin{tikzcd}
&(\MA \oplus \MB^*[d-1],sm_{\mathbf{G}}) \arrow[swap,"s\varphi_{\mathbf{G}}"]{dl} \arrow[swap,"s\psi_{\mathbf{G}}"]{dr}\\
(\MA \oplus \MA^*[d-1],sm_{\MA\oplus\MA^*})&& (\MB \oplus \MB^*[d-1], sm_{\MB\oplus\MB^*}).
\end{tikzcd}
\end{equation}

\noindent Then, we have the following result.

\begin{theorem}
\label{thm:homotopies-pCY-Ainf}
      If $(\Phi_0,s_{d+1}\mathbf{F}),(\Phi_0,s_{d+1}\mathbf{G}): (\MA,s_{d+1}M_{\MA})\rightarrow (\MB,s_{d+1}M_{\MB})$ are homotopic $d$-pre-Calabi-Yau morphisms, then the induced morphisms $(sm_{\mathbf{F}},s\varphi_{\mathbf{F}},s\psi_{\mathbf{F}})$ and $(sm_{\mathbf{G}},s\varphi_{\mathbf{G}},s\psi_{\mathbf{G}})$ of $\widehat{A}_{\infty}$ are homotopic.
\end{theorem}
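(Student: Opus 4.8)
The plan is to push a homotopy living on the pre-Calabi-Yau side through the functor $\mathcal{P}$ and to recognise the outcome as a weak homotopy of $A_{\infty}$-morphisms on each of the two legs of Theorem \ref{thm:main-article-1}. First I would use Definition \ref{def:htpy} to fix an element $a(t)+b(t)dt\in\Bar{\mathcal{L}}_d^{\Phi_0}(\MA,\MB)[t,dt]$ with $sa(0)=s_{d+1}\mathbf{F}$, $sa(1)=s_{d+1}\mathbf{G}$, such that $sa(t)$ is a $d$-pre-Calabi-Yau morphism $(\MA,s_{d+1}M_{\MA})\to(\MB,s_{d+1}M_{\MB})$ for all $t\in\Bbbk$ and $\frac{\partial a}{\partial t}(t)^{\doubar{x}}=\sum_{n\geq 2}\frac{1}{(n-1)!}\Bar{\ell}_{M_{\MA},M_{\MB}}^{n}(a(t),\dots,a(t),b(t))^{\doubar{x}}$ for every $\doubar{x}\in\doubar{\MO}_{\MA}$. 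Applying $\mathcal{P}$ to each pair $(\Phi_0,sa(t))$ produces, by Theorem \ref{thm:main-article-1}, an $A_{\infty}$-structure $sm_{a(t)}$ on $\MA\oplus\MB^*[d-1]$ together with $A_{\infty}$-morphisms $s\varphi_{a(t)}:(\MA\oplus\MB^*[d-1],sm_{a(t)})\to(\MA\oplus\MA^*[d-1],sm_{\MA\oplus\MA^*})$ and $s\psi_{a(t)}:(\MA\oplus\MB^*[d-1],sm_{a(t)})\to(\MB\oplus\MB^*[d-1],sm_{\MB\oplus\MB^*})$. Because, for each fixed target tuple, the diagrams defining $\mathcal{P}$ are finite sums (finiteness as in Lemmas \ref{lemma:fin-diag} and \ref{lemma:MC-A-inf}) of evaluations multilinear in the discs carrying the pre-Calabi-Yau morphism, these three families are polynomial in $t$, they specialise to $(sm_{\mathbf{F}},s\varphi_{\mathbf{F}},s\psi_{\mathbf{F}})$ at $t=0$ and to $(sm_{\mathbf{G}},s\varphi_{\mathbf{G}},s\psi_{\mathbf{G}})$ at $t=1$, the target $A_{\infty}$-structures $sm_{\MA\oplus\MA^*}$, $sm_{\MB\oplus\MB^*}$ and all the object maps are constant in $t$, and $s\varphi_{\mathbf{F}},s\varphi_{\mathbf{G}}$ (resp. $s\psi_{\mathbf{F}},s\psi_{\mathbf{G}}$) share the same object map (depending on $\Phi_0$ only). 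It therefore suffices to produce weak homotopies $s\varphi_{\mathbf{F}}\sim s\varphi_{\mathbf{G}}$ and $s\psi_{\mathbf{F}}\sim s\psi_{\mathbf{G}}$ in the sense of Definition \ref{def:weak-htpy-A-inf}, i.e. homotopies (Definition \ref{def:htpic-MC-elts}) of the associated Maurer-Cartan elements of the $L_{\infty}$-algebra of Proposition \ref{prop:A-inf-L-inf-borisov}.

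For the $\varphi$-leg I would set $A^{\varphi}(t)=(sm_{\MA\oplus\MA^*},\varphi_{a(t)},sm_{a(t)})$ and $B^{\varphi}(t)=(0,\partial_{b}\varphi_{a(t)},\partial_{b}m_{a(t)})$, where $\partial_{b}(-)$ denotes the $b(t)$-linearisation of the relevant $\mathcal{P}$-diagrams: the sum, over all the summands defining $sm_{a(t)}$ (resp. $\varphi_{a(t)}$), of the diagram obtained by replacing exactly one disc filled with $a(t)$ by a disc filled with $b(t)$. Each $A^{\varphi}(t)$ is a Maurer-Cartan element by Theorem \ref{thm:main-article-1} and Proposition \ref{prop:A-inf-L-inf-borisov}, so by Remark \ref{remark:htpic-MC-elts-equiv} it remains to verify $\frac{\partial A^{\varphi}}{\partial t}(t)^{\bar{x}}=\sum_{n\geq 2}\frac{1}{(n-1)!}\ell^{n}(A^{\varphi}(t),\dots,A^{\varphi}(t),B^{\varphi}(t))^{\bar{x}}$ for every $\bar{x}$. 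On the codomain slot this reads $0=\partial_{t}sm_{\MA\oplus\MA^*}$; on the domain slot, only the restriction of $\ell^{2}$ to $C(\MA\oplus\MB^*[d-1])[1]^{\otimes 2}$ contributes, so it reads $\partial_{t}sm_{a(t)}=[sm_{a(t)},\partial_{b}m_{a(t)}]_{G}$; and on the middle slot it becomes $\partial_{t}\varphi_{a(t)}=\partial_{b}\varphi_{a(t)}$, both sides re-expressed through the operations $\varphi^{\sigma}_{sm}$ and $\psi_{sm}$ of the bracket $\ell$. Each of these identities follows by differentiating the defining $\mathcal{P}$-diagrams, substituting the pre-Calabi-Yau transport equation $\frac{\partial a}{\partial t}=\sum_{n\geq 2}\frac{1}{(n-1)!}\Bar{\ell}_{M_{\MA},M_{\MB}}^{n}(a,\dots,a,b)$, and reorganising the resulting diagrams by means of the diagrammatic dictionary — established in the proof of Theorem \ref{thm:main-article-1} in \cite{moi} — between the necklace operations feeding $\Bar{\ell}$ and $\ell$ on the one hand and the Gerstenhaber/$A_{\infty}$ operations feeding $\mathcal{P}$ and the bracket $\ell$ on the other. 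Consequently $A^{\varphi}(t)+B^{\varphi}(t)dt$ is a Maurer-Cartan element over $\Bbbk[t,dt]$ joining the Maurer-Cartan elements attached to $s\varphi_{\mathbf{F}}$ and $s\varphi_{\mathbf{G}}$, whence $s\varphi_{\mathbf{F}}$ and $s\varphi_{\mathbf{G}}$ are weakly homotopic.

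The $\psi$-leg is handled verbatim with $A^{\psi}(t)=(sm_{\MB\oplus\MB^*},\psi_{a(t)},sm_{a(t)})$ and $B^{\psi}(t)=(0,\partial_{b}\psi_{a(t)},\partial_{b}m_{a(t)})$, the domain-slot identity $\partial_{t}sm_{a(t)}=[sm_{a(t)},\partial_{b}m_{a(t)}]_{G}$ being exactly the one already checked. Combining the two weak homotopies and invoking the definition of homotopic morphisms of $\widehat{A}_{\infty}$ yields the statement. One could equivalently package the construction as the action on a particular path of an $\infty$-morphism induced by $\mathcal{P}$ from $\Bar{\mathcal{L}}_d^{\Phi_0}(\MA,\MB)$ into a suitable fibre product of the two copies of the $L_{\infty}$-algebra of Proposition \ref{prop:A-inf-L-inf-borisov}, but I prefer the explicit path since the needed identities are then literally those appearing in \cite{moi}.

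I expect the main obstacle to be precisely the diagrammatic bookkeeping of the second paragraph: that differentiating the families of diagrams defining $sm_{a(t)}$, $s\varphi_{a(t)}$ and $s\psi_{a(t)}$ and then inserting the pre-Calabi-Yau transport equation produces, with matching signs, exactly the diagrams occurring in the bracket $\ell$ evaluated on $A^{\bullet}(t)$ and $B^{\bullet}(t)$ — in particular that $\partial_{t}sm_{a(t)}$ is a genuine Gerstenhaber bracket $[sm_{a(t)},\partial_{b}m_{a(t)}]_{G}$, which encodes the fact that $\mathcal{P}$ sends gauge-related pre-Calabi-Yau morphisms to gauge-related $A_{\infty}$-data. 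This has to be extracted from, and runs parallel to, the proof of Theorem \ref{thm:main-article-1} and the functoriality of $\mathcal{P}$ (Proposition \ref{prop:pCY-category}), where the translation between necklace- and Gerstenhaber-type diagrams and its sign conventions are already carried out; the remaining points (polynomial dependence on $t$, finiteness of the diagram sets, agreement of object maps) are routine.
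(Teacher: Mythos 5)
Your proposal follows essentially the same route as the paper: take the homotopy $a(t)+b(t)dt$ on the pre-Calabi-Yau side, apply $\mathcal{P}$ to get the path $sm_{a(t)},\varphi_{a(t)},\psi_{a(t)}$, and take as $dt$-component exactly the $b(t)$-linearisation of the $\mathcal{P}$-diagrams (the paper's $sm(t)$, $\varphi(t)$, which by degree reasons contain a single disc filled with $b(t)$), then verify the Maurer-Cartan equation over $\Bbbk[t,dt]$ slot by slot using that $a(t)$ is a $d$-pre-Calabi-Yau morphism and that $M_{\MA},M_{\MB}$ are $d$-pre-Calabi-Yau structures. The paper carries out only the $\varphi$-leg explicitly (declaring the $\psi$-leg similar) and writes your "diagrammatic bookkeeping" step out in full, but the construction and the identities checked are the same.
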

\begin{proof}
    We only show that $s\varphi_{\mathbf{F}}$ and $s\varphi_{\mathbf{G}}$ are weakly homotopic $A_{\infty}$-morphisms since the case of $s\psi_{\mathbf{F}}$ and $s\psi_{\mathbf{G}}$ is similar.
    Since $(\Phi_0,s_{d+1}\mathbf{F})$ and $(\Phi_0,s_{d+1}\mathbf{G})$ are homotopic, there exists an element $a(t)+b(t)dt\in \Bar{\mathcal{L}}_d^{\Phi_0}(\MA,\MB)[t,dt]$ such that $sa(0)=s_{d+1}\mathbf{F}$, $sa(1)=s_{d+1}\mathbf{G}$, $a(t)$ is a Maurer-Cartan element of $(\Bar{\mathcal{L}}_d^{\Phi_0}(\MA,\MB),\Bar{\ell}_{M_{\MA},M_{\MB}})$ and 
    \[
    \frac{\partial a}{\partial t}(t)^{\doubar{x}}=\sum\limits_{n=2}^{\infty}\frac{1}{(n-1)!}\bar{\ell}^n\big(a(t),\dots,a(t),b(t)\big)^{\doubar{x}}
    \]
    for every $t\in\Bbbk$, $\doubar{x}\in\doubar{\MO}_{\MA}$.
    Since $a(t)$ is a Maurer-Cartan element for every $t\in\Bbbk$, $sa(t)$ is a $d$-pre-Calabi-Yau morphism $(\MA,s_{d+1}M_{\MA})\rightarrow (\MB,s_{d+1}M_{\MB})$ so by Theorem \ref{thm:main-article-1} it induces an $A_{\infty}$-structure $sm_{a(t)}$ on $\MA\oplus\MB^*[d-1]$ and an $A_{\infty}$-morphism \[s\varphi_{a(t)} : (\MA\oplus\MB^*[d-1],sm_{a(t)})\rightarrow (\MA\oplus\MA^*[d-1],sm_{\MA\oplus\MA^*}).\]   
We will show that $A(t)=(sm_{a(t)},\varphi_{a(t)},sm_{\MA\oplus\MA^*})$ and $B(t)=(sm(t),\varphi(t),0)$ induce a weak homotopy $A(t)+B(t)dt$ between $(sm_{\mathbf{F}},\varphi_{\mathbf{F}},sm_{\MA\oplus\MA^*})$ and $(sm_{\mathbf{G}},\varphi_{\mathbf{G}},sm_{\MA\oplus\MA^*})$, where $sm(t)$ is defined by $\pi_{\MA[1]}\circ sm(t)^{\doubar{x}}=\sum\mathcal{E}(\mathcalboondox{D})$, $\pi_{\MB^*[d]}\circ sm(t)^{\doubar{x}}=\sum\mathcal{E}(\mathcalboondox{D'})$ and $\varphi(t)$ by $\pi_{\MA[1]}\circ s\varphi(t)^{\doubar{x}}=0$ and $\pi_{\MA^*[d]}\circ s\varphi(t)^{\doubar{x}}=\sum\mathcal{E}(\mathcalboondox{D\dprime})$ where the sums are over all the filled diagrams $\mathcalboondox{D}$, $\mathcalboondox{D'}$ and $\mathcalboondox{D\dprime}$ of type $\doubar{x}$ and of the form

\begin{minipage}{21cm}
     \begin{tikzpicture}[line cap=round,line join=round,x=1.0cm,y=1.0cm]
\clip(-2,-1.3) rectangle (3,1);
      \draw [line width=0.5pt] (0.,0.) circle (0.5cm);
     \shadedraw[rotate=30,shift={(0.5cm,0cm)}] \doublefleche;
     \shadedraw[rotate=150,shift={(0.5cm,0cm)}] \doublefleche;
     \draw[line width=1.1pt,rotate=90][->, >= stealth, >= stealth](0.5,0)--(0.9,0);
     \draw [line width=0.5pt] (1.12,-0.65) circle (0.3cm);
     \shadedraw[shift={(0.86cm,-0.5cm)},rotate=150] \doubleflechescindeeleft;
     \shadedraw[shift={(0.86cm,-0.5cm)},rotate=150] \doubleflechescindeeright;
     \shadedraw[shift={(0.86cm,-0.5cm)},rotate=150] \fleche;
     \draw [rotate around ={60:(1.12,-0.65)}] [->, >= stealth, >= stealth] (1.43,-0.65)--(1.73,-0.65);
     \draw [rotate around ={-120:(1.12,-0.65)}] [->, >= stealth, >= stealth] (1.43,-0.65)--(1.73,-0.65);
     \draw [line width=0.5pt] (-1.12,-0.65) circle (0.3cm);
     \shadedraw[shift={(-0.86cm,-0.5cm)},rotate=30] \doubleflechescindeeleft;
      \shadedraw[shift={(-0.86cm,-0.5cm)},rotate=30] \doubleflechescindeeright;
       \shadedraw[shift={(-0.86cm,-0.5cm)},rotate=30] \fleche;
      \draw [rotate around ={-60:(-1.12,-0.65)}] [->, >= stealth, >= stealth] (-1.43,-0.65)--(-1.73,-0.65);
     \draw [rotate around ={120:(-1.12,-0.65)}] [->, >= stealth, >= stealth] (-1.43,-0.65)--(-1.73,-0.65);
\begin{scriptsize}
\draw [fill=black] (0,-0.6) circle (0.3pt);
\draw [fill=black] (0.2,-0.55) circle (0.3pt);
\draw [fill=black] (-0.2,-0.55) circle (0.3pt);
\draw [fill=black] (1.45,-0.85) circle (0.3pt);
\draw [fill=black] (1.5,-0.67) circle (0.3pt);
\draw [fill=black] (1.33,-0.97) circle (0.3pt);
\draw [fill=black] (-1.45,-0.85) circle (0.3pt);
\draw [fill=black] (-1.5,-0.67) circle (0.3pt);
\draw [fill=black] (-1.33,-0.97) circle (0.3pt);
\end{scriptsize}
\draw (0,0.25) node[anchor=north ] {$M_{\mathcal{A}}$};
\draw (-1.12,-0.4) node[anchor=north ] {$\scriptstyle{a(t)}$};
\draw (1.12,-0.4) node[anchor=north ] {$\scriptstyle{b(t)}$};
\draw (2.2,0) node[anchor=north ] {,};
\end{tikzpicture}
\begin{tikzpicture}[line cap=round,line join=round,x=1.0cm,y=1.0cm]
\clip(-2,-1.5) rectangle (4,1.5);
  \draw [line width=0.5pt] (0.,0.) circle (0.5cm);
     \draw [rotate=0] [->, >= stealth, >= stealth] (0.5,0)--(0.9,0);
     \draw [rotate=180] [->, >= stealth, >= stealth] (0.5,0)--(0.9,0);
     \draw [rotate=-90] [line width=1.1pt,<-, >= stealth, >= stealth] (0.5,0)--(0.9,0);
     \draw [line width=0.5pt] (0.77,-0.77) circle (0.3cm);
     \draw[rotate around={135:(0.77,-0.77)}] [->, >= stealth, >= stealth] (1.07,-0.77)--(1.37,-0.77);
      \draw[rotate around={15:(0.77,-0.77)}] [->, >= stealth, >= stealth] (1.07,-0.77)--(1.37,-0.77);
      \draw[rotate around={-105:(0.77,-0.77)}] [->, >= stealth, >= stealth] (1.07,-0.77)--(1.37,-0.77);
     \shadedraw[rotate around={75:(0.77,-0.77)}, shift={(1.07,-0.77)}] \doublefleche;
     \shadedraw[rotate around={-45:(0.77,-0.77)}, shift={(1.07,-0.77)}] \doublefleche;
     \draw [line width=0.5pt] (-0.77,-0.77) circle (0.3cm);
     \draw[rotate around={45:(-0.77,-0.77)}] [->, >= stealth, >= stealth] (-0.47,-0.77)--(-0.17,-0.77);
      \draw[rotate around={165:(-0.77,-0.77)}] [->, >= stealth, >= stealth] (-0.47,-0.77)--(-0.17,-0.77);
      \draw[rotate around={-75:(-0.77,-0.77)}] [->, >= stealth, >= stealth] (-0.47,-0.77)--(-0.17,-0.77);
     \shadedraw[rotate around={105:(-0.77,-0.77)}, shift={(-0.47,-0.77)}] \doublefleche;
     \shadedraw[rotate around={225:(-0.77,-0.77)}, shift={(-0.47,-0.77)}] \doublefleche;
\begin{scriptsize}
\draw [fill=black] (-0.35,-0.5) circle (0.3pt);
\draw [fill=black] (-0.25,-0.55) circle (0.3pt);
\draw [fill=black] (-0.15,-0.58) circle (0.3pt);
\draw [fill=black] (0.35,-0.5) circle (0.3pt);
\draw [fill=black] (0.25,-0.55) circle (0.3pt);
\draw [fill=black] (0.15,-0.58) circle (0.3pt);
\draw [fill=black] (0,0.6) circle (0.3pt);
\draw [fill=black] (0.2,0.55) circle (0.3pt);
\draw [fill=black] (-0.2,0.55) circle (0.3pt);
\draw [fill=black] (-0.4,-0.7) circle (0.3pt);
\draw [fill=black] (-0.37,-0.83) circle (0.3pt);
\draw [fill=black] (-0.43,-0.95) circle (0.3pt);
\draw [fill=black] (0.4,-0.7) circle (0.3pt);
\draw [fill=black] (0.37,-0.83) circle (0.3pt);
\draw [fill=black] (0.43,-0.95) circle (0.3pt);
\end{scriptsize}
\draw (0,0.25) node[anchor=north ] {$M_{\mathcal{B}}$};
\draw (0.77,-0.52) node[anchor=north ] {$\scriptstyle{a(t)}$};
\draw (-0.77,-0.52) node[anchor=north ] {$\scriptstyle{b(t)}$};
\draw (3,0.2) node[anchor=north ] {and};
\end{tikzpicture}
\begin{tikzpicture}[line cap=round,line join=round,x=1.0cm,y=1.0cm]
\clip(-1,-1.5) rectangle (5.104458484699738,1.5);
    \draw (0,0) circle (0.5cm);
    \draw [rotate=0][->,>=stealth] (0.5,0)--(0.9,0);
    \draw [rotate=120][->,>=stealth] (0.5,0)--(0.9,0);
    \draw [rotate=-120][->,>=stealth] (0.5,0)--(0.9,0);
    \draw[rotate=-60,shift={(0.5,0)}]\doublefleche;
    \draw[rotate=60,shift={(0.5,0)}]\doubleflechescindeeleft;
    \draw[rotate=60,shift={(0.5,0)}]\doubleflechescindeeright;
    \draw[line width=1.1pt,rotate=60][<-,>=stealth] (0.5,0)--(1,0);
    \draw  (0,0.25)node[anchor=north]{$b(t)$};
    \draw[fill=black] (-0.6,0) circle (0.3pt);
    \draw[fill=black] (-0.55,0.2) circle (0.3pt);
    \draw[fill=black] (-0.55,-0.2) circle (0.3pt);
\end{tikzpicture}
\end{minipage}

\noindent respectively, where diagrams of the form $\mathcalboondox{D}$ and $\mathcalboondox{D'}$ contain only one disc filled with $b(t)$ for degree reasons. 

It is clear that $sA(0)=(sm_{\mathbf{F}},s\varphi_{\mathbf{F}},sm_{\MA\oplus\MA^*})$ and $sA(1)=(sm_{\mathbf{G}},s\varphi_{\mathbf{G}},sm_{\MA\oplus\MA^*})$ by definition.
Moreover, for $t\in\Bbbk$, $A(t)$ is a Maurer-Cartan element of $(\mathcal{L}_d^{\Phi_0}(\MA,\MB),\ell)$. Indeed, by construction, $sm_{a(t)}$ and $sm_{\MA\oplus\MA^*}$ are $A_{\infty}$-structures and $s\varphi_{a(t)} : (\MA\oplus\MB^*[d-1],sm_{a(t)})\rightarrow (\MA\oplus\MA^*[d-1],sm_{\MA\oplus\MA^*})$ is an $A_{\infty}$-morphism which ensures that $(sm_{a(t)},\varphi_{a(t)},sm_{\MA\oplus\MA^*})$ is a Maurer-Cartan element for every $t\in\Bbbk$.
It remains to show that 
 \[
    \frac{\partial A}{\partial t}(t)^{\doubar{x}}=\sum\limits_{n=2}^{\infty}\frac{1}{(n-1)!}\ell^n\big(A(t),\dots,A(t),B(t)\big)^{\doubar{x}}
    \]
for every $\doubar{x}\in\doubar{\MO}_{\MA}$. We have that  \[
    \frac{\partial A}{\partial t}(t)=(\Tilde{m}(t),\Tilde{a}(t),0)
    \]
and \begin{equation}
    \begin{split}
        \sum\limits_{n=2}^{\infty}\frac{1}{(n-1)!}\ell^n\big(A(t),\dots,A(t),B(t)\big)^{\doubar{x}}&=\big([sm_{a(t)},sm(t)]_G,\sum\limits_{n=1}^{\infty}sm_{\MA\oplus\MA^*}\circ(\underbrace{\varphi_{a(t)},\dots,\varphi_{a(t)},\varphi(t)}_{\text{n times}})^{\doubar{x}}
    \\&\hskip3.5cm+(\varphi(t)\upperset{G}{\circ} sm_{a(t)})^{\doubar{x}}
    +(\varphi_{a(t)}\upperset{G}{\circ}sm(t))^{\doubar{x}},0\big)
    \end{split}
\end{equation}
for every $t\in\Bbbk$, $\doubar{x}\in\doubar{\MO}_{\MA}$.

We first prove that $\Tilde{m}(t)=[sm_{a(t)},sm(t)]_G$ for every $t\in\Bbbk$. We have that $\pi_{\MA[1]}\circ  \Tilde{m}(t)^{\doubar{x}}=\sum\mathcal{E}(\mathcalboondox{D^1_1})+\sum\mathcal{E}(\mathcalboondox{D^1_2})+\sum\mathcal{E}(\mathcalboondox{D^1_3})+\sum\mathcal{E}(\mathcalboondox{D^1_4})+\sum\mathcal{E}(\mathcalboondox{D^1_5})$ where the sums are over all the filled diagrams $\mathcalboondox{D^1_i}$ for $i\in\llbracket 1,5\rrbracket$ of type $\doubar{x}$ and of the form

\begin{minipage}{21cm}

\end{minipage}

\noindent respectively. Now, we have that 

\begin{small}
\begin{equation}
    \begin{split}
        &\sum\mathcal{E}(\mathcalboondox{D_2^1})=\sum\mathcal{E}(\mathcalboondox{D_2^4})\text{, }
        \sum\mathcal{E}(\mathcalboondox{D_3^1})=\sum\mathcal{E}(\mathcalboondox{D_2^5})\text{, }
        \sum\mathcal{E}(\mathcalboondox{D_4^1})=\sum\mathcal{E}(\mathcalboondox{D_3^4})\text{, }
        \sum\mathcal{E}(\mathcalboondox{D_5^1})=\sum\mathcal{E}(\mathcalboondox{D_5^5})\text{, }  \\&\sum\mathcal{E}(\mathcalboondox{D_1^2})=\sum\mathcal{E}(\mathcalboondox{D_4^4})\text{, }\sum\mathcal{E}(\mathcalboondox{D_2^2})=\sum\mathcal{E}(\mathcalboondox{D_7^5}) \text{, }       
        \sum\mathcal{E}(\mathcalboondox{D_4^2})=\sum\mathcal{E}(\mathcalboondox{D_5^4})\text{, }\sum\mathcal{E}(\mathcalboondox{D_5^2})=\sum\mathcal{E}(\mathcalboondox{D_9^5})
    \end{split}
\end{equation}
\end{small}
and since $a(t)$ is a Maurer-Cartan element of $(\Bar{\mathcal{L}}_d^{\Phi_0}(\MA,\MB),\Bar{\ell}_{M_{\MA},M_{\MB}})$ for every $t\in \Bbbk$, we have that $\sum\mathcal{E}(\mathcalboondox{D_4^5})=-\sum\mathcal{E}(\mathcalboondox{D_4^5}')+\sum\mathcal{E}(\mathcalboondox{D_4^5}'')$ where the last two sums are over all the filled diagrams $\mathcalboondox{D_4^5}'$ and $\mathcalboondox{D_4^5}''$ of type $\doubar{x}$ and of the form 

\begin{minipage}{21cm}
\begin{tikzpicture}[line cap=round,line join=round,x=1.0cm,y=1.0cm]
\clip(-4,-2.4) rectangle (4,3);
  \draw [line width=0.5pt] (0.,0.) circle (0.3cm);
     \shadedraw[rotate=90,shift={(0.3cm,0cm)}] \doubleflechescindeeleft;
     \shadedraw[rotate=90,shift={(0.3cm,0cm)}] \doubleflechescindeeright;
     \draw[rotate=90][<-,>=stealth](0.3,0)--(0.7,0);
     \shadedraw[rotate=-90,shift={(0.3cm,0cm)}] \doubleflechescindeeleft;
     \shadedraw[rotate=-90,shift={(0.3cm,0cm)}] \doubleflechescindeeright;
     \draw[rotate=-90][<-,>=stealth](0.3,0)--(0.7,0);
     \draw[->, >= stealth,>=stealth](0.3,0)--(0.6,0);
     \draw [line width=0.5pt] (0,1.2) circle (0.5cm);
    \draw [rotate around ={-30:(0,1.2)}, shift={(0.5,1.2)}]\doublefleche;
     \draw [rotate around ={-150:(0,1.2)}, shift={(0.5,1.2)}]\doublefleche;
      \draw [line width=0.5pt] (-1.03,1.8) circle (0.3cm);
     \draw[rotate around={-30:(-1.03,1.8)}][<-, >= stealth,>=stealth](-0.73,1.8)--(-0.33,1.8);
    \shadedraw[rotate around={-30:(-1.03,1.8)},shift={(-0.73cm,1.8cm)}] \doubleflechescindeeleft;
     \shadedraw[rotate around={-30:(-1.03,1.8)},shift={(-0.73cm,1.8cm)}] \doubleflechescindeeright;
     \draw[rotate around={60:(-1.03,1.8)}][->, >= stealth,>=stealth](-0.73,1.8)--(-0.43,1.8);
     \draw[rotate around={-120:(-1.03,1.8)}][->, >= stealth,>=stealth](-0.73,1.8)--(-0.43,1.8);
     \draw [line width=0.5pt] (1.03,1.8) circle (0.3cm);
     \draw[rotate around={-150:(1.03,1.8)}][<-, >= stealth,>=stealth](1.33,1.8)--(1.73,1.8);
    \shadedraw[rotate around={-150:(1.03,1.8)},shift={(1.33cm,1.8cm)}] \doubleflechescindeeleft;
     \shadedraw[rotate around={-150:(1.03,1.8)},shift={(1.33cm,1.8cm)}] \doubleflechescindeeright;
     \draw[rotate around={-60:(1.03,1.8)}][->, >= stealth,>=stealth](1.33,1.8)--(1.63,1.8);
     \draw[rotate around={120:(1.03,1.8)}][->, >= stealth,>=stealth](1.33,1.8)--(1.63,1.8);
     \draw [line width=0.5pt] (0,-1.2) circle (0.5cm);
      \draw [rotate around ={30:(0,-1.2)}, shift={(0.5,-1.2)}]\doublefleche;
     \draw [rotate around ={150:(0,-1.2)}, shift={(0.5,-1.2)}]\doublefleche;
     \draw[line width=1.1pt,rotate around={-150:(0,-1.2)}][->, >= stealth,>=stealth](0.5,-1.2)--(0.9,-1.2);
      \draw [line width=0.5pt] (1.03,-1.8) circle (0.3cm);
     \draw[rotate around={150:(1.03,-1.8)}][<-, >= stealth,>=stealth](1.33,-1.8)--(1.73,-1.8);
    \shadedraw[rotate around={150:(1.03,-1.8)},shift={(1.33cm,-1.8cm)}] \doubleflechescindeeleft;
     \shadedraw[rotate around={150:(1.03,-1.8)},shift={(1.33cm,-1.8cm)}] \doubleflechescindeeright;
     \draw[rotate around={60:(1.03,-1.8)}][->, >= stealth,>=stealth](1.33,-1.8)--(1.63,-1.8);
     \draw[rotate around={-120:(1.03,-1.8)}][->, >= stealth,>=stealth](1.33,-1.8)--(1.63,-1.8);
    \begin{scriptsize}
\draw [fill=black] (0,1.8) circle (0.3pt);
\draw [fill=black] (0.2,1.75) circle (0.3pt);
\draw [fill=black] (-0.2,1.75) circle (0.3pt);
\draw [fill=black] (0,-1.8) circle (0.3pt);
\draw [fill=black] (0.2,-1.75) circle (0.3pt);
\draw [fill=black] (-0.2,-1.75) circle (0.3pt);
\draw [fill=black] (-0.4,0) circle (0.3pt);
\draw [fill=black] (-0.35,-0.2) circle (0.3pt);
\draw [fill=black] (-0.35,0.2) circle (0.3pt);
\draw [fill=black] (1.4,-2) circle (0.3pt);
\draw [fill=black] (1.42,-1.8) circle (0.3pt);
\draw [fill=black] (1.25,-2.15) circle (0.3pt);
\draw [fill=black] (1.4,2) circle (0.3pt);
\draw [fill=black] (1.42,1.8) circle (0.3pt);
\draw [fill=black] (1.25,2.15) circle (0.3pt);
\draw [fill=black] (-1.4,2.) circle (0.3pt);
\draw [fill=black] (-1.42,1.8) circle (0.3pt);
\draw [fill=black] (-1.25,2.15) circle (0.3pt);
\end{scriptsize}
\draw(0,1.45)node[anchor=north]{$M_{\MA}$};
\draw(0,-0.95)node[anchor=north]{$M_{\MA}$};
\draw(0,0.25)node[anchor=north]{$\scriptstyle{a(t)}$};
\draw(1.03,-1.55)node[anchor=north]{$\scriptstyle{a(t)}$};
\draw(1.03,2.05)node[anchor=north]{$\scriptstyle{b(t)}$};
\draw(-1.03,2.05)node[anchor=north]{$\scriptstyle{a(t)}$};
\draw(3,0.)node[anchor=north]{and};
\end{tikzpicture}
    \begin{tikzpicture}[line cap=round,line join=round,x=1.0cm,y=1.0cm]
\clip(-2,-2.4) rectangle (3.3,3);
   \draw [line width=0.5pt] (0.,0.) circle (0.5cm);
     \shadedraw[rotate=150,shift={(0.5cm,0cm)}] \doublefleche;
      \draw [line width=0.5pt] (0.,1.3) circle (0.3cm);
     \shadedraw[rotate around={-90:(0,1.3)},shift={(0.3cm,1.3cm)}] \doubleflechescindeeleft;
     \shadedraw[rotate around={-90:(0,1.3)},shift={(0.3cm,1.3cm)}] \doubleflechescindeeright;
     \shadedraw[rotate around={-90:(0,1.3)},shift={(0.3cm,1.3cm)}] \fleche;
      \draw[rotate around={180:(0,1.3)}][->, >= stealth, >= stealth](0.3,1.3)--(0.6,1.3);
      \draw[rotate around={0:(0,1.3)}][->, >= stealth, >= stealth](0.3,1.3)--(0.6,1.3);
     \shadedraw[rotate=30,shift={(0.5cm,0cm)}] \doubleflechescindeeleft;
     \shadedraw[rotate=30,shift={(0.5cm,0cm)}] \doubleflechescindeeright;
     \shadedraw[rotate=30,shift={(0.5cm,0cm)}] \fleche;
      \draw [line width=0.5pt] (1.3,0.75) circle (0.5cm);
     \draw [line width=0.5pt] (1.03,-0.6) circle (0.3cm);
    \draw[rotate=-30][->, >= stealth, >= stealth](0.5,0)--(0.9,0);
     \draw [rotate around ={60:(1.03,-0.6)}] [->, >= stealth, >= stealth] (1.33,-0.6)--(1.63,-0.6);
     \draw [rotate around ={-120:(1.03,-0.6)}] [->, >= stealth, >= stealth] (1.33,-0.6)--(1.63,-0.6);
    \draw[rotate=-150][->, >= stealth, >= stealth](0.5,0)--(0.9,0);
    \draw [line width=0.5pt] (-1.03,-0.6) circle (0.3cm);
     \draw [rotate around ={-60:(-1.03,-0.6)}] [->, >= stealth, >= stealth] (-1.33,-0.6)--(-1.63,-0.6);
     \draw [rotate around ={120:(-1.03,-0.6)}] [->, >= stealth, >= stealth] (-1.33,-0.6)--(-1.63,-0.6);
     \draw [rotate around ={-30:(1.3,0.75)}] [->, >= stealth, >= stealth] (1.8,0.75)--(2.2,0.75);
     \draw [line width=1.1pt,rotate around ={90:(1.3,0.75)}] [->, >= stealth, >= stealth] (1.8,0.75)--(2.2,0.75);
     \draw [rotate around ={30:(1.3,0.75)}, shift={(1.8,0.75)}]\doublefleche;
     \draw [rotate around ={150:(1.3,0.75)}, shift={(1.8,0.75)}]\doublefleche;
     \draw [line width=0.5pt] (2.33,0.15) circle (0.3cm);
      \draw [rotate around ={60:(2.33,0.15)}] [->, >= stealth, >= stealth] (2.63,0.15)--(2.93,0.15);
       \draw [rotate around ={-120:(2.33,0.15)}] [->, >= stealth, >= stealth] (2.63,0.15)--(2.93,0.15);
     \shadedraw[rotate around={150:(2.33,0.15)},shift={(2.63cm,0.15cm)}] \doubleflechescindeeleft;
     \shadedraw[rotate around={150:(2.33,0.15)},shift={(2.63cm,0.15cm)}] \doubleflechescindeeright;
    \shadedraw[rotate around={30:(-1.03,-0.6)},shift={(-0.73,-0.6)}] \doubleflechescindeeleft;
     \shadedraw[rotate around={30:(-1.03,-0.6)},shift={(-0.73,-0.6)}] \doubleflechescindeeright;
     \shadedraw[rotate around={150:(1.03,-0.6)},shift={(1.33,-0.6)}] \doubleflechescindeeleft;
     \shadedraw[rotate around={150:(1.03,-0.6)},shift={(1.33,-0.6)}] \doubleflechescindeeright;
\begin{scriptsize}
\draw [fill=black] (0,-0.6) circle (0.3pt);
\draw [fill=black] (0.2,-0.55) circle (0.3pt);
\draw [fill=black] (-0.2,-0.55) circle (0.3pt);
\draw [fill=black] (1.3,0.75-0.6) circle (0.3pt);
\draw [fill=black] (1.5,0.75-0.55) circle (0.3pt);
\draw [fill=black] (1.1,0.75-0.55) circle (0.3pt);
\draw [fill=black] (0,1.7) circle (0.3pt);
\draw [fill=black] (0.2,1.65) circle (0.3pt);
\draw [fill=black] (-0.2,1.65) circle (0.3pt);
\draw [fill=black] (2.7,-0.02) circle (0.3pt);
\draw [fill=black] (2.73,0.18) circle (0.3pt);
\draw [fill=black] (2.55,-0.18) circle (0.3pt);
\draw [fill=black] (1.4,-0.6) circle (0.3pt);
\draw [fill=black] (1.35,-0.81) circle (0.3pt);
\draw [fill=black] (1.2,-0.95) circle (0.3pt);
\draw [fill=black] (-1.4,-0.6) circle (0.3pt);
\draw [fill=black] (-1.35,-0.81) circle (0.3pt);
\draw [fill=black] (-1.2,-0.95) circle (0.3pt);
\end{scriptsize}
\draw(0,0.25)node[anchor=north]{$M_{\MA}$};
\draw(1.3,1)node[anchor=north]{$M_{\MA}$};
\draw(2.33,0.4)node[anchor=north]{$\scriptstyle{a(t)}$};
\draw(0,1.55)node[anchor=north]{$\scriptstyle{b(t)}$};
\draw(1.03,-0.35)node[anchor=north]{$\scriptstyle{a(t)}$};
\draw(-1.03,-0.35)node[anchor=north]{$\scriptstyle{a(t)}$};
\end{tikzpicture}
\end{minipage}

\noindent respectively and $\sum\mathcal{E}(\mathcalboondox{D_6^5})=\sum\mathcal{E}(\mathcalboondox{D_6^5}')+\sum\mathcal{E}(\mathcalboondox{D_6^5}'')$ where the last two sums are over all the filled diagrams $\mathcalboondox{D_6^5}'$ and $\mathcalboondox{D_6^5}''$ of type $\doubar{x}$ and of the form 

\begin{minipage}{21cm}
\begin{tikzpicture}[line cap=round,line join=round,x=1.0cm,y=1.0cm]
\clip(-3,-4) rectangle (4,1);
  \draw [line width=0.5pt] (0.,0.) circle (0.5cm);
     \draw [rotate=0] [->, >= stealth, >= stealth] (0.5,0)--(0.9,0);
     \draw [rotate=180] [->, >= stealth, >= stealth] (0.5,0)--(0.9,0);
     \draw [rotate=-90] [<-, >= stealth, >= stealth] (0.5,0)--(2,0);
     \draw (0,-2.5) circle (0.5cm);
     \draw[rotate around={-90:(0,-2.5)}][->,>=stealth](0.5,-2.5)--(0.9,-2.5);
      \draw[rotate around={45:(0,-2.5)}][<-,>=stealth](0.5,-2.5)--(0.8,-2.5);
      \draw (0.78,-1.72) circle (0.3cm);
      \draw[rotate around={45:(0.78,-1.72)}][->,>=stealth](1.08,-1.72)--(1.38,-1.72);
      \draw[rotate around={135:(0.78,-1.72)},shift={(1.08,-1.72)}]\doublefleche;
      \draw[rotate around={-45:(0,-2.5)}][<-,>=stealth](0.5,-2.5)--(0.8,-2.5);
      \draw[line width=1.1pt, rotate around={0:(0,-2)}][<-,>=stealth](0.5,-2.5)--(1,-2.5);
      \draw (0.78,-3.28) circle (0.3cm);
      \draw[rotate around={-45:(0.78,-3.28)}][->,>=stealth](1.08,-3.28)--(1.38,-3.28);
      \draw[rotate around={-135:(0.78,-3.28)},shift={(1.08,-3.28)}]\doublefleche;
     \draw [line width=0.5pt] (0.77,-0.77) circle (0.3cm);
     \draw[rotate around={135:(0.77,-0.77)}] [->, >= stealth, >= stealth] (1.07,-0.77)--(1.37,-0.77);
      \draw[rotate around={-45:(0.77,-0.77)}] [->, >= stealth, >= stealth] (1.07,-0.77)--(1.37,-0.77);
     \shadedraw[rotate around={45:(0.77,-0.77)}, shift={(1.07,-0.77)}] \doublefleche;
     \draw [line width=0.5pt] (-0.77,-0.77) circle (0.3cm);
     \draw[rotate around={45:(-0.77,-0.77)}] [->, >= stealth, >= stealth] (-0.47,-0.77)--(-0.17,-0.77);
      \draw[rotate around={165:(-0.77,-0.77)}] [->, >= stealth, >= stealth] (-0.47,-0.77)--(-0.17,-0.77);
      \draw[rotate around={-75:(-0.77,-0.77)}] [->, >= stealth, >= stealth] (-0.47,-0.77)--(-0.17,-0.77);
     \shadedraw[rotate around={105:(-0.77,-0.77)}, shift={(-0.47,-0.77)}] \doublefleche;
     \shadedraw[rotate around={225:(-0.77,-0.77)}, shift={(-0.47,-0.77)}] \doublefleche;
\begin{scriptsize}
\draw [fill=black] (-0.35,-0.5) circle (0.3pt);
\draw [fill=black] (-0.25,-0.55) circle (0.3pt);
\draw [fill=black] (-0.15,-0.58) circle (0.3pt);
\draw [fill=black] (0.35,-0.5) circle (0.3pt);
\draw [fill=black] (0.25,-0.55) circle (0.3pt);
\draw [fill=black] (0.15,-0.58) circle (0.3pt);
\draw [fill=black] (0,0.6) circle (0.3pt);
\draw [fill=black] (0.2,0.55) circle (0.3pt);
\draw [fill=black] (-0.2,0.55) circle (0.3pt);
\draw [fill=black] (-0.4,-0.7) circle (0.3pt);
\draw [fill=black] (-0.37,-0.83) circle (0.3pt);
\draw [fill=black] (-0.43,-0.95) circle (0.3pt);
\draw [rotate around={30:(0.77,-0.77)}][fill=black] (0.4,-0.7) circle (0.3pt);
\draw [rotate around={30:(0.77,-0.77)}][fill=black] (0.37,-0.83) circle (0.3pt);
\draw [rotate around={30:(0.77,-0.77)}][fill=black] (0.43,-0.95) circle (0.3pt);
\draw [fill=black] (-0.55,-2.3) circle (0.3pt);
\draw [fill=black] (-0.55,-2.7) circle (0.3pt);
\draw [fill=black] (-0.6,-2.5) circle (0.3pt);
\draw [fill=black] (0.55,-2.26) circle (0.3pt);
\draw [fill=black] (0.48,-2.17) circle (0.3pt);
\draw [fill=black] (0.58,-2.36) circle (0.3pt);
\draw [rotate around={-45:(0,-2.5)}][fill=black] (0.55,-2.26) circle (0.3pt);
\draw [rotate around={-45:(0,-2.5)}][fill=black] (0.48,-2.17) circle (0.3pt);
\draw [rotate around={-45:(0,-2.5)}][fill=black] (0.58,-2.36) circle (0.3pt);
\draw [fill=black] (1.15,-1.82) circle (0.3pt);
\draw [fill=black] (1.08,-1.95) circle (0.3pt);
\draw [fill=black] (0.95,-2.05) circle (0.3pt);
\draw [fill=black] (1.15,-3.18) circle (0.3pt);
\draw [fill=black] (1.08,-3.05) circle (0.3pt);
\draw [fill=black] (0.95,-2.95) circle (0.3pt);
\end{scriptsize}
\draw (0,0.25) node[anchor=north ] {$M_{\mathcal{B}}$};
\draw (0,-2.25) node[anchor=north ] {$M_{\mathcal{B}}$};
\draw (0.77,-0.52) node[anchor=north ] {$\scriptstyle{a(t)}$};
\draw (-0.77,-0.52) node[anchor=north ] {$\scriptstyle{b(t)}$};
\draw (0.78,-3.03) node[anchor=north ] {$\scriptstyle{a(t)}$};
\draw (0.78,-1.47) node[anchor=north ] {$\scriptstyle{a(t)}$};
\draw (3.5,-1) node[anchor=north ] {and};
\end{tikzpicture}
            \begin{tikzpicture}[line cap=round,line join=round,x=1.0cm,y=1.0cm]
\clip(-3,-3.5) rectangle (4,1);
  \draw [line width=0.5pt] (0.,0.) circle (0.5cm);
     \draw [rotate=0] [->, >= stealth, >= stealth] (0.5,0)--(0.9,0);
     \draw [rotate=180] [->, >= stealth, >= stealth] (0.5,0)--(0.9,0);
     \draw [rotate=-90] [line width=1.1pt,<-, >= stealth, >= stealth] (0.5,0)--(0.9,0);
     \draw [line width=0.5pt] (0.77,-0.77) circle (0.3cm);
     \draw[rotate around={135:(0.77,-0.77)}] [->, >= stealth, >= stealth] (1.07,-0.77)--(1.37,-0.77);
      \draw[rotate around={-45:(0.77,-0.77)}] [->, >= stealth, >= stealth] (1.07,-0.77)--(1.37,-0.77);
     \shadedraw[rotate around={45:(0.77,-0.77)}, shift={(1.07,-0.77)}] \doublefleche;
     \draw (1.55,-1.55) circle (0.5cm);
     \draw[rotate around={180:(1.55,-1.55)}][->,>=stealth](2.05,-1.55)--(2.45,-1.55);
     \draw[rotate around={0:(1.55,-1.55)}][->,>=stealth](2.05,-1.55)--(2.45,-1.55);
     \draw[rotate around={45:(1.55,-1.55)}][<-,>=stealth](2.05,-1.55)--(2.35,-1.55);
     \draw (2.33,-0.77) circle (0.3cm);
     \draw[rotate around={45:(2.33,-0.77)}][->,>=stealth](2.63,-0.77)--(2.93,-0.77);
     \draw[rotate around={-45:(2.33,-0.77)},shift={(2.63,-0.77)}]\doublefleche;
     \draw [line width=0.5pt] (-0.77,-0.77) circle (0.3cm);
     \draw[rotate around={45:(-0.77,-0.77)}] [->, >= stealth, >= stealth] (-0.47,-0.77)--(-0.17,-0.77);
      \draw[rotate around={165:(-0.77,-0.77)}] [->, >= stealth, >= stealth] (-0.47,-0.77)--(-0.17,-0.77);
      \draw[rotate around={-75:(-0.77,-0.77)}] [->, >= stealth, >= stealth] (-0.47,-0.77)--(-0.17,-0.77);
     \shadedraw[rotate around={105:(-0.77,-0.77)}, shift={(-0.47,-0.77)}] \doublefleche;
     \shadedraw[rotate around={225:(-0.77,-0.77)}, shift={(-0.47,-0.77)}] \doublefleche;
\begin{scriptsize}
\draw [fill=black] (-0.35,-0.5) circle (0.3pt);
\draw [fill=black] (-0.25,-0.55) circle (0.3pt);
\draw [fill=black] (-0.15,-0.58) circle (0.3pt);
\draw [fill=black] (0.35,-0.5) circle (0.3pt);
\draw [fill=black] (0.25,-0.55) circle (0.3pt);
\draw [fill=black] (0.15,-0.58) circle (0.3pt);
\draw [fill=black] (0,0.6) circle (0.3pt);
\draw [fill=black] (0.2,0.55) circle (0.3pt);
\draw [fill=black] (-0.2,0.55) circle (0.3pt);
\draw [fill=black] (-0.4,-0.7) circle (0.3pt);
\draw [fill=black] (-0.37,-0.83) circle (0.3pt);
\draw [fill=black] (-0.43,-0.95) circle (0.3pt);
\draw [rotate around={30:(0.77,-0.77)}][fill=black] (0.4,-0.7) circle (0.3pt);
\draw [rotate around={30:(0.77,-0.77)}][fill=black] (0.37,-0.83) circle (0.3pt);
\draw [rotate around={30:(0.77,-0.77)}][fill=black] (0.43,-0.95) circle (0.3pt);
\draw [fill=black] (1.55,-2.15) circle (0.3pt);
\draw [fill=black] (1.75,-2.1) circle (0.3pt);
\draw [fill=black] (1.35,-2.1) circle (0.3pt);
\draw [rotate around={180:(1.55,-1.55)}][fill=black] (1.55,-2.15) circle (0.3pt);
\draw [rotate around={180:(1.55,-1.55)}][fill=black] (1.75,-2.1) circle (0.3pt);
\draw [rotate around={180:(1.55,-1.55)}][fill=black] (1.35,-2.1) circle (0.3pt);
\draw [fill=black] (1.98,-0.65) circle (0.3pt);
\draw [fill=black] (2.05,-0.5) circle (0.3pt);
\draw [fill=black] (2.2,-0.42) circle (0.3pt);
\end{scriptsize}
\draw (0,0.25) node[anchor=north ] {$M_{\mathcal{B}}$};
\draw (1.55,-1.3) node[anchor=north ] {$M_{\mathcal{B}}$};
\draw (0.77,-0.52) node[anchor=north ] {$\scriptstyle{a(t)}$};
\draw (-0.77,-0.52) node[anchor=north ] {$\scriptstyle{b(t)}$};
\draw (2.33,-0.52) node[anchor=north ] {$\scriptstyle{a(t)}$};
\end{tikzpicture}
\end{minipage}

\noindent respectively.
We have that $\sum\mathcal{E}(\mathcalboondox{D_4^5}')=\sum\mathcal{E}(\mathcalboondox{D_3^5})$ and that $\sum\mathcal{E}(\mathcalboondox{D_6^5}'')=\sum\mathcal{E}(\mathcalboondox{D_8^5})$.
Finally, using that $s_{d+1}M_{\MA}$ and $s_{d+1}M_{\MB}$ are $d$-pre-Calabi-Yau structures, we obtain that 
\begin{equation}
    \begin{split}
        &\sum\mathcal{E}(\mathcalboondox{D_1^1})-\sum\mathcal{E}(\mathcalboondox{D_1^4})-\sum\mathcal{E}(\mathcalboondox{D_1^5})+\sum\mathcal{E}(\mathcalboondox{D_4^5}'')=0
        \\ \text{ and }& -\sum\mathcal{E}(\mathcalboondox{D_3^2})+\sum\mathcal{E}(\mathcalboondox{D_6^4})+\sum\mathcal{E}(\mathcalboondox{D_6^5}')-\sum\mathcal{E}(\mathcalboondox{D_{10}^5})=0.
    \end{split}
\end{equation}
Therefore, $\Tilde{m}(t)=[sm_{a(t)},sm(t)]_G$ for every $t\in\Bbbk$.
We now prove that 
\[s\Tilde{a}(t)^{\doubar{x}}=\sum\limits_{n=1}^{\infty}sm_{\MA\oplus\MA^*}\big(\varphi_{a(t)},\dots,\varphi_{a(t)},\varphi(t)\big)^{\doubar{x}}+(s\varphi(t)\upperset{G}{\circ} sm_{a(t)})^{\doubar{x}}
    +(\varphi_{a(t)}\upperset{G}{\circ}sm(t))^{\doubar{x}}\] for every $t\in\Bbbk$, $\doubar{x}\in\doubar{\MO}_{\MA}$. We have that $\pi_{\MA^*[d]}\circ s\Tilde{a}(t)^{\doubar{x}}=\sum\mathcal{E}(\mathcalboondox{D^3_1})+\sum\mathcal{E}(\mathcalboondox{D^3_2})+\sum\mathcal{E}(\mathcalboondox{D^3_3})+\sum\mathcal{E}(\mathcalboondox{D^3_4})+\sum\mathcal{E}(\mathcalboondox{D^3_5})$ where the sums are over all the filled diagrams $\mathcalboondox{D^3_i}$ for $i\in\llbracket 1,5\rrbracket$ of type $\doubar{x}$ and of the form 

\begin{minipage}{21cm}

\end{minipage}

\noindent respectively. We also have that \[\pi_{\MA[1]}\circ \big(
\sum\limits_{n=1}^{\infty}sm_{\MA\oplus\MA^*}\big(\varphi_{a(t)},\dots,\varphi_{a(t)},\varphi\big)+\varphi\upperset{G}{\circ} sm_{a(t)}
    +\varphi_{a(t)}\upperset{G}{\circ}sm\big)^{\doubar{x}}=0\]
for every $\doubar{x}\in\doubar{\MO}_{\MA}$. 

Therefore, since we have $\sum\mathcal{E}(\mathcalboondox{D^6_1})=\sum\mathcal{E}(\mathcalboondox{D^3_1})$, $\sum\mathcal{E}(\mathcalboondox{D^6_2})=\sum\mathcal{E}(\mathcalboondox{D^3_2})$, $\sum\mathcal{E}(\mathcalboondox{D^6_3})=\sum\mathcal{E}(\mathcalboondox{D^3_4})$, $\sum\mathcal{E}(\mathcalboondox{D^6_4})=\sum\mathcal{E}(\mathcalboondox{D^3_3})$ and $\sum\mathcal{E}(\mathcalboondox{D^6_5})=\sum\mathcal{E}(\mathcalboondox{D^3_5})$, we have that
\[s\Tilde{a}(t)^{\doubar{x}}=\sum\limits_{n=1}^{\infty}sm_{\MA\oplus\MA^*}\big(\varphi_{a(t)},\dots,\varphi_{a(t)},\varphi\big)^{\doubar{x}}+(s\varphi\upperset{G}{\circ} sm_{a(t)})^{\doubar{x}}+(s\varphi_{a(t)}\upperset{G}{\circ}sm)^{\doubar{x}}\] for every $t\in\Bbbk$, $\doubar{x}\in\doubar{\MO}_{\MA}$ which concludes the proof.
\end{proof}

\begin{corollary}
    If $(\Phi_0,s_{d+1}\mathbf{F}) : (\MA,s_{d+1}M_{\MA})\rightarrow (\MB,s_{d+1}M_{\MB})$ is a homotopy equivalence, then so is $s\varphi_{\mathbf{F}}$.
\end{corollary}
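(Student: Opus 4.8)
The plan is to reduce the statement, via Theorem~\ref{thm:equiv-htpy-qiso-A-inf}, to the assertion that $s\varphi_{\mathbf{F}}$ is a quasi-isomorphism of $A_{\infty}$-categories, and then to extract this from the explicit shape of $s\varphi_{\mathbf{F}}$ together with the hypothesis on $(\Phi_0,s_{d+1}\mathbf{F})$. First I would observe that a homotopy equivalence of $d$-pre-Calabi-Yau categories is a quasi-isomorphism of $d$-pre-Calabi-Yau categories, by Proposition~\ref{prop:htpy-equiv-q-iso}; by definition this means that the chain map $s_{d+1}F^{x,y}$ is a quasi-isomorphism for every $x,y\in\MO_{\MA}$ (this is the condition appearing in Proposition~\ref{prop:chain-htpy-strong-htpy}).

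Next I would recall, from the construction of the functor $\mathcal{P}$ in \cite{moi} underlying Theorem~\ref{thm:main-article-1}, the form of the $A_{\infty}$-morphism $s\varphi_{\mathbf{F}} : (\MA\oplus\MB^*[d-1],sm_{\mathbf{F}})\rightarrow(\MA\oplus\MA^*[d-1],sm_{\MA\oplus\MA^*})$: it is the identity on objects, its component on the summand ${}_{x}\MA_{y}$ is the identity, and its component carrying the dual summand coming from $\MB$ to the dual summand coming from $\MA$ is, up to the degree shift, the $\kk$-linear dual of a linear component $s_{d+1}F^{y,x}$ of $s_{d+1}\mathbf{F}$. Consequently, for each pair of objects, the linear part of $s\varphi_{\mathbf{F}}$ on the corresponding hom-complex is a block-triangular chain map whose two diagonal blocks are the identity and (up to a degree shift) the $\kk$-linear dual of the appropriate $s_{d+1}F^{y,x}$. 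Over a field, dualizing preserves quasi-isomorphisms, and a block-triangular chain map with quasi-isomorphic diagonal blocks is itself a quasi-isomorphism (apply the five lemma to the long exact sequences attached to the relevant short exact sequence of complexes). Since $s_{d+1}F^{x,y}$ is a quasi-isomorphism for all $x,y$ by the previous step, it follows that $s\varphi_{\mathbf{F}}$ is a quasi-isomorphism of $A_{\infty}$-categories, hence, by Theorem~\ref{thm:equiv-htpy-qiso-A-inf}, a homotopy equivalence.

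The step I expect to require the most care is isolating the precise linear part of $s\varphi_{\mathbf{F}}$, and of the differentials on its source and target, well enough to justify the block-triangularity and the five-lemma argument; all of this is a direct unwinding of the construction in \cite{moi} and \cite{ktv}, but it is the only place where one has to open up the black box $\mathcal{P}$. A variant that avoids this, at the cost of invoking the explicit composition law of hat morphisms, would instead use functoriality of $\mathcal{P}$: a homotopy inverse $(\Psi_0,s_{d+1}\mathbf{G})$ of $(\Phi_0,s_{d+1}\mathbf{F})$ yields, via Theorem~\ref{thm:homotopies-pCY-Ainf} and $\mathcal{P}(\operatorname{Id}_{\MA})=(sm_{\MA\oplus\MA^*},\id,\id)$, that $s\varphi_{\mathbf{G}\circ\mathbf{F}}$ is weakly homotopic to $\id_{\MA\oplus\MA^*[d-1]}$ and $s\psi_{\mathbf{F}\circ\mathbf{G}}$ is weakly homotopic to $\id_{\MB\oplus\MB^*[d-1]}$; since weakly homotopic $A_{\infty}$-morphisms have isomorphic induced maps on cohomology (their intermediate $A_{\infty}$-structures being mutually quasi-isomorphic) and the $\varphi$-leg of a composite of hat morphisms factors through the $\varphi$-legs of the factors, one again concludes that $s\varphi_{\mathbf{F}}$ is a quasi-isomorphism and finishes with Theorem~\ref{thm:equiv-htpy-qiso-A-inf}.
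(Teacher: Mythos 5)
Your proposal is correct in outline but takes a genuinely different route from the paper. The paper's own proof is your second, ``variant'' argument: it picks a homotopy inverse $(\Psi_0,s_{d+1}\mathbf{G})$, applies Theorem~\ref{thm:homotopies-pCY-Ainf} to the two composites (which are homotopic to the identities), and uses the compatibility of $\mathcal{P}$ with (partial) composition from Theorem~\ref{thm:main-article-1} to conclude that $s\varphi_{\mathbf{F}}$ has a homotopy inverse; it never opens the black box of $\mathcal{P}$. Your primary argument instead factors through quasi-isomorphisms: Proposition~\ref{prop:htpy-equiv-q-iso} gives that each $s_{d+1}F^{x,y}$ is a quasi-isomorphism, the explicit description of $\varphi_{\MA}$ from \cite{moi} exhibits its linear part as a block-triangular chain map with diagonal blocks $\id_{{}_x\MA_y}$ and (a shift of) the dual of $s_{d+1}F^{y,x}$ --- here you do need that ${}_x\MA_y$ is a subcomplex of ${}_x(\MA\oplus\MB^*)_y$ and of ${}_x(\MA\oplus\MA^*)_y$, which holds by construction, so the five-lemma argument goes through over a field --- and Theorem~\ref{thm:equiv-htpy-qiso-A-inf} then upgrades the quasi-isomorphism to a homotopy equivalence. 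What each approach buys: the paper's route is shorter and stays entirely at the level of the functor $\mathcal{P}$, but the homotopies it produces for $s\varphi_{\mathbf{G}}\circ s\varphi_{\mathbf{F}}$ and its companion are only the weak homotopies furnished by Theorem~\ref{thm:homotopies-pCY-Ainf}; your route requires verifying the precise linear part of $\varphi_{\MA}$ (the one step you rightly flag as delicate, and the only place \cite{moi} must be unwound), but in exchange it bypasses Theorem~\ref{thm:homotopies-pCY-Ainf} entirely and, via Theorem~\ref{thm:equiv-htpy-qiso-A-inf}, delivers a homotopy inverse in the strict sense of Section~\ref{section:A-inf-htpy}.
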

\begin{proof}
    Consider $(\Psi_0,s_{d+1}\mathbf{G}) : (\MB,s_{d+1}M_{\MB}) \rightarrow (\MA,s_{d+1}M_{\MA})$ a $d$-pre-Calabi-Yau morphism such that 
    $(\Psi_0\circ \Psi_0,s_{d+1}\mathbf{G}\circ s_{d+1}\mathbf{F})$ (resp. $(\Phi_0\circ \Psi_0,s_{d+1}\mathbf{F}\circ s_{d+1}\mathbf{G})$) is homotopic to $\id_{\MA}$ (resp. $\id_{\MB}$). 
     Since $(\Psi_0,s_{d+1}\mathbf{G})$ is a $d$-pre-Calabi-Yau morphism, it induces an $A_{\infty}$-structure $sm_{\mathbf{G}}$ on $\MB\oplus\MA^*[d-1]$ together with $A_{\infty}$-morphisms 
     \begin{equation}
\begin{tikzcd}
&(\MB \oplus \MA^*[d-1],sm_{\mathbf{G}}) \arrow[swap,"s\varphi_{\mathbf{G}}"]{dl} \arrow[swap,"s\psi_{\mathbf{G}}"]{dr}\\
(\MB \oplus \MB^*[d-1],sm_{\MB\oplus\MB^*})&& (\MA \oplus \MA^*[d-1], sm_{\MA\oplus\MA^*})
\end{tikzcd}
\end{equation}

By Theorem \ref{thm:homotopies-pCY-Ainf}, the image of $(\Psi_0\circ \Psi_0,s_{d+1}\mathbf{G}\circ s_{d+1}\mathbf{F})$ (resp. $(\Phi_0\circ \Psi_0,s_{d+1}\mathbf{F}\circ s_{d+1}\mathbf{G})$) under $\mathcal{P}$ is homotopic to the identity of $\widehat{A}_{\infty}$ associated to $\MA$ (resp. $\MB$).
Moreover, by Theorem \ref{thm:main-article-1}, $\mathcal{P}$ is compatible with partial products which means that $(\Phi_0,s_{d+1}\mathbf{F})$ is a homotopy equivalence with homotopy inverse $(\Psi_0,s_{d+1}\mathbf{G})$.

\end{proof}

\bibliographystyle{alpha}
\bibliography{mybiblio}

\begin{thebibliography}{KTV21}

\bibitem[Bor05]{borisov1}
Dennis~V. Borisov.
\newblock Formal deformations of morphisms of associative algebras.
\newblock {\em Int. Math. Res. Not.}, 2005(41):2499--2523, 2005.

\bibitem[Bor07]{borisov2}
Dennis~V. Borisov.
\newblock {$G_{\infty}$}-structure on the deformation complex of a morphism.
\newblock {\em J. Pure Appl. Algebra}, 210(3):751--770, 2007.

\bibitem[Bou23]{moi}
Marion Boucrot.
\newblock Morphisms of pre-{C}alabi-{Y}au categories and morphisms of cyclic {$A_{\infty}$}-categories.
\newblock arXiv:2304.13661 [math.KT], 46 pages, 2023.

\bibitem[Ger63]{gerstenhaber}
Murray Gerstenhaber.
\newblock The cohomology structure of an associative ring.
\newblock {\em Ann. of Math. (2)}, 78:267--288, 1963.

\bibitem[Get09]{getzler}
Ezra Getzler.
\newblock Lie theory for nilpotent $l_\infty $ -algebras.
\newblock {\em Annals of Mathematics}, 170(1):271--301, 2009.

\bibitem[Kon13]{kontsevich}
Maxim Kontsevich.
\newblock {W}eak {C}alabi-{Y}au algebras.
\newblock Notes taken from the talk at \emph{Conference on Homological Mirror Symmetry}, 2013.

\bibitem[KS]{kontsevich-soibelman}
Maxim Kontsevich and Yan Soibelman.
\newblock Deformation {T}heory.
\newblock \url{https://people.maths.ox.ac.uk/beem/papers/kontsevich_soibelman_deformation_theory_1.pdf}.

\bibitem[KS22]{kraft-schnitzer}
Andreas Kraft and Jonas Schnitzer.
\newblock {A}n {I}ntroduction to ${L}_\infty$-algebras and their {H}omotopy {T}heory.
\newblock arXiv:2207.01861 [math.QA], 2022.

\bibitem[KTV21]{ktv}
Maxim Kontsevich, Alex Takeda, and Yiannis Vlassopoulos.
\newblock {P}re-{C}alabi-{Y}au algebras and topological quantum field theories.
\newblock arXiv:2112.14667 [math.AG], 2021.

\bibitem[LH03]{lefevre}
Kenji Lefèvre-Hasegawa.
\newblock {S}ur les {A}-infinies catégories.
\newblock arXiv:math/0310337 [math.CT], 2003.

\bibitem[LS93]{lada-stasheff}
T.~Lada and J.~Stasheff.
\newblock Introduction to {SH} {L}ie algebras for physicists.
\newblock {\em Int J Theor Phys}, 32:1087–1103, 1993.

\bibitem[LV22]{lv}
Johan Leray and Bruno Vallette.
\newblock {P}re-{C}alabi--{Y}au algebras and homotopy double poisson gebras.
\newblock arXiv:2203.05062 [math.QA], 2022.

\bibitem[Man05]{manetti}
Marco Manetti.
\newblock Deformation theory via differential graded {L}ie algebras.
\newblock arXiv:math/0507284 [math.AG], 2005.

\bibitem[Sei17]{seidel}
Paul Seidel.
\newblock Fukaya {$A_\infty$}-structures associated to {L}efschetz fibrations. {II}.
\newblock In {\em Algebra, geometry, and physics in the 21st century}, volume 324 of {\em Progr. Math.}, pages 295--364. Birkh\"{a}user/Springer, Cham, 2017.

\bibitem[SS85]{schlessinger-stasheff}
M.~Schlessinger and J.~Stasheff.
\newblock The {L}ie algebra structure of tangent cohomology and deformation theory.
\newblock {\em J. of Pure and Appl. Algebra}, 38:313--322, 1985.

\bibitem[Sta92]{stasheff-linf}
J.~Stasheff.
\newblock Differential graded {L}ie algebras, quasi-{H}opf algebras and higher homotopy algebras.
\newblock {\em Quantum groups 1510 in Lecture Notes in Math. Springer, Berlin}, 1992.

\bibitem[TZ07]{tz}
Thomas Tradler and Mahmoud Zeinalian.
\newblock Algebraic string operations.
\newblock {\em $K$-Theory}, 38(1):59--82, 2007.

\bibitem[Yal16]{yalin}
Sinan Yalin.
\newblock Maurer-{C}artan spaces of filtered {$L_\infty$}-algebras.
\newblock {\em J. Homotopy Relat. Struct.}, 11(3):375--407, 2016.

\end{thebibliography}

\vspace{1cm}

MARION BOUCROT: Univ. Grenoble Alpes, CNRS, IF, 38000 Grenoble, France

\textit{E-mail adress :} \href{mailto:marion.boucrot@univ-grenoble-alpes.fr}{\texttt{marion.boucrot@univ-grenoble-alpes.fr}}

\end{document}